\newcommand{\marging}[1]
\newcommand{\xRightarrow}[2][]{\ext@arrow 0359\Rightarrowfill@{#1}{#2}}
\theoremstyle{plain}
\newtheorem{theorem}{Theorem}[section]				
\newtheorem{proposition}[theorem]{Proposition}		
\newtheorem{corollary}[theorem]{Corollary}
\newtheorem{lemma}[theorem]{Lemma}
\theoremstyle{definition}
\newtheorem{definition}[theorem]{Definition}
\newtheorem{remark}[theorem]{Remark}
\newtheorem{example}[theorem]{Example}
\newcommand{\ABbb}{\mathbb A}
\newcommand{\CBbb}{\mathbb C}
\newcommand{\HBbb}{\mathbb H}
\newcommand{\LBbb}{\mathbb L}
\newcommand{\PBbb}{\mathbb P}
\newcommand{\RBbb}{\mathbb R}
\newcommand{\ZBbb}{\mathbb Z}
\newcommand{\Acal}{\mathcal A}
\newcommand{\Ccal}{\mathcal C}
\newcommand{\Fcal}{\mathcal F}
\newcommand{\Ical}{\mathcal I}
\newcommand{\Lcal}{\mathcal L}
\newcommand{\Mcal}{\mathcal M}
\newcommand{\Ncal}{\mathcal N}
\newcommand{\Ocal}{\mathcal O}
\newcommand{\Xcal}{\mathcal X}
\newcommand{\PSL}{\mathsf{PSL}}
\newcommand{\GL}{\mathsf{GL}}
\newcommand{\U}{\mathsf{U}}
\DeclareMathOperator{\End}{End}
\DeclareMathOperator{\Hom}{Hom}
\DeclareMathOperator{\id}{id}
\DeclareMathOperator{\imag}{Im}
\DeclareMathOperator{\real}{Re}
\DeclareMathOperator{\tr}{tr}
\DeclareMathOperator{\Div}{div}
\DeclareMathOperator{\ord}{ord}
\DeclareMathOperator{\Int}{int}
\DeclareMathOperator{\an}{an}
\DeclareMathOperator{\PICRIG}{PICRIG}
\DeclareMathOperator{\PIC}{PIC}
\DeclareMathOperator{\HOM}{HOM}
\newcommand{\dbar}{\bar\partial}
\newcommand{\lra}{\longrightarrow}
\newcommand{\Pic}{{\rm Pic}}
\newcommand{\GM}{{\rm GM}}
\newcommand{\KS}{{\rm KS}}
\newcommand{\Inv}{{\rm Inv}}
\newcommand{\isorightarrow}{\xrightarrow{
   \,\smash{\raisebox{-0.5ex}{\ensuremath{\sim}}}\,}}
   \newcommand{\isoleftarrow}{\xleftarrow{
   \,\smash{\raisebox{-0.5ex}{\ensuremath{\sim}}}\,}}
\begin{document}


\title[Deligne pairings and rank one local systems on curves]{Deligne pairings and families of \\ rank one local systems on algebraic curves}

\author[Freixas i Montplet]{Gerard Freixas i Montplet}

\address{CNRS -- Institut de Math\'ematiques de Jussieu,
   Universit\'e de Paris 6, 4 Place Jussieu,
   75005 Paris}
\email{gerard.freixas@imj-prg.fr}
\thanks{G. F. supported in part by ANR grant ANR-12-BS01-0002.}

\author[Wentworth]{Richard A. Wentworth}

\address{Department of Mathematics,
   University of Maryland,
   College Park, MD 20742, USA}
\email{raw@umd.edu}
\thanks{R.W. supported in part by NSF grant DMS-1406513.}

\subjclass[2000]{Primary: 58J52; Secondary: 14C40}
\date{\today}

\begin{abstract} 
For smooth families of projective algebraic curves,  
 we extend  the notion of intersection pairing of metrized line bundles to a pairing on line bundles with flat relative connections. 
In this setting, we prove the existence of  a  canonical and functorial ``intersection'' connection on the Deligne pairing. A relationship is found with the holomorphic extension of analytic torsion, and in the case of trivial fibrations we show that the Deligne isomorphism is flat with respect to the connections we construct.  Finally, we give an application to the construction of a meromorphic connection on the hyperholomorphic line bundle over the twistor space of rank one flat connections on a Riemann surface.
\end{abstract}



\maketitle

\setcounter{tocdepth}{2}
\tableofcontents

\thispagestyle{empty}





\section{Introduction}
\medskip
\medskip

Let $\pi:\Xcal\to S$ be a smooth proper morphism of smooth quasi-projective complex varieties with $1$-dimensional connected fibers. Let $\Lcal$ be a holomorphic line bundle on $\Xcal$, and denote by $\omega_{\Xcal/S}$ the relative dualizing sheaf of the family $\pi$. In his approach to understanding work of Quillen \cite{Quillen:85} on determinant bundles of families of $\overline{\partial}$-operators on a Riemann surface, Deligne \cite{Deligne:87} established a canonical (up to sign) functorial isomorphism of line bundles on $S$
\begin{equation}\label{eq:deligne-iso}
	\det R\pi_{\ast}(\Lcal)^{\otimes 12}
	\isorightarrow
	\langle\omega_{\Xcal/S},\omega_{\Xcal/S}\rangle\otimes\langle\Lcal,\Lcal\otimes\omega_{\Xcal/S}^{-1}\rangle^{\otimes 6}.
\end{equation}
The isomorphism refines to the level of sheaves the Grothendieck-Riemann-Roch theorem in relative dimension 1. It relates the determinant of the relative cohomology of $\Lcal$ (on the left hand side of \eqref{eq:deligne-iso}) to certain  ``intersection bundles'' $\langle\Lcal,\Mcal\rangle\to S$ (on the right hand side of \eqref{eq:deligne-iso}), known as \emph{Deligne pairings}, which associate line bundles on $S$ to  pairs of holomorphic bundles $\Lcal, \Mcal\to \Xcal$. The relationship with Quillen's construction finds some inspiration in Arakelov geometry, where metrized line bundles play a central role. Given smooth hermitian metrics on $\omega_{\Xcal/S}$ and $\Lcal$, then there is an associated \emph{Quillen metric} on $\det R\pi_{\ast}(\Lcal)$. The relevant input in the definition of this metric is the (holomorphic) analytic torsion of Ray-Singer: a spectral invariant obtained as a zeta regularized determinant of the positive \emph{self-adjoint} $\overline{\partial}$-laplacians for $\Lcal$ and the chosen metrics. Also, the Deligne pairings in \eqref{eq:deligne-iso} inherit hermitian metrics, defined in the style of the archimedean contribution to Arakelov's arithmetic intersection pairing. Using the Chern connections associated to these hermitian metrics,  the cohomological equality
\begin{equation} \label{eqn:c1}
c_1(\langle \Lcal, \Mcal\rangle)=\pi_\ast \left( c_1(\Lcal)\cup c_1(\Mcal)\right)
\end{equation}
becomes an equality of forms for the Chern-Weil expressions of $c_1$ in terms of curvature. 
Moreover,  for these choices of metrics, the Deligne isomorphism \eqref{eq:deligne-iso} becomes an isometry, up to an overall topological constant. 
This picture has been  vastly generalized in several contributions by Bismut-Freed, Bismut-Gillet-Soul\'e, Bismut-Lebeau, and others. They lead to the proof of the Grothendieck-Riemann-Roch theorem in Arakelov geometry, by Gillet-Soul\'e \cite{GilletSoule:92}.

In another direction, Fay \cite{Fay:81} studied the Ray-Singer torsion as a function on unitary characters of the fundamental group of a marked compact Riemann surface $X$. He showed that this function admits a unique holomorphic extension to the complex affine variety of complex characters of $\pi_{1}(X)$. He goes on to prove that the divisor of this function 
determines the marked Riemann surface structure. As for the Ray-Singer torsion, the holomorphic extension of the analytic torsion function to the complex character variety can be obtained by a zeta regularization procedure, this time for non-self-adjoint elliptic operators. Similar considerations 
appear in \cite{Kim:07, McIntyreTeo:08}, and  in more recent work \cite{Hitchin:13}, where Hitchin uses these zeta regularized determinants of non-self-adjoint operators in the construction of a hyperholomorphic line bundle on the moduli space of Higgs bundles.

From a modern perspective, it is reasonable to seek  a common conceptual framework for the results of Deligne, Fay and Hitchin, where the object of study is the determinant of cohomology of a line bundle endowed with a flat relative  connection. Hence, on the left hand side of \eqref{eq:deligne-iso}, one would like to define a connection on the determinant of the cohomology in terms of the spectrum of some natural non-self-adjoint elliptic operators, specializing to the Quillen connection in the unitary case. On the right hand side of \eqref{eq:deligne-iso}, one would like to define natural connections on the Deligne pairings, specializing to Chern connections in the metric case. The aim would then be to show that the Deligne isomorphism is flat for these connections. This is the motivation of the present article, where we achieve  the core of this program. Specifically,
\begin{itemize}
\item we define an \emph{intersection connection} on the Deligne pairing  of line bundles on $\Xcal$ with flat relative  connections; 
\item in the case of trivial families $\Xcal=X\times S$, 
we build a holomorphic connection on the determinant of the cohomology  by spectral methods and show that the Deligne isomorphism is flat with respect to this connection and intersection connections on Deligne pairings;
\item we recover some of the results of Fay and Hitchin as applications of our results.
\end{itemize}
In a separate paper, we will deduce the flatness of Deligne's isomorphism for general fibrations $\Xcal\to S$ from the case of the trivial families addressed in this article. In addition, we will derive an arithmetic Riemann-Roch theorem for flat line bundles on arithmetic surfaces. 

We now state the main results and outline of this paper more precisely. Given  a smooth connection on $\Lcal\to\Xcal$ which is compatible with the holomorphic structure and flat on the fibers of $\pi$, 
we wish to define an associated compatible connection on the Deligne pairing $\langle\Lcal, \Mcal\rangle$.  The existence of the Deligne pairing relies on the Weil reciprocity law of meromorphic functions on Riemann surfaces. Similarly, the construction of a connection on $\langle\Lcal, \Mcal\rangle$ requires a corresponding property which we will call \emph{Weil reciprocity for connections}, or (WR) for short.  It turns out that not every connection satisfies this condition, but  a given smooth family of flat relative  connections can always be extended to a connection that satisfies (WR) and which is functorial with respect to tensor products and base change (we shall simply say ``functorial''). 
This extension is unique once the bundle is rigidified;
in general  we characterize the space of all such extensions. It is important to stress that the extension is in general not holomorphic, even if the initial flat relative connection is.
The  result may then be formulated as follows.

\begin{theorem}[\sc Trace connection] \label{thm:main}
  Let  $\Lcal, \Mcal$ be holomorphic line bundles on $\pi:\Xcal\to S$.  Assume we are given:
  \begin{itemize}
  \item  a section $\sigma: S\to \Xcal$;
  \item a rigidification $\sigma^\ast\Lcal\simeq \Ocal_S$;
  \item a flat relative  connection  $\nabla_{\Xcal/S}$, compatible with $\Lcal$ (see Definition \ref{def:rel-hol}).
  \end{itemize}
  Then the following hold:
  \begin{enumerate}
  \item
  there is a unique extension of $\nabla_{\Xcal/S}$ to a smooth connection on $\Lcal$ that is compatible with the holomorphic structure, satisfies (WR) universally (i.e. after any base change $T\rightarrow S$) and induces the trivial connection on $\sigma^\ast\Lcal$;
  \item  consequently, $\nabla_{\Xcal/S}$ uniquely determines a functorial connection $\nabla^{tr}_{{\langle\Lcal, \Mcal\rangle}}$ on the Deligne pairing $\langle\Lcal, \Mcal\rangle$;
  \item in the case where $\nabla_{\Xcal/S}$ is the fiberwise restriction of the Chern connection for a hermitian structure on $\Lcal$, then
  $\nabla^{tr}_{{\langle\Lcal, \Mcal\rangle}}$ coincides with the connection for Deligne's metric on $\langle\Lcal, \Mcal\rangle$ (and any metric on $\Mcal$). 
\end{enumerate}
\end{theorem}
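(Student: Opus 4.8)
The plan is to treat the three assertions in order, extracting first the formula that any trace connection on the Deligne pairing must obey and recognizing (WR) as the sole obstruction to its existence. Recall that $\langle\Lcal,\Mcal\rangle$ is generated locally by symbols $\langle\ell,m\rangle$, for meromorphic sections $\ell,m$ with disjoint relative divisors, subject to $\langle f\ell,m\rangle=f(\Div m)\,\langle\ell,m\rangle$ and $\langle\ell,gm\rangle=g(\Div\ell)\,\langle\ell,m\rangle$. Writing $\theta_\ell:=\nabla\ell/\ell$ for the logarithmic $1$-form of a compatible extension $\nabla$ of $\nabla_{\Xcal/S}$, I would define
$$\frac{\nabla^{tr}_{\langle\Lcal,\Mcal\rangle}\langle\ell,m\rangle}{\langle\ell,m\rangle}:=\langle\theta_\ell,\Div m\rangle=\sum_{p}\ord_p(m)\,p^\ast\theta_\ell,$$
each point $p$ of $\Div m=\sum_p\ord_p(m)\,p$ being read as a section of $\pi$. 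Only the connection on $\Lcal$ enters, matching the claim in (3) that the answer is insensitive to the metric on $\Mcal$. Compatibility with $\ell\mapsto f\ell$ is automatic, since $\theta_{f\ell}=\theta_\ell+d\log f$ contributes $\langle d\log f,\Div m\rangle=d\log f(\Div m)$, the logarithmic derivative of the transition $f(\Div m)$. Compatibility with $m\mapsto gm$ contributes $\langle\theta_\ell,\Div g\rangle$ and demands
$$\langle\theta_\ell,\Div g\rangle=\langle d\log g,\Div\ell\rangle$$
for all admissible $\ell,g$; this is precisely (WR), the connection-level avatar of the reciprocity $f(\Div g)=g(\Div f)$. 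Thus parts (1) and (2) are two faces of one statement, and (WR) is the only thing to prove.

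For (1), I would fix a local holomorphic frame with fibre coordinate $z$ and base coordinates $w=(w_j)$, and write the connection form of a compatible $\nabla$ as a $(1,0)$-form $\theta=\theta_z\,dz+\sum_j\theta_{w_j}\,dw_j$, where $\theta_z\,dz$ is pinned down by $\nabla_{\Xcal/S}$ and the horizontal coefficients $\theta_{w_j}$ are the remaining freedom. The decisive structural fact is that relative flatness forces $\theta_z$ to be fibrewise holomorphic, which is exactly what enables a residue computation on the fibres. Peeling off the trivial-bundle case --- where $\theta_\ell=d\log\lambda$ and (WR) is classical Weil reciprocity --- reduces (WR) to a linear identity for $\sum_q b_q\,q^\ast\theta$ against principal divisors $\Div g=\sum_q b_q q$, in which the fixed relative part $\theta_z$ contributes through the motion of the divisor points. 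Two extensions satisfying (WR) differ by a horizontal $(1,0)$-form $\alpha=\sum_j\psi_j\,dw_j$ with $\sum_q b_q\,\psi_j(q)=0$ for every principal divisor; since the induced functional on degree-zero divisors descends to a continuous homomorphism out of the compact Jacobian, it vanishes, forcing each $\psi_j$ to be fibrewise constant and hence $\alpha=\pi^\ast\beta$. The rigidification $\sigma^\ast\Lcal\simeq\Ocal_S$ together with $\sigma^\ast\nabla=d$ then fixes $\beta$ uniquely. For existence I would solve the linear fibrewise equation that (WR) imposes on the horizontal part by the Hodge/Green's-function decomposition on the fibres, the potential period obstruction vanishing because $\theta_z$ is holomorphic. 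This analytic construction is genuinely necessary --- a general flat relative connection corresponds to a $\CBbb^\ast$-character, not a unitary one, so it need not come from any metric --- and it is also why the extension is in general non-holomorphic.

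Part (2) is then formal: with the (WR)-extension of (1) the symbol formula descends to $\langle\Lcal,\Mcal\rangle$, functoriality in $\Lcal$ and $\Mcal$ follows from $\theta_{\ell_1\otimes\ell_2}=\theta_{\ell_1}+\theta_{\ell_2}$ and additivity of divisors, and compatibility with base change is the universal validity of (WR). For (3) I would take the Chern connection $\nabla^{Ch}$ of a hermitian metric $h$ on $\Lcal$ whose fibrewise restriction is $\nabla_{\Xcal/S}$; in a holomorphic frame $\theta_\ell=\partial\log\|\ell\|_h^2$, and metrized Weil reciprocity shows $\nabla^{Ch}$ satisfies (WR), so by the uniqueness in (1) it is the extension governing $\nabla^{tr}$. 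It then remains to identify $\langle\partial\log\|\ell\|_h^2,\Div m\rangle$ with the Chern connection of Deligne's metric: differentiating Deligne's expression for $\log\|\langle\ell,m\rangle\|^2$ along $S$, the fibre-integral term is handled by the curvature identity \eqref{eqn:c1} and the divisor term yields the trace, the two assembling to $\nabla^{tr}_{\langle\Lcal,\Mcal\rangle}$ independently of the metric on $\Mcal$.

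The main obstacle is the heart of (1): proving (WR) universally for the canonical extension. Because $\theta_\ell$ is not closed, differentiating the reciprocity along the base produces curvature contributions absent in the classical case, and the crux is to choose the horizontal part so that these cancel --- solvability being supplied precisely by the fibrewise holomorphicity of $\theta_z$ that relative flatness guarantees.
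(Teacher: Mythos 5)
Your reduction of the theorem to (WR) and your treatment of the easier parts track the paper closely: the formula $\nabla^{tr}_{\langle\Lcal,\Mcal\rangle}\langle\ell,m\rangle/\langle\ell,m\rangle=\tr_{\Div m/S}(\nabla\ell/\ell)$, with compatibility in the $\Lcal$-variable automatic and compatibility in the $\Mcal$-variable equivalent to (WR), is exactly Proposition \ref{proposition:Leibniz-WR} and Theorem \ref{theorem:char-trace}; your uniqueness argument (the difference of two extensions is a horizontal $(1,0)$-form whose evaluation functional kills principal divisors, descends to a continuous homomorphism on the compact Jacobian, hence vanishes, and the residual pullback from $S$ is killed by the rigidification) is the paper's vanishing lemma, Proposition \ref{prop:vanishing}, and its corollary; and part (3) via metrized reciprocity is Example \ref{ex:chern-reciprocity} together with Lemma \ref{lem:chern}. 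One caveat even here: to descend your functional to all of $J$ you must evaluate it on divisors parametrized by open subsets of the Jacobian, i.e.\ after base changes $T\to S$ that are smooth but not \'etale over $S$; this is exactly why (WR)/Weil vanishing must be imposed \emph{universally}, a point your sketch uses only tacitly.

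The genuine gap is the existence half of part (1), which is the heart of the theorem and which you defer to ``solve the linear fibrewise equation that (WR) imposes on the horizontal part by the Hodge/Green's-function decomposition on the fibres, the potential period obstruction vanishing because $\theta_z$ is holomorphic.'' As stated this is not an argument, and it mislocates the difficulty. (WR) is not a fibrewise equation: pulling back $\nabla\ell/\ell$ along the sections constituting $\Div f$ couples the unknown horizontal coefficients both to the vertical part contracted with the motion of the divisor points and to base derivatives of the holonomy of $\nabla_{\Xcal/S}$; it is not specified what equation a Green's function is to solve, and the relevant periods do \emph{not} vanish --- they equal $\int_\gamma\nabla_{\GM}\nu=d\log\chi(\gamma)$ and must instead be absorbed by the equivariance of the lift $\tilde\ell$ on the relative universal cover (your ``peeling off the trivial-bundle case'' is likewise unexplained, since $\Lcal$ is not trivial and the flat frame exists only on the cover). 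What the paper actually does (Section \ref{sec:canonical}, Theorem \ref{thm:extension}) is exhibit the extension in closed form,
$$\frac{\nabla\ell}{\ell}(\tilde z,s)=\frac{d\tilde\ell}{\tilde\ell}(\tilde z,s)-\int_{\tilde\sigma(s)}^{\tilde z}\nabla_{\GM}\nu\ ,$$
the path integral being of fibrewise \emph{harmonic} representatives of the Gauss--Manin invariant of $(\Lcal,\nabla_{\Xcal/S})$, and then prove (WR) by differentiating the Griffiths--Harris reciprocity law for the pair $(f,\tilde\ell)$ along the base and comparing with the first reciprocity law in families \eqref{eqn:WR-families}; compatibility with the holomorphic structure is a separate argument (comparison with the Chern connection, fibrewise harmonicity, vanishing along $\sigma$). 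None of this machinery --- the Gauss--Manin invariant, the universal-cover equivariance, the differentiated reciprocity laws --- appears in your sketch, so the existence claim, and with it parts (1) and (2), remains unproved.
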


We shall 
use the term  \emph{trace connection} for the
connections $\nabla^{tr}_{{\langle\Lcal, \Mcal\rangle}}$ that arise from Theorem \ref{thm:main}  (see Definition \ref{def:trace-connection} for a precise definition). We will even see it exists in the absence of rigidificaiton. The extension result makes use of the moduli space of line bundles with flat relative  connections and the infinitesimal deformations of such, which we call \emph{Gauss-Manin invariants} $\nabla_{\GM}\nu$. These are $1$-forms on $S$ with values in the local system $H^1_{dR}(\Xcal/S)$ that are canonically associated to a flat relative  connection $\nabla_{\Xcal/S}$ (see Section \ref{sec:gm}).
The several types of connections and their moduli spaces are discussed in Section \ref{section:connections}. In Section \ref{sec:Deligne} we formalize the notion of Weil reciprocity and trace connection, and we formulate general existence and uniqueness theorems in terms of Poincar\'e bundles. In Section \ref{section:proofs} we attack the proof of Theorem \ref{thm:main}. The main result is Theorem \ref{thm:extension}, where we show that a certain \emph{canonical extension} of $\nabla_{\Xcal/S}$ satisfies all the necessary requirements.
The method is constructive and exploits several reciprocity laws for differential forms. Actually, in degenerate situations, one can see that the extension theorem encapsulates reciprocity laws for differentials of both the first and third kinds. The advantage of our procedure is that it admits a closed expression for the curvature of a trace connection on $\langle\Lcal,\Mcal\rangle$. This is the content of Proposition \ref{prop:curvature-trace}. The formula involves the Gauss-Manin invariant and the derivative of the period map for  the family of curves.

In the symmetric situation where both $\Lcal$ and $\Mcal$ are endowed with flat relative connections, this construction leads to an intersection pairing which generalizes that of Deligne (Definition \ref{def:intersection-connection}).  We have the following

\begin{theorem}[\sc Intersection connection] \label{thm:intersection}
Let  $\Lcal, \Mcal$, be holomorphic line bundles on $\pi:\Xcal\to S$ with flat relative  connections $\nabla^L_{\Xcal/S}$ and $\nabla^M_{\Xcal/S}$.  Then there is a uniquely determined connection $\nabla^{int}_{{\langle\Lcal, \Mcal\rangle}}$ on $\langle\Lcal, \Mcal\rangle$ satisfying:
\begin{enumerate}
\item  $\nabla^{int}_{{\langle\Lcal, \Mcal\rangle}}$  is functorial, and it is symmetric with respect to the isomorphism $$\langle\Lcal, \Mcal\rangle\simeq \langle\Mcal, \Lcal\rangle\ ;$$
\item  the curvature of $\nabla^{int}_{{\langle\Lcal, \Mcal\rangle}}$ is given by 
\begin{equation} \label{eqn:intersection-curvature}
 F_{\nabla^{int}_{\langle\Lcal, \Mcal\rangle}}=\frac{1}{2\pi i}\, \pi_\ast\left( \nabla_{\GM}\nu_L \cup \nabla_{\GM}\nu_M\right)
\end{equation}
where $\nabla_{\GM}\nu_L$ and $\nabla_{\GM}\nu_M$ are the Gauss-Manin invariants of $\Lcal$ and $\Mcal$, respectively, and the cup product is defined in \eqref{eqn:cup};
\item in the case where $\nabla^M_{\Xcal/S}$ is the fiberwise restriction of the Chern connection for a  hermitian structure on $\Mcal$, then $\nabla^{int}_{{\langle\Lcal, \Mcal\rangle}}=\nabla^{tr}_{{\langle\Lcal, \Mcal\rangle}}$;
\item in the case where both $\nabla^L_{\Xcal/S}$ and $\nabla^M_{\Xcal/S}$ are the restrictions of Chern connections for hermitian structures, then $\nabla^{int}_{{\langle\Lcal, \Mcal\rangle}}$ is the Chern connection for Deligne's metric on $\langle\Lcal, \Mcal\rangle$. 
\end{enumerate}
\end{theorem}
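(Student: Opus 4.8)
The plan is to build $\nabla^{int}_{\langle\Lcal,\Mcal\rangle}$ by symmetrizing the trace connection of Theorem~\ref{thm:main}. Since both $\Lcal$ and $\Mcal$ now carry flat relative connections, Theorem~\ref{thm:main} applied to $\nabla^L_{\Xcal/S}$ produces a trace connection $\nabla^{tr}_{\langle\Lcal,\Mcal\rangle}$, while the same theorem applied to $\nabla^M_{\Xcal/S}$ produces a trace connection $\nabla^{tr}_{\langle\Mcal,\Lcal\rangle}$; transporting the latter through the canonical symmetry isomorphism $\langle\Mcal,\Lcal\rangle\simeq\langle\Lcal,\Mcal\rangle$, I would define $\nabla^{int}_{\langle\Lcal,\Mcal\rangle}$ to be their midpoint (the affine average), as in Definition~\ref{def:intersection-connection}. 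Because the difference of two connections on a line bundle is a global $1$-form on $S$ and curvature shifts additively under such a difference, the midpoint has curvature equal to the arithmetic mean $F_{\nabla^{int}}=\tfrac12\big(F_{\nabla^{tr}_{\langle\Lcal,\Mcal\rangle}}+F_{\nabla^{tr}_{\langle\Mcal,\Lcal\rangle}}\big)$ of the two trace curvatures.

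Property~(i) is then immediate: interchanging $\Lcal$ and $\Mcal$ exchanges the two trace connections, so the average is invariant, giving symmetry; functoriality follows from the functoriality of each trace connection (Theorem~\ref{thm:main}(ii)) together with the compatibility of the symmetry isomorphism and of the canonical extension (Theorem~\ref{thm:extension}) with base change.

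For property~(ii) I would insert the curvature formula of Proposition~\ref{prop:curvature-trace} into the average above. This is the crux of the argument. The trace curvature is asymmetric in its two arguments — it is assembled from the Gauss--Manin invariant $\nabla_{\GM}\nu_L$, from data attached only to the holomorphic structure of $\Mcal$, and from the derivative of the period map — so the main work is to re-express the $\Mcal$-contribution through $\nabla_{\GM}\nu_M$ once $\Mcal$ is equipped with its own flat relative connection, and then to check that the period-map terms in the two trace curvatures enter with opposite symmetry and therefore cancel upon averaging. What survives should be exactly $\tfrac{1}{2\pi i}\pi_\ast(\nabla_{\GM}\nu_L\cup\nabla_{\GM}\nu_M)$ of \eqref{eqn:intersection-curvature}, the symmetry of this expression reflecting the antisymmetry of the cup product on $H^1_{dR}(\Xcal/S)$ balanced against the antisymmetry of the wedge on $S$; see \eqref{eqn:cup}. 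The reciprocity laws for differentials of the first and third kinds underlying Theorem~\ref{thm:extension} are what should make this cancellation work, and this is the step I expect to be the main obstacle.

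Finally, for properties~(iii) and~(iv) I would specialize. When $\nabla^M_{\Xcal/S}$ is the fiberwise restriction of a Chern connection, Theorem~\ref{thm:main}(iii) identifies $\nabla^{tr}_{\langle\Mcal,\Lcal\rangle}$ with the Deligne-metric connection; I would then show that in this situation the two trace connections agree — equivalently, that the symmetrizing term vanishes — so that the average collapses to $\nabla^{tr}_{\langle\Lcal,\Mcal\rangle}$, giving~(iii). The cleanest route is to observe that both connections have the same curvature by~(ii), are functorial, and are trivial on the rigidification, and then to appeal to the uniqueness in Theorem~\ref{thm:main}. Property~(iv) is the case where this is used twice: with both connections coming from metrics, (iii) and Theorem~\ref{thm:main}(iii) together identify $\nabla^{int}_{\langle\Lcal,\Mcal\rangle}$ with the Chern connection of Deligne's metric. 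Uniqueness of a connection satisfying (1)--(4) then follows because (2) fixes the curvature up to a closed $1$-form on $S$, while functoriality together with the reduction~(3) on the unitary locus forces that form to vanish.
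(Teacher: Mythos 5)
Your construction has a genuine gap: the intersection connection is \emph{not} the affine average of the two trace connections, and the averaged connection fails properties (ii) and (iii). The paper's intersection connection is defined locally (Theorem \ref{theorem:int-connection}, Definition \ref{def:intersection-connection}) by
\begin{displaymath}
\frac{\nabla^{int}\langle\ell,m\rangle}{\langle\ell,m\rangle}
=\frac{i}{2\pi}\pi_{\ast}\left(\frac{\nabla^{\prime}m}{m}\wedge F_{\nabla}\right)
+\tr_{\Div m/S}\left(\frac{\nabla\ell}{\ell}\right),
\end{displaymath}
i.e.\ it is the trace connection \emph{plus} a fiber-integral correction, and that correction is precisely what averaging cannot produce. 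A clean counterexample is the trivial fibration $\Xcal=X\times M_{dR}(X)$ with $\Lcal=\Mcal$ the universal bundle and universal relative connection. Here the two trace connections have the same curvature, given by \eqref{eqn:curvature-trace} with $\nu_L=\nu_M=\nu$ and vanishing Kodaira--Spencer term (the family is a product), namely
\begin{displaymath}
F_{\nabla^{tr}}=-\frac{1}{\pi}\sum_{i,j=1}^{g}\imag\Omega_{ij}\,\bigl(dt_i\wedge ds_j+ds_i\wedge d\bar{s}_j\bigr),
\end{displaymath}
so your average has this curvature as well. But \eqref{eqn:intersection-curvature} gives $-\frac{2}{\pi}\sum_{i,j}\imag\Omega_{ij}\,dt_i\wedge ds_j$ (this is \eqref{eqn:curv-inter}): the intersection connection is holomorphic here (Corollary \ref{cor:hol-intersection}), with curvature of type $(2,0)$, while your connection carries the non-holomorphic component $ds_i\wedge d\bar{s}_j$. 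The cancellation you hope for ``upon averaging'' thus fails even in a case where the period-map terms are identically zero; the discrepancy comes from the $(\nabla_{\GM}\nu_L)''\cup\overline{(\nabla_{\GM}\nu_M)''}$ terms, which symmetrize rather than cancel.

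Property (iii) fails for the same reason. When $\nabla^M_{\Xcal/S}$ is unitary, the paper proves (Section \ref{sec:relation}) that $\nabla^{int}_{\langle\Lcal,\Mcal\rangle}$ equals the trace connection built from $\nabla^L_{\Xcal/S}$ \emph{alone}; for your average to collapse to this you would need the two trace connections to agree. But the trace connection built from the Chern connection of $\Mcal$ is, by Theorem \ref{thm:main} (iii), the Deligne-metric connection, which ignores $\nabla^L_{\Xcal/S}$ altogether; it agrees with $\nabla^{tr}_{\langle\Lcal,\Mcal\rangle}$ only when $\nabla^L_{\Xcal/S}$ is also unitary (in the universal example one curvature has a $dt_i\wedge du_j$ term, the other $d\bar{s}_i\wedge du_j$, and these coincide exactly on the locus $t=-\bar{s}$). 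The paper's actual route is different and cannot be bypassed: define $\nabla^{int}$ by the displayed formula; verify well-definedness against the Deligne-pairing relations using Poincar\'e--Lelong and the Weil vanishing property (Theorem \ref{theorem:int-connection}); prove symmetry by a computation with currents (Proposition \ref{prop:sym-int-conn}); compute the curvature as $\frac{i}{2\pi}\pi_{\ast}(F_{\nabla}\wedge F_{\nabla^{\prime}})$ (Proposition \ref{prop:curv-inter}) and convert it to \eqref{eqn:intersection-curvature} using the canonical extension; then deduce (iii), (iv), and the independence of auxiliary choices from Propositions \ref{prop:non-rig-ext} and \ref{prop:indep-int-conn}.
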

We will call $\nabla^{int}_{{\langle\Lcal, \Mcal\rangle}}$ the \emph{intersection connection} of $\nabla^L_{\Xcal/S}$ and $\nabla^M_{\Xcal/S}$. 
Using the Chern-Weil forms for the extensions of $\nabla^L_{\Xcal/S}$ and $\nabla^M_{\Xcal/S}$ obtained in Theorem \ref{thm:main}, the intersection connection realizes \eqref{eqn:c1}  as an equality of forms. Intersection connections are first introduced in Section \ref{sec:Deligne}, as an abutment of the theory of Weil reciprocity and trace connections.

The intersection connection on the Deligne pairing may therefore be regarded as an extension of Deligne's construction to encompass all flat relative connections and not just unitary ones.  This is a nontrivial enlargement of the theory, and in Section \ref{section:examples} we illustrate this point 
in the case of a trivial family $\Xcal=X\times S$, where  the definition of the connections on $\langle\Lcal, \Mcal\rangle$ described in Theorems \ref{thm:main} and \ref{thm:intersection} can be made very explicit. 
Given a holomorphic relative connection on $\Lcal\to X\times S$ (see Definition \ref{def:rel-hol}),  there is a classifying map $S\to \Pic^{0}(X)$, and $\det R\pi_\ast(\Lcal)$ is the pull-back to $S$ of the corresponding determinant of cohomology.  Viewing $\Pic^{0}(X)$ as the character variety of $\U(1)$-representations of $\pi_1(X)$, the determinant of cohomology carries a natural Quillen metric and associated Chern connection (in this case called the Quillen connection).  
If we choose a theta characteristic $\kappa$ on $X$,  $\kappa^{\otimes 2}=\omega_X$,  and consider instead the map $S\to \Pic^{g-1}(X)$ obtained from the family  $\Lcal\otimes \kappa\to \Xcal$, then $\det R\pi_\ast(\Lcal\otimes \kappa)$ is the pull back of $\Ocal(-\Theta)$.
 Using a complex valued holomorphic version of the  analytic torsion of Ray-Singer, $T(\chi\otimes\kappa)$, we show that the tensor product of the
determinants of cohomology for $X$ and $\overline X$  admits a canonical holomorphic connection.
On the other hand, in this situation the intersection connection on $\langle\Lcal, \Lcal\rangle$ is also holomorphic.
   In Theorem \ref{thm:flat} we show that the Deligne isomorphism, which relates these two bundles, is flat with respect to these connections.

Finally, again in the case of a trivial fibration, we point out a link with some of the ideas in the recent paper \cite{Hitchin:13}.
The space $M_{dR}(X)$ of flat rank 1 connections on $X$ has a hyperk\"ahler structure. Its twistor space $\lambda: Z\to\PBbb^1$ carries a holomorphic line bundle $\Lcal_Z$, which may be interpreted as a determinant of cohomology via Deligne's characterization of $Z$ as the space of $\lambda$-connections.  On $\Lcal_Z$ there is a meromorphic connection with simple poles along the divisor at $\lambda^{-1}(0)$ and $\lambda^{-1}(\infty)$, and whose curvature gives a holomorphic symplectic form on the other fibers. In Theorem \ref{thm:hitchin}, we show how this connection is obtained from the intersection connection on the Deligne pairing of the universal bundle on $M_{dR}(X)$.  Similar methods 
will potentially produce a higher rank version of this result; this will be the object of future research.

We end this introduction by noting that considerations similar to the central theme of this paper have been discussed previously by various authors. We mention here the work of Bloch-Esnault \cite{BlochEsnault:00} on the determinant of deRham cohomology and Gauss-Manin connections in the algebraic setting, and of Beilinson-Schechtman \cite{BeilinsonSchechtman:88}. Complex valued extensions of analytic torsion do not seem to play a role in these papers.  Gillet-Soul\'e \cite{GilletSoule:89} also initiated a study of Arakelov geometry for bundles with holomorphic connections, but left as an open question the possibility of a Riemann-Roch type theorem.

\medskip
\emph{Acknowledgments.}  The authors would like to thank Ignasi Mundet i Riera for an important comment concerning rigidification,  Dennis Eriksson for his valuable suggestions concerning connections on Deligne pairings, and
Scott Wolpert for pointing out reference \cite{Fay:81}.  R.W. is also grateful for the generous support of the Centre National de la Recherche Scientifique of France, and for the warm hospitality of the faculty and staff of the Institut de Math\'ematiques de Jussieu, where a portion of this work was completed.

\section{Relative Connections and Deligne Pairings}\label{section:connections}

\subsection{Preliminary definitions}  
Let $\pi:\Xcal\to S$ be a submersion of smooth manifolds, whose fibers are compact and two dimensional. We suppose that the relative complexified tangent bundle $T_{\pi,\mathbb{C}}$ comes equipped with a relative complex structure $J:T_{\pi,\mathbb{C}}\rightarrow T_{\pi,\mathbb{C}}$, so that the fibers of $\pi$ have a structure of compact Riemann surfaces. It  then makes sense to introduce sheaves of $(p,q)$ relative differential forms $\Acal^{p,q}_{\Xcal/S}$. There is a relative Dolbeault operator $\overline{\partial}:\Acal^{0}_{\Xcal}\rightarrow\Acal^{0,1}_{\Xcal/S}$, which is just the projection of the exterior differential to $\Acal^{0,1}_{\Xcal/S}$.  The case most relevant in this paper and to which we shall soon restrict ourselves is, of course,  when $\pi$ is a holomorphic map of complex manifolds. Then, the relative Dolbeault operator is the projection to relative forms of the Dolbeault operator on $\Xcal$. 

Let $L\to \Xcal$ be a $\mathcal{C}^\infty$ line bundle. We may consider several additional structures on $L$. The first one is relative holomorphicity.\begin{definition}
A \emph{relative holomorphic structure} on $L$ is the choice of a \emph{relative Dolbeault operator on $L$}: a $\mathbb{C}$-linear map $\overline{\partial}_{L}:\Acal^0_\Xcal(L)\rightarrow\Acal^{0,1}_{\Xcal/S}(L)$ that satisfies the Leibniz rule with respect to the relative $\overline{\partial}$-operator.
 We will write $\Lcal$ for a pair $(L,\overline{\partial}_{L})$, and call it a \emph{relative holomorphic line bundle}. 
\end{definition}
\begin{remark}
In the holomorphic (or algebraic) category we shall always assume the relative Dolbeault operator is the fiberwise restriction of a global integrable operator $\overline{\partial}_{L}:\Acal^0_\Xcal(L)\rightarrow\Acal^{0,1}_{\Xcal}(L)$, so that $\Lcal\to\Xcal$ is a holomorphic bundle. In order to stress the distinction, we will sometimes refer to a \emph{global} holomorphic line bundle on $\Xcal$.
\end{remark}
The second kinds of structure to be considered are various   notions of connections.
\begin{definition} \label{def:rel-hol}
\hspace{2em}
\begin{enumerate}
\item
A \emph{relative connection on $L\to\Xcal$} is a 
$\CBbb$-linear map 
$\nabla_{\Xcal/S} : \Acal^0_\Xcal(L) \to \Acal^1_{\Xcal/S}(L)$
 satisfying the Leibniz rule with respect to the relative exterior differential $d:\Acal^{0}_{\Xcal}\rightarrow\Acal^{1}_{\Xcal/S}$.
 \item Given the structure of a relative holomorphic line bundle $\Lcal=(L,\overline{\partial}_{L})$, a \emph{relative connection on} $\Lcal$ is a relative connection on the underlying $\mathcal{C}^\infty$ bundle $L$ that is compatible with $\Lcal$, in the sense that the vertical $(0,1)$ part satisfies: $(\nabla_{\Xcal/S})''=\dbar_L$ (relative operator). 
 \item A relative connection on $L\to\Xcal$ is called \emph{flat} if the induced connection on $L\bigr|_{\Xcal_s}$ is flat for each $s\in S$. 
 \item If $\pi$ is a holomorphic map of complex manifolds and $\Lcal\to\Xcal$ is a global holomorphic line bundle on $\Xcal$, a relative connection on $\Lcal$ is called \emph{holomorphic} if it induces a map $\nabla_{\Xcal/S}: \Lcal\to \Lcal\otimes \Omega_{\Xcal/S}^{1}$. Such connections are automatically flat.
 \item Finally, if $\nabla_{\Xcal/S}$ is a relative connection on a global holomorphic line bundle $\Lcal\to \Xcal$, then a smooth connection $\nabla$ on $L$ is called \emph{compatible} with  $\nabla_{\Xcal/S}$ if $\nabla^{0,1}=\dbar_L$ (global operator) and the projection to relative forms makes the following diagram commute:
$$
\xymatrix{
L \ar[r]^{\nabla\quad}   \ar@/_1pc/@{>}[rr]_{\nabla_{\Xcal/S}} &\Acal^{1}_\Xcal(L) \ar[r] &  \Acal^{1}_{\Xcal/S}(L) 
}
$$
\end{enumerate}
\end{definition}

\begin{remark}  \label{rem:holomorphic}
\hspace{2em}
\begin{enumerate}
\item If $\Lcal$ is a relative holomorphic line bundle and $\nabla_{\Xcal/S}$ is a flat relative connection, then its restrictions to fibers are holomorphic connections. This is important to keep in mind.
\item The  important special case (iv)  above  occurs, for example, when $\nabla_{\Xcal/S}$ is  the fiberwise restriction of a holomorphic connection 
on $\Lcal$. This is perhaps the most natural situation from the algebraic point of view.  
However, the more general case of flat relative connections  considered in this paper is far more flexible and is necessary for applications, as the next example illustrates (see also Remark \ref{rem:betti} below).
\end{enumerate}
\end{remark}

\begin{example} \label{ex:chern}
Suppose $\pi$, $\Xcal$, $S$ holomorphic and $\Lcal\to \Xcal$ is a global holomorphic line bundle with relative degree zero.  Then there is  a smooth hermitian metric on $\Lcal$ such that the restriction of the Chern connection $\nabla_{ch}$ to each fiber is flat, and for a rigidified bundle (i.e. the choice of a trivialization along a given section) this metric and connection can be uniquely normalized (by imposing triviality along the section). Abusing terminology slightly, we shall refer to the connection $\nabla_{ch}$ as  \emph{the Chern connection of} $\Lcal\to \Xcal$.
The fiberwise restriction of $\nabla_{ch}$ then gives a flat relative connection $\nabla_{\Xcal/S}$.  Note that outside of some trivial situations  it is essentially never the case that $\nabla_{\Xcal/S}$ is holomorphic in the sense of Definition \ref{def:rel-hol} (iv).
 \end{example}
\subsection{Gauss-Manin invariant} \label{sec:gm}
Let $\pi:\Xcal\rightarrow S$ be as in the preceding discussion, and suppose it comes equipped with a fixed section $\sigma:S\rightarrow\Xcal$. The problem of extending relative connections to global connections requires infinitesimal deformations of line bundles with relative connections. In our approach, it is convenient to introduce a moduli point of view. We set $M_{dR}(\Xcal/S)=\{\text{moduli of flat relative connections on $\Xcal$}\}$ on a fixed $\mathcal{C}^{\infty}$ bundle $L\to \Xcal$ that is topologically trivial on the fibers.  
Consider the functor of points:
$
\{T\to S\}\mapsto M_{dR}(\Xcal_T/T)$,
where $T\rightarrow S$ is a morphism of smooth manifolds and $\Xcal_T$ is the base change of $\Xcal$ to $T$. This functor can be represented by a smooth fibration in Lie groups over $S$. To describe it, let us consider the relative deRham cohomology $H^{1}_{dR}(\Xcal/S)$. This is a complex local system on $S$, whose total space may be regarded as a $\mathcal{C}^{\infty}$ complex vector bundle. The local system $R^{1}\pi_{\ast}(2\pi i\underline{\mathbb{Z}})\to S$ is contained and is discrete in $H^{1}_{dR}(\Xcal/S)$. We can thus form the quotient:
\begin{displaymath}
	H^{1}_{dR}(\Xcal/S)/R^{1}\pi_{\ast}(2\pi i\underline{\mathbb{Z}})\rightarrow S.
\end{displaymath}
This space represents $T\mapsto M_{dR}(\Xcal_{T}/T)$. Therefore, given a pair $(\Lcal,\nabla_{\Xcal/S})$ (or more generally $(\Lcal,\nabla_{\Xcal_{T}/T})$) formed by a relative holomorphic line bundle together with a flat relative  connection, there is a classifying $\mathcal{C}^{\infty}$ morphism
\begin{displaymath}
	\nu:S\rightarrow H^{1}_{dR}(\Xcal/S)/R^{1}\pi_{\ast}(2\pi i\underline{\mathbb{Z}}).
\end{displaymath}
Locally on $S$, this map lifts to $\widetilde{\nu}:U\rightarrow H^{1}_{dR}(\Xcal/S)$. For future reference (e.g.\ Proposition \ref{prop:curvature-trace}), we note that $\real \tilde\nu$ is well-defined independent of the lift.
Applying the Gauss-Manin connection gives an element 
$$
\nabla_{\GM}\tilde\nu\in H^1_{dR}(\Xcal/S)\otimes \Acal^1_U.
$$
Now since $\nabla_{\GM}R^1\pi_\ast(2\pi i\underline{\ZBbb})=0$, it follows that the above expression is actually well-defined globally,  independent of the choice of  lift (and we therefore henceforth omit the tilde from the notation).
We define the \emph{Gauss-Manin invariant} of $(\Lcal,\nabla_{\Xcal/S})$ by
\begin{equation} \label{eqn:GM}
	\nabla_{\GM}\nu\in H^1_{dR}(\Xcal/S)\otimes \Acal^1_S.
\end{equation}
In case $\pi:\Xcal\rightarrow S$ is a proper morphism of smooth complex algebraic varieties, then the moduli space of flat connections on $L$ can be related to the so called universal vectorial extension \cite{MazurMessing:74}. Set
$$
J := J(\Xcal/S)=\text{Pic}^0(\Xcal/S).
$$
The universal vectorial extension $E(\Xcal/S)$ is a smooth algebraic group variety over $S$, sitting in an exact sequence of algebraic group varieties
\begin{equation} \label{eqn:universal-extension}
0\lra V(\pi_\ast\Omega^1_{\Xcal/S})\lra E(\Xcal/S)\lra J(\Xcal/S)\lra 0.
\end{equation}
Here $V(\pi_\ast\Omega^1_{\Xcal/S})$ denotes the vector bundle associated to $\pi_\ast\Omega^1_{\Xcal/S}$. In the algebraic (or holomorphic) category, $E(\Xcal/S)$ is universal for the property of being an extension of the relative Jacobian by a vector bundle, and is a fine moduli space for line bundles with relative holomorphic connections (up to isomorphism). Restricted to the $\mathcal{C}^{\infty}$ category, $E(\Xcal/S)$ also represents $T\mapsto M_{dR}(\Xcal_{T}/T)$. Therefore, even if $E(\Xcal/S)$ is actually a smooth complex algebraic variety in this case, a relative holomorphic line bundle with flat connection $(\Lcal,\nabla_{\Xcal/S})$ corresponds to a $\mathcal{C}^{\infty}$ map
\begin{displaymath}
	\nu:S\rightarrow E(\Xcal/S).
\end{displaymath}
The connection $\nabla_{\Xcal/S}$ is holomorphic exactly when this classifying map is holomorphic. In this case, the Gauss-Manin invariant as defined above is an element of:
\begin{displaymath}
	\nabla_{\GM}\nu\in H^{1}_{dR}(\Xcal/S)\otimes\Omega^{1}_{S}.
\end{displaymath}
We mention an intermediate condition that is also natural:
\begin{definition} \label{def:type}
A flat relative connection will be called  \emph{of type $(1,0)$} if $\nabla_{\GM}\nu\in H^1_{dR}(\Xcal/S)\otimes \Acal^{1,0}_S$.
\end{definition}

It will be useful to recall the following (cf.\ \cite{Voisin:07}). A local expression for $\nabla_{\GM}\nu$ is computed as follows:  let $s_i$ be local coordinates on $U\subset S$ and $\widetilde{\partial_{s_i}}$ a lifting to $\Xcal_{U}$ of the vector field $\partial/\partial s_i$.  Suppose $\nabla$ is a connection with curvature $F_\nabla$ such that the restriction of $\nabla$ to the fibers in $U$ coincides with the relative connection $\nabla_{\Xcal/S}$.  Then 
\begin{equation} \label{eqn:GM-expression}
\nabla_{\GM}\nu=\sum_i\left[ \Int_{\widetilde{\partial_{s_i}}}(F_\nabla)\bigr|_{\rm fiber}\right]\otimes ds_i\in H^1_{dR}(\Xcal/S)\otimes \Acal^1_U.
\end{equation}
This formula is an infinitesimal version of Stokes theorem. With this formula in hand, one easily checks  that the Gauss-Manin invariant is compatible with base change. Let $\varphi:T\rightarrow S$ be a morphism of manifolds. Then there is a natural pull-back map
\begin{displaymath}
	\varphi^{\ast}:H^{1}_{dR}(\Xcal/S)\otimes\Acal^{1}_{S}\rightarrow H^{1}_{dR}(\Xcal_{T}/T)\otimes\Acal^{1}_{T}.
\end{displaymath}
Under this map, we have
\begin{equation}\label{eqn:GM-pullback}
	\varphi^{\ast}(\nabla_{\GM}\nu)=\nabla_{\GM}(\varphi^{\ast}\nu),
\end{equation}
where $\varphi^{\ast}\nu$ corresponds to the pull-back of $(\Lcal,\nabla_{\Xcal/S})$ to $\Xcal_{T}$.

Finally, we introduce the following notation.  Let 
\begin{align}
(\nabla_{GM}\nu)'&=\Pi' \nabla_{GM}\nu\in H_{dR}^{1,0}(\Xcal/S)\otimes \Acal_S^1 \label{eqn:nu'}\\
(\nabla_{GM}\nu)''&=\Pi'' \nabla_{GM}\nu\in H_{dR}^{0,1}(\Xcal/S)\otimes \Acal_S^1 \label{eqn:nu''}
\end{align}
where $\Pi'$, $\Pi''$ are the projections onto  the $(1,0)$ and $(0,1)$ parts  of $\nabla_{GM}\nu$ under the relative Hodge decomposition of $\mathcal{C}^{\infty}$ vector bundles
\begin{displaymath}
	H^{1}_{dR}(\Xcal/S)=H^{1,0}(\Xcal/S)\oplus H^{0,1}(\Xcal/S)\ .
\end{displaymath}

\subsection{Deligne pairings, norm and trace}
Henceforth, we suppose that $\pi:\Xcal\rightarrow S$ is a smooth proper morphism of smooth quasi-projective complex varieties, with connected fibers of relative dimension 1. Let $\Lcal,\Mcal\to \Xcal$ be algebraic line bundles. The \emph{Deligne pairing} $\langle\Lcal,\Mcal\rangle\to S$ is a line bundle defined as follows. As an $\Ocal_S$-module, it can be described locally for the Zariski or \'etale topologies on $S$ (at our convenience), in terms of generators and relations. In this description, we may thus localize $S$ for any of these topologies, without any further comment:
  \begin{itemize}
  \item Generators:  local generators  of $\langle\Lcal,\Mcal\rangle\to S$ are given by symbols $\langle\ell, m\rangle$ where $\ell$, $m$ are rational sections of $\Lcal$, $\Mcal$, respectively,  with disjoint divisors that are finite and flat over $S$. We say that $\ell$ and $m$ are in general position.
  \item Relations: for $f\in \CBbb(\Xcal)^\times$ and rational sections $\ell$, $m$, such that $f\ell$, $m$ and $\ell$, $m$ are in general position, 
  \begin{equation} \label{eqn:relations}
  \langle f\ell, m\rangle=N_{\Div m/S}(f)\langle \ell, m\rangle
  \end{equation}
   and similarly for $\langle \ell, fm\rangle$. Here $N_{\Div m/S}: \Ocal_{\Div m}\to \Ocal_S$ is the norm morphism.
  \end{itemize}
The Deligne pairing has a series of properties (bi-additivity, compatibility with base change, cohomological construction \`a la Koszul, etc.) that we will not recall here; instead, we refer to  \cite{Elkik:89}  for a careful and general discussion. 

\begin{remark}
There is a holomorphic variant of Deligne's pairing in the analytic category, defined analogously, which we denote temporarily by $\langle\cdot,\cdot\rangle^{\an}$. If ``$\an$'' denotes as well the analytification functor from algebraic coherent sheaves to analytic coherent sheaves, there is a canonical isomorphism, compatible with base change,
\begin{displaymath}
	\langle\Lcal,\Mcal\rangle^{\an}\isorightarrow\langle\Lcal^{\an},\Mcal^{\an}\rangle^{\an}.
\end{displaymath}
Actually, there is no real gain to working in the analytic as opposed to the algebraic category, since we assume our varieties to be quasi-projective. Indeed, the relative Picard scheme of degree $d$ line bundles $\Pic^{d}(X/S)$ is quasi-projective as well. By use of a projective compactification $\overline{S}$ of $S$ and $\overline{P}$ of $\Pic^{d}(X/S)$ and Chow's lemma, we see that holomorphic line bundles on $\Xcal$ of relative degree $d$ are algebraizable. For instance, if $\Lcal$ is holomorphic on $\Xcal$, after possibly replacing $S$ by a connected component, it corresponds to a graph $\Gamma$ in $S^{\an}\times\Pic^{d}(\Xcal/S)^{\an}$. By taking the Zariski closure in $\overline{S}^{\an}\times \overline{P}^{\an}$, we see that $\Gamma$ is an algebraic subvariety of $S^{\an}\times\Pic^{d}(X/S)^{\an}$, and then the projection isomorphism $\Gamma\to S^{\an}$ is necessarily algebraizable. Therefore, the classifying morphism of $\Lcal$, $S^{\an}\to\Pic^{d}(X/S)^{\an}$, is algebraizable.
For the rest of the paper we shall  interchangeably  speak of algebraic or holomorphic line bundles on $\Xcal$ (or simply line bundles). Similarly,  we suppress the index ``$\an$" from the notation.
\end{remark}

The following  is undoubtedly well-known, but for 
 lack of a precise reference we provide the statement and proof.
\begin{lemma}
Let $\pi:\Xcal\rightarrow S$, $\Lcal$, $\Mcal$ be as above. Locally Zariski over $S$, the Deligne pairing $\langle\Lcal,\Mcal\rangle$ is generated by symbols $\langle\ell,m\rangle$, with rational sections $\ell,m$ whose divisors are disjoint, finite and \'etale over $S$. In addition, if $\sigma:S\rightarrow\Xcal$ is a given section, one can suppose that $\Div\ell$ and $\Div m$ avoid $\sigma$. 
\end{lemma}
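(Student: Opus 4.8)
The plan is to produce the required rational sections by twisting $\Lcal$ and $\Mcal$ into relatively very ample bundles, applying a Bertini argument on the generic fibre, and then spreading the resulting good behaviour out to a Zariski neighbourhood. Since $S$ is smooth, after localizing we may assume it connected, hence irreducible, with generic point $\eta$ and function field $\CBbb(S)$. First I would fix a relatively very ample line bundle $\Ocal_\Xcal(1)$ on $\Xcal/S$ and choose $N\gg 0$ so that $\Lcal(N):=\Lcal\otimes\Ocal_\Xcal(N)$, $\Mcal(N)$, and $\Ocal_\Xcal(N)$ are all relatively very ample with locally free direct images whose formation commutes with base change (relative Serre vanishing together with cohomology and base change, after shrinking $S$). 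A rational section of $\Lcal$ can then be written as a ratio $\ell=s/a$ with $s$ a regular section of $\Lcal(N)$ and $a$ a regular section of $\Ocal_\Xcal(N)$, so that $\Div\ell=\Div s-\Div a$; similarly $m=t/b$ for $\Mcal$. Thus it suffices to produce regular sections $s,a,t,b$ of these very ample bundles whose divisors are finite étale over $S$, pairwise disjoint, and disjoint from $\sigma(S)$.

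Next I would work over $\eta$. The fibre $\Xcal_\eta$ is a smooth projective curve over the infinite field $\CBbb(S)$ of characteristic zero, and each bundle above restricts to a very ample bundle on it. By the classical Bertini theorem a general member of such a linear system is a reduced $0$-cycle, i.e.\ a sum of distinct points; moreover, since a very ample system is base-point free and separates points, one may additionally require the divisor to avoid any prescribed finite set of closed points. Choosing the sections successively I can thus arrange that the divisors of $s_\eta, a_\eta, t_\eta, b_\eta$ are reduced, pairwise disjoint, and disjoint from $\sigma(\eta)$. Each of these extends to a regular section over some Zariski open $U\subseteq S$, since sections over $\eta$ come from sections over an open.

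It then remains to spread the generic picture out. After shrinking $U$ I may assume each section is nonzero on every fibre over $U$, so its divisor contains no fibre component and is therefore quasi-finite and, being closed in the proper $\Xcal_U\to U$, finite over $U$. Such a divisor is a Cartier divisor in the smooth, hence Cohen--Macaulay, variety $\Xcal_U$, equidimensional over the regular base $U$, so miracle flatness gives flatness, and it is finite flat over $U$. The étale (equivalently, unramified) locus of a finite flat morphism is open and contains $\eta$ by our choice of reduced generic fibre; likewise pairwise disjointness and disjointness from $\sigma(S)$ are open conditions holding at $\eta$. Shrinking $U$ finitely many times so that all these conditions hold everywhere, I obtain sections whose divisors are finite étale over $U$, pairwise disjoint, and avoid $\sigma$. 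Setting $\ell=s/a$ and $m=t/b$ then yields rational sections of $\Lcal$ and $\Mcal$ with $\Div\ell,\Div m$ finite étale over $U$, disjoint, and disjoint from $\sigma$, and $\langle\ell,m\rangle$ is the desired local generator of $\langle\Lcal,\Mcal\rangle$. The main obstacle is precisely the passage from the generic fibre to a neighbourhood: one must confirm that ``reduced fibre'' propagates, which is the openness of the étale locus of $\Div s\to S$, and that finiteness and flatness hold, for which horizontality of the divisor together with miracle flatness are the key inputs.
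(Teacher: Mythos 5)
Your proposal has the right ingredients (twisting into very ample bundles, Bertini, spreading out via openness of finiteness/flatness/\'etaleness), but as written it proves a strictly weaker statement than the lemma. By running Bertini on the generic fibre $\Xcal_\eta$ and then spreading out, you obtain a \emph{single dense open} $U\subseteq S$ over which good generators exist. The lemma, however, asserts that the pairing is generated by such symbols \emph{Zariski-locally over all of $S$}: every point $s\in S$ must have a neighborhood carrying a generator $\langle\ell,m\rangle$ with finite \'etale divisors avoiding $\sigma$. Your argument says nothing about points of $S\setminus U$, and the generic-point construction genuinely cannot reach them: the sections you chose at $\eta$ may degenerate exactly over that closed complement (divisors acquiring fibral components, becoming ramified, or hitting $\sigma$), and iterating your argument on $S\setminus U$ does not help, since sections constructed over the closed subvariety $S\setminus U$ need not extend to a neighborhood in $S$. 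This matters for the paper's purpose: the lemma is used to define connections on $\langle\Lcal,\Mcal\rangle$ by traces of differential forms, and this requires generators of the stated kind near \emph{every} point of $S$, not just on a dense open.

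The repair is close to what you already set up, and it is essentially what the paper does. Fix an arbitrary point $s\in S$ and run the construction centered at $s$ rather than at $\eta$: apply Bertini to the very ample systems restricted to the fibre $\Xcal_s$ (a smooth projective curve over $\CBbb$) to choose sections on $\Xcal_s$ with reduced, pairwise disjoint divisors avoiding $\sigma(s)$; then use the cohomology-and-base-change statement you already invoked ($\pi_\ast$ of the twisted bundles locally free, formation commuting with base change) to lift these to honest sections over a neighborhood of $s$; finally your openness arguments (no fibral components, properness plus quasi-finiteness gives finiteness, miracle flatness, openness of the \'etale locus and of disjointness) apply verbatim to produce a neighborhood $V\ni s$ with the desired generator. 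For comparison, the paper instead compactifies the family via Hironaka to $\tilde\pi:\widetilde\Xcal\to\widetilde S$, reduces to very ample $\widetilde\Lcal,\widetilde\Mcal$ by bi-additivity of the pairing (the analogue of your twisting trick), and applies Bertini on the \emph{total space} $\widetilde\Xcal$ to find sections whose divisors are smooth and meet the chosen fibre $\Xcal_s$ transversally away from $\sigma(s)$; smoothness and finiteness of $\Div\ell\to\widetilde S$ at $s$ then give the \'etale neighborhood. Either route works; the essential point your proof is missing is that the construction must be anchored at an arbitrary $s\in S$, not at the generic point.
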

\begin{proof}
It is enough to prove the first statement in the presence of a section $\sigma$ (base change of rational sections defined over $S$). The lemma is an elaboration of Bertini's theorem. After taking the closure of the graph of $\pi$ in a suitable projective space and desingularizing by Hironaka's theorem, we can assume that $\pi$ extends to a morphism of smooth and projective algebraic varieties $\tilde\pi:\widetilde{\Xcal}\rightarrow\widetilde{S}$. By this we mean that $S\subset\widetilde{S}$ is a dense open Zariski subset, and $\pi$ is the restriction (on the base) of $\tilde\pi$ to $S$. We can also assume that $\Lcal$ and $\Mcal$ extend to line bundles on $\widetilde{\Xcal}$, denoted $\widetilde{\Lcal}$ and $\widetilde{\Mcal}$. By the projectivity of $\widetilde{\Xcal}$ and the bi-additivity of the Deligne pairing, we can thus take $\widetilde{\Lcal}$ and $\widetilde{\Mcal}$ to be very ample line bundles. Fix a fiber $\Xcal_{s}$ of $\pi$, for $s\in S$. Hence $\Xcal_{s}\hookrightarrow\widetilde{\Xcal}$ is a closed immersion of smooth varieties. By Bertini's theorem, we can find a global section $\ell$ of $\widetilde{\Lcal}$ whose divisor in $\widetilde{\Xcal}$ is smooth, irreducible and intersects $\Xcal_{s}$ transversally, in a finite number of points $F$. We can also assume it avoids $\sigma(s)$. Notice that by the choice of the section $\ell$, the map $\Div\ell\rightarrow\widetilde{S}$ is smooth and finite at $s$. Consequently, there is an open Zariski neighborhood of $s$, say $V\subset S$, such that $(\Div\ell){\mid_V}$ is finite \'etale over $V$ and disjoint with $\sigma(V)$. Because $F$ is a finite set of points, we can also find a global section $m$ of $\widetilde{\Mcal}$, with smooth divisor $\Div m$, intersecting $\Xcal_{s}$ transversally and avoiding $F\cup \lbrace\sigma(s)\rbrace$. After possibly restricting $V$, we can assume that $(\Div m){\mid_V}$ is also finite \'etale over $V$ and is disjoint with $(\Div\ell){\mid_V}\cup\sigma(V)$. 
\end{proof}

The relevance of the lemma will be apparent later when we discuss connections on Deligne pairings. While the defining relations in the Deligne pairing make use of the norm morphism of rational functions, the construction of connections will require traces of differential forms. This is possible when our divisors are finite \'etale over the base: for a differential form $\omega$ defined on an open neighborhood of an irreducible divisor $D\hookrightarrow \Xcal$ that is finite \'etale on $S$, the trace $\tr_{D/S}(\omega)$ is the map induced by inverting the map $\pi^\ast:\Acal_S^i\to \Acal_D^i$ (which is possible because $D\to S$ is finite \'etale). The trace is extended by linearity to Weil divisors whose irreducible components are finite \'etale over the base. The following is then clear:
\begin{lemma} \label{lem:trace-norm}
If $D$ is a Weil divisor in $\Xcal$ whose irreducible components are finite \'etale over $S$, then
$
d\log N_{D/S}(f)=\tr_{D/S}(d\log f)
$. 
\end{lemma}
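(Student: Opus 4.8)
The plan is to reduce the identity to a purely local computation over $S$, exploiting that both the norm and the trace are computed sheet-by-sheet once the covering $D\to S$ is split. First I would note that both sides are additive in $D$: the norm is multiplicative under disjoint unions of divisors, so $d\log N_{D/S}(f)$ is additive, while $\tr_{D/S}$ is additive by its very definition (extension by linearity to divisors with finite \'etale components). Hence it suffices to treat a single irreducible component, i.e.\ $D\to S$ finite \'etale of some degree $n$. Since the asserted equality is an equality of smooth $1$-forms on $S$, it may be checked on any open cover; and because $D\to S$ is finite \'etale, it is an $n$-sheeted covering in the analytic topology. Thus I would localize on $S$ to a small open $U$ over which $D\bigr|_U\cong\bigsqcup_{j=1}^n U$ splits into sheets $D_j$, each mapping isomorphically to $U$ under $\pi$, with inverse sections $\sigma_j:U\to D$ satisfying $\pi\circ\sigma_j=\id_U$.

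Over such a $U$ the two sides acquire transparent descriptions. For a function $f$ that is regular and nonvanishing along $D$, the norm $N_{D/S}(f)$ is the determinant of multiplication by $f$ on $\pi_\ast\Ocal_D$; under the splitting this module is $\bigoplus_j\Ocal_U$ and multiplication by $f$ is diagonal with entries $\sigma_j^\ast f$, whence $N_{D/S}(f)=\prod_{j=1}^n\sigma_j^\ast f$. On the other hand, on each sheet $D_j$ the restriction of $\pi$ is an isomorphism whose inverse is $\sigma_j$, so inverting $\pi^\ast$ sheet-by-sheet and summing gives $\tr_{D/S}(\omega)=\sum_{j=1}^n\sigma_j^\ast\omega$ for a form $\omega$ near $D$. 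The computation then closes at once, using that pullback commutes with $d$ and with division, so that $d\log$ commutes with each $\sigma_j^\ast$:
$$
d\log N_{D/S}(f)=\sum_{j=1}^n d\log\bigl(\sigma_j^\ast f\bigr)=\sum_{j=1}^n\sigma_j^\ast\bigl(d\log f\bigr)=\tr_{D/S}\bigl(d\log f\bigr).
$$
As this holds on each member of an open cover of $S$, it holds globally.

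I do not expect a genuine obstacle here; the entire content lies in correctly matching the two local models. The point requiring the most care is the hypothesis on $f$: for $d\log f$ to define a smooth form on a neighborhood of $D$, and for $N_{D/S}(f)$ to be a unit on $S$ so that $d\log N_{D/S}(f)$ is meaningful, one needs $f$ to be regular and nonvanishing along $D$ — which is exactly the situation relevant to the construction of connections on Deligne pairings. The remaining bookkeeping, namely that the determinant-of-multiplication description of $N_{D/S}$ and the inverse-of-$\pi^\ast$ description of $\tr_{D/S}$ are both literally evaluated sheet-by-sheet, and that passing to the analytic splitting is harmless because both operations are compatible with this localization, is routine.
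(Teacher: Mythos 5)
Your proof is correct. The paper gives no proof at all---after defining $\tr_{D/S}$ by inverting $\pi^{\ast}$ on each component it simply declares the lemma ``clear''---and your argument (reduction by additivity to an irreducible component, analytic-local splitting of the \'etale cover into sheets, norm as product and trace as sum of pullbacks along the local sections, then commuting $d\log$ with $\sigma_j^{\ast}$) is exactly the routine verification the paper leaves implicit.
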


\subsection{Metrics and connections}\label{subsection:metrics-connections}
We continue with the previous notation.  Suppose now that $\Lcal$, $\Mcal$ are endowed with smooth hermitian metrics $h,k$, respectively. For both we shall denote the associated norms $\Vert\cdot\Vert$. Then Deligne \cite{Deligne:87} defines a metric on $\langle\Lcal, \Mcal\rangle$ via the following formula: 
\begin{equation} \label{eqn:deligne-metric}
\log\Vert\langle\ell, m\rangle\Vert=\pi_\ast\left(\log\Vert m\Vert \, c_1(\Lcal, h)+ \log\Vert \ell\Vert\, \delta_{\Div m}\right)
\end{equation}
where $c_1(\Lcal, h)=(i/2\pi) F_\nabla$ is the Chern-Weil form of the Chern connection $\nabla$ of $(\Lcal, h)$.  Note that the expression in parentheses above is $\log\Vert\ell\Vert\ast\log\Vert m\Vert$ as defined in \cite{GilletSoule:92}.  If  $\nabla$  is flat on the fibers of $\Xcal$, then 
$$
\log\Vert\langle\ell, m\rangle\Vert^2=\pi_\ast\left(\log\Vert \ell\Vert^{2} \delta_{\Div m}\right)
=\tr_{\Div m/S}\left(\log\Vert\ell\Vert^2\right)
$$
and 
\begin{equation} \label{eqn:deligne-connection}
\partial\log\Vert\langle\ell, m\rangle\Vert^2=\tr_{\Div m/S}\left(\partial\log\Vert\ell\Vert^2\right)
=\tr_{\Div m/S}\left(\frac{\nabla\ell}{\ell}\right).
\end{equation}
Given a flat relative  connection on $\Lcal$, not necessarily unitary, we wish to take the right hand side of \eqref{eqn:deligne-connection} as the definition of a \emph{trace connection} on the pairing $\langle \Lcal, \Mcal\rangle$.  In this case, we  \emph{define}
\begin{equation} \label{eqn:derivative}
\nabla\langle \ell, m\rangle := \langle \ell, m\rangle\otimes \tr_{\Div m/S}\left(\frac{\nabla\ell}{\ell}\right).
\end{equation}
We extend this definition to the free $\mathcal{C}^\infty(S)$-module generated by the symbols, by enforcing the Leibniz rule:
\begin{equation} \label{eqn:leibniz}
\nabla (\varphi\langle \ell, m\rangle) := d\varphi\otimes \langle \ell, m\rangle+ \varphi\nabla\langle \ell, m\rangle
\end{equation}
for all $\varphi\in C^\infty(S)$.  Later, in Section \ref{sec:Deligne}, we will see that this is the only sensible definition whenever we neglect the connection on $\Mcal$. To show that \eqref{eqn:derivative} gives a well-defined connection on $\langle\Lcal, \Mcal\rangle$, we must verify compatibility with the definition of the Deligne pairing.  Because of the asymmetry of the pair, this amounts to two conditions: compatibility with the change of frame $\ell$, which is always satisfied, and compatibility with the choice of section $m$,  which is not. 

Let us address the first issue. Consistency between  \eqref{eqn:relations} and \eqref{eqn:leibniz} requires the following statement:
 
\begin{lemma} \label{lem:leibniz}
With $\nabla$ defined as in \eqref{eqn:derivative} and \eqref{eqn:leibniz}, then
$$
\nabla\langle f\ell, m\rangle = dN_{\Div m/S}(f)\otimes\langle \ell, m\rangle 
+N_{\Div m/S}(f)\nabla\langle \ell, m\rangle 
$$
for all meromorphic $f$ on $\Xcal$ for which  the Deligne symbols are defined.
\end{lemma}

\begin{proof}
By Lemma \ref{lem:trace-norm}, the  right hand side above is
\begin{align*}
\tr_{\Div m/S}(df)\otimes\langle \ell, m\rangle +N_{\Div m/S}(f) \tr_{\Div m/S}&\left(\frac{\nabla\ell}{\ell}\right)\otimes
\langle \ell, m\rangle  \\
&= \tr_{\Div m/S}\left(\frac{df}{f}  + \frac{\nabla\ell}{\ell}\right)\langle f\ell, m\rangle\ .
\end{align*}
By \eqref{eqn:deligne-connection}, the left hand side is
$$
\tr_{\Div m/S}\left(\frac{\nabla (f\ell)}{f\ell}\right)\otimes
\langle f\ell, m\rangle 
=\tr_{\Div m/S}\left(\frac{df}{f}  + \frac{\nabla\ell}{\ell}\right)\langle f\ell, m\rangle
$$
by the Leibniz rule for the connection on $\Lcal$.
\end{proof}

The second relation is consistency with the change of frame $m\mapsto fm$, $f\in \CBbb(\Xcal)^\times$ (whenever all the symbols are defined).  By Lemma \ref{lem:trace-norm}, we require
\begin{equation} \label{eqn:second-relation}
\frac{\nabla\langle \ell, fm\rangle}{\langle \ell, fm\rangle}=
\frac{\nabla\langle \ell, m\rangle}{\langle \ell, m\rangle}
+
\tr_{\Div \ell/S}\left(\frac{df}{f} \right).
\end{equation}
By \eqref{eqn:deligne-connection}, 
$$
\frac{\nabla\langle \ell, fm\rangle}{\langle \ell, fm\rangle}=
 \tr_{\Div(fm)/S}\left(\frac{\nabla\ell}{\ell}\right)
 =\frac{\nabla\langle \ell, m\rangle}{\langle \ell, m\rangle}
+
 \tr_{\Div f/S}\left(\frac{\nabla\ell}{\ell}\right).
$$
So \eqref{eqn:second-relation} is satisfied if and only if
$$
I(f,\ell, \nabla) :=\tr_{\Div f/S}\left(\frac{\nabla\ell}{\ell}\right)-\tr_{\Div \ell/S}\left(\frac{df}{f} \right)=0.
$$
Note that if $\ell\mapsto g\ell$, with $\Div g$ in general position, then
$$
I(f,g\ell, \nabla)=I(f,\ell,\nabla)+\tr_{\Div f/S}\left(\frac{dg}{g} \right)
-\tr_{\Div g/S}\left(\frac{df}{f} \right)
$$
But Weil reciprocity implies
$$
N_{\Div f/S}(g)=N_{\Div g/S}(f) 
\stackrel{d\log}{\xRightarrow{\hspace*{1cm}}}
\tr_{\Div f/S}\left(\frac{dg}{g} \right)
=\tr_{\Div g/S}\left(\frac{df}{f} \right)
$$
so $I=I(f,\nabla)$ is actually independent of $\ell$ and defined for all $f$. In particular, $I(f,\nabla)$ depends only on the isomorphism class of $\nabla$.
 Moreover,
$$
I(fg,\nabla)=I(f,\nabla)+ I(g,\nabla)
$$
Thus, extending the trace trivially on vertical divisors, we have the following 
\begin{align*}
I(\nabla): \CBbb(\Xcal)^\times \longrightarrow \Acal^1_{\CBbb(S)} &:=\lim_{\longrightarrow}\{\Gamma(U,\Acal^1_{S}) : \text{ Zariski open } U\subset S \}\\
f&\longmapsto I(f,\nabla)
\end{align*}

We will say that  a connection  $\nabla$ on $\Lcal\to\Xcal$ satisfies \emph{Weil reciprocity} (WR) if $I(\nabla)=0$. In the next section we elaborate on this notion as well as a functorial version, whose importance will be seen in the uniqueness issue. So far, we have shown the following
\begin{proposition}\label{proposition:Leibniz-WR}
If $\nabla$ as above satisfies (WR), then for any line bundle $\Mcal\to \Xcal$, $\nabla$  induces a connection on $\langle\Lcal, \Mcal\rangle$. 
\end{proposition}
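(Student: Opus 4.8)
The plan is to exhibit the prescription \eqref{eqn:derivative}, extended by the Leibniz rule \eqref{eqn:leibniz}, as a bona fide connection on the line bundle $\langle\Lcal,\Mcal\rangle$ by verifying that it is compatible with the two defining relations \eqref{eqn:relations} of the Deligne pairing. After localizing $S$, the preceding lemma lets us generate $\langle\Lcal,\Mcal\rangle$ by a single symbol $\langle\ell,m\rangle$ with $\Div\ell$ and $\Div m$ disjoint and finite \'etale over $S$, so that the trace in \eqref{eqn:derivative} is meaningful. It therefore suffices to attach to each admissible symbol the connection form $\theta_{\langle\ell,m\rangle}:=\tr_{\Div m/S}(\nabla\ell/\ell)$ and to check that under each of the two elementary frame changes generating \eqref{eqn:relations}, $\theta$ transforms by $d\log$ of the corresponding norm transition factor; compatibility with a general change of frame then follows by chaining these elementary moves.

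First I would dispose of the change of frame in the left slot, $\ell\mapsto f\ell$. This is exactly Lemma \ref{lem:leibniz}: the Leibniz rule for $\nabla$ on $\Lcal$ together with Lemma \ref{lem:trace-norm} gives $\theta_{\langle f\ell,m\rangle}=\theta_{\langle\ell,m\rangle}+\tr_{\Div m/S}(df/f)=\theta_{\langle\ell,m\rangle}+d\log N_{\Div m/S}(f)$, which matches the transition factor $N_{\Div m/S}(f)$. No hypothesis on $\nabla$ is needed here.

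Next I would treat the right slot, $m\mapsto fm$. Since $\Div(fm)=\Div f+\Div m$, additivity of the trace over divisors yields $\theta_{\langle\ell,fm\rangle}=\tr_{\Div f/S}(\nabla\ell/\ell)+\theta_{\langle\ell,m\rangle}$, whereas the required transition factor $N_{\Div\ell/S}(f)$ contributes $d\log N_{\Div\ell/S}(f)=\tr_{\Div\ell/S}(df/f)$ by Lemma \ref{lem:trace-norm}. As recorded in \eqref{eqn:second-relation} and the discussion following it, consistency is thus equivalent to the vanishing of
\[
I(f,\ell,\nabla)=\tr_{\Div f/S}\!\left(\frac{\nabla\ell}{\ell}\right)-\tr_{\Div\ell/S}\!\left(\frac{df}{f}\right),
\]
which is precisely the hypothesis (WR). This is the single point at which the hypothesis enters, and it is the crux of the statement. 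Granting both compatibilities, the rule $\langle\ell,m\rangle\mapsto\theta_{\langle\ell,m\rangle}\otimes\langle\ell,m\rangle$ descends to a well-defined $\CBbb$-linear operator $\langle\Lcal,\Mcal\rangle\to\langle\Lcal,\Mcal\rangle\otimes\Acal^1_S$ obeying the Leibniz rule over $C^\infty(S)$, i.e.\ a connection.

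The main obstacle I anticipate is not any one computation --- each is already essentially carried out in the text --- but the descent bookkeeping: one must check that the relation submodule is generated by the two elementary frame changes, that any two admissible symbols over a common open set are joined by a finite chain of such moves (using $\ell'=g\ell$, $m'=hm$ for rational $g,h$), and that after shrinking $S$ the divisors appearing at every intermediate step remain simultaneously finite \'etale and in general position, so that all the traces in the chain are defined. Once this reduction is secured, the proposition follows by assembling Lemma \ref{lem:leibniz} with the identity $I(f,\nabla)=0$.
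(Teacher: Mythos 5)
Your proposal is correct and follows essentially the same route as the paper: the left-slot compatibility $\ell\mapsto f\ell$ is exactly Lemma \ref{lem:leibniz} (needing no hypothesis on $\nabla$), and the right-slot compatibility $m\mapsto fm$ reduces, via Lemma \ref{lem:trace-norm} and additivity of the trace, to the vanishing of $I(f,\ell,\nabla)$, which is precisely (WR). The descent bookkeeping you flag is handled in the paper implicitly by the generators-and-relations description of $\langle\Lcal,\Mcal\rangle$ together with the preceding lemma guaranteeing enough symbols with finite \'etale divisors, so your argument matches the paper's in substance.
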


\begin{example} \label{ex:chern-reciprocity}
The Chern connection $\nabla_{ch}$ on $\Lcal$ induces a well-defined (Chern) connection $\langle\Lcal,\Mcal\rangle$ for all $\Mcal$, by using Deligne's metric. Notice from \eqref{eqn:deligne-metric} that this is independent of a choice of metric on $\Mcal$.
We  then clearly have $I(\nabla_{ch})=0$.
To see this explicitly, note that if $h$ is the metric on $\Lcal$ in the frame $\ell$, then 
\begin{align*}
\tr_{\Div f/S}\left(\frac{\nabla\ell}{\ell}\right)&= \tr_{\Div f/S}(\partial h) =\partial (N_{\Div f/S} \log h )
=\partial\, \pi_\ast\left( \log\Vert\ell\Vert^2\delta_{\Div f}\right) \\
&=\partial\, \pi_\ast\left( \log\Vert\ell\Vert^2\left(\frac{1}{2\pi i}\dbar\partial \log|f|^2\right)\right)\qquad\qquad\text{(by Poincar\'e-Lelong)} \\
&=\partial\, \pi_\ast\left(\left(\frac{1}{2\pi i}\dbar\partial  \log\Vert\ell\Vert^2\right)\log|f|^2\right) \\
&=\partial\, \pi_\ast\left( \log|f|^2\delta_{\Div \ell}\right) \qquad\qquad\qquad\text{(since $\nabla_{\Xcal/S}$ is relatively flat)}\\
&=\tr_{\Div \ell/S}\left(\frac{df}{f}\right)
\end{align*}
\end{example}

\section{Trace Connections and Intersection Connections} \label{sec:Deligne}
In this section we formalize the notion of connections satisfying (WR). We characterize them in terms of \emph{trace connections} (see Section \ref{subsection:weil-trace}). These are connections on Deligne pairings $\langle\Lcal,\Mcal\rangle$, arising from  a connection on $\Lcal$ satisfying (WR). This interpretation leads to a structure theorem for the space of all connections satisfying (WR) on a given line bundle (Theorem \ref{theorem:existence-unicity}). Trace connections should be viewed as an intermediary construction, leading ultimately to the \emph{intersection connections} of Section \ref{subsection:intersection}. In this case, both $\Lcal$ and $\Mcal$ are endowed with a connection satisfying (WR). This is the core of this section, and it will play a prominent role in Section \ref{section:examples}, for instance, when we establish the flatness of Deligne type isomorphisms.

\subsection{Weil reciprocity and trace connections}\label{subsection:weil-trace}
Consider a smooth and proper morphism of smooth quasi-projective complex varieties $\pi:\Xcal\rightarrow S$, with connected fibers of relative dimension 1. We suppose we are given a section $\sigma:S\rightarrow \Xcal$. Let $\Lcal\to \Xcal$ be a holomorphic line bundle. Assume also that $\Lcal$ is rigidified along $\sigma$; that is, there is  an isomorphism $\sigma^{\ast}(\Lcal)
\isorightarrow\Ocal_S$, fixed once for all. 

Let $\nabla:\Acal^0_\Xcal(L)\rightarrow \Acal^{1}_\Xcal(L)$ be a connection on $\Lcal$ (recall for holomorphic line bundles we usually assume compatibility with holomorphic structures). We say that $\nabla$ is rigidified along $\sigma$ if it corresponds to the trivial derivation through the fixed isomorphism $\sigma^{\ast}(\Lcal)\isorightarrow\Ocal_S$. 

\begin{definition} \label{def:WR}
We say that the rigidified connection $\nabla$ satisfies  \emph{Weil reciprocity} (WR) if for every meromorphic section $\ell$ of $\Lcal$ and meromorphic function $f\in\mathbb{C}(\Xcal)^{\times}$, whose divisors $\Div\ell$ and $\Div f$ are \'etale and disjoint over a Zariski open subset $U\subset S$ with $\Div\ell$  disjoint from $\sigma$, the following identity of smooth differential forms on $U$ holds:
\begin{equation}\label{eq:2}
	\tr_{\Div f/U}\left(\frac{\nabla\ell}{\ell}\right)=\tr_{\Div\ell/U}\left(\frac{df}{f}\right).
\end{equation}
We say that $\nabla$ satisfies the (WR) \emph{universally} if for every morphism of smooth quasi-projective complex varieties $p:T\rightarrow S$, the pull-back (rigidified) connection $p^{\ast}(\nabla)$ on $p^{\ast}(\Lcal)$ also satisfies Weil reciprocity.
\end{definition}
To simplify the presentation, we will write $\nabla$ instead of $p^{\ast}(\nabla)$.
\begin{remark}
\hspace{2em}
\begin{enumerate}
\item There is a nonrigidified version of this definition. It is also possible to not assume a priori any compatibility with the holomorphic structure of $\Lcal$ (in this case, (WR) is much a stronger condition).
\item The assumption that $\Div\ell$ be disjoint from $\sigma$ is not essential, but it simplifies the proof of the theorem below. 
\item Condition \eqref{eq:2} is highly nontrivial: it relates a \emph{smooth} $(1,0)$ differential 1-form to a \emph{holomorphic} 1-form.
\end{enumerate}
\end{remark}

\begin{definition}\label{def:trace-connection}
A \emph{trace connection} for $\Lcal$ consists in giving, for every morphism of smooth quasi-projective complex varieties $p:T\rightarrow S$ and every holomorphic line bundle $\Mcal$ on $\Xcal_T$ of relative degree 0, a connection $\nabla_{\Mcal}$ on $\langle p^{\ast}(\Lcal),\Mcal\rangle$, compatible with the holomorphic structure on $\Lcal$, subject to the following conditions:
\begin{itemize}
	\item ({\sc Functoriality}) If $q:T'\rightarrow T$ is a morphism of smooth quasi-projective complex varieties, the base changed connection $q^{\ast}(\nabla_{\Mcal})$ corresponds to $\nabla_{q^{\ast}(\Mcal)}$ through the canonical isomorphism
	\begin{displaymath}
		q^{\ast}\langle p^{\ast}(\Lcal),\Mcal\rangle\isorightarrow \langle q^{\ast}p^{\ast}(\Lcal),q^{\ast}(\Mcal)\rangle.
	\end{displaymath}
	\item ({\sc Additivity}) Given $\nabla_{\Mcal}$ and $\nabla_{\Mcal'}$ as above, the connection $\nabla_{\Mcal\otimes \Mcal'}$ corresponds to the ``tensor product connection'' $\nabla_{\Mcal}\otimes\id+\id\otimes\nabla_{\Mcal'}$ through the canonical isomorphism
	\begin{displaymath}
		\langle p^{\ast}(\Lcal),\Mcal\otimes \Mcal'\rangle\isorightarrow\langle p^{\ast}(\Lcal),\Mcal\rangle\otimes\langle p^{\ast}(\Lcal),\Mcal'\rangle.
	\end{displaymath}
	\item ({\sc Compatibility with isomorphisms}) Given an isomorphism of line bundles (of relative degree 0) $\varphi:\Mcal\rightarrow \Mcal'$ on $\Xcal_T$, the connections $\nabla_\Mcal$ and $\nabla_\Mcal'$ correspond through the induced isomorphism on Deligne pairings
	\begin{displaymath}
		\langle\id,\varphi\rangle:\langle p^{\ast}(\Lcal),\Mcal\rangle\isorightarrow\langle p^{\ast}(\Lcal),\Mcal'\rangle.
	\end{displaymath}
\end{itemize}
We shall express a trace connection as an assignment $(p:T\rightarrow S,\Mcal)\mapsto\nabla_\Mcal$, or just $\Mcal\mapsto\nabla_\Mcal$.
\end{definition}

\begin{remark}
\hspace{2em}
\begin{enumerate}
\item It is easy to check that the additivity axiom implies that $\nabla_{\Ocal_\Xcal}$ corresponds to the trivial connection through the canonical isomorphism 
\begin{displaymath}
	\langle \Lcal,\Ocal_{\Xcal}\rangle\isorightarrow\Ocal_{S}.
\end{displaymath}
\item The compatibility with isomorphisms implies that $\nabla_{\Mcal}$ is invariant under the action of automorphisms of $\Mcal$ on $\langle p^{\ast}\Lcal,\Mcal\rangle$.
\end{enumerate}
\end{remark}
In case that $\Lcal$ is of relative degree 0, a trace connection for $\Lcal$ automatically satisfies an extra property that we will need in the next section. In this situation, for a line bundle on $\Xcal$ coming from the base $\pi^{\ast}(\Ncal)$, there is a canonical isomorphism
\begin{displaymath}
	\langle \Lcal,\pi^{\ast}(\Ncal)\rangle\isorightarrow\Ocal_{S}
\end{displaymath}
(see the proof in the lemma below). With these preliminaries at hand, we can state:
\begin{lemma}\label{lemma:trivial-connection}
Suppose that $\Lcal$ is of relative degree 0. Then, for any $(p:T\rightarrow S,\Mcal)$ as above, with $\Mcal=\pi_{T}^{\ast}(\Ncal)$, the connection $\nabla_{\Mcal}$ corresponds to the trivial connection through the canonical isomorphism $\langle p^{\ast}\Lcal,\Mcal\rangle\isorightarrow\Ocal_{T}$.
\end{lemma}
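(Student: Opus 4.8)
The plan is to reduce to the base $S$ itself and then argue locally. Since a trace connection is defined for every pair $(p:T\rightarrow S,\Mcal)$ and base change preserves relative degree, I may rename $T$ as the base and assume $p=\id_S$, so that $\Ncal$ is a line bundle on $S$, $\Mcal=\pi^\ast\Ncal$, and $\Lcal$ still has relative degree $0$. There are two things to establish: first, the canonical isomorphism $\langle\Lcal,\pi^\ast\Ncal\rangle\simeq\Ocal_S$ itself; and second, that $\nabla_{\pi^\ast\Ncal}$ is trivial under it. Both are local statements on $S$, and by functoriality of the trace connection (applied to open immersions $V\hookrightarrow S$) it suffices to prove them over any $V\subset S$ on which $\Ncal$ is trivialized by a nowhere-vanishing section $n$.

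On such a $V$, multiplication by $\pi^\ast n$ is an isomorphism $\mu_n:\Ocal_{\Xcal_V}\isorightarrow\pi^\ast\Ncal|_{\Xcal_V}$ of line bundles of relative degree $0$. It induces $\langle\id,\mu_n\rangle:\langle\Lcal,\Ocal_\Xcal\rangle|_V\isorightarrow\langle\Lcal,\pi^\ast\Ncal\rangle|_V$, and composing with the standard trivialization $\langle\Lcal,\Ocal_\Xcal\rangle\simeq\Ocal_S$ (a basic property of Deligne pairings, cf.\ \cite{Elkik:89}, and the source of the additivity remark above) gives a local isomorphism $\Psi_n:\langle\Lcal,\pi^\ast\Ncal\rangle|_V\isorightarrow\Ocal_V$. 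The crux is to show $\Psi_n$ is independent of $n$, for then the $\Psi_n$ glue to the desired canonical global isomorphism $\Psi$. If $n'=gn$ with $g$ a unit on $V$, then $\mu_{n'}$ differs from $\mu_n$ by the automorphism of $\Ocal_{\Xcal_V}$ given by multiplication by $\pi^\ast g$, so it suffices to check that the induced automorphism $\langle\id,\pi^\ast g\rangle$ of $\langle\Lcal,\Ocal_\Xcal\rangle$ (functoriality in the second argument) is the identity. On a generator $\langle\ell,1\rangle$ with $\Div\ell$ finite flat over $S$, the relation \eqref{eqn:relations} in the second slot gives
\[
\langle\id,\pi^\ast g\rangle\langle\ell,1\rangle=\langle\ell,\pi^\ast g\rangle=N_{\Div\ell/S}(\pi^\ast g)\,\langle\ell,1\rangle,
\]
and since $\pi^\ast g$ is constant along the fibers while $\Div\ell\rightarrow S$ is finite flat of degree equal to the relative degree of $\Lcal$, one has $N_{\Div\ell/S}(\pi^\ast g)=g^{\deg_\pi\Lcal}=g^{0}=1$. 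This is precisely where the hypothesis $\deg_\pi\Lcal=0$ enters, and I expect this independence-of-trivialization computation to be the main obstacle; without relative degree $0$ the identical computation recovers instead the projection formula $\langle\Lcal,\pi^\ast\Ncal\rangle\simeq\Ncal^{\otimes\deg_\pi\Lcal}$, and the symbol $\langle\ell,\pi^\ast n\rangle$ would transform as $n^{\otimes\deg_\pi\Lcal}$ rather than being canonically trivial.

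It then remains to identify the connection. By the compatibility-with-isomorphisms axiom of Definition \ref{def:trace-connection}, the isomorphism $\langle\id,\mu_n\rangle$ intertwines $\nabla_{\Ocal_\Xcal}$ and $\nabla_{\pi^\ast\Ncal}$ over $V$; and by the consequence of additivity recorded in the remark after Definition \ref{def:trace-connection}, $\nabla_{\Ocal_\Xcal}$ is the trivial connection under $\langle\Lcal,\Ocal_\Xcal\rangle\simeq\Ocal_S$. Hence $\nabla_{\pi^\ast\Ncal}$ is trivial under $\Psi_n$ on each $V$. Since the $\Psi_n$ have just been shown to agree on overlaps and to assemble into the canonical $\Psi$, the global section $\Psi^{-1}(1)$ is flat, so $\nabla_{\pi^\ast\Ncal}$ is trivial under $\Psi$ globally, which is the assertion. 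The only remaining care is to confirm that the glued $\Psi$ coincides with the canonical isomorphism named in the statement; this is immediate from the explicit description $\langle\ell,\pi^\ast n\rangle\mapsto 1$, which is manifestly the projection-formula isomorphism in the degree-zero case.
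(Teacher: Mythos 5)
Your proof is correct and follows essentially the same route as the paper's: localize on the base to trivialize $\Ncal$, transport the problem to $\langle\Lcal,\Ocal_{\Xcal}\rangle$ via the induced isomorphism, invoke compatibility with isomorphisms together with the triviality of $\nabla_{\Ocal_{\Xcal}}$, and use the relative degree $0$ hypothesis to see that the construction is independent of the chosen trivialization. The only difference is cosmetic: you spell out the norm computation $N_{\Div\ell/S}(\pi^{\ast}g)=g^{\deg_{\pi}\Lcal}=1$ that the paper leaves as an assertion.
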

\begin{proof}
The statement is local for the Zariski topology on $T$, so we can localize and suppose there is a trivialization $\varphi:\Ncal\isorightarrow\Ocal_{T}$. This trivialization induces a trivialization $\widetilde{\varphi}:\pi_{T}^{\ast}(\Ncal)\isorightarrow\Ocal_{\Xcal_T}$. The isomorphism $\langle p^{\ast}\Lcal,\Mcal\rangle\isorightarrow\Ocal_{T}$ is such that there is commutative diagram:
\begin{displaymath}
	\xymatrix{
		\langle p^{\ast}\Lcal,\Mcal\rangle\ar[r]^{
		 \quad \smash{\raisebox{-0.5ex}{\ensuremath{\sim}}}
				}\ar[d]_{\langle\id,\widetilde{\varphi}\rangle}	&\Ocal_{T}\ar[d]^{\id}\\
		\langle p^{\ast}\Lcal,\Ocal_{\Xcal_T}\rangle\ar[r]^{\quad \smash{\raisebox{-0.5ex}{\ensuremath{\sim}}}}	&\Ocal_{T}.
	}
\end{displaymath}
Observe that if we change $\varphi$ by a unit in $\Ocal_{T}^{\times}$, then the degree 0 assumption on $\Lcal$ ensures $\langle\id,\widetilde{\varphi}\rangle$ does not change! This is compatible with the rest of the diagram being independent of $\varphi$. Now we combine: a) the compatibility of trace connections with isomorphisms, b) the triviality of $\nabla_{\Ocal_{\Xcal_T}}$ through the lower horizontal arrow, c) the commutative diagram. We conclude that $\nabla_{\Mcal}$ corresponds to the canonical connection through the upper horizontal arrow.
\end{proof}

Now for the characterization of connections satisfying (WR) universally in terms of trace connections:
\begin{theorem}\label{theorem:char-trace}
There is a bijection between the following type of data:
\begin{itemize}
	\item A rigidified connection $\nabla$ on $\Lcal$ satisfying (WR) universally.
	\item A trace connection for $\Lcal$.
\end{itemize}
Moreover, the correspondence between both is given by the following rule. Let $\nabla$ be a rigidified connection on $\Lcal$ satisfying (WR) universally. Let $p:T\rightarrow S$ be a morphism of smooth quasi-projective complex varieties, and $\Mcal$ a line bundle on $\Xcal_T$ of relative degree 0. If $\ell$ and $m$ are rational sections of $p^{\ast}(\Lcal)$ and $\Mcal$, whose divisors are \'etale and disjoint over an open Zariski subset $U\subset T$, and $\Div \ell$ is disjoint with $p^{\ast}\sigma$, then
\begin{equation}\label{eq:1}
	\nabla_{\Mcal}\langle \ell,m\rangle=\langle \ell,m\rangle\otimes\tr_{\Div m/U}\left(\frac{\nabla \ell}{\ell}\right).
\end{equation}
\end{theorem}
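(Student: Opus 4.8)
The plan is to establish the bijection by exhibiting two mutually inverse constructions: from a rigidified connection $\nabla$ satisfying (WR) universally, produce a trace connection via the formula \eqref{eq:1}; and from a trace connection, recover $\nabla$ by specializing the assignment $\Mcal\mapsto\nabla_\Mcal$ to a universal (Poincaré/diagonal) situation. The real content is concentrated in showing that the second construction inverts the first, and I expect that step to be the main obstacle.

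\textbf{Forward direction.} Suppose $\nabla$ satisfies (WR) universally. For each $(p:T\to S,\Mcal)$ with $\Mcal$ of relative degree $0$, define $\nabla_\Mcal$ on generators by \eqref{eq:1} and extend by the Leibniz rule \eqref{eqn:leibniz}. The only thing to check is compatibility with the two defining relations of the Deligne pairing: compatibility with $\ell\mapsto f\ell$ is Lemma \ref{lem:leibniz}, and compatibility with $m\mapsto fm$ is precisely the vanishing $I(f,\nabla)=0$, i.e.\ (WR), as in the discussion preceding Proposition \ref{proposition:Leibniz-WR}. Since (WR) is assumed after every base change, the identical computation shows $\nabla_\Mcal$ is a well-defined connection universally. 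Functoriality is then immediate, because the trace commutes with base change; additivity follows from $\tr_{\Div(mm')/U}=\tr_{\Div m/U}+\tr_{\Div m'/U}$ and the bi-additivity of symbols; and compatibility with an isomorphism $\varphi:\Mcal\to\Mcal'$ holds because $\varphi$ sends a section $m$ to one with the same divisor, leaving the right-hand side of \eqref{eq:1} untouched.

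\textbf{Backward direction.} Given a trace connection $\Mcal\mapsto\nabla_\Mcal$, take $T=\Xcal$, $p=\pi$, so $\Xcal_T=\Xcal\times_S\Xcal$ with projections $\mathrm{pr}_1,\mathrm{pr}_2$ and $p^\ast\Lcal=\mathrm{pr}_2^\ast\Lcal$. Let $\Delta$ be the diagonal and $\sigma_T$ the pullback of $\sigma$, both sections of $\mathrm{pr}_1$, and set $\Mcal_0=\Ocal_{\Xcal_T}(\Delta-\sigma_T)$, of relative degree $0$. Using the canonical isomorphism $\langle p^\ast\Lcal,\Ocal(D)\rangle\cong\tau^\ast p^\ast\Lcal$ for a section $D=\tau(T)$, together with $\mathrm{pr}_2\circ\Delta=\id$, $\mathrm{pr}_2\circ\sigma_T=\sigma\circ\pi$, and the rigidification $\sigma^\ast\Lcal\cong\Ocal_S$, bi-additivity gives $\langle p^\ast\Lcal,\Mcal_0\rangle\cong\Lcal\otimes(\pi^\ast\sigma^\ast\Lcal)^{-1}\cong\Lcal$. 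I then \emph{define} $\nabla:=\nabla_{\Mcal_0}$ through this identification; it is a connection on $\Lcal\to\Xcal$, compatible with the holomorphic structure since trace connections are. Pulling the construction back along $\sigma:S\to\Xcal=T$ collapses $\Mcal_0$ to $\Ocal_S$, so functoriality and the triviality of $\nabla_{\Ocal}$ (the additivity remark) show $\nabla$ is rigidified along $\sigma$.

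\textbf{Mutual inversion and (WR).} It remains to prove that the trace connection attached to this $\nabla$ by \eqref{eq:1} coincides with the given one; this simultaneously forces (WR) universally for $\nabla$, since \eqref{eq:1} is consistent with the Deligne relations only when $I(f,\nabla)=0$, and the given $\nabla_\Mcal$ is a genuine connection on $\langle p^\ast\Lcal,\Mcal\rangle$. Fix $(T,\Mcal,m)$ with $\Div m$ étale over $U$. After an étale localization trivializing $\Div m$, the degree-$0$ divisor splits as $\sum_i(\tau_i-\tau_i')$, so additivity reduces the claim to $\Mcal=\Ocal(\tau-\sigma_U)$ for a single section $\tau$ (the $\sigma_U$-contributions cancel in both the trace and, via additivity, in the connection). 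Since $\Ocal(\tau-\sigma_U)=\tau^\ast\Mcal_0$, functoriality yields $\nabla_{\Ocal(\tau-\sigma_U)}=\tau^\ast\nabla$, which unwinds under the identifications to the right-hand side of \eqref{eq:1}. The hard part is exactly this last reduction: one must verify that the canonical Deligne isomorphisms intertwine the trace of $\nabla\ell/\ell$ along a section with restriction/pullback along that section, and carefully track the relative-degree normalizations so that all spurious $\sigma_T$- and base-pulled-back contributions vanish (the situation where Lemma \ref{lemma:trivial-connection} is the relevant tool). Once \eqref{eq:1} is established after arbitrary base change, the two constructions are manifestly inverse and (WR) holds universally, completing the proof.
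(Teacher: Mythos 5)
Your proposal follows essentially the same route as the paper's own proof: the forward direction invokes Proposition \ref{proposition:Leibniz-WR} exactly as the paper does, and the backward direction uses the identical base change to $\Xcal\times_{S}\Xcal$ with $\Mcal_0=\Ocal(\delta-\widetilde{\sigma})$, the same rigidification argument via pullback along $\sigma$, and the same \'etale-localization/additivity reduction to line bundles of the form $\Ocal(\sigma_i-\sigma)$ with their canonical sections. The ``hard part'' you flag at the end is precisely what the paper settles with the explicit identity \eqref{eq:6}, namely
$\nabla_{\Ocal(\delta-\widetilde{\sigma})}\langle p_{2}^{\ast}\ell,{\mathbbm 1}\rangle/\langle p_{2}^{\ast}\ell,{\mathbbm 1}\rangle
=\nabla\ell/\ell-\pi^{\ast}\bigl(d\sigma^{\ast}(\ell)/\sigma^{\ast}\ell\bigr)$,
whose pullback along the section $\sigma_i$ (your $\tau$) yields the trace formula \eqref{eq:1} once the rigidification of $\nabla$ along $\sigma$ is used to identify $\sigma_i^{\ast}\pi^{\ast}\bigl(d\sigma^{\ast}(\ell)/\sigma^{\ast}\ell\bigr)$ with $\sigma^{\ast}(\nabla\ell/\ell)$.
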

\begin{remark}
In Section  \ref{subsection:metrics-connections} we guessed the formula \eqref{eqn:derivative} from the Chern connection of metrics on Deligne pairings. The theorem above  shows that this is indeed the only possible construction of trace connections, once we impose some functoriality. The functoriality requirement is  natural, since Deligne pairings behave well with respect to base change. 
\end{remark}
\begin{proof}[Proof of Theorem \ref{theorem:char-trace}]
Given a rigidified connection satisfying (WR) universally, we already know that the rule \eqref{eq:1} defines a trace connection for $\Lcal$. Indeed, the condition (WR) guarantees that this rule is compatible  with both the Leibniz rule and the relations defining the Deligne pairing (Proposition \ref{proposition:Leibniz-WR}). The compatibility with the holomorphic structure of the trace connection is direct from the definition.

Now, let us consider a trace connection for $\Lcal$, i.e. the association 
\begin{displaymath}
	(p:T\rightarrow S,\Mcal)\mapsto\nabla_{\Mcal}\quad\text{on}\quad\langle p^{\ast}(\Lcal),\Mcal\rangle,
\end{displaymath}
for $\Mcal$ a line bundle on $\Xcal_T$ of relative degree 0. Let us consider the particular base change $\pi:\Xcal\rightarrow S$. The new family of curves is given by $p_1:\Xcal\times_{S} \Xcal\rightarrow \Xcal$, the projection onto the first factor. The base change of $\Lcal$ to $\Xcal\times_{S} \Xcal$ is the pull-back $p_{2}^{\ast}(\Lcal)$. The family $p_1$ comes equipped with two sections. The first one is the diagonal section, that we denote $\delta$. The second one, is the base change (or lift) of the section $\sigma$, that we write $\widetilde{\sigma}$. Hence, at the level of points, $\widetilde{\sigma}(x)=(x,\sigma\pi(x))$. The images of these sections are Cartier divisors in $\Xcal\times_{S}\Xcal$, so that they determine line bundles that we denote $\Ocal(\delta)$, $\Ocal(\widetilde{\sigma})$. Let us take for $\Mcal$ the line bundle $\Ocal(\delta-\widetilde{\sigma})$, namely $\Ocal(\delta)\otimes\Ocal(\widetilde{\sigma})^{-1}$. By the properties of the Deligne pairing, there is a canonical isomorphism
\begin{displaymath}
	\langle p_2^{\ast}(\Lcal),\Ocal(\delta-\widetilde{\sigma})\rangle\isorightarrow\delta^{\ast}p_{2}^{\ast}(\Lcal)\otimes\widetilde{\sigma}^{\ast}p_{2}^{\ast}\Lcal^{-1}.
\end{displaymath}
But now, $p_2 \delta=\id$, and $p_{2}\widetilde{\sigma}=\sigma\pi$. Using the rigidification $\sigma^{\ast}(\Lcal)\isorightarrow\Ocal_{S}$, we obtain an isomorphism 
\begin{equation}\label{eq:4}
	\langle p_2^{\ast}(\Lcal),\Ocal(\delta-\widetilde{\sigma})\rangle\isorightarrow\Lcal.
\end{equation}
Through this isomorphism, the connection $\nabla_{\Ocal(\delta-\widetilde{\sigma})}$ corresponds to a connection on $\Lcal$, that we temporarily write $\nabla_{S}$. It is compatible with the holomorphic structure, as trace connections are by definition. More generally, given a morphism of smooth quasi-projective complex varieties $p:T\rightarrow S$, the same construction applied to the base changed family $\Xcal_T\rightarrow T$ (with the base changed section $\sigma_T$) produces a connection $\nabla_T$ on $p^{\ast}(\Lcal)$, and it is clear that $\nabla_T=p^{\ast}(\nabla_S)$. We shall henceforth simply write $\nabla$ for this compatible family of connections. It is important to stress the role of the rigidification in the construction of $\nabla$.

First, we observe that the connection $\nabla$ is rigidified along $\sigma$. Indeed, on the one hand, by the functoriality of the Deligne pairing with respect to base change, there is a canonical isomorphism
\begin{equation}\label{eq:3}
	\sigma^{\ast}\langle p_{2}^{\ast}(\Lcal),\Ocal(\delta-\widetilde{\sigma})\rangle\isorightarrow
		\langle \Lcal,\sigma^{\ast}\Ocal(\delta-\widetilde{\sigma})\rangle\simeq\langle \Lcal,\Ocal_{S}\rangle.
\end{equation}
Here we have used the fact that the pull-backs of $\delta$ and $\widetilde{\sigma}$ by $\sigma$ both coincide with the section $\sigma$ itself, so that $\sigma^{\ast}\Ocal(\delta-\widetilde{\sigma})\simeq\Ocal_{S}$. On the other hand, the functoriality assumption on trace connections and compatibility with isomorphisms ensure that through the isomorphism \eqref{eq:3} we have an identification
\begin{displaymath}
	\sigma^{\ast}(\nabla_{\Ocal(\delta-\widetilde{\sigma})})=\nabla_{\Ocal_S}.
\end{displaymath}
But we already remarked that $\nabla_{\Ocal_S}$ corresponds to the trivial connection through the isomorphism
\begin{equation}\label{eq:5}
	\langle \Lcal,\Ocal_{S}\rangle\isorightarrow\Ocal_S.
\end{equation}
Now, the rigidification property for $\nabla$ follows, since the composition of $\sigma^{\ast}$\eqref{eq:4}--\eqref{eq:5} gives back our fixed isomorphism $\sigma^{\ast}(\Lcal)\isorightarrow\Ocal_S$.

Second, we show that $\nabla$ satisfies  (WR) universally. Actually, we will see that for $(p:T\rightarrow S,\Mcal)$, the connection $\nabla_{\Mcal}$ is given by the rule
\begin{displaymath}
	\nabla_{\Mcal}\langle \ell,m\rangle=\langle \ell,m\rangle\otimes\tr_{\Div m/U}\left(\frac{\nabla \ell}{\ell}\right),
\end{displaymath}
for sections $\ell$ and $m$ as in the statement. 
Using the fact that $\nabla_\Mcal$ is a connection (and hence satisfies the Leibniz rule) and imposing the relations defining the Deligne pairing, this ensures that  (WR) for $\nabla$ is satisfied. 

To simplify the discussion, and because the new base $T$ will be fixed from now on, we may just change the meaning of the notation and write $S$ instead of $T$. Also, observe that the equality of two differential forms can be checked after \'etale base change (because \'etale base change induces isomorphisms on the level of differential forms). Therefore, after possibly localizing $S$ for the \'etale topology, we can suppose that
\begin{displaymath}
	\Div m=\sum_{i}n_{i}D_i\ ,
\end{displaymath} 
where the divisors $D_i$ are given by sections $\sigma_i$, and the $n_i$ are integers with $\sum_i n_i=0$. Because
\begin{displaymath}
	\Mcal\simeq\Ocal(\sum_{i}n_{i}D_i)\simeq\bigotimes_{i}\Ocal(\sigma_{i}-\sigma)^{\otimes n_i},
\end{displaymath}
the additivity of the trace connection and the trace $\tr_{\Div m/S}$ with respect to $m$, and the compatibility with isomorphisms, we reduce to the case where $\Mcal=\Ocal(\sigma_{i}-\sigma)$ and $m$ is the canonical rational section ${\mathbbm 1}$ with divisor $\sigma_i-\sigma$. In order to trace back the definition of $\nabla$, we effect the base change $\Xcal\rightarrow S$. By construction of the connection $\nabla$ on $\Lcal$ (that, recall, involves the rigidification), we have
\begin{equation}\label{eq:6}
	\frac{\nabla_{\Ocal(\delta-\widetilde{\sigma})}\langle p_{2}^{\ast}\ell,{\mathbbm 1}\rangle}{\langle p_{2}^{\ast}\ell,{\mathbbm 1}\rangle}
	=\frac{\nabla \ell}{\ell}-\pi^{\ast}\left(\frac{d\sigma^{\ast}(\ell)}{\sigma^{\ast}\ell}\right),
\end{equation}
where we identify $\sigma^{\ast}(\ell)$ with a rational function on $S$ through the rigidification. We now pull-back the identity \eqref{eq:6} by $\sigma_i$, and for this we remark that the pull-back of $\delta$ by $\sigma_i$ becomes $\sigma_i$, while the pull-back of $\widetilde{\sigma}$ by $\sigma_i$ becomes $\sigma$! Taking this into account, together with the functoriality of Deligne pairings and trace connections, we obtain
\begin{displaymath}
	\begin{split}
		\frac{\nabla_{\Ocal(\sigma_i-\sigma)}\langle \ell,{\mathbbm 1}\rangle}{\langle \ell,{\mathbbm 1}\rangle}&=\sigma_{i}^{\ast}\left(\frac{\nabla \ell}{\ell}\right)
	-\sigma_{i}^{\ast}\pi^{\ast}\left(\frac{d\sigma^{\ast}(\ell)}{\sigma^{\ast}\ell}\right)\\
	&=\sigma_{i}^{\ast}\left(\frac{\nabla \ell}{\ell}\right)-\sigma^{\ast}\left(\frac{\nabla \ell}{\ell}\right)\\
	&=\tr_{\Div{\mathbbm 1}/S}\left(\frac{\nabla \ell}{\ell}\right).
	\end{split}
\end{displaymath}
In the second inequality we used that $\nabla$ is rigidified along $\sigma$, that we already showed above. This completes the proof of the theorem.
\end{proof}

\begin{remark}
Notice that the above notions do not require  $\Lcal$ to have relative degree 0. It may well be that the objects we introduce do not exist at such a level of generality. In the relative degree 0 case we have shown that connections satisfying (WR) universally on $\Lcal$ do indeed exist and can be constructed from relative connections that are compatible with the holomorphic structure of $\Lcal$. The latter, of course, always exist by taking  the Chern connection of a hermitian metric.
In the next section, we confirm the existence in relative degree 0 by other methods, and we classify them all.
\end{remark}

\begin{corollary}
Let $\Mcal\mapsto\nabla_{\Mcal}$ be a trace connection for the rigidified line bundle $\Lcal$. Then there is a unique extension of the trace connection to line bundles $\Mcal$ of arbitrary relative degree, such that $\nabla_{\Ocal(\sigma)}$ corresponds to the trivial connection through the isomorphism $\langle\Lcal,\Ocal(\sigma)\rangle\simeq\sigma^{\ast}(\Lcal)\simeq\Ocal_{S}$. This extension satisfies the following properties:
\begin{enumerate}
	\item if $\nabla$ is the connection on $\Lcal$ determined by Theorem \ref{theorem:char-trace}, the extension is still given by the rule \eqref{eq:1};
	\item the list of axioms of Definition \ref{def:trace-connection}, i.e. functoriality, additivity and compatibility with isomorphisms.
\end{enumerate}
\end{corollary}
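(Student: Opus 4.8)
The plan is to reduce everything to the rigidified connection $\nabla$ on $\Lcal$ produced by Theorem \ref{theorem:char-trace}, and to observe that the rule \eqref{eq:1} never really used the relative degree of its second argument. First I would define, for every $p:T\to S$ and every line bundle $\Mcal$ on $\Xcal_T$ of arbitrary relative degree, the connection $\nabla_\Mcal$ on $\langle p^\ast\Lcal,\Mcal\rangle$ by the same formula \eqref{eq:1}, using generators $\langle\ell,m\rangle$ with $\Div\ell$ disjoint from $p^\ast\sigma$ (such generators always exist by the Bertini-type lemma). The key observation is that Proposition \ref{proposition:Leibniz-WR} only invokes the Leibniz rule and the identity $I(\nabla)=0$ — i.e.\ (WR) for $\nabla$ — and neither ingredient sees $\deg\Mcal$; since $\nabla$ satisfies (WR) universally, $p^\ast\nabla$ does too, so the rule descends to a genuine connection on $\langle p^\ast\Lcal,\Mcal\rangle$ for every $\Mcal$. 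This gives existence and, tautologically, property (1).

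Second, I would check the normalization. Taking $\Mcal=\Ocal(\sigma)$ and $m=\mathbbm 1$ the canonical section of divisor $\sigma$, formula \eqref{eq:1} reads $\nabla_{\Ocal(\sigma)}\langle\ell,\mathbbm 1\rangle=\langle\ell,\mathbbm 1\rangle\otimes\sigma^\ast(\nabla\ell/\ell)$. Because the connection $\nabla$ supplied by Theorem \ref{theorem:char-trace} is rigidified along $\sigma$, the pullback $\sigma^\ast(\nabla\ell/\ell)$ equals $d\log\sigma^\ast\ell$, where $\sigma^\ast\ell$ is regarded as a function via the rigidification. Under $\langle\Lcal,\Ocal(\sigma)\rangle\simeq\sigma^\ast\Lcal\simeq\Ocal_S$ the generator $\langle\ell,\mathbbm 1\rangle$ corresponds to the function $\sigma^\ast\ell$, so $\nabla_{\Ocal(\sigma)}\langle\ell,\mathbbm 1\rangle$ corresponds to $d(\sigma^\ast\ell)$; thus $\nabla_{\Ocal(\sigma)}$ is the trivial connection, as required.

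Third, for the axioms of Definition \ref{def:trace-connection} I would argue directly from \eqref{eq:1}: functoriality holds because $\nabla$ and the trace are stable under base change; additivity holds because $\Div(mm')=\Div m+\Div m'$ and the trace is additive in the divisor; and compatibility with an isomorphism $\varphi:\Mcal\to\Mcal'$ holds because $\varphi$ carries a section $m$ to a section of the same divisor, leaving the right-hand side of \eqref{eq:1} unchanged. On relative degree $0$ the formula agrees with the given trace connection by Theorem \ref{theorem:char-trace}, so this really is an extension.

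Finally, uniqueness. Any extension obeying additivity and compatibility with isomorphisms is pinned down once its values on degree-$0$ bundles and on $\Ocal(\sigma)$ are fixed: an arbitrary $\Mcal$ of relative degree $d$ factors as $\Mcal\simeq\Mcal_0\otimes\Ocal(\sigma)^{\otimes d}$ with $\deg\Mcal_0=0$, and additivity through $\langle\Lcal,\Mcal\rangle\simeq\langle\Lcal,\Mcal_0\rangle\otimes\langle\Lcal,\Ocal(\sigma)\rangle^{\otimes d}$ forces $\nabla_\Mcal$ to be the tensor product of the original trace connection on the first factor with the (normalized) trivial connection on the second. I expect the only genuinely delicate step to be the normalization computation of the second paragraph — recognizing that the rigidification of $\nabla$ is exactly the condition rendering $\nabla_{\Ocal(\sigma)}$ trivial — since the remaining verifications are formal consequences of the additivity of traces and of the Deligne pairing.
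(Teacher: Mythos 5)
Your proposal is correct and takes essentially the same route as the paper: the paper's own proof consists of taking the rigidified connection $\nabla$ supplied by Theorem \ref{theorem:char-trace}, extending the trace connection to arbitrary $\Mcal$ by the rule \eqref{eq:1}, and declaring the remaining claims ``straightforward to check.'' Your write-up simply supplies those checks explicitly --- well-definedness of \eqref{eq:1} independently of the relative degree of $\Mcal$, the normalization of $\nabla_{\Ocal(\sigma)}$ via the rigidification of $\nabla$, the three axioms, and uniqueness by twisting down to relative degree $0$ with $\Ocal(\sigma)^{\otimes d}$ --- all of which are accurate.
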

\begin{proof}
Let $\nabla$ be the rigidified connection on $\Lcal$ corresponding to the trace connection $\Mcal\mapsto\nabla_{\Mcal}$. Then we extend the trace connection to arbitrary $\Mcal$ by the rule \eqref{eq:1}. The claims of the corollary are straightforward to check. 
\end{proof}

\subsection{Reformulation in terms of Poincar\'e bundles}\label{subsection:structure}
In case $\Lcal$ is of relative degree 0, the notion of trace connection can be rendered more compact by the introduction of a Poincar\'e bundle on the relative jacobian. Let $\pi:\Xcal\rightarrow S$ be our smooth fibration in proper curves, with a fixed section $\sigma:S\rightarrow \Xcal$. We write $p:J\rightarrow S$ for the relative jacobian $J=J(\Xcal/S)$, and $\mathcal{P}$ for the Poincar\'e bundle on $\Xcal\times_{S} J$, rigidified along the lift $\widetilde{\sigma}:J\rightarrow \Xcal\times_{S}J$ of the section $\sigma$. This rigidified Poincar\'e bundle has a neat compatibility property with respect to the group scheme structure of $J$. Let us introduce the addition map
\begin{displaymath}
	\mu:J\times_{S} J\longrightarrow J.
\end{displaymath}
If $T\rightarrow S$ is a morphism of schemes, then at the level of $T$ valued points the addition map is induced by the tensor product of line bundles on $\Xcal_{T}$. If $p_1,p_2$ are the projections of $J\times_{S} J$ onto the first and second factors, then there is an isomorphism of line bundles on $\Xcal\times_{S}J\times_{S}J$
\begin{equation}\label{eq:7}
	\mu^{\ast}\mathcal{P}\isorightarrow (p_{1}^{\ast}\mathcal{P})\otimes (p_{2}^{\ast}\mathcal{P}).
\end{equation}
In particular, given a line bundle $\Lcal$ on $\Xcal$ and its pull-back $\widetilde{\Lcal}$ to $\Xcal\times_{S}J\times_{S}J$, there is an induced canonical isomorphism of Deligne pairings
\begin{displaymath}
	\langle \widetilde{\Lcal},\mu^{\ast}\mathcal{P}\rangle\isorightarrow \langle\widetilde{\Lcal},p_{1}^{\ast}\mathcal{P}\rangle
	\otimes\langle\widetilde{\Lcal},p_{1}^{\ast}\mathcal{P}\rangle.
\end{displaymath}

Let $\Mcal\mapsto\nabla_{\Mcal}$ be a trace connection for $\Lcal$. Then, we can evaluate it on the data $(p:J\rightarrow S,\mathcal{P})$, thus providing a connection $\nabla_{\mathcal{P}}$ on $\langle p^{\ast}\Lcal,\mathcal{P}\rangle$. By the functoriality of trace connections, we have
\begin{displaymath}
	\mu^{\ast}\nabla_{\mathcal{P}}=\nabla_{\mu^{\ast}\mathcal{P}},\quad p_{1}^{\ast}\nabla_{\mathcal{P}}=\nabla_{p_{1}^{\ast}\mathcal{P}},
	\quad p_{2}^{\ast}\nabla_{\mathcal{P}}=\nabla_{p_{2}^{\ast}\mathcal{P}}.
\end{displaymath}
Furthermore, by the compatibility with isomorphisms and additivity, there is an identification through \eqref{eq:7}
\begin{displaymath}
	\mu^{\ast}\nabla_{\mathcal{P}}=(p_{1}^{\ast}\nabla_{\mathcal{P}})\otimes\id + \id\otimes (p_{2}^{\ast}\nabla_{\mathcal{P}}).
\end{displaymath}
We claim the data $\Mcal\mapsto\nabla_{\Mcal}$ is determined by $\nabla_{\mathcal{P}}$. For if $q:T\rightarrow S$ is a morphism of smooth quasi-projective complex varieties, and $\Mcal$ a line bundle on $\Xcal_T$ of relative degree 0, then we have a classifying morphism $\varphi:T\rightarrow J$ and an isomorphism
\begin{displaymath}
	\Mcal\isorightarrow(\varphi^{\ast}\mathcal{P})\otimes(\pi_{T}^{\ast}\sigma_{T}^{\ast}\Mcal)
\end{displaymath}
This isomorphism induces an isomorphism on Deligne pairings
\begin{displaymath}
	\langle q^{\ast}\Lcal,\Mcal\rangle\isorightarrow\langle q^{\ast}\Lcal, (\varphi^{\ast}\mathcal{P})\otimes(\pi_{T}^{\ast}\sigma_{T}^{\ast}\Mcal)\rangle.
\end{displaymath}
But now, because $\Lcal$ is of relative degree 0, there is a canonical isomorphism
\begin{displaymath}
	\langle q^{\ast}\Lcal,\pi_{T}^{\ast}\sigma_{T}^{\ast}\Mcal\rangle\isorightarrow\Ocal_{T},
\end{displaymath}
and the connection $\nabla_{\pi_{T}^{\ast}\sigma_{T}^{\ast}\Mcal}$ is trivial by Lemma \ref{lemma:trivial-connection}. Hence, through the resulting isomorphism on Deligne pairings
\begin{displaymath}
	\langle q^{\ast}\Lcal,\Mcal\rangle\isorightarrow \varphi^{\ast}\langle p^{\ast}\Lcal,\mathcal{P}\rangle,
\end{displaymath}
we have an identification of connections
\begin{displaymath}
	\varphi^{\ast}\nabla_{\mathcal{P}}=\nabla_{\Mcal}.
\end{displaymath}
Observe moreover that this identification does not depend on the precise isomorphism
\begin{displaymath}
	\Mcal\isorightarrow(\varphi^{\ast}\mathcal{P})\otimes(\pi_{T}^{\ast}\sigma_{T}^{\ast}\Mcal),
\end{displaymath}
by the compatibility of trace connections with isomorphisms of line bundles (and hence with automorphisms of line bundles). The next statement is now clear.
\begin{proposition}
Suppose that $\Lcal$ is of relative degree 0. The following data are equivalent:
\begin{itemize}
	\item A trace connection for $\Lcal$.
	\item A connection $\nabla_{\mathcal{P}}$ on the Deligne pairing $\langle p^{\ast}\Lcal,\mathcal{P}\rangle$ (compatible with the holomorphic structure), satisfying the following compatibility with addition on $J$:
	\begin{equation}\label{eq:8}
			\mu^{\ast}\nabla_{\mathcal{P}}=(p_{1}^{\ast}\nabla_{\mathcal{P}})\otimes\id + \id\otimes (p_{2}^{\ast}\nabla_{\mathcal{P}}). 
	\end{equation}
\end{itemize}
The correspondence is given as follows. Let $\nabla_{\mathcal{P}}$ be a connection on $\langle p^{\ast}\Lcal,\mathcal{P}\rangle$ satisfying \eqref{eq:8}. Let $q:T\rightarrow S$ be a morphism of smooth quasi-projective complex varieties, $\Mcal$ a line bundle on $\Xcal_T$ of relative degree 0 and $\varphi:T\rightarrow J$ its classifying map, so that there are isomorphisms
\begin{displaymath}
	\Mcal\isorightarrow \varphi^{\ast}\mathcal{P}\otimes(\pi_{T}^{\ast}\sigma_{T}^{\ast}\Mcal),\quad\langle q^{\ast}\Lcal,\Mcal\rangle\isorightarrow \varphi^{\ast}\langle p^{\ast}\Lcal,\mathcal{P}\rangle.
\end{displaymath}
Then, through these identifications, the rule
\begin{displaymath}
	\Mcal\longmapsto \varphi^{\ast}\nabla_{\mathcal{P}}\quad\text{on}\quad \langle q^{\ast}\Lcal,\Mcal\rangle
\end{displaymath}
defines a trace connection for $\Lcal$.
\end{proposition}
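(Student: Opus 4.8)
Since the discussion preceding the statement already establishes the forward assignment — a trace connection $\Mcal\mapsto\nabla_\Mcal$ evaluates on $(p:J\to S,\mathcal{P})$ to a connection $\nabla_{\mathcal{P}}$ satisfying \eqref{eq:8}, and is recovered from $\nabla_{\mathcal{P}}$ through the identity $\varphi^\ast\nabla_{\mathcal{P}}=\nabla_\Mcal$ — the plan is to supply the genuinely new direction: starting from an \emph{arbitrary} $\nabla_{\mathcal{P}}$ obeying \eqref{eq:8}, I would check that the rule $\Mcal\mapsto\varphi^\ast\nabla_{\mathcal{P}}$ satisfies the three axioms of Definition \ref{def:trace-connection}, and then observe that the two assignments are mutually inverse.

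First I would pin down well-definedness. The classifying map $\varphi:T\to J$ attached to a relative-degree-$0$ bundle $\Mcal$ is unique, so $\varphi^\ast\nabla_{\mathcal{P}}$ is canonical on $\varphi^\ast\langle p^\ast\Lcal,\mathcal{P}\rangle$; the only freedom is in the transport isomorphism $\langle q^\ast\Lcal,\Mcal\rangle\isorightarrow\varphi^\ast\langle p^\ast\Lcal,\mathcal{P}\rangle$, which is built from a chosen isomorphism $\Mcal\isorightarrow\varphi^\ast\mathcal{P}\otimes\pi_T^\ast\sigma_T^\ast\Mcal$ together with the canonical triviality $\langle q^\ast\Lcal,\pi_T^\ast\sigma_T^\ast\Mcal\rangle\isorightarrow\Ocal_T$. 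Two choices of the former differ by a unit pulled back from $T$, and exactly as in Lemma \ref{lemma:trivial-connection} the relative degree $0$ hypothesis on $\Lcal$ forces the corresponding scaling automorphism of the target to act trivially on the pairing $\langle q^\ast\Lcal,-\rangle$, so the transport — and hence $\nabla_\Mcal$ — is independent of the choice.

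The three axioms then follow from the universal property of $J$. Functoriality is immediate because the classifying map of $q'^\ast\Mcal$ is $\varphi\circ q'$, whence $\nabla_{q'^\ast\Mcal}=(q')^\ast\nabla_\Mcal$. Additivity is where \eqref{eq:8} does the real work: the classifying map of $\Mcal\otimes\Mcal'$ is $\mu\circ(\varphi,\varphi')$, so pulling \eqref{eq:8} back along $(\varphi,\varphi'):T\to J\times_S J$ and using $p_1\circ(\varphi,\varphi')=\varphi$, $p_2\circ(\varphi,\varphi')=\varphi'$ yields $\nabla_{\Mcal\otimes\Mcal'}=\nabla_\Mcal\otimes\id+\id\otimes\nabla_{\Mcal'}$. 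Compatibility with isomorphisms holds because isomorphic degree-$0$ bundles determine the same $\varphi$. As a consistency check one can pull \eqref{eq:8} back along $(\id,0):J\to J\times_S J$ to see that $0^\ast\nabla_{\mathcal{P}}$ is trivial, recovering the Lemma \ref{lemma:trivial-connection} property intrinsically from \eqref{eq:8}. For the inverse statements, a trace connection returns to itself by $\varphi^\ast\nabla_{\mathcal{P}}=\nabla_\Mcal$, while a given $\nabla_{\mathcal{P}}$ is recovered because $\mathcal{P}$ on $\Xcal_J$ has classifying map $\id_J$ and rigidification $\widetilde\sigma^\ast\mathcal{P}\simeq\Ocal_J$, so evaluating the reconstructed trace connection on $\mathcal{P}$ returns $\id^\ast\nabla_{\mathcal{P}}=\nabla_{\mathcal{P}}$.

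I expect the bookkeeping in the additivity step to be the only real obstacle: one must verify that the isomorphism \eqref{eq:7} for $\mathcal{P}$, the bi-additive splitting of the Deligne pairing, and the decompositions $\Mcal\isorightarrow\varphi^\ast\mathcal{P}\otimes\pi_T^\ast\sigma_T^\ast\Mcal$ are all mutually compatible, so that the tensor-product connection obtained from \eqref{eq:8} matches $\nabla_\Mcal\otimes\id+\id\otimes\nabla_{\Mcal'}$ on the nose rather than merely up to the trivially paired base factors; the degree-$0$ hypothesis, which kills those factors, is precisely what makes this work.
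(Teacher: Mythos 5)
Your proposal is correct and takes essentially the same route as the paper: the paper's proof consists of the discussion preceding the proposition (evaluating a trace connection at $(p:J\rightarrow S,\mathcal{P})$ and showing $\varphi^{\ast}\nabla_{\mathcal{P}}=\nabla_{\Mcal}$) and then declares the converse ``clear,'' which is precisely the verification you supply. Your ingredients --- well-definedness via the relative degree $0$ argument of Lemma \ref{lemma:trivial-connection}, functoriality and compatibility with isomorphisms from uniqueness of the classifying map, additivity by pulling \eqref{eq:8} back along $(\varphi,\varphi'):T\rightarrow J\times_{S}J$, and the mutual-inverse checks using that $\mathcal{P}$ has classifying map $\id_{J}$ --- are exactly the intended ones.
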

\noindent The proposition justifies  calling $\nabla_{\mathcal{P}}$ a \emph{universal trace connection}.

The formulation of trace connections in terms of Poincar\'e bundles makes it easy to deal with the uniqueness issue. Let $\Mcal\mapsto\nabla_{\Mcal}$ and $\Mcal\mapsto\nabla_{\Mcal}^{\prime}$ be trace connections. These are determined by the respective ``universal'' connections $\nabla_{\mathcal{P}}$ and $\nabla_{\mathcal{P}}^{\prime}$. Two connections on a given holomorphic line bundle, compatible with the holomorphic structure, differ by a smooth $(1,0)$ differential one form. Let $\theta$ be the smooth $(1,0)$ form on $J$ given by $\nabla_{\mathcal{P}}-\nabla_{\mathcal{P}}^{\prime}$. Then, the compatibility of universal trace connections with additivity imposes the restriction on $\theta$
\begin{displaymath}
	\mu^{\ast}\theta=p_{1}^{\ast}\theta + p_{2}^{\ast}\theta.
\end{displaymath}
We say that $\theta$ is a translation invariant form, and we write the space of such differential forms $\Inv(J)^{(1,0)}\subset \Gamma(J, A^{1,0}_{J})$.
\begin{proposition}
The space of trace connections for a line bundle $\Lcal$ on $\Xcal$, of relative degree 0, is a torsor under $\Inv(J)^{(1,0)}$.
\end{proposition}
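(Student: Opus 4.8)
The plan is to exploit the previous proposition, which identifies a trace connection for $\Lcal$ with a single connection $\nabla_{\mathcal{P}}$ on the Deligne pairing $\langle p^{\ast}\Lcal,\mathcal{P}\rangle$ that is compatible with the holomorphic structure and satisfies the additivity constraint \eqref{eq:8}. Under this dictionary the assertion becomes a statement about the set $\mathcal{T}$ of such \emph{universal} trace connections, and it suffices to show that $\mathcal{T}$ is a nonempty principal homogeneous space under $\Inv(J)^{(1,0)}$. Nonemptiness is already in hand: by Theorem \ref{theorem:char-trace} and the remark following it, the Chern connection of a fiberwise-flat hermitian metric on $\Lcal$ produces a connection satisfying (WR) universally, hence a trace connection, hence an element of $\mathcal{T}$.

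Next I would analyze the difference of two elements of $\mathcal{T}$. Given $\nabla_{\mathcal{P}},\nabla_{\mathcal{P}}^{\prime}\in\mathcal{T}$, both being connections on the same holomorphic line bundle and both compatible with its holomorphic structure, their difference is a uniquely determined global smooth $(1,0)$-form $\theta=\nabla_{\mathcal{P}}-\nabla_{\mathcal{P}}^{\prime}$ on $J$. Writing \eqref{eq:8} for each of the two connections and subtracting, the tensor-product terms collapse and one is left with
\[
\mu^{\ast}\theta=p_{1}^{\ast}\theta+p_{2}^{\ast}\theta,
\]
so that $\theta\in\Inv(J)^{(1,0)}$; this gives transitivity of the natural action $\nabla_{\mathcal{P}}\mapsto\nabla_{\mathcal{P}}+\theta$. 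Conversely, for any $\theta\in\Inv(J)^{(1,0)}$ and any $\nabla_{\mathcal{P}}\in\mathcal{T}$, the connection $\nabla_{\mathcal{P}}+\theta$ is still compatible with the holomorphic structure (its $(0,1)$-part is unchanged, since $\theta$ has type $(1,0)$) and still obeys \eqref{eq:8}, precisely because $\theta$ is translation invariant. Thus the action is well defined, and freeness is immediate: if $\nabla_{\mathcal{P}}+\theta=\nabla_{\mathcal{P}}$ then $\theta=0$, as the difference of two connections determines the form uniquely.

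The only real content lies in the equivalence between the additivity axiom \eqref{eq:8} and the translation invariance of the difference form; but, as indicated, both implications follow at once by pulling the relevant connections back along $\mu$, $p_{1}$, and $p_{2}$ and comparing, so I do not anticipate a genuine obstacle here. The points that merit care are rather bookkeeping: verifying that $\theta$ is globally defined on the total space $J$ (which is guaranteed because $\nabla_{\mathcal{P}}$ and $\nabla_{\mathcal{P}}^{\prime}$ are globally defined connections on a single line bundle), and ensuring that ``torsor'' is understood in the strong sense by invoking the nonemptiness established in the first step.
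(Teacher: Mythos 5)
Your proposal is correct and follows essentially the same route as the paper: the paper likewise works through the preceding proposition's dictionary, observing that two universal connections $\nabla_{\mathcal{P}}$, $\nabla_{\mathcal{P}}^{\prime}$ on $\langle p^{\ast}\Lcal,\mathcal{P}\rangle$ (both compatible with the holomorphic structure) differ by a global smooth $(1,0)$-form $\theta$ on $J$, and that the additivity constraint \eqref{eq:8} translates exactly into $\mu^{\ast}\theta=p_{1}^{\ast}\theta+p_{2}^{\ast}\theta$, i.e. $\theta\in\Inv(J)^{(1,0)}$. The only (minor) divergence concerns non-emptiness, which the paper defers to the subsequent Structure Theorem and settles by noting that $\langle p^{\ast}\Lcal,\mathcal{P}\rangle$ has relative degree $0$ on $J$ and so admits a unitary connection compatible with the addition law, whereas you invoke the fiberwise-flat Chern connection on $\Lcal$ together with Theorem \ref{theorem:char-trace} — an argument the paper itself endorses in the remark following that theorem.
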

It remains to consider the problem of existence. The line bundle $\langle p^{\ast}\Lcal,\mathcal{P}\rangle$ on $J$ is rigidified along the zero section and compatible with the relative addition law on $J$. In particular, $\langle p^{\ast}\Lcal,\mathcal{P}\rangle$ lies in $J^{\vee}(S)$, the $S$-valued points of the dual abelian scheme to $J$. Equivalently, it is a line bundle on $J$ of relative degree 0. Then $\langle p^{\ast}\Lcal,\mathcal{P}\rangle$ admits a smooth unitary connection compatible with the addition law. This connection is compatible with the holomorphic structure. We thus arrive at the following theorem.
\begin{theorem}[\sc Structure theorem] \label{theorem:existence-unicity}
The space of trace connections for $\Lcal$, and thus of rigidified connections on $\Lcal$ satisfying (WR) universally, is a nonempty torsor under $\Inv(J)^{(1,0)}$.
\end{theorem}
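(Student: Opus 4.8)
The plan is to reduce the entire statement to a question about a single line bundle on the relative Jacobian, and then to treat uniqueness (the torsor structure) and existence (non-emptiness) separately. By the reformulation in terms of Poincar\'e bundles proved above, once $\Lcal$ has relative degree $0$, a trace connection for $\Lcal$ is the same datum as a connection $\nabla_{\mathcal P}$ on $\mathcal{L}_0:=\langle p^{\ast}\Lcal,\mathcal P\rangle$ over $J$, compatible with the holomorphic structure and satisfying the additivity \eqref{eq:8}. Via Theorem \ref{theorem:char-trace} this is in turn the same as a rigidified connection on $\Lcal$ satisfying (WR) universally, so it suffices to analyze the space of additive connections on the fixed bundle $\mathcal{L}_0$.

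For the torsor structure I would argue exactly as in the Proposition immediately preceding the theorem: two connections $\nabla_{\mathcal P}$, $\nabla_{\mathcal P}'$ compatible with the holomorphic structure have the same $(0,1)$-part $\dbar$, hence differ by a global smooth $(1,0)$-form $\theta$ on $J$; subtracting the two instances of \eqref{eq:8} forces $\mu^{\ast}\theta=p_1^{\ast}\theta+p_2^{\ast}\theta$, i.e.\ $\theta\in\Inv(J)^{(1,0)}$, and conversely adding any such invariant form to an additive connection preserves \eqref{eq:8}. Thus $\Inv(J)^{(1,0)}$ acts freely and transitively, and the entire remaining content is to exhibit one additive connection.

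For existence, the first step is to recognize $\mathcal{L}_0$ as a relatively degree-$0$ bundle on $J\to S$. The rigidification of $\mathcal P$ along $\widetilde\sigma$ rigidifies $\mathcal{L}_0$ along the zero section, while the cube isomorphism \eqref{eq:7}, combined with the bi-additivity of the Deligne pairing, yields $\mu^{\ast}\mathcal{L}_0\simeq p_1^{\ast}\mathcal{L}_0\otimes p_2^{\ast}\mathcal{L}_0$. Hence $\mathcal{L}_0$ defines an $S$-point of the dual abelian scheme $J^{\vee}$. I would then invoke the fact that a relatively degree-$0$ line bundle on an abelian scheme carries a canonical flat unitary connection: fiberwise, by Appell--Humbert, the corresponding point of $J^{\vee}$ is a unitary character of $\pi_1$, and hence a flat hermitian metric whose Chern connection is flat and translation invariant. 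Being the Chern connection of a hermitian metric, it is automatically compatible with the holomorphic structure, and its translation invariance is precisely the cube-type additivity \eqref{eq:8}. (Alternatively, non-emptiness already follows from the Chern connection of any smooth metric on $\Lcal$, which satisfies (WR) universally by Example \ref{ex:chern-reciprocity} together with the naturality of Chern connections under base change.)

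The step I expect to be the main obstacle is promoting these last compatibilities from a single abelian variety to the relative setting over $S$: one must verify that the fiberwise flat unitary connection varies smoothly over $S$, that its invariance under translation along the fibers of $J\times_S J\to S$ produces the global identity \eqref{eq:8} and not merely a fiberwise equality, and that the construction is insensitive to the auxiliary rigidification data. Granting this, combining the connection just produced with the torsor structure established above shows that the space of trace connections for $\Lcal$ --- equivalently, by Theorem \ref{theorem:char-trace}, the space of rigidified connections on $\Lcal$ satisfying (WR) universally --- is a nonempty torsor under $\Inv(J)^{(1,0)}$, which is the assertion.
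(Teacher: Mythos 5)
Your proposal follows essentially the same route as the paper: the reduction to an additive connection $\nabla_{\mathcal{P}}$ on $\langle p^{\ast}\Lcal,\mathcal{P}\rangle$ via the preceding Proposition, the torsor structure from the difference form $\theta$ satisfying $\mu^{\ast}\theta=p_{1}^{\ast}\theta+p_{2}^{\ast}\theta$, and existence by viewing $\langle p^{\ast}\Lcal,\mathcal{P}\rangle$ as an $S$-point of $J^{\vee}$ and equipping it with a smooth unitary connection compatible with the addition law (your Appell--Humbert justification, and the relative-smoothness caveat you flag, simply fill in details the paper leaves terse). Your parenthetical alternative via the fiberwise-flat Chern connection on $\Lcal$ and Example \ref{ex:chern-reciprocity} is likewise the observation the paper itself records in the remark following Theorem \ref{theorem:char-trace}.
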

In the Section \ref{section:proofs} we will provide a constructive approach to Theorem \ref{theorem:existence-unicity}. 
\begin{remark}
For the amateurs of stacks, we sketch  a restatement of Theorem \ref{theorem:char-trace} in a categorical language. Let us consider the category $(Sch/S)$ of schemes over $S$, endowed with the \'etale topology, and the following categories fibered in groupoids over $(Sch/S)$:
\begin{enumerate}
	\item $\PICRIG^{0}(\Xcal/S)$ is the category of line bundles on (base changes of) $\Xcal$, of relative degree 0 and rigidified along $\sigma$. This is where our line bundle $\Lcal$ naturally lives;
	\item $\Pic^{0}(\Xcal/S)$ is the usual Picard functor of relative degree 0 line bundles, modulo algebraic equivalence;
	\item $\PIC(S)$ is the groupoid of line bundles on schemes over $S$.
\end{enumerate}
A similar strategy as in the proof of Theorem \ref{theorem:char-trace} shows that Deligne's pairing provides an isomorphism
\begin{displaymath}
	\begin{split}
		\PICRIG(\Xcal/S)&\overset{\sim}{\longrightarrow}\HOM(\Pic^{0}(\Xcal/S),\PIC(S))\\
		\Lcal &\longmapsto (\Mcal\mapsto\langle\Lcal,\Mcal\rangle).
	\end{split}
\end{displaymath}
For simplicity, we have ignored base changes. Let us now restrict these groupoids to the full subcategory of $(Sch/S)$ formed by smooth schemes over $\mathbb{C}$ (not necessarily smooth over $S$). There are variants of our groupoids that implement connections:
\begin{enumerate}
	\item $\PICRIG^{(WR)}(\Xcal/S)$ is the category of line bundles on (base changes of) $\Xcal$, of relative degree $0$ and rigidified along $\sigma$, equipped with compatible rigidified connections satisyfing (WR) universally;
	\item $\PIC^{\sharp}(S)$ the groupoid of line bundles with compatible connections.
\end{enumerate}
In these groupoids, natural arrows are by definition flat isomorphisms. The content of Theorem \ref{theorem:char-trace} (for line bundles of relative degree 0) is that Deligne's pairing establishes an isomorphism:
\begin{displaymath}
	\begin{split}
		\PICRIG^{(WR)}(\Xcal/S)&\overset{\sim}{\longrightarrow}\HOM(\Pic^{0}(\Xcal/S),\PIC^{\sharp}(S))\\
			(\Lcal,\nabla)&\longmapsto (\Mcal\mapsto (\langle\Lcal,\Mcal\rangle,\nabla_{\Mcal})).
	\end{split}
\end{displaymath}
Implicit in this statement  is the extra property satisfied by trace connections shown in Lemma \ref{lemma:trivial-connection}. It is in the application of the lemma that we need the relative degree 0 assumption on $\Lcal$.
\end{remark}

\subsection{Intersection connections}\label{subsection:intersection}
We continue with the  notation of the previous sections. In Theorem \ref{theorem:char-trace} we have related rigidified connections on $\Lcal$ satisfying (WR) universally and trace connections for $\Lcal$, as equivalent notions. We have also given a structure theorem for the space of such objects (Theorem \ref{theorem:existence-unicity}). There is, of course,  a lack of symmetry in the definition of a trace connection $\Mcal\mapsto\nabla_{\Mcal}$, since the holomorphic line bundles $\Mcal$ require no extra structure. In this section, we show that given a relatively flat connection on $\Lcal$ satisfying (WR) universally, we can build an \emph{intersection connection} ``against'' line bundles with connections $(\Mcal,\nabla^{\prime})$. Moreover, if $\nabla'$ is relatively flat and also satisfies (WR), then the resulting connection is symmetric with respect to the symmetry of the Deligne pairing.
In the following, $F_{\nabla}$ stands for the curvature of a connection $\nabla$.

\begin{theorem}\label{theorem:int-connection}
Let $\nabla$  be a  connection on $\Lcal\to\Xcal$ satisfying (WR), not necessarily rigidified, and such that its vertical projection $\nabla_{\Xcal/S}$ is flat. Let $\nabla'$ be a connection on another line bundle $\Mcal\to \Xcal$ of arbitrary relative degree. Then:
\begin{enumerate}
\item on the Deligne pairing $\langle\Lcal,\Mcal\rangle$, the following rule defines a connection compatible with the holomorphic structure:
\begin{equation} \label{eqn:int-connection}
	\frac{D\langle\ell,m\rangle}{\langle\ell,m\rangle}=\frac{i}{2\pi}\pi_{\ast}\left(\frac{\nabla^{\prime}m}{m}\wedge F_{\nabla}\right)+\tr_{\Div m/S}\left(\frac{\nabla\ell}{\ell}\right);
\end{equation}
\item if both connections $\nabla$ and $\nabla^{\prime}$ are unitary, then $D$ is the Chern connection of the corresponding metrized Deligne pairing;
\item if $\nabla$ satisfies (WR) universally, then the construction of $D$ is compatible with base change and coincides with a trace connection when restricted to line bundles with unitary connections $(\Mcal,\nabla^{\prime})$.
\end{enumerate}
\end{theorem}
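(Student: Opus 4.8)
The plan is to check that the prescription \eqref{eqn:int-connection}, extended to the free $\mathcal{C}^\infty(S)$-module on the symbols by the Leibniz rule \eqref{eqn:leibniz}, is compatible with the two defining relations \eqref{eqn:relations}; it is then a connection by construction. Write the right-hand side of \eqref{eqn:int-connection} as $C(\ell,m)+T(\ell,m)$, with $C(\ell,m)=\tfrac{i}{2\pi}\pi_\ast(\tfrac{\nabla'm}{m}\wedge F_\nabla)$ and $T(\ell,m)=\tr_{\Div m/S}(\tfrac{\nabla\ell}{\ell})$. The relation $\ell\mapsto f\ell$ is immediate: $C$ is independent of $\ell$ and $F_\nabla$ is intrinsic, so $C$ is unchanged, while $T$ changes by $\tr_{\Div m/S}(\tfrac{df}{f})=d\log N_{\Div m/S}(f)$ by Leibniz on $\Lcal$ and Lemma \ref{lem:trace-norm}, matching \eqref{eqn:relations} exactly as in Lemma \ref{lem:leibniz}. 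Compatibility with the holomorphic structure is a type count: $\tfrac{\nabla\ell}{\ell}$ and $\tfrac{\nabla'm}{m}$ are $(1,0)$-forms (holomorphic sections, compatible connections), so $T$ is $(1,0)$; and in $\tfrac{\nabla'm}{m}\wedge F_\nabla$ only the $(2,1)$-piece $\tfrac{\nabla'm}{m}\wedge F_\nabla^{1,1}$ survives integration over the one-dimensional fibers, so $C$ is $(1,0)$ as well.

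The substance of (i), and the step I expect to be hardest, is the relation $m\mapsto fm$. Since $\nabla'(fm)/fm=\tfrac{df}{f}+\tfrac{\nabla'm}{m}$ and $\Div(fm)=\Div f+\Div m$, compatibility with \eqref{eqn:relations} is equivalent to the reciprocity identity
\[
\tr_{\Div f/S}\!\Big(\tfrac{\nabla\ell}{\ell}\Big)+\tfrac{i}{2\pi}\pi_\ast\!\Big(\tfrac{df}{f}\wedge F_\nabla\Big)=\tr_{\Div\ell/S}\!\Big(\tfrac{df}{f}\Big),
\]
i.e.\ $I(f,\ell,\nabla)=\tfrac{1}{2\pi i}\pi_\ast(\tfrac{df}{f}\wedge F_\nabla)$ with $I$ the Weil-reciprocity defect of \S\ref{subsection:metrics-connections}. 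I would establish this as an identity of currents on $\Xcal$. Setting $\alpha=\tfrac{\nabla\ell}{\ell}$, $\beta=\tfrac{df}{f}$, Poincar\'e--Lelong gives $d[\beta]=2\pi i\,[\Div f]$ and $d[\alpha]=F_\nabla+2\pi i\,[\Div\ell]$, while $\tr_{\Div f/S}(\alpha)=\pi_\ast([\Div f]\wedge\alpha)$ and likewise for the other trace. Substituting and using $d(\beta\wedge\alpha)=d\beta\wedge\alpha-d\alpha\wedge\beta$ together with $\pi_\ast d=d\pi_\ast$ (fibers compact, no boundary) yields $I(f,\ell,\nabla)=\tfrac{1}{2\pi i}\,d\,\pi_\ast(\beta\wedge\alpha)+\tfrac{1}{2\pi i}\pi_\ast(\beta\wedge F_\nabla)$. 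The key observation is that $\beta\wedge\alpha=\tfrac{df}{f}\wedge\tfrac{\nabla\ell}{\ell}$ is of type $(2,0)$, hence integrates to zero over the one-dimensional fibers; the boundary term drops and the identity follows. The delicate point to verify is that the integration by parts produces no residue along the (disjoint, finite \'etale) polar divisors: this holds because the poles of $\alpha,\beta$ are logarithmic — $\varepsilon$-tubes contribute $O(\varepsilon\log\varepsilon)$ — and $\alpha$ is smooth along $\Div f$, $\beta$ along $\Div\ell$. When $\nabla$ satisfies (WR) one has $I\equiv 0$, so $C$ and $T$ are then separately relation-invariant, but the reciprocity law shows the sum is well defined in any case.

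For (ii) I would compute from Deligne's metric \eqref{eqn:deligne-metric}: with $\nabla,\nabla'$ the Chern connections of $h,k$, one has $\log\Vert\langle\ell,m\rangle\Vert^2=\pi_\ast(\log\Vert m\Vert^2\,c_1(\Lcal,h)+\log\Vert\ell\Vert^2\,\delta_{\Div m})$ and $c_1(\Lcal,h)=\tfrac{i}{2\pi}F_\nabla$. Applying $\partial$, and using $\partial F_\nabla=0$ for a unitary connection, $\tfrac{\nabla\ell}{\ell}=\partial\log\Vert\ell\Vert^2$, $\tfrac{\nabla'm}{m}=\partial\log\Vert m\Vert^2$, reproduces exactly the right-hand side of \eqref{eqn:int-connection}; as $D$ is $(1,0)$ and compatible with the holomorphic structure, it is the Chern connection of the metrized pairing. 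For (iii), compatibility with base change $p\colon T\to S$ is routine naturality, since $\tfrac{\nabla\ell}{\ell}$, $\tfrac{\nabla'm}{m}$, $F_\nabla$, the trace and the fiber integral $\pi_\ast$ all commute with pull-back, as do the Deligne symbols. Finally, for unitary $(\Mcal,\nabla')$ the assignment $\Mcal\mapsto D$ satisfies functoriality, additivity and compatibility with isomorphisms, so by Theorem \ref{theorem:char-trace} it is the trace connection of a rigidified (WR) connection; to identify this connection with $\nabla$ — equivalently, to see that the correction $C$ drops out — I would use relative flatness, which gives $\pi_\ast(\log\Vert m\Vert^2 F_\nabla)=0$, and integrate by parts on the fibers, invoking (WR) for $\nabla$ to absorb the residual $(2,0)$-curvature contribution, so that $D$ reduces to $\tr_{\Div m/S}(\tfrac{\nabla\ell}{\ell})$, the trace connection determined by $\nabla$.
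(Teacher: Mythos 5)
Your treatment of parts (i) and (ii) is correct, and part (i) follows a genuinely different route from the paper's. The paper uses (WR) at the outset to cancel the trace terms under $m\mapsto fm$, reducing well-definedness to the vanishing $\pi_\ast\bigl(\tfrac{df}{f}\wedge F_{\nabla}\bigr)=0$ (its \eqref{eq:vanishing-integral}), which it then proves by writing $\nabla=\nabla_{ch}+\theta$ and combining $\partial$-closedness and fiberwise flatness of $F_{\nabla_{ch}}$ with the Weil vanishing property of $\theta$. You instead prove the exact identity $I(f,\ell,\nabla)=\tfrac{1}{2\pi i}\pi_\ast\bigl(\tfrac{df}{f}\wedge F_{\nabla}\bigr)$ by Poincar\'e--Lelong and Stokes for currents with disjoint singular supports, plus the type observation $\pi_\ast\bigl(\tfrac{df}{f}\wedge\tfrac{\nabla\ell}{\ell}\bigr)=0$. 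Since this identity is precisely the compatibility condition for $m\mapsto fm$ (the signs do match: $\tfrac{1}{2\pi i}=-\tfrac{i}{2\pi}$), it shows that the rule \eqref{eqn:int-connection} is consistent for an \emph{arbitrary} compatible connection; (WR) and vertical flatness are not needed for this step. This is stronger and arguably more illuminating than the paper's argument: it exhibits the curvature correction as the exact compensation of the reciprocity defect $I$, explains why $I$ is independent of $\ell$, and recovers \eqref{eq:vanishing-integral} as a corollary when $I=0$. The current-theoretic manipulations are of exactly the kind the paper itself allows (see the proof of Proposition \ref{prop:sym-int-conn}), and your $\varepsilon$-tube remark is what justifies the Leibniz rule there as well. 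Your type count for holomorphicity (using $F_{\nabla}^{0,2}=0$) and your explicit computation in (ii), which the paper leaves as ``follows by construction,'' are likewise correct.

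Part (iii), however, has a genuine gap at the final identification. Base change is indeed routine, but you then want $D$ to reduce to $\tr_{\Div m/S}\bigl(\tfrac{\nabla\ell}{\ell}\bigr)$ for unitary $(\Mcal,\nabla')$, i.e.\ the vanishing of the correction $\tfrac{i}{2\pi}\pi_\ast\bigl(\tfrac{\nabla'm}{m}\wedge F_{\nabla}\bigr)$, and you propose to obtain this by fiberwise integration by parts, ``invoking (WR) to absorb the residual curvature contribution.'' If one actually carries out that integration by parts (with $\nabla=\nabla_{ch}+\theta$ and $\tfrac{\nabla'm}{m}=\partial\log\Vert m\Vert^2$), what remains is $2\pi i\,\tr_{\Div m/S}(\theta)+\pi_\ast\bigl(F_{\nabla'}\wedge\theta\bigr)$ --- exactly \eqref{eqn:M-reciprocity} --- and (WR) kills neither term: the Weil vanishing property of $\theta$ concerns divisors of rational functions, and $\Div m$ is not such a divisor. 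The mutual cancellation of these two terms is the entire content of the (unnumbered) proposition of Section \ref{sec:relation}, whose proof differentiates the reciprocity law \eqref{eqn:reciprocityI} in the family, uses the Gauss--Manin formalism and the bilinear relations, and relies on $\nabla'$ being the \emph{fiberwise flat} Chern connection of Lemma \ref{lem:chern}, not an arbitrary unitary connection. Relatedly, your appeal to Theorem \ref{theorem:char-trace} presupposes that $D$ depends only on $\Mcal$ and not on the chosen unitary $\nabla'$; a conformal change of metric by $e^{\varphi}$ changes the correction term by $\tfrac{i}{2\pi}\pi_\ast\bigl(\dbar\partial\varphi\wedge\theta\bigr)$, which is not obviously zero, so this independence is itself equivalent to the same nontrivial vanishing. (In fairness, the paper's own proof declares (iii) ``immediate'' and defers the substance to Section \ref{sec:relation}; but your sketch, as written, does not close this step.)
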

\begin{proof}
To justify that the rule $D$ defines a connection, it is enough to show the compatibility between Leibniz' rule and the relations defining the Deligne pairing. One readily checks the compatibility for the change $\ell\mapsto f\ell$ for $f$ a rational function. For the change $m\mapsto fm$, the trace term in the definition of $D$ already satisfies (WR). We thus have to show the invariance of the fiber integral under the change $m\mapsto fm$, or equivalently
\begin{equation}\label{eq:vanishing-integral}
	\pi_{\ast}\left(\frac{df}{f}\wedge F_{\nabla}\right)=0.
\end{equation}
It will be useful to compare $\nabla$ to the Chern connection $\nabla_{ch}$ on $\Lcal$, which is relatively flat (see Example \ref{ex:chern}; the rigidification is irrelevant for this discussion). The connection $\nabla_{ch}$ also satisfies (WR) (see Example \ref{ex:chern-reciprocity}). We write
\begin{displaymath}
	\nabla=\nabla_{ch}+\theta,\quad F_{\nabla}=F_{\nabla_{ch}}+d\theta.
\end{displaymath}
Then $\theta$ is of type $(1,0)$ and has vanishing trace along rational divisors (we call this the \emph{Weil vanishing property}). We exploit this fact, together with the observation that since $F_{\nabla_{ch}}$ is of type $(1,1)$ it is $\partial$-closed. Write:
\begin{align}
		\pi_{\ast}\left(\frac{df}{f} \wedge F_{\nabla}\right)&=\pi_{\ast}\left(\frac{df}{f} \wedge F_{\nabla_{ch}}\right)
		+\pi_{\ast}\left(\frac{df}{f}\wedge d\theta\right) \notag \\
		&=\partial\pi_{\ast}(\log|f|^{2} F_{\nabla_{ch}})
		+\pi_{\ast}\left(\frac{df}{f}\wedge d\theta\right). \label{eq:vanishing-integral-2}
\end{align}
Because $F_{\nabla_{ch}}$ is  flat on fibers,
\begin{equation}\label{eq:vanishing-integral-3}
	\pi_{\ast}(\log|f|^{2} F_{\nabla_{ch}})=0.
\end{equation}
Furthermore, by the Poincar\'e-Lelong formula for currents on $\Xcal$
\begin{displaymath}
	\overline{\partial}\partial\log |f|^{2}=2\pi i\cdot\delta_{\Div f}.
\end{displaymath}
Hence, by type considerations,
\begin{align}
	\pi_{\ast}\left(\frac{df}{f}\wedge d\theta\right)&=
	\pi_{\ast}\left(\partial\log|f|^2\wedge \dbar\theta\right) \notag \\
	&= \pi_\ast\left(\dbar\partial \log|f|^2\wedge\theta\right)-\dbar\pi_{\ast}\left(\partial\log|f|^2\wedge \theta\right)\notag \\
	&=
	2\pi i\tr_{\Div f/S}(\theta)
	-\dbar\pi_{\ast}\left(\partial\log|f|^2\wedge \theta\right)\ .\label{eq:vanishing-integral-4}
\end{align}
The trace term vanishes by the Weil vanishing property of $\theta$, and the second term vanishes because the integrand it is of type $(2,0)$ and $\pi$ reduces types by $(1,1)$. Combining \eqref{eq:vanishing-integral-2}--\eqref{eq:vanishing-integral-4} we conclude with the desired \eqref{eq:vanishing-integral}.

The second assertion follows by construction of $D$, and the third  item is immediate.
\end{proof}
\begin{definition}[\sc Intersection Connection] \label{def:intersection-connection}
The connection $\nabla^{int}_{ \langle\Lcal,\Mcal\rangle}:= D$ on $\langle\Lcal,\Mcal\rangle$ constructed in Theorem \ref{theorem:int-connection} is called \emph{the intersection connection attached to $(\Lcal,\nabla)$ and $(\Mcal,\nabla^{\prime})$}. We write $\langle (\Lcal,\nabla),(\Mcal,\nabla^{\prime})\rangle$ for the Deligne pairing of $\Lcal$ and $\Mcal$ with the intersection connection.
\end{definition}

We have the following
\begin{proposition} \label{prop:curv-inter}
The curvature of the intersection connection $\nabla^{int}_{ \langle\Lcal,\Mcal\rangle}$ attached to $(\Lcal,\nabla)$ and $(\Mcal,\nabla^{\prime})$ is given by
\begin{displaymath}
	F_{\nabla^{int}_{ \langle\Lcal,\Mcal\rangle}}=\frac{i}{2\pi}\pi_{\ast}(F_{\nabla}\wedge F_{\nabla^{\prime}}).
\end{displaymath}
\end{proposition}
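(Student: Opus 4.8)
The curvature of the line-bundle connection $\nabla^{int}_{\langle\Lcal,\Mcal\rangle}=D$ is computed as $dA$, where $A$ is the connection $1$-form of $D$ in a local frame $\langle\ell,m\rangle$, i.e. the right-hand side of \eqref{eqn:int-connection}. So the plan is to differentiate that expression directly and to show that the two ``trace'' contributions cancel, leaving only the fiber integral of $F_{\nabla}\wedge F_{\nabla'}$. Write $\alpha=\nabla\ell/\ell$ and $\beta=\nabla'm/m$ for the connection forms of $\nabla$ and $\nabla'$ in the frames $\ell$, $m$; away from the divisors these satisfy $d\alpha=F_{\nabla}$ and $d\beta=F_{\nabla'}$, and $A=\frac{i}{2\pi}\pi_{\ast}(\beta\wedge F_{\nabla})+\tr_{\Div m/S}(\alpha)$.

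For the trace term I would use that $\Div m$ is finite \'etale over $S$, so $\tr_{\Div m/S}$ is inverse to an \'etale pullback and therefore commutes with $d$; hence $d\,\tr_{\Div m/S}(\alpha)=\tr_{\Div m/S}(d\alpha)=\tr_{\Div m/S}(F_{\nabla})$. For the fiber-integral term the essential input is a current computation. Regarding $\beta$ as a current on $\Xcal$, its singular part along $\Div m$ is $\partial\log|m|^{2}$ in a local holomorphic frame, so the Poincar\'e--Lelong formula $\dbar\partial\log|f|^{2}=2\pi i\,\delta_{\Div f}$ (already used in the proof of Theorem \ref{theorem:int-connection}) gives $d\beta=F_{\nabla'}+2\pi i\,\delta_{\Div m}$ as currents. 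Combining this with the Bianchi identity $dF_{\nabla}=0$ yields $d(\beta\wedge F_{\nabla})=F_{\nabla'}\wedge F_{\nabla}+2\pi i\,\delta_{\Div m}\wedge F_{\nabla}$. Since $\pi$ is proper, $d$ commutes with fiber integration on currents, and $\pi_{\ast}(\delta_{\Div m}\wedge F_{\nabla})=\tr_{\Div m/S}(F_{\nabla})$ because integration against $\delta_{\Div m}$ restricts $F_{\nabla}$ to the horizontal divisor and pushes it forward along the finite \'etale map $\Div m\to S$. Thus
\[
d\,\pi_{\ast}(\beta\wedge F_{\nabla})=\pi_{\ast}(F_{\nabla'}\wedge F_{\nabla})+2\pi i\,\tr_{\Div m/S}(F_{\nabla}).
\]

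Assembling the two pieces and using $\frac{i}{2\pi}\cdot 2\pi i=-1$, the contribution $\frac{i}{2\pi}\cdot 2\pi i\,\tr_{\Div m/S}(F_{\nabla})=-\tr_{\Div m/S}(F_{\nabla})$ exactly cancels the term $\tr_{\Div m/S}(F_{\nabla})$ coming from differentiating the second summand of $A$. What remains is $dA=\frac{i}{2\pi}\pi_{\ast}(F_{\nabla'}\wedge F_{\nabla})=\frac{i}{2\pi}\pi_{\ast}(F_{\nabla}\wedge F_{\nabla'})$, since the two curvatures are of even degree and hence commute under the wedge; this is the asserted formula. Independence of the frame $\langle\ell,m\rangle$ is automatic, as $D$ is already known to be a well-defined connection by Theorem \ref{theorem:int-connection}, so $dA$ is a globally defined $2$-form.

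The step I expect to be the main obstacle is the current-theoretic bookkeeping: justifying rigorously that $d\beta$ acquires exactly the residue $2\pi i\,\delta_{\Div m}$, that fiber integration commutes with $d$ across the singularities of $\beta$ along the horizontal divisor, and that $\pi_{\ast}(\delta_{\Div m}\wedge F_{\nabla})$ equals $\tr_{\Div m/S}(F_{\nabla})$ with the correct sign and normalization. An alternative that sidesteps distributions is to imitate the proof of Theorem \ref{theorem:int-connection}: write $\nabla=\nabla_{ch}+\theta$ with $\theta$ of type $(1,0)$ and enjoying the Weil vanishing property, integrate by parts on the complement of shrinking tubular neighborhoods of $\Div m$, and track the boundary residues explicitly as the neighborhoods shrink, so that the $\theta$-contribution to the trace again cancels against the fiber-integral residue.
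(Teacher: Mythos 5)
Your proposal is correct and follows essentially the same route as the paper's own proof: the paper likewise treats $\nabla'm/m$ as a current satisfying the Poincar\'e--Lelong type equation $\tfrac{i}{2\pi}dT+\delta_{\Div m}=\tfrac{i}{2\pi}F_{\nabla'}$ (your $d\beta=F_{\nabla'}+2\pi i\,\delta_{\Div m}$), commutes $d$ with $\pi_\ast$, uses $d\,\tr_{\Div m/S}(\nabla\ell/\ell)=\tr_{\Div m/S}(F_{\nabla})$, and cancels the two trace terms exactly as you do. The only difference is presentational: the paper asserts the current identity as ``a routine consequence of Stokes' theorem,'' while you spell out the local verification and note the regularization issues.
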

\begin{proof}
Let $\langle\ell,m\rangle$ be a symbol providing a local frame for the Deligne pairing $\langle\Lcal,\Mcal\rangle$ (after possibly shrinking $S$). The current $T$ of integration against $\nabla^{\prime}m/m$ satisfies a Poincar\'e-Lelong type equation
\begin{equation}\label{eq:Poincare-Lelong}
	\frac{i}{2\pi}dT+\delta_{\Div m}=\frac{i}{2\pi}F_{\nabla^{\prime}}.
\end{equation}
This is a routine consequence of Stokes' theorem. Therefore, we find an equality
\begin{equation}\label{eq:int-conn-1}
	\frac{i}{2\pi}d\pi_{\ast}\left(\frac{\nabla^{\prime}m}{m}\wedge F_{\nabla}\right)=\frac{i}{2\pi}\pi_{\ast}(F_{\nabla}\wedge F_{\nabla^{\prime}})-
	\tr_{\Div m/S}(F_{\nabla}).
\end{equation}
On the other hand
\begin{equation}\label{eq:int-conn-2}
	d\tr_{\Div m/S}\left(\frac{\nabla\ell}{\ell}\right)=\tr_{\Div m/S}(F_{\nabla}).
\end{equation}
From equations \eqref{eq:int-conn-1}--\eqref{eq:int-conn-2} and the definition of $\nabla^{int}_{ \langle\Lcal,\Mcal\rangle}$, we conclude
\begin{align*}
	F_{\nabla^{int}_{ \langle\Lcal,\Mcal\rangle}}&=d\left(\frac{D\langle\ell,m\rangle}{\langle\ell,m\rangle}\right) \\
	&=\frac{i}{2\pi}\pi_{\ast}(F_{\nabla}\wedge F_{\nabla^{\prime}})-
	\tr_{\Div m/S}(F_{\nabla})+\tr_{\Div m/S}(F_{\nabla})\\
	&=\frac{i}{2\pi}\pi_{\ast}(F_{\nabla}\wedge F_{\nabla^{\prime}}),
\end{align*}
as was to be shown.
\end{proof}
Intersection connections satisfy the expected behavior with respect to tensor product and flat isomorphisms. Furthermore, if $\Lcal$ and $\Mcal$ are line bundles endowed with connections $\nabla$, $\nabla^{\prime}$ that satisfy (WR), one might expect a symmetry of the intersection connections on $\langle\Lcal,\Mcal\rangle$ and $\langle\Mcal,\Lcal\rangle$, through the canonical isomorphism of Deligne pairings
\begin{equation}\label{eq:deligne-sym}
	\langle\Lcal,\Mcal\rangle\isorightarrow\langle\Mcal,\Lcal\rangle.
\end{equation}
This is indeed the case.
\begin{proposition}\label{prop:sym-int-conn}
Suppose the connections $\nabla$, $\nabla^{\prime}$ on $\Lcal$ and $\Mcal$ both satisfy (WR) and are relatively flat. Then, the symmetry isomorphism \eqref{eq:deligne-sym} is flat with respect to the intersection connections.
\end{proposition}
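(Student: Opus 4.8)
The plan is to verify that, under the canonical symmetry isomorphism \eqref{eq:deligne-sym}, which sends a local symbol $\langle\ell,m\rangle$ to $\langle m,\ell\rangle$, the two intersection connections have the same connection form in this common frame. Abbreviate $a=\nabla\ell/\ell$ and $b=\nabla'm/m$; both are of type $(1,0)$ away from their polar divisors, since $\nabla,\nabla'$ are compatible with the holomorphic structures and $\ell,m$ are holomorphic there. By \eqref{eqn:int-connection}, the connection form for $\langle\Lcal,\Mcal\rangle$ is $\tfrac{i}{2\pi}\pi_{\ast}(b\wedge F_{\nabla})+\tr_{\Div m/S}(a)$, while the connection form for $\langle\Mcal,\Lcal\rangle$ in the frame $\langle m,\ell\rangle$ is $\tfrac{i}{2\pi}\pi_{\ast}(a\wedge F_{\nabla'})+\tr_{\Div\ell/S}(b)$. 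Thus flatness of \eqref{eq:deligne-sym} reduces to the single identity
\[
\tfrac{i}{2\pi}\pi_{\ast}\!\left(b\wedge F_{\nabla}-a\wedge F_{\nabla'}\right)=\tr_{\Div\ell/S}(b)-\tr_{\Div m/S}(a),
\]
which I read as a Weil-type reciprocity law relating the two connections. Note both intersection connections exist in the first place precisely because $\nabla,\nabla'$ satisfy (WR) and are relatively flat, via Theorem \ref{theorem:int-connection}.

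The starting point is a pointwise identity of smooth forms on the complement of $\Div\ell\cup\Div m$: since $da=F_{\nabla}$ and $db=F_{\nabla'}$ in the frames $\ell,m$, the Leibniz rule gives $b\wedge F_{\nabla}-a\wedge F_{\nabla'}=-d(b\wedge a)$. I would then integrate this over the fibers of $\pi$. The primitive $b\wedge a$ is of type $(2,0)$, so its fiberwise integral vanishes identically for type reasons; consequently $\pi_{\ast}(b\wedge F_{\nabla}-a\wedge F_{\nabla'})$ is entirely a residue contribution, localized along the two finite \'etale divisors. The precise mechanism is the current-theoretic Poincar\'e--Lelong relation underlying \eqref{eq:Poincare-Lelong}: treating $b\wedge a$ as an $L^1_{loc}$ current, its distributional derivative $d[b\wedge a]$ differs from $[d(b\wedge a)]$ by residue currents supported on $\Div\ell$ and $\Div m$, while $\pi_{\ast}d[b\wedge a]=d\,\pi_{\ast}[b\wedge a]=0$ by the same $(2,0)$ type vanishing.

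The heart of the argument, and the step I expect to be the main obstacle, is the careful evaluation of these two residue currents with correct signs and constants. Near $\Div\ell$ the singular part of $a$ is $dg_\ell/g_\ell$ (with $g_\ell$ a local defining function) against a smooth $b$, and the relation $d[dg_\ell/g_\ell]=2\pi i\,\delta_{\Div\ell}$ produces the residue $-2\pi i\,b\wedge\delta_{\Div\ell}$; symmetrically, near $\Div m$ one obtains $+2\pi i\,\delta_{\Div m}\wedge a$, the opposite sign arising from the Leibniz rule because the singular factor now sits on the left. Pushing these forward, the localization of $\delta$ on a divisor finite \'etale over $S$ turns the fiber integral into the trace along that divisor, so $\pi_{\ast}(b\wedge\delta_{\Div\ell})=\tr_{\Div\ell/S}(b)$ and $\pi_{\ast}(\delta_{\Div m}\wedge a)=\tr_{\Div m/S}(a)$. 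Applying the prefactor $i/2\pi$ converts $-2\pi i$ and $+2\pi i$ into $+1$ and $-1$, so the residues assemble into exactly $\tr_{\Div\ell/S}(b)-\tr_{\Div m/S}(a)$, establishing the identity and hence the flatness. The delicate points are justifying Stokes in the presence of logarithmic singularities, the vanishing of the $(2,0)$ primitive under $\pi_{\ast}$, and the sign bookkeeping that makes the two residues cancel asymmetrically.

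As a consistency check one may note that Proposition \ref{prop:curv-inter} already gives $F_{\nabla^{int}_{\langle\Lcal,\Mcal\rangle}}=\tfrac{i}{2\pi}\pi_{\ast}(F_{\nabla}\wedge F_{\nabla'})=\tfrac{i}{2\pi}\pi_{\ast}(F_{\nabla'}\wedge F_{\nabla})=F_{\nabla^{int}_{\langle\Mcal,\Lcal\rangle}}$, so the two connections already have equal curvature and the difference of their connection forms is automatically closed; the residue computation above is what upgrades this to the exact vanishing needed for flatness of \eqref{eq:deligne-sym}.
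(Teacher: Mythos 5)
Your proof is correct and is essentially the same argument as the paper's: the paper's chain of equalities applies the Poincar\'e--Lelong relation \eqref{eq:Poincare-Lelong} to replace $F_{\nabla}$ by $dT_{\ell}$ plus a delta current, swaps $\pi_{\ast}\left(\tfrac{\nabla'm}{m}\wedge dT_{\ell}\right)=\pi_{\ast}\left(\tfrac{\nabla\ell}{\ell}\wedge dT_{m}\right)$ via Stokes for currents with disjoint wave front sets together with the $(2,0)$-type vanishing under $\pi_{\ast}$, and then applies Poincar\'e--Lelong in reverse for $T_{m}$ --- which is precisely your single identity $\pi_{\ast}d[b\wedge a]=0$ unpacked into residue contributions. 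Your sign and constant bookkeeping ($d[dg/g]=2\pi i\,\delta$, prefactor $i/2\pi$) agrees with the paper's conventions, so the two computations coincide term by term.
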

\begin{proof}
Let $\ell,m$ be a couple of sections providing bases elements $\langle\ell,m\rangle$ and $\langle m,\ell\rangle$ of $\langle\Lcal,\Mcal\rangle$ and $\langle\Mcal,\Lcal\rangle$, respectively. We denote by $T_{\ell}$ and $T_{m}$ the currents of integration against $\nabla\ell/\ell$ and $\nabla^{\prime}m/m$, respectively. These currents have disjoint wave front sets. The same holds for the Dirac currents $\delta_{\Div\ell}$ and $\delta_{\Div m}$, as well as $\delta_{\Div\ell}$ and $\nabla^{\prime} m/m$, etc. For currents with disjoint wave front sets, the usual wedge product rules and Stokes' formulas for differential forms remain true. Applying the Poincar\'e-Lelong type equations for $T_{\ell}$ and $T_{m}$ (see \eqref{eq:Poincare-Lelong}), we find the chain of equalities
\begin{displaymath}
	\begin{split}
		\frac{\nabla^{int}_{ \langle\Lcal,\Mcal\rangle}\langle\ell,m\rangle}{\langle\ell,m\rangle}&:=\frac{i}{2\pi}\pi_{\ast}\left(\frac{\nabla^{\prime}m}{m}\wedge F_{\nabla}\right)+\tr_{\Div m/S}\left(\frac{\nabla\ell}{\ell}\right)\\
		&=\frac{i}{2\pi}\pi_{\ast}\left(\frac{\nabla^{\prime}m}{m}\wedge dT_{\ell}\right)+\tr_{\Div\ell/S}\left(\frac{\nabla^{\prime}m}{m}\right)
		+\tr_{\Div m/S}\left(\frac{\nabla\ell}{\ell}\right)\\
		&=\frac{i}{2\pi}\pi_{\ast}\left(\frac{\nabla\ell}{\ell}\wedge dT_{m}\right)+\tr_{\Div m/S}\left(\frac{\nabla\ell}{\ell}\right)
		+\tr_{\Div\ell/S}\left(\frac{\nabla^{\prime}m}{m}\right)\\
		&=\frac{i}{2\pi}\pi_{\ast}\left(\frac{\nabla \ell}{\ell}\wedge F_{\nabla^{\prime}}\right)+\tr_{\Div\ell/S}\left(\frac{\nabla^{\prime} m}{m}\right)\\
		&=:\frac{\nabla^{int}_{ \langle\Mcal,\Lcal\rangle}\langle m,\ell\rangle}{\langle m,\ell\rangle}.
	\end{split}
\end{displaymath}
The proof is complete.
\end{proof}
For later use, it will be useful to study the change of intersection connections under change of connection on $\Lcal$.
\begin{proposition}\label{prop:indep-int-conn}
Assume that $\Mcal$ has relative degree 0. Let $\theta\in\Gamma(S,\Acal^{1,0}_{S})$. Then
\begin{displaymath}
	\langle (\Lcal,\nabla+\pi^{\ast}\theta),(\Mcal,\nabla^{\prime})\rangle
	=\langle(\Lcal,\nabla),(\Mcal,\nabla^{\prime})\rangle.
\end{displaymath}
\end{proposition}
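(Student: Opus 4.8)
The plan is to insert the shifted connection $\nabla+\pi^{\ast}\theta$ directly into the defining formula \eqref{eqn:int-connection} for the intersection connection and to verify that the resulting correction vanishes term by term. As a preliminary remark, since $\pi^{\ast}\theta$ is horizontal its vertical part is zero, so $(\nabla+\pi^{\ast}\theta)_{\Xcal/S}=\nabla_{\Xcal/S}$ is still flat; moreover $\tr_{\Div f/S}(\pi^{\ast}\theta)=0$ for any principal divisor $\Div f$ (its relative degree being $0$), whence $\nabla+\pi^{\ast}\theta$ again satisfies (WR). Thus the left-hand side is a bona fide intersection connection and the two sides may be compared.

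First I would record the effect of the shift on the two ingredients of \eqref{eqn:int-connection}. In a local frame $\ell$ for $\Lcal$ one has $(\nabla+\pi^{\ast}\theta)\ell/\ell=\nabla\ell/\ell+\pi^{\ast}\theta$, while the curvature changes by $F_{\nabla+\pi^{\ast}\theta}=F_{\nabla}+\pi^{\ast}d\theta$. Substituting into \eqref{eqn:int-connection}, the connection attached to $(\Lcal,\nabla+\pi^{\ast}\theta)$ differs from the one attached to $(\Lcal,\nabla)$ by the global $1$-form
$$
\frac{i}{2\pi}\,\pi_{\ast}\left(\frac{\nabla^{\prime}m}{m}\wedge\pi^{\ast}d\theta\right)+\tr_{\Div m/S}(\pi^{\ast}\theta),
$$
and since the difference of two connections on one line bundle is tensorial, it suffices to see that each summand is zero.

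For the first summand I would argue by the projection formula: $\pi_{\ast}\bigl(\tfrac{\nabla^{\prime}m}{m}\wedge\pi^{\ast}d\theta\bigr)=\pi_{\ast}\bigl(\tfrac{\nabla^{\prime}m}{m}\bigr)\wedge d\theta$, and the fiber integral of the degree-one current $\nabla^{\prime}m/m$ over the two-dimensional fibers lands in degree $-1$, hence vanishes. Equivalently, by a fiber/base bidegree count, $\pi^{\ast}d\theta$ has vertical degree $0$ while $\nabla^{\prime}m/m$ has vertical degree at most $1$, so the wedge never attains the vertical degree $2$ that $\pi_{\ast}$ must extract. As $d\theta$ is smooth, no wave-front subtlety arises in pairing it with the current $\nabla^{\prime}m/m$. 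I would stress that this summand vanishes for \emph{every} $\Mcal$, irrespective of its relative degree.

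The content of the proposition is therefore concentrated in the trace term, which is the step to handle with care. After localizing for the étale topology so that $\Div m=\sum_i n_i D_i$ is finite étale and splits into sheets, $\tr_{\Div m/S}$ restricts a form to $\Div m$ and sums the inverse-pullback contributions over the sheets, weighted by the multiplicities $n_i$. Restricting $\pi^{\ast}\theta$ to a sheet and inverting $\pi^{\ast}$ returns $\theta$ on that sheet, so $\tr_{\Div m/S}(\pi^{\ast}\theta)=\bigl(\sum_i n_i\deg(D_i/S)\bigr)\,\theta$. The coefficient is exactly the relative degree of $\Mcal$, which is $0$ by hypothesis; hence the trace term vanishes and the two intersection connections coincide. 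This is precisely the point at which the relative degree $0$ assumption on $\Mcal$ is indispensable.
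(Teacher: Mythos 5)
Your proof is correct and follows essentially the same route as the paper's: both reduce the claim to the vanishing of $\pi_{\ast}\bigl(\tfrac{\nabla^{\prime}m}{m}\wedge\pi^{\ast}d\theta\bigr)$ via the projection formula together with a fiber-degree/type count, and the vanishing of $\tr_{\Div m/S}(\pi^{\ast}\theta)=(\deg\Div m)\,\theta$ using the relative degree $0$ hypothesis. Your preliminary check that $\nabla+\pi^{\ast}\theta$ still satisfies (WR) (so that the left-hand side is a well-defined intersection connection) is a point the paper leaves implicit, but it is a minor addition rather than a different argument.
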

\begin{proof}
Let $\langle\ell,m\rangle$ be a local basis of the Deligne pairing. We observe that
\begin{displaymath}
	\pi_{\ast}\left(\frac{\nabla^{\prime}m}{m}\wedge d\left(\pi^{\ast}\theta\right)\right)
	=\pi_{\ast}\left(\frac{\nabla^{\prime}m}{m}\right)\wedge d\theta=0.
\end{displaymath}
The vanishing of the last fiber integral is obtained by counting types: $\nabla^{\prime}m/m$ is of type $(1,0)$ and $\pi$ reduces types by $(1,1)$. Also, because $\Div m$ is of degree 0, we have
\begin{displaymath}
	\tr_{\Div m/S}(\pi^{\ast}\theta)=(\deg\Div m)\theta=0.
\end{displaymath}
These observations together imply the proposition.
\end{proof}

\section{Proof of the Main Theorems}\label{section:proofs} 
We place ourselves in the setting of Theorems \ref{thm:main} and \ref{thm:intersection}. Hence, $\pi:\Xcal\rightarrow S$ is a smooth proper morphism of smooth quasi-projective complex varieties with connected fibers of relative dimension 1 and genus $g$. We suppose $\sigma:S\rightarrow\Xcal$ is a given section. Let $\Lcal$ be a line bundle on $\Xcal$, rigidified along $\sigma$. And let  $\nabla_{\Xcal/S}$ be a flat relative connection. As noted previously,  the restrictions of $\nabla_{\Xcal/S}$ to fibers are holomorphic connections. In this section we construct a global extension of $\nabla_{\Xcal/S}$ that satisfies (WR) universally and is rigidified along $\sigma$. By Theorem \ref{theorem:existence-unicity}, this is equivalent to attaching to $(\Lcal,\nabla_{\Xcal/S})$ a trace connection for $\Lcal$. It also gives rise to intersection connections. The construction is local on the base for the analytic topology, and makes use of Gauss-Manin invariants (deformation theory). This is carried out in Section \ref{sec:canonical}. The uniqueness of the extension is discussed in Section \ref{sec:unique}, and is based on a general vanishing lemma for differential forms satisfying a ``Weil vanishing property''. The local construction given in Section \ref{sec:canonical} is well suited to curvature computations of trace and intersection connections on Deligne pairings, and an important case is studied in Section \ref{sec:curvature}.

\subsection{The canonical extension: local description and properties} \label{sec:canonical}
In this step, we work locally on $S$ for the analytic topology. We replace $S$ by a contractible open subset $S^{\circ}$. Hence, local systems over $S^{\circ}$ are trivial. We write $\Xcal^{\circ}$ for the restriction of $\Xcal$ to $S^{\circ}$, but to ease the notation, we still denote $\Lcal$ for the restriction of $\Lcal$. For later use (in the proof of Theorem \ref{thm:extension}), we fix a family of symplectic bases $\{\alpha_i,\beta_i\}_{i=1}^g$ for $H_1(\Xcal_s)$, that is flat with respect to the Gauss-Manin connection. Observe that this trivially determines a symplectic horizontal basis after any base change $T\rightarrow S^{\circ}$. We may assume that these are given by closed curves based at $\sigma(s)$. We view these curves as the polygonal  boundary of a fundamental domain $\Fcal_s\subset \widetilde\Xcal_s$
in the local relative universal cover $\widetilde\Xcal\to \Xcal^{\circ}$, in the usual way:

\setlength{\unitlength}{1cm}
\begin{picture}(12,5)
\put(7.05,3.65){$\beta_1$}
\put(7.05,1.4){$\beta_1$}
\put(7.35,2.6){$\alpha_1$}
\put(5.4,.7){$\alpha_1$}
\put(4.4,.95){$\bullet$}
\put(4.2,.6){$\tilde\sigma(s)$}
\put(3.5,1){
{\scalebox{.35}{\includegraphics{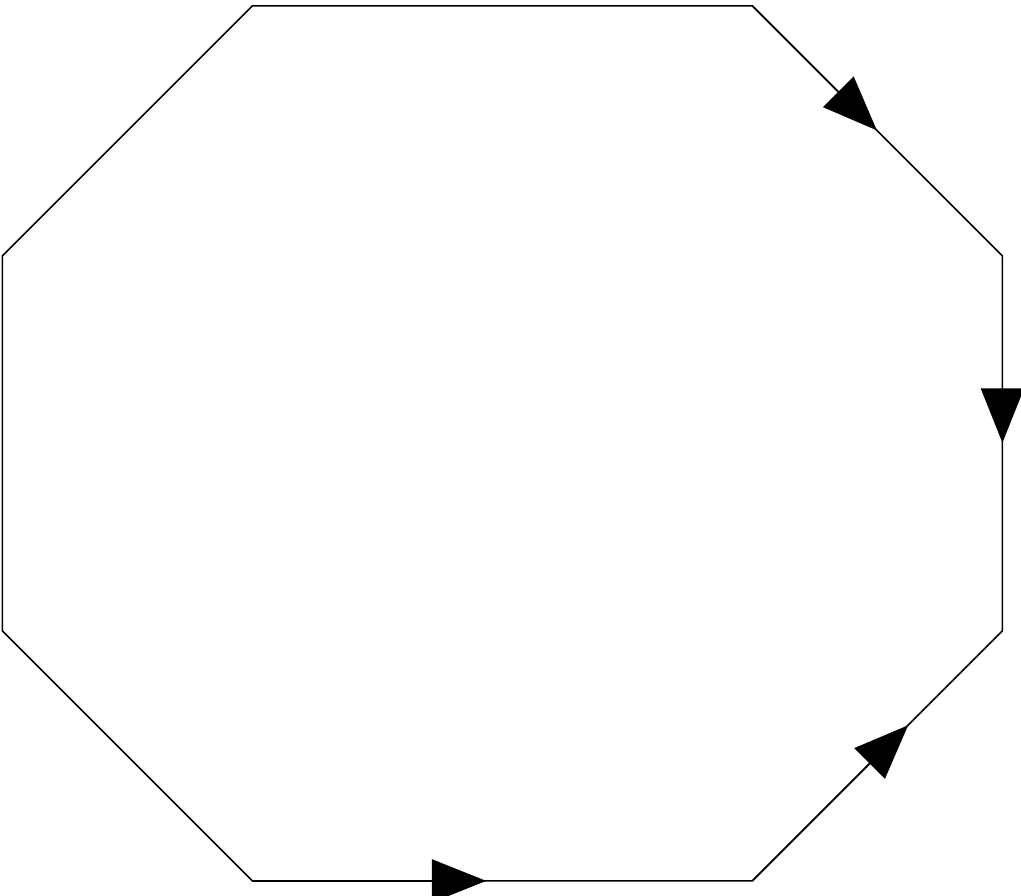}}}}
\end{picture}\\
In the figure we have written $\tilde{\sigma}(s)$ for a lift of the section $\sigma(s)$ to a fundamental domain $\Fcal_{s}$. Let $\nu:S\rightarrow E(\Xcal/S)$ be the $\Ccal^{\infty}$ classifying map corresponding to $(\Lcal,\nabla_{\Xcal/S})$. By the choice of $S^{\circ}$, $\nu\mid_{S^{\circ}}$ lifts to $\tilde{\nu}:S^{\circ}\rightarrow H^{1}_{dR}(\Xcal/S)$. We identify the fundamental groups $\pi_{1}(\Xcal_{s},\sigma(s))$, $s\in S^{\circ}$, to a single $\Gamma$. Then to $\tilde{\nu}$ there is associate a smooth family of complex valued characters of $\Gamma$
\begin{displaymath}
	\chi_{s}(\gamma)=\exp\left(\int_\gamma\tilde{\nu}_s\right),\quad\gamma\in\Gamma, \quad s\in S^{\circ}.
\end{displaymath}
Observe that the definition of $\chi_{s}$ only depends on $\nu$, and not the particular choice of the lift $\tilde{\nu}$. We can thus write $\nu$ in the integral. With this understood, local smooth sections $\ell$ of $\Lcal\to \Xcal^{\circ}$ are identified with functions $\tilde \ell$ on $\widetilde \Xcal$ satisfying the equivariance law
\begin{equation} \label{eqn:equivariant1}
\tilde \ell(\gamma \tilde{z}, s)=\chi_{s}(\gamma)\tilde \ell(\tilde{z},s),\quad\gamma\in\Gamma,\quad s\in S^{\circ}.
\end{equation}
Rational sections are meromorphic in $\tilde{z}\in\widetilde{\Xcal}_{s}$, for fixed $s$. Notice that for every $s\in S^{\circ}$, we have a holomorphic structure $\overline{\partial}_{s}$ on $\widetilde{\Xcal}_s$.

\begin{remark} \label{rem:lift}
\hspace{2em}
\begin{enumerate}
\item We clarify this important construction. Choose a lift $\tilde\sigma(s)$ to $\widetilde \Xcal_s$ lying in $\Fcal_s$. The rigidification $\sigma^\ast\Lcal\simeq \Ocal_S$ gives a nonzero element ${\bf e}\in \widetilde \Lcal\bigr|_{\tilde\sigma(s)}$. Using the relative flat connection $\nabla_{\Xcal_s}$, ${\bf e}$ extends to a global frame $\widetilde{\bf e}$ of $ \widetilde \Lcal_{s}\to \widetilde\Xcal_s$. Then the pullback of a section $\ell$ can be written $\tilde \ell(\tilde{z})\widetilde{\bf e}$.
\item If two lifts of $\sigma(s)$ are related by $\tilde\sigma_2(s)=\gamma\cdot \tilde\sigma_1(s)$, then $\widetilde{\bf e}_2=\chi(\gamma)\widetilde{\bf e}_1$, and therefore $\tilde\ell_2=\chi(\gamma)^{-1}\tilde\ell_1$.
\item
With the identification above, the relative connection $\nabla_{\Xcal/S}$ is given by
$$
\ell \longleftrightarrow \tilde \ell\quad :\qquad \frac{\nabla_{\Xcal/S}\ell}{\ell} \longleftrightarrow \frac{d\tilde \ell}{\tilde \ell}\quad\text{projected to }\Acal^{1,0}_{\widetilde{\Xcal}/S^{\circ}}.
$$
\end{enumerate}
\end{remark}
To extend the relative connection we must differentiate $\tilde \ell$ with respect to $s$, as well as   the factor
$$
s\mapsto \exp\left(\int_\gamma\nu_s\right)
$$
 all in a way which preserves the condition \eqref{eqn:equivariant1}.  Note that the dependence on $\gamma$ factors through homology.  Hence, we regard  $\gamma$ as a horizontal section of $(R^1\pi_\ast\underline \ZBbb)^{\vee}$ on $S^{\circ}$. Then, by the very definition of the Gauss-Manin connection,
 \begin{equation}\label{eqn:derivative-GM}
 d\exp\left(\int_\gamma \nu\right)
=\exp\left(\int_\gamma \nu\right) d\int_\gamma\tilde{\nu}
=\exp\left(\int_\gamma \nu\right) \int_\gamma\nabla_{\GM}\nu.
 \end{equation}
Here, the ``integral'' over $\gamma\subset \Xcal_s$ means the integral of the part of $\nabla_{\GM}\nu$ in $H^1_{dR}(\Xcal/S)$. Now choose any frame $\{[\eta_i]\}_{i=1}^{2g}$ of the local system $H^1_{dR}(\Xcal/S)$ on $S^{\circ}$.  Then we may write: 
$$\nabla_{\GM}\nu=\sum_{i=1}^{2g}[\eta_i]\otimes \theta_i\ ,\ \theta_i\in \Acal^1_{S^{\circ}}.$$
For each $s\in S^{\circ}$ and $i=1,\ldots, 2g$, there is a unique harmonic representative $\eta_i(z,s)$ of $[\eta_i](s)$ on the fiber $\Xcal_s$. For $\tilde{z}\in\widetilde{\Xcal}_s$, we consider a path joining $\widetilde{\sigma}(s)$ to $\tilde{z}$ in $\widetilde{\Xcal}_s$.  Then we set
\begin{equation} \label{eqn:connection-1form}
\int_{\tilde{\sigma}(s)}^{\tilde{z}}{\nabla_{\GM}\nu} := \sum_{i=1}^{2g}\left\{ \int_{\tilde{\sigma}(s)}^{\tilde{z}} \eta_i(z,s)
\right\} \theta_i,
\end{equation}
where we have abused notation and wrote $\eta_{i}$ for its lift to the universal cover. This expression varies smoothly in $s\in S^{\circ}$ and $\tilde{z}$. It is independent of the choice of local frame. Indeed, if $[\tilde \eta_i]=A_{ij}[\eta_j]$ for a (constant) matrix $A_{ij}$, then $\tilde \eta_i(z,s)=A_{ij}\eta_j(z,s)$ (uniqueness of harmonic representatives), and $A_{ij}\tilde\theta_i=\theta_j$ (since $\nabla_{\GM}\nu$ is intrinsically defined). It follows that
\begin{align*}
\sum_{i=1}^{2g}\left\{ \int_{\tilde{\sigma}(s)}^{\tilde{z}}\tilde \eta_i(z,s)
\right\}\tilde \theta_i &=
\sum_{i,j=1}^{2g}\left\{ \int_{\tilde\sigma(s)}^{\tilde z} A_{ij} \eta_j(z,s)
\right\}\tilde \theta_i \\
&=
\sum_{i,j=1}^{2g}\left\{ \int_{\tilde{\sigma}(s)}^{\tilde z}  \eta_j(z,s)
\right\}A_{ij}\tilde \theta_i \\
&= \sum_{j=1}^{2g}\left\{ \int_{\tilde{\sigma}(s)}^{\tilde z} \eta_j(z,s)
\right\} \theta_j.
\end{align*}
With this understood, on $\widetilde{\Xcal}$
 we extend the relative connection $\nabla_{\Xcal/S}$ by the following (see Remark \ref{rem:lift}): if $\tilde{z}\in\widetilde{\Xcal}_{s}$,
\begin{equation} \label{eqn:extension}
\frac{\nabla\ell}{\ell}(\tilde{z},s) :=\frac{d\tilde \ell}{\tilde \ell}(\tilde{z},s)- \int_{\tilde{\sigma}(s)}^{\tilde{z}}{\nabla_{\GM}\nu}.
\end{equation}
We claim that this expression descends to a $1$-form on $\Xcal^{\circ}$ and is independent of the choice of lift of $\sigma(s)$.
Both facts follow from the same argument.
Suppose, for example, that $\tilde\sigma_2(s)=\gamma\tilde\sigma_1(s)$ are two choices of local lifts.
 Then by Remark \ref{rem:lift} (ii),   It follows that $\tilde\ell_2(\tilde z)=\chi(\gamma)^{-1}\tilde\ell_1(\tilde z)$, and therefore
$$
\frac{d\tilde \ell_2}{\tilde \ell_2}=\frac{d\tilde \ell_1}{\tilde \ell_1}-d\log\chi(\gamma)
$$
On the other hand, 
\begin{align*}
\int_{\tilde{\sigma}_2}^{\tilde{z}}{\nabla_{\GM}\nu}&=\int_{\gamma\tilde{\sigma}_{1}}^{\tilde z}{\nabla_{\GM}\nu} \\
&=\int_{\tilde{\sigma}_{1}}^{\tilde z}{\nabla_{\GM}\nu} -
\int_{\tilde{\sigma}_{1}}^{\gamma\tilde\sigma_1}{\nabla_{\GM}\nu} 
\\
&=\int_{\tilde{\sigma}_{1}}^{\tilde z}{\nabla_{\GM}\nu} -
\int_\gamma{\nabla_{\GM}\nu} 
\\
&=\int_{\tilde{\sigma}_{1}}^{\tilde z}{\nabla_{\GM}\nu} -
d\log\chi(\gamma)
\end{align*}
where we have used \eqref{eqn:derivative-GM}.
It follows that $\nabla\ell/\ell$ in \eqref{eqn:extension} is independent of the lift.  The fact that  $\nabla\ell/\ell$ descends to a $1$-form on  $\Xcal^\circ$ follows by the same argument. This proves the claim.

From the previous discussion
it also follows  that given overlapping contractible open subsets of $S$, say $S^{\circ}_1$ and $S^{\circ}_2$, the corresponding extensions $\nabla_1$ and $\nabla_2$ agree over the intersection, so that they can be glued together. Therefore, there exists a smooth connection $\nabla:L\rightarrow L\otimes\Acal^{1}_{\Xcal}$ that, locally on contractible open subsets of $S$, is of the form \eqref{eqn:extension}.
\begin{definition}
The extended connection $\nabla$ on $L\to \Xcal$ given by the  procedure above will be called \emph{the canonical extension} of $\nabla_{\Xcal/S}$ to $\Xcal$.
\end{definition}
\begin{remark}
\hspace{2em}
\begin{enumerate}
\item It is immediate that $\nabla$ indeed satisfies the Leibniz rule and is  a smooth connection. It is also trivially rigidified along $\sigma$. 
\item It is, however, perhaps not so clear the $\nabla$ is compatible with the holomorphic structure on $\Lcal$, and this will be checked below in Theorem \ref{thm:extension}. 
\item From now on, for notational convenience we confuse points on fibers of $\Xcal^{\circ}\rightarrow S^{\circ}$ with their lifts to fundamental domains of universal covers. Therefore, we will write expressions such as
\begin{displaymath}
	\int_{\sigma(s)}^{z}\nabla_{\GM}\nu\ .
\end{displaymath}
\end{enumerate}
\end{remark}

\begin{lemma}\label{lemma:nabla-pullback}
The construction of the canonical extension $\nabla$ is compatible with base change.
\end{lemma}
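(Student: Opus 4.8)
The statement to establish is the following base-change invariance. Given a morphism $\varphi:T\rightarrow S$, form the cartesian square with projection $\Phi:\Xcal_T=\Xcal\times_S T\rightarrow\Xcal$, and let $\nabla_{\Xcal_T/T}=\Phi^{\ast}\nabla_{\Xcal/S}$ be the induced flat relative connection on $\Lcal_T=\Phi^{\ast}\Lcal$ (rigidified along $\sigma_T$). The lemma asserts that the canonical extension of $\nabla_{\Xcal_T/T}$ coincides with $\Phi^{\ast}\nabla$. The plan is simply to verify that each of the two terms in the local formula \eqref{eqn:extension} is compatible with pullback along $\Phi$. Since both $\Phi^{\ast}\nabla$ and the canonical extension on $\Lcal_T$ are globally defined smooth connections, and since the local extensions were shown above to glue, it suffices to check the identity after restricting to contractible opens; first I would cover $T$ by contractible opens $T^{\circ}$ each mapping into a contractible $S^{\circ}$, noting that the flat symplectic basis, the characters $\chi_s$, and the rigidification all pull back to the corresponding data for $\Xcal_T/T$ (as already observed in the text for the symplectic basis).

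For the first term, the local section $\ell$ pulls back to $\Phi^{\ast}\ell$, whose associated equivariant function on the relative universal cover is $\Phi^{\ast}\tilde\ell$: the character governing the equivariance law \eqref{eqn:equivariant1} for $\Phi^{\ast}\ell$ is the pullback $\chi_{\varphi(t)}$ of $\chi_s$, and the rigidified frame $\widetilde{\bf e}$ of Remark \ref{rem:lift} pulls back because $\nabla_{\Xcal/S}$ and the rigidification do. As the exterior derivative commutes with pullback, $\Phi^{\ast}(d\tilde\ell/\tilde\ell)=d(\Phi^{\ast}\tilde\ell)/\Phi^{\ast}\tilde\ell$, which is precisely the first term of the canonical extension formula for $\Lcal_T$.

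For the second term the two inputs are the base-change compatibility \eqref{eqn:GM-pullback} of the Gauss-Manin invariant, $\varphi^{\ast}(\nabla_{\GM}\nu)=\nabla_{\GM}(\varphi^{\ast}\nu)$, and the fact that the harmonic representatives used in \eqref{eqn:connection-1form} are intrinsic to the fibers. Writing $\nabla_{\GM}\nu=\sum_i[\eta_i]\otimes\theta_i$, the pullback is $\sum_i\varphi^{\ast}[\eta_i]\otimes\varphi^{\ast}\theta_i$. For $t\in T^{\circ}$ the map $\Phi$ identifies the fiber $\Xcal_{T,t}$ with $\Xcal_{\varphi(t)}$, and the class $\varphi^{\ast}[\eta_i]$ at $t$ is exactly $[\eta_i]$ at $\varphi(t)$. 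Since harmonicity of $1$-forms on a Riemann surface is conformally invariant, hence depends only on the fiber, the harmonic representative of $\varphi^{\ast}[\eta_i]$ on $\Xcal_{T,t}$ is $\eta_i(z,\varphi(t))$. Moreover a path from $\widetilde{\sigma}(t)$ to $\tilde z$ maps under $\Phi$ to a path from $\widetilde{\sigma}(\varphi(t))$ to its image, so the corresponding integrals agree and one obtains $\Phi^{\ast}\bigl(\int_{\widetilde{\sigma}(s)}^{\tilde z}\nabla_{\GM}\nu\bigr)=\int_{\widetilde{\sigma}(t)}^{\tilde z}\nabla_{\GM}(\varphi^{\ast}\nu)$, which is the second term of the canonical extension for $\Lcal_T$. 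Combining the two terms shows that $\Phi^{\ast}(\nabla\ell/\ell)$ equals the defining expression \eqref{eqn:extension} for $\Phi^{\ast}\ell$, proving the claim.

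The only point requiring genuine care is the compatibility of the harmonic representatives, since these are the one ingredient defined fiberwise rather than by a universal categorical recipe; but because harmonicity is a conformal and therefore fiberwise-intrinsic condition, and the fibers of $\Xcal_T/T$ are literally the fibers of $\Xcal/S$, this compatibility reduces to the already-established invariance \eqref{eqn:GM-pullback} of the Gauss-Manin invariant together with the elementary naturality of path integration under $\Phi$.
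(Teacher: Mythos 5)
Your proof is correct and follows essentially the same route as the paper, whose own proof is just the one-line observation that the claim follows from the defining expression \eqref{eqn:extension} together with the base-change compatibility \eqref{eqn:GM-pullback} of the Gauss--Manin invariant. You have simply spelled out the term-by-term verification (naturality of $d\tilde\ell/\tilde\ell$ and fiberwise-intrinsic harmonic representatives) that the paper leaves implicit.
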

\begin{proof}
The lemma follows from the expression \eqref{eqn:extension} and the compatibility of the Gauss-Manin invariant with base change \eqref{eqn:GM-pullback}.
\end{proof}

Because the line bundle $\Lcal$ is of relative degree 0, we can endow it with the rigidified Chern connection $\nabla_{ch}$, which is flat on fibers. We next show that the Chern connection is the canonical extension of its vertical projection, as given by the preceding construction.
\begin{lemma} \label{lem:chern}
Suppose that the flat relative  connection $\nabla_{\Xcal/S}$ is induced by a hermitian structure $h$ on $\Lcal$ whose  Chern connection is flat on the fibers. Then the 
 Chern connection $\nabla_{ch}$ of $(\Lcal, h)$ coincides with the canonical extension \eqref{eqn:extension} of $\nabla_{\Xcal/S}$.
\end{lemma}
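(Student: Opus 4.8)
The plan is to compare the two connections directly in the fiberwise-flat frame $\widetilde{\bf e}$ of Remark \ref{rem:lift}, working locally over a contractible $S^{\circ}$ and on the relative universal cover. Taking $\ell=\widetilde{\bf e}$ (so that $\tilde\ell\equiv 1$) in the defining formula \eqref{eqn:extension}, the canonical extension has connection form
\[
	\frac{\nabla\widetilde{\bf e}}{\widetilde{\bf e}}=-\int_{\tilde\sigma(s)}^{\tilde z}\nabla_{\GM}\nu ,
\]
so it suffices to prove that this equals $\gamma:=\nabla_{ch}\widetilde{\bf e}/\widetilde{\bf e}$. First I would record that $\gamma$ is \emph{horizontal}, i.e. it vanishes on vectors tangent to the fibers: by hypothesis $\nabla_{ch}$ and $\nabla$ share the vertical projection $\nabla_{\Xcal/S}$, and $\widetilde{\bf e}$ is $\nabla_{\Xcal/S}$-flat, so the vertical part of $\nabla_{ch}\widetilde{\bf e}$ is $\nabla_{\Xcal/S}\widetilde{\bf e}=0$. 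Choosing a coframe $\{\theta_i\}$ of $S^{\circ}$ with lifted dual vector fields $\widetilde{\partial_i}$, one may thus write $\gamma=\sum_i\gamma_i\,\theta_i$ with $\gamma_i$ functions on the cover. Both $\gamma$ and the Gauss-Manin integral pull back to $0$ along $\sigma$ (rigidification of $\nabla_{ch}$, respectively the vanishing of the integral at $\tilde z=\tilde\sigma$), which will fix the constants of integration.

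Next I would compute the Gauss-Manin invariant \emph{through the Chern connection itself}, since its fiberwise restriction is $\nabla_{\Xcal/S}$. By the local formula \eqref{eqn:GM-expression}, $\nabla_{\GM}\nu=\sum_i[\,\Int_{\widetilde{\partial_i}}(F_{\nabla_{ch}})|_{\rm fiber}\,]\otimes\theta_i$, and since $F_{\nabla_{ch}}=d\gamma$ in the frame $\widetilde{\bf e}$, a one-line contraction computation gives $\Int_{\widetilde{\partial_i}}(F_{\nabla_{ch}})|_{\rm fiber}=-d\gamma_i|_{\rm fiber}$, the fiberwise differential of the coefficient $\gamma_i$. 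The decisive point, and the step I expect to be the crux, is that this fiber $1$-form is \emph{already harmonic} on each fiber. This is forced by the $(1,1)$-type of the Chern curvature: vanishing of the fiber--base $(2,0)$ part $\partial\gamma^{1,0}=0$ makes the coefficients of $\gamma^{1,0}$ fiberwise antiholomorphic, while $\bar\partial\gamma^{0,1}=0$ makes those of $\gamma^{0,1}$ fiberwise holomorphic; in either case $d\gamma_i|_{\rm fiber}$ is a holomorphic or antiholomorphic $1$-form, hence harmonic. It is exactly here that the special feature of the Chern connection (as opposed to an arbitrary extension compatible with the holomorphic structure, which would only satisfy $F^{0,2}=0$) enters.

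Finally, since the harmonic representative of $[\,\Int_{\widetilde{\partial_i}}(F_{\nabla_{ch}})|_{\rm fiber}\,]$ appearing in \eqref{eqn:connection-1form} is therefore $-d\gamma_i|_{\rm fiber}$ itself, the fiber integral telescopes to $\int_{\tilde\sigma}^{\tilde z}\eta_i=-(\gamma_i(\tilde z)-\gamma_i(\tilde\sigma))=-\gamma_i$, using $\gamma_i(\tilde\sigma)=0$. Hence $-\int_{\tilde\sigma}^{\tilde z}\nabla_{\GM}\nu=\sum_i\gamma_i\theta_i=\gamma$, which is precisely the identity $\nabla=\nabla_{ch}$ (and incidentally re-establishes compatibility with the holomorphic structure in this case). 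Beyond the harmonicity observation, the main bookkeeping obstacle is the multivaluedness on the universal cover: one must check that $\gamma_i$ and the primitive of $\eta_i$ transform identically under the deck group, both shifting by the period $\int_\delta\eta_i$ via the computation \eqref{eqn:derivative-GM}, so that the comparison descends to $\Xcal^{\circ}$ and is independent of the chosen lift of $\sigma$. This is the same equivariance already exploited in showing that \eqref{eqn:extension} is well defined.
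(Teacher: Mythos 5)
Your proof is correct, and at the decisive step it runs genuinely differently from the paper's. Both arguments work in the vertically flat frame and reduce the lemma to the identity $\nabla_{ch}\widetilde{\bf e}/\widetilde{\bf e}=-\int_{\tilde\sigma(s)}^{\tilde z}\nabla_{\GM}\nu$, and in both the crux is that the fiberwise $1$-forms representing the Gauss--Manin classes are \emph{harmonic}, so that the integral \eqref{eqn:connection-1form} can be evaluated by your telescoping. The paper secures harmonicity by a gauge choice: it picks a trivialization ${\bf 1}$ with $\dbar_L=\dbar+\alpha$ and $\alpha$ harmonic on every fiber, sets $A=\alpha-\bar\alpha$, forms the flat frame ${\bf e}_0=\exp(-\int_{\sigma(s)}^z A)\cdot{\bf 1}$, computes $\nabla_{ch}{\bf e}_0/{\bf e}_0=-\int_{\sigma(s)}^z dA$, and then identifies this with $-\int_{\sigma(s)}^z\nabla_{\GM}\nu_0$ ``by definition'' of \eqref{eqn:connection-1form}. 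You instead derive harmonicity of $\Int_{\widetilde{\partial_i}}(F_{\nabla_{ch}})\bigr|_{\rm fiber}=-d\gamma_i\bigr|_{\rm fiber}$ from the $(1,1)$-type of the Chern curvature, with no choice of gauge at all. This is more than cosmetic: even in the paper's gauge, the fiberwise contraction of $dA$ picks up the fiber differentials of the \emph{horizontal} components of $\alpha$, and checking that these do not spoil harmonicity (equivalently, that the horizontal part of $A$ is fiberwise constant, hence identically zero after rigidification) is precisely the type-$(1,1)$ identity you isolate; so your route makes explicit the mechanism that the paper's ``by definition'' compresses. What the paper's construction buys in exchange is the explicit auxiliary data $\nabla_0$, ${\bf e}_0$, $\tilde\ell_0$, which are reused verbatim in the proof of Theorem \ref{thm:extension}; your argument is self-contained for the lemma but produces no such byproduct. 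One small economy: your final deck-equivariance check is not really needed --- once $\gamma=-\int_{\tilde\sigma}^{\tilde z}\nabla_{\GM}\nu$ is established on the cover, the two connections, both honest connections on $\Xcal^{\circ}$, already agree there.
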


\begin{proof}
We work locally on contractible subsets $S^{\circ}$ of $S$. Let $p: \widetilde \Xcal\to \Xcal^{\circ}$ denote the fiberwise universal cover of $\Xcal^{\circ}$, and
choose a lift $\tilde \sigma$ of the section $\sigma$. 
Choose a smooth trivialization $\bf 1$ of the underlying $\Ccal^\infty$ line bundle $L\to
\Xcal^{\circ}$, compatible with the trivialization along $\sigma$, 
and write the $\dbar$-operator associated to $\Lcal$ as
$\dbar_L=\dbar+\alpha$, for a $(0,1)$-form $\alpha$ on $\Xcal$.  By changing
the trivialization we may assume the restriction of $\alpha$ to each fiber
is harmonic.  Let $\nabla_0=d+A$, $A=\alpha-\bar\alpha$.  Then $\nabla_0$ is
the Chern connection for the hermitian metric $\Vert{\bf 1}\Vert=1$.
Moreover, by the assumption on $\alpha$, the restriction of $A$ to every fiber is
closed, and so $\nabla_0$ is relatively flat and is therefore the Chern
connection of $\Lcal$.
Let $\nabla_{\GM}\nu_0$ be the associated Gauss-Manin invariant.
Set
$$
{\bf e}_0=\exp\left(-\int_{\sigma(s)}^z A\right)\cdot {\bf 1}
$$
Then ${\bf e}_0$ is a vertically flat section, well-defined on
$\widetilde\Xcal$.  Moreover, 
$$
\nabla_0{\bf e}_0=\left(-A-\int_{\sigma(s)}^z dA\right){\bf e}_0+ A{\bf
e}_0= \left(-\int_{\sigma(s)}^z dA\right){\bf e}_0
$$
Now by definition of the integral in \eqref{eqn:connection-1form},
$$
\int_{\sigma(s)}^z dA=\int_{\sigma(s)}^z\nabla_{\GM}\nu_0 
$$
and we conclude that
$$
\frac{\nabla_0{\bf e}_0}{{\bf e}_0}=-\int_{\sigma(s)}^z\nabla_{\GM}\nu_0 
$$
The equivariant function $\tilde\ell_0$ on $\widetilde\Xcal$ associated with 
the holomorphic section $\ell$ is given by: $\ell =\tilde \ell_0 {\bf e}_0$, and so
\begin{equation} \label{eqn:l0}
\frac{\nabla_0\ell}{\ell}=
\frac{d\tilde \ell_0}{\tilde\ell_0}+
\frac{\nabla_0{\bf e}_0}{{\bf e}_0}=
\frac{d\tilde \ell_0}{\tilde\ell_0}
-\int_{\sigma(s)}^z\nabla_{\GM}\nu_0 
\end{equation}
This completes the proof.
\end{proof}

\begin{theorem}[{\sc Canonical extension}] \label{thm:extension}
The canonical extension $\nabla$ is  compatible with the holomorphic structure on $\Lcal$, and it  satisfies (WR) universally. Moreover, it is rigidified along the section $\sigma$.
\end{theorem}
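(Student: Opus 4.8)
The plan is to verify the three assertions of Theorem \ref{thm:extension} separately: rigidification (immediate from the construction), compatibility with the holomorphic structure (a local type computation), and (WR) universally (the genuine content). Rigidification is already observed after the definition of the canonical extension, since $\int_{\tilde\sigma(s)}^{\tilde\sigma(s)}\nabla_{\GM}\nu = 0$ and $\tilde\ell(\tilde\sigma(s),s)$ corresponds to the fixed trivialization. Compatibility with the holomorphic structure means showing $\nabla^{0,1} = \dbar_L$ globally. For this I would work locally and compare $\nabla$ with the Chern connection $\nabla_{ch}$, which by Lemma \ref{lem:chern} is itself the canonical extension of its own vertical projection. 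Writing $\nabla = \nabla_{ch} + \theta$, formula \eqref{eqn:extension} shows that $\theta$ is built from $\int_{\sigma(s)}^z(\nabla_{\GM}\nu - \nabla_{\GM}\nu_0)$; since the harmonic representatives $\eta_i(z,s)$ are holomorphic $(1,0)$-forms on each fiber and the remaining $s$-dependence is collected in the coefficients $\theta_i \in \Acal^1_{S^\circ}$, the difference is a $(1,0)$-form on $\Xcal^\circ$. Hence $\nabla^{0,1} = \nabla_{ch}^{0,1} = \dbar_L$, as desired.

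\emph{The hard part} is (WR) universally. By Lemma \ref{lemma:nabla-pullback} the construction is compatible with base change, so it suffices to verify (WR) for $\nabla$ itself over each contractible $S^\circ$; the universal statement then follows by pulling back. Thus I must establish the identity \eqref{eq:2}, namely
$$
\tr_{\Div f/S}\left(\frac{\nabla\ell}{\ell}\right) = \tr_{\Div\ell/S}\left(\frac{df}{f}\right),
$$
for rational $\ell$ and $f$ with disjoint étale divisors. The strategy is again to write $\nabla = \nabla_{ch} + \theta$. Since $\nabla_{ch}$ satisfies (WR) by Example \ref{ex:chern-reciprocity}, it remains to prove that $\theta$ has the \emph{Weil vanishing property}: $\tr_{\Div f/S}(\theta) = 0$ for every rational $f$. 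From \eqref{eqn:extension} and Lemma \ref{lem:chern}, the one-form $\theta$ along a fiber has the shape $-\int_{\sigma(s)}^z(\nabla_{\GM}\nu - \nabla_{\GM}\nu_0)$, a primitive of a combination of the harmonic forms $\eta_i$ with coefficients in $\Acal^1_{S^\circ}$.

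\emph{Therefore} the crux reduces to a reciprocity law for integrals of harmonic (abelian) differentials against rational divisors. I would fix a symplectic flat basis $\{\alpha_i,\beta_i\}$ as arranged in the construction and evaluate $\tr_{\Div f/S}(\theta) = \sum_{p\in\Div f}(\ord_p f)\,\theta(p)$ by cutting the fiber along the fundamental polygon $\Fcal_s$. The function $\log f$ is single-valued on $\Fcal_s$, so Stokes' theorem applied to $\theta \wedge d\log f$ over $\Fcal_s$ expresses the sum of residue-type contributions at $\Div f$ as a boundary integral over $\partial\Fcal_s$; the paired $\alpha_i,\beta_i$ edges then combine via the bilinear relations for the periods of $\eta_i$ and the periods of $d\log f$. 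Because $f$ is a genuine rational (single-valued) function, its $\alpha$- and $\beta$-periods are integer multiples of $2\pi i$, and the antisymmetry of the Riemann bilinear pairing forces the total to vanish. This is precisely where, as the authors note, reciprocity laws for differentials of the first and third kinds enter. Once the Weil vanishing property of $\theta$ is in hand, combining it with (WR) for $\nabla_{ch}$ yields (WR) for $\nabla$, and base-change compatibility upgrades this to (WR) universally, completing the proof.
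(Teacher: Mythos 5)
Your reduction of (WR) to the Weil vanishing of $\theta=\nabla-\nabla_{ch}$ is legitimate (given Example \ref{ex:chern-reciprocity}, it is equivalent to (WR)), and the rigidification claim is indeed immediate; but both of your substantive steps have genuine gaps. For compatibility with the holomorphic structure, the premise that the harmonic representatives $\eta_i(z,s)$ in \eqref{eqn:connection-1form} are holomorphic $(1,0)$-forms is false: the $[\eta_i]$ form a frame of the \emph{whole} rank-$2g$ local system $H^1_{dR}(\Xcal/S)$, so antiholomorphic parts necessarily occur (in the trivial fibration, \eqref{eqn:nabla} contains the terms $\bar\omega_i\otimes ds_i$). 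Moreover the coefficients $\theta_i\in\Acal^1_{S^{\circ}}$ have $(0,1)$-components $d\bar s_j$, and $\theta$ also contains the term $d\tilde\ell/\tilde\ell-d\tilde\ell_0/\tilde\ell_0$ coming from the two different flat frames, which contributes to $\theta^{0,1}$ exactly as in \eqref{eqn:theta01}. So $\theta$ is not of type $(1,0)$ by inspection---that is precisely what must be proved. The paper's argument is genuinely analytic: it writes $(\nabla\ell/\ell)^{0,1}=\sum_i\varphi_i\,d\bar s_i$, shows by a $\partial\dbar$-computation (using harmonicity in $z$ of the Gauss--Manin integrals and fiberwise meromorphy of $\tilde\ell,\tilde\ell_0$) that each $\varphi_i$ is harmonic on the compact fibers, hence fiberwise constant, and then uses the rigidification---$\varphi_i$ vanishes along $\sigma$---to conclude $\varphi_i\equiv 0$. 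Your argument never invokes the rigidification here, which is a red flag: it is an essential ingredient of this step, not a formality.

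For (WR), your fiberwise Stokes argument cannot work as stated, for two reasons. First, you are computing the wrong object: $\tr_{\Div f/S}(\theta)$ is a $1$-form on $S$, obtained by pulling back the \emph{full} form $\theta$ along the sections of $\Div f$; it therefore involves the horizontal components of $\theta$ (the Gauss--Manin integrals in \eqref{eqn:extension}, which restrict to zero on fibers) and the motion of the points of $\Div f$ over $S$, neither of which is visible to an integration over a single fiber $\Fcal_s$. In fact the fiberwise restriction of $\theta$ is not the Gauss--Manin primitive you describe, but rather $d\log(\tilde\ell/\tilde\ell_0)$ restricted to the fiber, i.e.\ the difference $\nabla_{\Xcal/S}-\nabla^{0}_{\Xcal/S}$ of the two flat relative connections. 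Second, even the fiberwise vanishing you assert is false: applying \eqref{eqn:reciprocityI} to the fiberwise holomorphic form $\theta_s$ gives
$$
2\pi i\sum_{p}\ord_p(f)\int_{\sigma}^{p}\theta_s=\sum_{i=1}^g\left(\int_{\alpha_i}\theta_s\int_{\beta_i}\frac{df}{f}-\int_{\alpha_i}\frac{df}{f}\int_{\beta_i}\theta_s\right),
$$
whose right-hand side is a $2\pi i\ZBbb$-combination of the periods of $\theta_s$ (the differences of logarithms of the two holonomy characters), nonzero in general; antisymmetry of the pairing forces nothing. What actually makes the trace vanish is differentiation in the base direction: the paper differentiates Reciprocity Law II \eqref{eqn:reciprocityII} for the pair $(f,\tilde\ell)$, which produces the two trace terms, and then identifies the derivative of the period pairing---via \eqref{eqn:derivative-GM} and the family version \eqref{eqn:WR-families} of Law I applied to $\nabla_{\GM}\nu$---with $\tr_{\Div f/S}\bigl(\int_\sigma^z\nabla_{\GM}\nu\bigr)$, exactly cancelling the Gauss--Manin term in \eqref{eqn:extension}. (The same differentiate-and-cancel mechanism, applied to a form like your $\theta$, is carried out in Section \ref{sec:relation}, equations \eqref{eq:periods-theta}--\eqref{eq:periods-theta-2}; note that there $\tr_{\Div m/S}(\theta)$ is \emph{not} zero for a general divisor, but equals a curvature integral, vanishing only for divisors of rational functions.) Without this differentiation step and a second application of reciprocity, your argument does not reach the required identity of $1$-forms on $S$.
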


\begin{proof}
Again we work locally on contractible open subsets $S^{\circ}$ of $S$. For the first statement, it suffices to show that for any meromorphic section $\ell$ of $\Lcal$, 
$$\left(\frac{\nabla \ell}{\ell}\right)^{0,1}=0$$
The restriction of this form to the fibers of $\Xcal^{\circ}$ vanishes; hence, with respect to local holomorphic
coordinates $\{s_i\}$ on $S^{\circ}$ we may write
\begin{equation} \label{eqn:local}
\left(\frac{\nabla \ell}{\ell}\right)^{0,1}= \sum_i \varphi_i d\bar s_i
\end{equation}
for functions $\varphi_i$ on $\Xcal^{\circ}$.  We wish to prove that the $\varphi_i$ vanish identically.
Write $\nabla=\nabla_0+\theta$, where $\nabla_0$ is the Chern connection as in the proof of Lemma \ref{lem:chern}. 
By the construction \eqref{eqn:extension} of the canonical extensions, 
\begin{equation} \label{eqn:theta01}
\theta^{0,1}=\frac{\dbar\tilde\ell}{\tilde\ell}-
\frac{\dbar\tilde\ell_0}{\tilde\ell_0}+\int_{\sigma(s)}^z \left(\nabla_{\GM}\nu_0\right)^{0,1}
-\int_{\sigma(s)}^z \left(\nabla_{\GM}\nu\right)^{0,1}
\end{equation}
 Now from the definition of the Gauss-Manin integral \eqref{eqn:connection-1form}, for fixed $s$ the expression
$$
\int_{\sigma(s)}^z (\nabla_{\GM}\nu)^{0,1}
$$
is a harmonic function of $z$.   Similarly for $\nu_0$.  Taking $\partial\dbar$ of  \eqref{eqn:theta01}, it then follows that the $dz\wedge d\bar z$ term of $\partial\dbar \theta^{0,1}$ vanishes.  But by Lemma \ref{lem:chern}, 
$$
\theta^{0,1}=\left(\frac{\nabla \ell}{\ell}\right)^{0,1}-\left(\frac{\nabla_0 \ell}{\ell}\right)^{0,1}=\left(\frac{\nabla \ell}{\ell}\right)^{0,1}
$$
and so in \eqref{eqn:local},  $\partial_z\partial_{\bar z}\varphi_i=0$ for all $i$.  Hence, the $\varphi_i$ are harmonic, and therefore constant along the fibers of $\Xcal^{\circ}$.
  But
they also vanish along $\sigma$, and so vanish identically, and the first statement of the theorem
follows.

 It remains to prove that $\nabla$ satisfies (WR) universally. By the compatibility of the construction of $\nabla$ with base change (Lemma \ref{lemma:nabla-pullback}), it is enough to work over $S^{\circ}$. Also, in the proof we are allowed to do base changes of $S^{\circ}$ induced by \'etale base changes of $S$, since equalities of differential forms are local for this topology. For the proof we follow the argument for classical Weil reciprocity for Riemann surfaces.  Recall that for a  holomorphic differential $\omega$ and nonzero meromorphic function $f$ on a Riemann surface $X$ with homology basis $\{\alpha_i,\beta_i\}$, we have (cf.\ \cite[Reciprocity Law I, p.\ 230]{GriffithsHarris:78})
\begin{equation} \label{eqn:reciprocityI}
2\pi i\sum_{p\in \Div(f)}\ord_p(f)\int_{\sigma}^p \omega =
\sum_{i=1}^g\left(\int_{\alpha_i}\omega\int_{\beta_i}\frac{df}{f} -\int_{\alpha_i}\frac{df}{f}\int_{\beta_i}\omega\right)
\end{equation}
where the left hand side is independent of the base point $\sigma$ 
because $\deg\Div(f)=0$. The divisor of $f$ is understood to be restricted to the fundamental domain delimited by the curves representing the homology basis. We note two generalizations of this type of formula:
\begin{enumerate}
\item in \eqref{eqn:reciprocityI}, we may use an anti-holomorphic form $\overline \omega$ instead of $\omega$. Indeed, the periods of $df/f$ are pure imaginary, and the assertion follows by conjugating both sides;
\item in families, if $\Div f/S$ is finite \'etale over $S$, then after \'etale base change we can assume the irreducible components are given by sections.  Applying this, the previous comment, and the Hodge decomposition to the cohomological part of $\nabla_{\GM}\nu$, we have for each $s$
\begin{align}
\begin{split} \label{eqn:WR-families}
2\pi i\sum_{p(s)\in \Div(f(\cdot, s))}\ord_{p(s)}(f)&\int_{\sigma(s)}^{p(s)} {\nabla_{\GM}\nu} =\\
&\sum_{i=1}^g\left(\int_{\alpha_i}{\nabla_{\GM}\nu} \int_{\beta_i}\frac{df}{f} -\int_{\alpha_i}\frac{df}{f}\int_{\beta_i}{\nabla_{\GM}\nu} \right).
\end{split}
\end{align}
Here $\lbrace\alpha_i,\beta_i\rbrace$ is a horizontal symplectic basis of $(R^{1}\pi_{\ast}\mathbb{Z})^{\vee}$ on $S^{\circ}$ as fixed in the beginning of Section  \ref{sec:canonical}.
\end{enumerate}
There is a second version of Weil reciprocity (cf.\ \cite[p.\ 243]{GriffithsHarris:78}) for pairs $f,g$ of meromorphic functions
\begin{align}
\begin{split} \label{eqn:reciprocityII}
2\pi i\sum_{q\in \Div(g)}\ord_q(g)\log f(q)&-
2\pi i\sum_{p\in \Div(f)}\ord_p(f)\log g(p) =\\
&\sum_{i=1}^g\left( \int_{\alpha_i}\frac{df}{f}\int_{\beta_i}\frac{dg}{g}-\int_{\alpha_i}\frac{dg}{g}\int_{\beta_i}\frac{df}{f}\right)
\end{split}
\end{align}
which applies also to families (after possible \'etale base change, to assume the components of the divisors are given by sections). Again, the divisors are taken in the fundamental domain delimited by the homology basis.

We now apply \eqref{eqn:reciprocityII} in the case where $\Div f/S$ is finite \'etale and $g=\tilde \ell$ (regarded as an equivariant meromorphic function fiberwise).  We observe that the periods of $df/f$ are constant functions on the base $S^{\circ}$ (they belong to $2\pi i\mathbb{Z}$). Taking derivatives and appealing to \eqref{eqn:derivative-GM} and \eqref{eqn:WR-families}, results in the string of equalities 
\begin{align*}
2\pi i \tr_{\Div \ell/S^{\circ}}\left(\frac{df}{f}\right)-2\pi i \tr_{\Div f/S^{\circ}}\left(\frac{d\tilde \ell}{\tilde \ell}\right)&=
d\left[\sum_{i=1}^g\left( \int_{\alpha_i}\frac{df}{f}\int_{\beta_i}\frac{d\tilde \ell}{\tilde \ell}-\int_{\alpha_i}\frac{d\tilde \ell}{\tilde \ell}\int_{\beta_i}\frac{df}{f}\right)\right]\\
&=\sum_{i=1}^g\left( \int_{\alpha_i}\frac{df}{f}\left\lbrace d\int_{\beta_i}\tilde{\nu}_s\right\rbrace-\left\lbrace d\int_{\alpha_i}\tilde{\nu}_s\right\rbrace\int_{\beta_i}\frac{df}{f}\right)\\
&=\sum_{i=1}^g\left( \int_{\alpha_i}\frac{df}{f}\int_{\beta_i}\nabla_{\GM}\nu-\int_{\alpha_i}\nabla_{\GM}\nu\int_{\beta_i}\frac{df}{f}\right)\\
&=2\pi i \tr_{\Div f/S^{\circ}}\left(\int_{\sigma}^z \nabla_{\GM}\nu\right).
\end{align*}
Therefore by \eqref{eqn:extension},
$$
\tr_{\Div \ell/S^{\circ}}\left(\frac{df}{f}\right)=\tr_{\Div f/S^{\circ}}\left(\frac{\nabla \ell}{ \ell}\right)
$$
In other words, $I(\nabla)=0$.

The rigidification property is immediate from the construction \eqref{eqn:extension}. This completes the proof of Theorem \ref{thm:extension}.
\end{proof}

\subsection{The canonical extension: uniqueness} \label{sec:unique}
In this section  we prove the uniqueness of the extension obtained in the previous section.  In fact, we will prove a little more.  
Let $\theta\in \Acal^i_{\Xcal}$ satisfying the following properties:

\begin{itemize}
	\item[(V1)] rigidification: $\sigma^{\ast}(\theta)=0$;
	\item[(V2)] the pull-back of $\theta$ to any fiber $\Xcal_s$, $s\in S$, vanishes;
	\item[(V3)] vanishing along rational divisors, universally: given a \emph{smooth} morphism of quasi-projective complex varieties $p:T\rightarrow S$, and a meromorphic function $f$ on the base change $\Xcal_T$ whose divisor is finite \'etale over $T$, we have
	\begin{displaymath}
		\tr_{\Div  f/T}(p^{\ast}\theta)=0.
	\end{displaymath}
	Here we write $p^{\ast}\theta$ for the pull-back of $\theta$ to $\Xcal_T$ by the induced morphism $\Xcal_T\rightarrow \Xcal$. This is the Weil vanishing property that appeared in the proof of Theorem \ref{theorem:int-connection}. 
\end{itemize}


\begin{proposition}[{\sc Vanishing lemma}]\label{prop:vanishing}
Let $\theta$ be a smooth complex differential 1-form on $\Xcal$, satisfying properties (V1)--(V3) above. Then $\theta$  vanishes identically on $\Xcal$.
\end{proposition}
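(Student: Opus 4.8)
\section*{Proof proposal for the Vanishing Lemma (Proposition \ref{prop:vanishing})}

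The plan is to reduce the statement to a pointwise reciprocity identity along a single fiber, and then to a purely function-theoretic lemma on a compact Riemann surface, which I will settle by means of the Abel--Jacobi map and the symmetric product. First I would unwind condition (V2). Since the fiberwise restriction of $\theta$ vanishes, $\theta$ lies in the kernel of the projection $\Acal^1_{\Xcal}\to\Acal^1_{\Xcal/S}$; equivalently, for each $s\in S$ and each $p\in\Xcal_s$ the covector $\theta_p$ is the pullback $\pi^{\ast}w_p$ of a unique $w_p\in W:=T^{\ast}_sS\otimes\CBbb$, and $p\mapsto w_p$ defines a smooth map $w_s\colon\Xcal_s\to W$. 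Condition (V1) says precisely that $w_{\sigma(s)}=0$. If $\tau$ is a section through a component of $\Div f$, then $(\tau^{\ast}\theta)_s=w_{\tau(s)}$ because $\pi\circ\tau=\id$; hence
\[
\tr_{\Div f/S}(\theta)\big|_s=\sum_{p\in\Div f\cap\Xcal_s}\ord_p(f)\,w_p\in W,
\]
so that (V3) becomes the fiberwise reciprocity identity $\sum_p\ord_p(f)\,w_p=0$. Composing with an arbitrary $\lambda\in W^{\ast}$ reduces everything to the following scalar assertion, which is the heart of the matter: if $\phi\colon X\to\CBbb$ is smooth on a compact Riemann surface $X$ with marked point $\sigma$, $\phi(\sigma)=0$, and $\sum_p\ord_p(f)\,\phi(p)=0$ for all meromorphic $f$, then $\phi\equiv 0$.

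For this core lemma I would use the symmetric product. Fix $k>2g-2$ and consider the Abel--Jacobi map $\mathrm{aj}_k\colon X^{(k)}\to\Jac(X)$, $D\mapsto[D-k\sigma]$, which is a smooth submersion with connected fibers (the complete linear systems $|D|\cong\PBbb^{k-g}$). Set $\Psi_k(D)=\sum_{p\in D}\phi(p)$, a smooth function on $X^{(k)}$. If $D,D'$ are reduced divisors in the same fiber of $\mathrm{aj}_k$, then $D-D'=\Div f$ is principal and the hypothesis gives $\Psi_k(D)=\Psi_k(D')$; since reduced divisors are dense in each connected fiber and $\Psi_k$ is continuous, $\Psi_k$ is constant on the fibers of $\mathrm{aj}_k$ and descends to a smooth function $\overline{\Psi}_k$ on $\Jac(X)$. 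The additivity $\Psi_{k+k'}(D+D')=\Psi_k(D)+\Psi_{k'}(D')$, together with the additivity of $\mathrm{aj}$, shows that $\overline{\Psi}_k(u)-\overline{\Psi}_k(0)$ is additive in $u$ and independent of $k$, i.e. a smooth homomorphism $g\colon\Jac(X)\to(\CBbb,+)$. Its image is a compact subgroup of $\CBbb$, hence trivial, so $g\equiv 0$ and $\Psi_k$ is globally constant. Evaluating on $k\sigma$ gives $\Psi_k\equiv k\,\phi(\sigma)=0$, and then evaluating on $(k-1)\sigma+p$ gives $\phi(p)=0$ for every $p$. This proves the core lemma, whence $w_s\equiv 0$ for every $s$, that is, $\theta\equiv 0$.

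The main obstacle I anticipate is the legitimacy of the reduction to the core lemma: condition (V3) supplies reciprocity only for meromorphic functions whose divisor is finite \'etale over the base, so I must check that enough fiberwise functions arise this way. The key point is that a meromorphic function on $X=\Xcal_s$ with reduced divisor extends, after shrinking $S$, to a relative meromorphic function on $\Xcal$ whose relative divisor is finite \'etale: one moves the simple zeros and poles in sections and keeps the degree-zero divisor relatively principal, i.e. in the kernel of the relative Abel--Jacobi map. After a further \'etale base change its components become sections, so (V3) applies and yields the displayed fiberwise identity. Because reduced divisors suffice to drive the density argument in the core lemma, this is exactly what is needed, and no analytic input beyond the reciprocity bookkeeping is required.
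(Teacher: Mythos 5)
Your route is genuinely different from the paper's, and its core is essentially sound. The paper reduces to a scalar function $\varphi$ on $\Xcal$ via a frame of $\Omega^1_S$, extends the trace from finite \'etale to finite \emph{flat} divisors by continuity, and then uses the universal divisors of the Poincar\'e bundle over Zariski opens of the relative Jacobian $J(\Xcal/S)$ to descend $\varphi$ to a continuous function on $J$ that is a homomorphism on each fiber; compactness of the fibers kills it, and pulling back along the Abel--Jacobi embedding $\Xcal\hookrightarrow J$ finishes. You instead reduce to a single fiber and run the Jacobian argument there via symmetric products; your core lemma is correct (a few bookkeeping points: $\Psi_k$ is continuous on $X^{(k)}$ but not smooth, which is all you use; you should take $k\geq 2g$ so the linear systems are base-point free, which you need both for density of reduced members and to find a member disjoint from two given ones; and to compare two reduced, non-disjoint members of a fiber you must pass through a third, generic one, since your hypothesis only covers functions with reduced divisor). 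Both proofs end with the same mechanism --- a continuous additive function on a compact torus vanishes --- so the real difference is where (V3) gets used, and that is where your proposal has a genuine gap.

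The gap is in the reduction: (V3) is an \emph{algebraic} hypothesis --- it applies to a smooth morphism of quasi-projective varieties $p:T\to S$ and a rational function on $\Xcal_T$ with finite \'etale divisor --- whereas your extension step ("after shrinking $S$, move the zeros and poles in sections, staying in the kernel of the relative Abel--Jacobi map") is an analytic implicit-function-theorem argument. The shrinking it requires is analytic, so the resulting family is not covered by (V3); moreover, the ability to move the points while keeping the divisor relatively principal needs the relative Abel--Jacobi map to be submersive at $(D_0^+,D_0^-)$, i.e.\ non-specialness of the divisors --- automatic for your degrees $k>2g-2$, but you never tie the degree bound to this step, and as literally stated (arbitrary reduced divisor) the claim is not available. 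Both defects can be repaired at once, in the spirit of what you intend: instead of seeking a section over a shrunken base, take $T$ to be the Zariski-open locus of disjoint, reduced pairs inside the incidence variety $\{(D^+,D^-)\in \Xcal^{(k)}\times_S\Xcal^{(k)} \,:\, D^+\sim D^- \text{ fiberwise}\}$, where $\Xcal^{(k)}$ is the relative symmetric product. For $k>2g-2$ the relative Abel--Jacobi morphism $\Xcal^{(k)}\times_S\Xcal^{(k)}\to J$ is smooth, so this incidence variety (its preimage of the zero section) is smooth and quasi-projective over $S$; it carries a tautological fiberwise-principal divisor, finite \'etale over $T$, which Zariski-locally on $T$ equals $\Div f$ for a rational $f$ (its line bundle is $\pi_T^{\ast}$ of a line bundle on $T$, trivial locally). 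Applying (V3) to this $T$, and using surjectivity of $dp$ to strip the $p^{\ast}$ from the identity $\bigl(\sum_x \ord_x(f)\,w_x\bigr)\circ dp_t=0$, yields exactly the fiberwise identities your core lemma consumes, for every disjoint reduced pair in every fiber. With that replacement your proof closes; note that this auxiliary space is essentially the paper's relative Jacobian in another guise, which is why the paper's universal formulation never encounters this issue.
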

\begin{proof}
The vanishing of a differential form is a local property, so we may assume that $\Omega_{S}^{1}$ is a free sheaf on $S$. Let $\theta_1,\ldots,\theta_d$ be a holomorphic frame for $\Omega_{S}^{1}$. Then, $\theta_1,\ldots,\theta_n,\overline{\theta}_1,\ldots,\overline{\theta}_n$ is a frame for  $\Acal^1_S$. Because $\theta$ vanishes on fibers by (V2), on $\Xcal$ we can write
\begin{displaymath}
	\theta=\sum_{i}f_{i}\pi^{\ast}\theta_{i}+\sum_{i}g_{i}\pi^{\ast}\overline{\theta}_i,
\end{displaymath}
for some smooth functions $f_i, g_i$ on $\Xcal$. Observe that $f_i$, $g_i$ vanish along the section $\sigma$ (V1) (by the independence of $\theta_1,\ldots,\theta_n,\overline{\theta}_1,\ldots,\overline{\theta}_n$). They also satisfy the Weil vanishing property (V3). For this, we need to observe that for a smooth morphism $p:T\rightarrow S$, the differential forms $p^{\ast}\theta_{i}$, $p^{\ast}\overline{\theta}_{i}$ are still stalk-wise independent, so are their pull-backs to $\Xcal_T$ (because $\Xcal_T\rightarrow T$ is smooth).\footnote{Note, however, that this property is lost in general if $p:T\rightarrow S$ is not smooth. This explains the restriction to smooth base change  in (V3).} We want to show these functions identically vanish.  We are thus required to prove that a smooth complex function $\varphi:\Xcal\rightarrow\CBbb$ satisfying (V1) and (V3) automatically satisfies (V2), and therefore vanishes.

Let us observe that  Weil vanishing for functions implies something more. Let $D$ be a divisor in $\Xcal_T$, finite and flat over $T$. Then the trace: $
	\tr_{D/T}(\varphi)
$,
can still be defined as a continuous function on $T$, by averaging on fibers and taking multiplicities into account (for this one does not even need $T\rightarrow S$ to be smooth). Hence, if $\tr_{D/T}(\varphi)$ vanishes over a Zariski dense open subset of $T$, then it vanishes everywhere by continuity. This is the case for $D=\Div  f$, where $f$ is a rational function on $\Xcal_T$ with finite flat divisor over $T$. Indeed, there is a dense (Zariski) open subset $U\subseteq T$ such that $D$ is finite \'etale over $U$. This means that the Weil vanishing property holds for rational divisors whose components are only finite and flat.

Recall that
the relative jacobian $J:=J(\Xcal/S)\rightarrow S$
 is a fibration of abelian varieties over $S$, representing the functor $T\mapsto J(T)$ of line bundles on $\Xcal_T$ of relative degree 0, modulo line bundles coming from the base. Here, we will exploit the fact that the total space $J$ is smooth (because $S$ is smooth), and therefore can be covered by Zariski open subsets $U$, which are smooth and quasi-projective over $S$! The natural inclusion of a Zariski open subset $U\hookrightarrow J$ corresponds to the universal rigidified (along $\sigma$) Poincar\'e bundle restricted to $\Xcal_U$, and for small enough $U$, one can suppose this line bundle is associated to a divisor in $\Xcal_U$, finite flat over $U$. We will call this ``a universal'' finite flat divisor over $U$. It is well defined only up to rational equivalence (through rational divisors which are finite flat over the base).

We proceed to extend $\varphi:\Xcal\rightarrow\CBbb$ to a continuous function $\widetilde{\varphi}:J\rightarrow\CBbb$, whose restriction on fibers is a continuous morphism of (topological) groups (for the analytic topology). 
Let $U$ be a Zariski open subset of $J$, such that $\Xcal_U$ affords a ``universal'' finite flat divisor of degree 0 over $U$. Denote this divisor $D_U$. Then, $\tr_{D_U/U}(\varphi)$ is a continuous function on $U$. Moreover, it only depends on the rational equivalence class of $D_U$, by the Weil vanishing property (extended to finite flat rational divisors). Because of this, given $U$ and $V$ intersecting open subsets in $J(\Xcal/S)$, we also have
\begin{displaymath}
	\tr_{D_U/U}(\varphi)\mid_{U\cap V}=\tr_{D_V/V}(\varphi)\mid_{U\cap V}.
\end{displaymath}
Therefore these functions glue into a continuous function $\widetilde{\varphi}:J\rightarrow \CBbb$. The linearity of the trace function with respect to sums of divisors, guarantees that $\widetilde{\varphi}$ is compatible with the group scheme structure. Namely, given the addition law
\begin{displaymath}
	\mu:J\times_{S} J\rightarrow J,
\end{displaymath}
and the two projections $p_i:J\times_{S} J\rightarrow J$, the following relation holds:
\begin{displaymath}
	\mu^{\ast}\widetilde{\varphi}=p_{1}^{\ast}\widetilde{\varphi}+p_{2}^{\ast}\widetilde{\varphi}.
\end{displaymath}
This in particular implies that $\widetilde{\varphi}$ is a topological group morphism on fibers and immediately leads to the vanishing of $\widetilde{\varphi}$ on fibers; hence, everywhere. Indeed, a given fiber $J_{s}$ ($s\in S$) can be uniformized as $\CBbb^{g}/\Lambda$, for some lattice $\Lambda$. The corresponding arrow $\CBbb^{g}\rightarrow\CBbb$ induced from $\widetilde{\varphi}$ is a continuous morphism of topological abelian groups, and it is therefore  a linear map of real vector spaces! Because the map factors through $J_{s}$, which is compact, its image is compact, and hence is reduced to $\{0\}$.

Finally, let $\iota: \Xcal\hookrightarrow J$ be the closed immersion given by the section $\sigma$. Because of the rigidification of $\varphi$, we have $\varphi=\iota^{\ast}\widetilde{\varphi}=0$. This concludes the proof of the proposition.
\end{proof}

\begin{corollary}[{\sc Uniqueness}]
Suppose that we are given $\nabla_1,\nabla_2$ are smooth connections on $L\to \Xcal$ (hence non-necessarily compatible with the holomorphic structure) satisfying the following properties:
\begin{itemize}
	\item[(E1)] they are both rigidified along the section $\sigma$;
	\item[(E2)] they coincide on fibers $\Xcal_s$, $s\in S$;
	\item[(E3)] they satisfy the Weil reciprocity for connections, universally.
\end{itemize}
Then $\nabla_1=\nabla_2$. Therefore, the canonical extension is unique.
\end{corollary}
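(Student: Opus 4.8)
The plan is to reduce the statement to the Vanishing Lemma (Proposition \ref{prop:vanishing}). Since $\nabla_1$ and $\nabla_2$ are connections on the \emph{same} $\mathcal{C}^\infty$ line bundle $L$, and $\End(L)$ is canonically trivial, their difference is a globally defined scalar-valued smooth $1$-form
$$
\theta := \nabla_1 - \nabla_2 = \frac{\nabla_1\ell}{\ell} - \frac{\nabla_2\ell}{\ell} \in \Acal^1_{\Xcal},
$$
independent of the choice of local nonvanishing section $\ell$. Proving $\nabla_1 = \nabla_2$ is therefore equivalent to proving $\theta\equiv 0$, and by Proposition \ref{prop:vanishing} it suffices to check that $\theta$ satisfies (V1), (V2) and (V3). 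First I would dispatch the two easy conditions. Property (V1), namely $\sigma^\ast\theta = 0$, is immediate from (E1): rigidification means that each $\sigma^\ast\nabla_i$ is the trivial derivation on $\sigma^\ast L \simeq \Ocal_S$, so their difference pulls back to zero along $\sigma$. Property (V2), the vanishing of $\theta$ on each fiber $\Xcal_s$, is exactly the content of (E2), since the vertical projections of $\nabla_1$ and $\nabla_2$ agree there.

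The substance of the argument is property (V3), the universal Weil vanishing. Fix a smooth morphism $p:T\to S$ and a meromorphic function $f$ on $\Xcal_T$ whose divisor is finite \'etale over $T$; I must show $\tr_{\Div f/T}(p^\ast\theta)=0$. After localizing on $T$, the Bertini-type lemma established earlier for Deligne pairings provides a meromorphic section $\ell$ of $p^\ast\Lcal$ whose divisor is \'etale and disjoint from both $\Div f$ and $p^\ast\sigma$ over a dense Zariski open $U\subseteq T$. Applying (WR) universally (E3) to each of $\nabla_1$ and $\nabla_2$ gives, on $U$,
$$
\tr_{\Div f/U}\!\left(\frac{\nabla_i\ell}{\ell}\right) = \tr_{\Div\ell/U}\!\left(\frac{df}{f}\right),\qquad i=1,2,
$$
and subtracting the two identities yields $\tr_{\Div f/U}(p^\ast\theta)=0$ on $U$. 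Since $p^\ast\theta$ is a smooth form and $\Div f\to T$ is finite \'etale, the trace $\tr_{\Div f/T}(p^\ast\theta)$ is a continuous form on all of $T$; vanishing on the dense open $U$ forces it to vanish everywhere, which is (V3).

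With (V1)--(V3) verified, Proposition \ref{prop:vanishing} gives $\theta\equiv 0$, hence $\nabla_1=\nabla_2$. For the final assertion, I would simply note that the canonical extension $\nabla$ of Section \ref{sec:canonical} satisfies all three hypotheses: it is rigidified along $\sigma$ (E1) and satisfies (WR) universally (E3) by Theorem \ref{thm:extension}, and it restricts to the given $\nabla_{\Xcal/S}$ on fibers (E2) by construction. Any other connection sharing these properties then agrees with it, proving uniqueness. I do not expect a genuine obstacle here: the real analytic difficulty---extending the trace to the relative Jacobian and exploiting its group structure to kill the form---has already been absorbed into Proposition \ref{prop:vanishing}. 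The only point requiring care is the passage in (V3) from vanishing on a dense open to vanishing everywhere, which is handled by the same continuity principle used in the proof of that proposition.
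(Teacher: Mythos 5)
Your proposal is correct and follows exactly the paper's route: write $\theta=\nabla_1-\nabla_2$ as a global smooth $1$-form, check that (E1)--(E3) give (V1)--(V3), and invoke the vanishing lemma (Proposition \ref{prop:vanishing}). The only difference is that you spell out what the paper leaves implicit—in particular the subtraction of the two (WR) identities against an auxiliary Bertini section and the density/continuity step for (V3)—and these details are handled correctly.
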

\begin{proof}
Indeed, we can write $\nabla_1=\nabla_2+\theta$, where $\theta$ is a smooth 1-form. Then properties (E1)--(E3) ensure that $\theta$ satisfies (V1)--(V3). By the vanishing lemma, $\theta=0$, so $\nabla_1=\nabla_2$ as required. The consequences for the canonical extension follow, since they satisfy (E1)--(E3).
\end{proof}
\begin{remark}
After the corollary, it is justified to talk about \emph{the} canonical extension.
\end{remark}

A second application of the vanishing lemma is an alternative proof of  the compatibility of a connection $\nabla$ on $\Lcal$ with the holomorphic structure (see Theorem \ref{thm:extension}). 
\begin{corollary}[{\sc Compatibility}]
Let $\Lcal\to \Xcal$ be a holomorphic line bundle 
with  connection $\nabla$ which:
\begin{itemize}
	\item[(H1)] is rigidified along the section $\sigma$;
	\item[(H2)] is holomorphic on fibers;
	\item[(H3)] and satisfies the Weil reciprocity for connections, universally.
\end{itemize}
Then $\nabla$ is compatible with the holomorphic structure on $\Lcal$. In particular, the canonical extension is compatible with the holomorphic structure of $\Lcal$.
\end{corollary}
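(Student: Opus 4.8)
The plan is to reduce everything to the Vanishing Lemma (Proposition \ref{prop:vanishing}). Set $\theta := \nabla^{0,1}-\dbar_L$, the difference between the $(0,1)$-part of $\nabla$ and the global Dolbeault operator of $\Lcal$. Since both $\nabla^{0,1}$ and $\dbar_L$ satisfy the Leibniz rule with respect to $\dbar$ on functions, their difference is $\Ocal_\Xcal$-linear, i.e. a globally defined smooth $(0,1)$-form on $\Xcal$; compatibility with the holomorphic structure is precisely the assertion $\theta=0$. I would therefore verify that $\theta$ satisfies properties (V1)--(V3) and then invoke Proposition \ref{prop:vanishing}, noting that a $(0,1)$-form is in particular a smooth complex $1$-form.

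The first two properties are immediate from the hypotheses. For (V1), rigidification (H1) means that through the fixed trivialization $\sigma^{\ast}\Lcal\simeq\Ocal_S$ both $\nabla$ and the holomorphic structure pull back to the trivial ones on $\Ocal_S$; hence $\sigma^{\ast}\nabla^{0,1}$ and $\sigma^{\ast}\dbar_L$ are both the standard $\dbar$ on $S$, so $\sigma^{\ast}\theta=0$. For (V2), holomorphicity on fibers (H2) says that the vertical $(0,1)$-part of $\nabla$ agrees with the fiberwise Dolbeault operator, so the restriction of $\theta$ to every fiber $\Xcal_s$ vanishes.

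The substance is (V3), and it is here that Weil reciprocity enters. The key observation is that for any meromorphic section $\ell$ of $\Lcal$ one has $(\nabla\ell/\ell)^{0,1}=\theta$, \emph{independently of $\ell$}: away from $\Div\ell$ the section is holomorphic, so $\dbar_L\ell=0$ and $(\nabla\ell/\ell)^{0,1}=(\nabla^{0,1}\ell)/\ell=\theta$ (equivalently, replacing $\ell$ by $g\ell$ for meromorphic $g$ adds $(dg/g)^{0,1}=0$). Now fix a smooth base change $p:T\to S$ and a meromorphic function $f$ on $\Xcal_T$ with finite étale divisor, and choose a meromorphic section $\ell$ as in Definition \ref{def:WR}. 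Universal Weil reciprocity (H3) gives
\[
\tr_{\Div f/T}\!\left(\frac{\nabla\ell}{\ell}\right)=\tr_{\Div\ell/T}\!\left(\frac{df}{f}\right).
\]
The right-hand side is the trace of the holomorphic $1$-form $df/f$ along a divisor that is étale over $T$; since the trace along an étale divisor is defined by inverting a local isomorphism it preserves the Hodge bidegree, so the right-hand side is of type $(1,0)$ and its $(0,1)$-part vanishes. Taking $(0,1)$-parts of both sides, and using type-preservation of the trace together with the displayed identity, yields $\tr_{\Div f/T}(p^{\ast}\theta)=0$, which is exactly (V3).

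With (V1)--(V3) verified, Proposition \ref{prop:vanishing} forces $\theta=0$, i.e. $\nabla^{0,1}=\dbar_L$, so $\nabla$ is compatible with the holomorphic structure; applied to the canonical extension this reproves the relevant part of Theorem \ref{thm:extension}. I expect the only delicate point to be the verification of (V3): one must observe both that $(\nabla\ell/\ell)^{0,1}$ is independent of the chosen section and equals the single global form $\theta$, and that the trace operation along an étale divisor commutes with projection onto the $(0,1)$-part. Everything else is formal bookkeeping of the three vanishing conditions.
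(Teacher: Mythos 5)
Your proof is correct, and it reaches the Vanishing Lemma (Proposition \ref{prop:vanishing}) by a genuinely more economical route than the paper. The paper's own proof introduces the Chern connection $\nabla_{ch}$ as a reference: it writes $\nabla=\nabla_{ch}+\theta$, notes that $\nabla_{ch}$ is rigidified, fiberwise holomorphic, compatible with $\dbar_L$, and satisfies (WR) universally (Example \ref{ex:chern-reciprocity}, which rests on a Poincar\'e--Lelong computation), deduces the Weil vanishing of $\theta$ by subtracting the two (WR) identities, and then applies the Vanishing Lemma to the $(0,1)$-part $\theta''$. Your argument dispenses with the reference connection entirely: since $\nabla_{ch}^{0,1}=\dbar_L$, your form $\theta=\nabla^{0,1}-\dbar_L$ is literally the paper's $\theta''$, but you verify (V3) for it directly from the single identity $\tr_{\Div f/T}(\nabla\ell/\ell)=\tr_{\Div\ell/T}(df/f)$ by projecting onto $(0,1)$-parts, using the two observations you correctly isolate as the crux: $(\nabla\ell/\ell)^{0,1}=\theta$ independently of $\ell$, and the trace along an \'etale divisor commutes with the type decomposition (the latter is also what the paper uses, in its third bullet). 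What your route buys is self-containedness --- no appeal to the existence of the Chern connection or to Example \ref{ex:chern-reciprocity} --- and it makes transparent that compatibility is forced purely by the type structure of Weil reciprocity: the right-hand side is a $(1,0)$-form, so the $(0,1)$-discrepancy must have vanishing traces. What the paper's route buys is uniformity with the rest of Section \ref{section:proofs}, where comparison against the Chern connection is the recurring technique and its (WR) property is needed anyway. The only points worth making explicit in a final write-up are the ones both proofs gloss: the existence of a suitable auxiliary section $\ell$ (the Bertini-type lemma of Section \ref{section:connections}), and the extension of the vanishing of $\tr_{\Div f/T}(p^{\ast}\theta)$ from the dense Zariski open $U$ where the divisors are \'etale and disjoint to all of $T$, by continuity of the trace along a finite \'etale divisor.
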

\begin{proof}
Because $\nabla$ is holomorphic on fibers, $\Lcal$ is of relative degree 0 and can be endowed with a Chern connection $\nabla_{ch}$. We can suppose $\nabla_{ch}$ is rigidified along the section $\sigma$ (H1) (because $\Lcal$ is rigidified). We already know that $\nabla_{ch}$ satisfies (H2)--(H3) (see Example \ref{ex:chern-reciprocity}). Also,  $\nabla_{ch}$ is compatible with the holomorphic structure $\Lcal$, by definition of Chern connections. Hence it is enough to compare $\nabla$ and $\nabla_{ch}$. Let us write $\nabla=\nabla_{ch}+\theta$. We decompose $\theta$ into types $(1,0)$ and $(0,1)$: $\theta=\theta^{\prime}+\theta^{\prime\prime}$, and  we wish to see that $\theta^{\prime\prime}=0$. But now, observe the following facts:
\begin{itemize}
	\item $\sigma^{\ast}\theta=0$ and pull-back by $\sigma$ respects types, so that $\sigma^{\ast}\theta^{\prime}=-\sigma^{\ast}\theta^{\prime\prime}$ has to vanish;
	\item $\theta^{\prime\prime}$ vanishes along the fibers, because $\nabla$ and $\nabla_{ch}$ are holomorphic along the fibers;
	\item $\theta$ satisfies the Weil vanishing universally. Because the trace along divisors $\tr_{D/T}$ respects types of differential forms, we deduce that it vanishes for $\theta^{\prime\prime}$.
\end{itemize}
Hence,  $\theta^{\prime\prime}$ satisfies the properties (V1)--(V3) above, and it therefore vanishes. It follows that $\nabla=\nabla_{ch}+\theta^{\prime}$ is compatible with  $\Lcal$.
\end{proof}
\subsection{Variant in the absence of rigidification}
In case the morphism $\pi:\Xcal\rightarrow S$ does not come with a rigidification, we can still pose the problem of extending connections and impose (WR) universally. We briefly discuss this situation. Locally for the \'etale topology, the morphism $\pi$ admits sections. \'Etale morphisms are local isomorphisms in the analytic topology. Therefore, given a relative connection $\nabla_{\Xcal/S}$, there is an analytic open covering $U_i$ of $S$, and connections $\nabla_{i}$ on $\Lcal_{\Xcal_{U_i}}$ extending $\nabla_{\Xcal/S}$ and satisfying (WR) universally. On an overlap $U_{ij}:=U_i\cap U_j$, the connections $\nabla_i$ and $\nabla_j$ differ by a smooth $(1,0)$-form $\theta_{ij}$ on $\Xcal_{U_{ij}}$. The differential form $\theta_{ij}$ satisfies the vanishing properties (V2)--(V3) of Section  \ref{sec:unique}. By the vanishing lemma (Proposition \ref{prop:vanishing}), $\theta_{ij}$ comes from a differential form on $U_{ij}$: $\theta_{ij}\in\Gamma(U_{ij},\Acal^{1,0}_{S})$. This family of differential forms obviously verifies the 1-cocycle condition, and hence gives a cohomology class in $H^{1}(S,\Acal^{1,0}_{S})$. But $\Acal^{1,0}_{S}$ is a fine sheaf, because it is a $\mathcal{C}^{\infty}(S)$-module. Therefore, this cohomology group vanishes and the cocycle $\lbrace\theta_{ij}\rbrace$ is trivial. This means that, after possibly modifying the connections $\nabla_{i}$ by suitable $(1,0)$ differential forms coming from the base, we can glue them together into a connection $\nabla$ on $\Lcal$, extending $\nabla_{\Xcal/S}$. Because any differential form coming from the base $S$ has vanishing trace along divisors of rational functions (more generally, along relative degree 0 divisors), this connection $\nabla$ still satisfies (WR) universally. Two such connections differ by a differential form in $\Gamma(S,\Acal^{1,0}_{S})$. Again, differential forms coming from $S$ have zero trace along degree zero divisors, and so this implies the induced trace connections on Deligne pairings $\langle\Lcal,\Mcal\rangle$ don't depend on the particular extension, as long as $\Mcal$ has relative degree 0. We summarize the discussion in a statement.
\begin{proposition}\label{prop:non-rig-ext}
In the absence of a section of $\pi:\Xcal\rightarrow S$, the space of extensions of $\nabla_{\Xcal/S}$ satisfying (WR) universally is a nonempty torsor under $\Gamma(S,\Acal_{S}^{1,0})$. To $\nabla_{\Xcal/S}$ there is an intrinsically attached trace connection on relative degree zero line bundles $\Mcal$ (still denoted $\nabla_{\langle\Lcal,\Mcal\rangle^{tr}}$).
\end{proposition}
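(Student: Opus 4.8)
The plan is to build the extension locally, using a section wherever one exists, and then to glue the local pieces with the help of the vanishing lemma and the fineness of $\Acal_S^{1,0}$. Since $\pi$ is smooth it admits sections \'etale-locally on $S$, and \'etale morphisms are local isomorphisms for the analytic topology; so I would first choose an analytic open cover $\{U_i\}$ of $S$ equipped with sections $\sigma_i\colon U_i\to\Xcal_{U_i}$. Rigidifying $\Lcal$ along $\sigma_i$ and invoking the canonical extension of Theorem \ref{thm:extension} produces, over each $U_i$, a connection $\nabla_i$ on $\Lcal|_{\Xcal_{U_i}}$ extending $\nabla_{\Xcal/S}$, compatible with the holomorphic structure, and satisfying (WR) universally. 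This settles local existence.

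The next step is to understand the overlaps. On $U_{ij}=U_i\cap U_j$ the connections $\nabla_i$ and $\nabla_j$ both extend $\nabla_{\Xcal/S}$ and are compatible with the holomorphic structure, so $\theta_{ij}:=\nabla_i-\nabla_j$ is a smooth $(1,0)$-form on $\Xcal_{U_{ij}}$ that vanishes on fibers, i.e.\ satisfies (V2). Since both connections satisfy (WR), their difference has vanishing trace along rational divisors, i.e.\ satisfies (V3). Here is the step I expect to be the crux: one cannot apply the vanishing lemma (Proposition \ref{prop:vanishing}) verbatim, because $\theta_{ij}$ does not satisfy the rigidification hypothesis (V1)---indeed $\sigma_i^{\ast}\theta_{ij}=-\sigma_i^{\ast}\nabla_j$ need not vanish. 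Instead I would revisit the proof of that lemma and observe that (V1) is used only at the very end: the argument via the relative jacobian shows, from (V2)--(V3) alone, that each coefficient function in the fiberwise decomposition $\theta_{ij}=\sum_k f_k\,\pi^{\ast}\theta_k+\sum_k g_k\,\pi^{\ast}\overline{\theta}_k$ is constant along fibers (its value at any point equals its value along $\sigma_i$), and it is (V1) that would then force these constants to vanish. Dropping (V1), the conclusion becomes that $\theta_{ij}$ is the pullback of a form on the base; being of type $(1,0)$, it lies in $\Gamma(U_{ij},\Acal_S^{1,0})$.

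With this descent in hand, $\{\theta_{ij}\}$ is a \v{C}ech $1$-cocycle for the sheaf $\Acal_S^{1,0}$. As this sheaf is a $\mathcal{C}^{\infty}_S$-module it is fine, so $H^1(S,\Acal_S^{1,0})=0$ and the cocycle is a coboundary $\theta_{ij}=\theta_i-\theta_j$ with $\theta_i\in\Gamma(U_i,\Acal_S^{1,0})$. Replacing $\nabla_i$ by $\nabla_i-\pi^{\ast}\theta_i$ makes the local connections agree on overlaps, hence glue to a global connection $\nabla$ on $\Lcal$ extending $\nabla_{\Xcal/S}$. To check that $\nabla$ still satisfies (WR) universally I would note that altering a connection by a base form $\pi^{\ast}\theta$ changes $\nabla\ell/\ell$ by $\pi^{\ast}\theta$, and $\tr_{\Div f/S}(\pi^{\ast}\theta)=(\deg\Div f)\,\theta=0$ since any rational function has degree-zero divisor; thus (WR) is preserved. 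This proves nonemptiness.

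Finally, for the torsor statement, any two global extensions differ by a form satisfying (V2)--(V3), hence---by the same descent reading of the vanishing lemma applied over all of $S$---by an element of $\Gamma(S,\Acal_S^{1,0})$, and conversely adding any such form preserves (WR) by the computation just made; so the extensions form a torsor under $\Gamma(S,\Acal_S^{1,0})$. For the intrinsic trace connection, I would use the defining formula \eqref{eqn:derivative}: modifying $\nabla$ by $\pi^{\ast}\theta$ changes $\nabla\langle\ell,m\rangle/\langle\ell,m\rangle$ by $\tr_{\Div m/S}(\pi^{\ast}\theta)=(\deg\Div m)\,\theta$, which vanishes precisely because $\Mcal$ has relative degree $0$. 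Hence the induced trace connection on $\langle\Lcal,\Mcal\rangle$ is independent of the chosen extension, giving the canonical $\nabla^{tr}_{\langle\Lcal,\Mcal\rangle}$ for all relative degree zero $\Mcal$.
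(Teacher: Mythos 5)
Your proposal is correct and follows the paper's own argument essentially step for step: \'etale-local sections yield local canonical extensions $\nabla_i$ satisfying (WR) universally, the overlap differences $\theta_{ij}$ satisfy (V2)--(V3) and hence descend to $(1,0)$-forms on the base, fineness of $\Acal^{1,0}_{S}$ trivializes the resulting \v{C}ech cocycle so the modified connections glue, and pullback forms have vanishing trace along degree-zero divisors, which gives both preservation of (WR) and the independence of the trace connection on $\langle\Lcal,\Mcal\rangle$ for $\Mcal$ of relative degree $0$. You are in fact more careful than the paper at one point: the paper invokes Proposition \ref{prop:vanishing} verbatim to conclude that $\theta_{ij}$ comes from the base, whereas, as you observe, that lemma as stated assumes (V1) and concludes vanishing, and one must instead extract from its proof the (V1)-free statement that (V2)--(V3) alone force each coefficient function to be constant on fibers, i.e.\ force descent.
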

\begin{remark}
The proposition refines Theorem \ref{theorem:existence-unicity}, when we are interested in connections on $\Lcal$ satisfying (WR) universally and extending a given flat relative connection $\nabla_{\Xcal/S}$.
\end{remark}

\begin{proof}[Completion of the Proof of Theorem \ref{thm:intersection}]
Let now $\nabla_{\Xcal/S}$ be a flat relative connection, and let $\nabla$ be any extension satisfying (WR) universally. Attached to $(\Lcal,\nabla)$ there is a trace connection $\Mcal\mapsto\nabla_{\Mcal}$, on line bundles of relative degree 0. This trace connection does not depend on the choice of extension $\nabla_{\Xcal/S}$, as we saw above. By a similar argument, if $(\Lcal,\nabla_{\Xcal/S}^{L})$ and $(\Mcal,\nabla_{\Xcal/S}^{M})$ line bundles with relatively flat connections on $\Xcal\rightarrow S$, Propositions \ref{prop:non-rig-ext}, \ref{prop:sym-int-conn} and \ref{prop:indep-int-conn} together show that there is an intrinsically attached intersection connection $\nabla_{\langle\Lcal,\Mcal\rangle}^{int}$ on $\langle\Lcal,\Mcal\rangle$.
\end{proof}

\begin{remark}
We can restate the intrinsic construction of trace and intersection connections above in a compact categorical language. Let us for instance treat the case of intersection connections. 
Let $\PIC^{\sharp}(\Xcal/S)$ be the category fibered in groupoids (over smooth schemes over $\mathbb{C}$ factoring through $S$), whose objects are line bundles $\Lcal$ over $\Xcal$ (or base changes of it) together with flat relative connections $\nabla_{\Xcal/S}$. Morphisms are given by flat isomorphisms. Similarly, $\PIC^{\sharp}(S)$ is the Picard category of line bundles with compatible connections. The intersection connection construction is a morphism of 
categories fibered in groupoids
\begin{displaymath}
	\begin{split}
		\PIC^{\sharp}(\Xcal/S)\times\PIC^{\sharp}(\Xcal/S)&\longrightarrow\PIC^{\sharp}(S)\\
		((\Lcal,\nabla_{\Xcal/S}^{L}),(\Mcal,\nabla_{\Xcal/S}^{M}))&\longmapsto (\langle\Lcal,\Mcal\rangle,\nabla_{\langle\Lcal,\Mcal\rangle}^{int}).
	\end{split}
\end{displaymath}
This picture encodes both the independence of the auxiliary extensions of the connections, as well as the various functorialities of the intersection connections.
\end{remark}

\subsection{Relation between trace and intersection connections} \label{sec:relation}
We now fill in the proof of Theorem \ref{thm:intersection} (iii), which asserts that if $\Mcal\to\Xcal$ has relative degree $0$, then the trace connection on $\langle\Lcal, \Mcal\rangle$ is a special case of an intersection connection. We state the precise result in  the following
\begin{proposition}
Let $\Lcal\to \Xcal$ be equipped with a flat relative connection $\nabla_{\Xcal/S}^L$.  Let $\Mcal\to \Xcal$ be a hermitian, holomorphic line bundle with Chern connection $\nabla^M_{ch}$ whose restriction $\nabla^M_{\Xcal/S}$ to  the fibers of $\pi:\Xcal\to S$ is a flat relative connection. Let $\nabla^{tr}_{ \langle\Lcal, \Mcal\rangle}$ be the trace connection associated to $\nabla_{\Xcal/S}^L$, and
$\nabla^{int}_{ \langle\Lcal, \Mcal\rangle}$  the intersection connection associated to $\nabla_{\Xcal/S}^L$ and $\nabla^M_{\Xcal/S}$. Then $\nabla^{tr}_{ \langle\Lcal, \Mcal\rangle}=\nabla^{int}_{ \langle\Lcal, \Mcal\rangle}$.
\end{proposition}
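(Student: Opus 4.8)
The plan is to compare the two connections through their defining formulas and reduce the statement to the vanishing of a single fiber integral. Writing $\nabla$ for the canonical extension of $\nabla^L_{\Xcal/S}$ to $\Lcal$ (Theorem \ref{thm:extension}) and $\nabla^M_{ch}$ for the Chern connection of $\Mcal$, Definition \ref{def:intersection-connection} together with the trace formula \eqref{eqn:derivative} gives, on a local frame $\langle\ell,m\rangle$,
\[
\frac{\nabla^{int}_{\langle\Lcal,\Mcal\rangle}\langle\ell,m\rangle}{\langle\ell,m\rangle}-\frac{\nabla^{tr}_{\langle\Lcal,\Mcal\rangle}\langle\ell,m\rangle}{\langle\ell,m\rangle}=\frac{i}{2\pi}\,\pi_\ast\!\left(\frac{\nabla^M_{ch}m}{m}\wedge F_\nabla\right)=:\Phi .
\]
Thus it is enough to prove $\Phi=0$; since $\Phi$ is a globally defined $(1,0)$-form on $S$, I would verify this locally, after restricting to a contractible $S^{\circ}$ as in Section \ref{sec:canonical}.

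Two structural inputs drive the argument. First, because $\nabla^M_{ch}$ is the Chern connection of a hermitian metric and is relatively flat, one has $\nabla^M_{ch}m/m=\partial\log\Vert m\Vert^2$; the key point is that $u:=\log\Vert m\Vert^2$ is a \emph{globally defined} real function on $\Xcal$, smooth away from $\Div m$ and with only logarithmic singularities along it. (It is here that the relative degree $0$ hypothesis on $\Mcal$ is used, through Example \ref{ex:chern}.) Second, since $\nabla^L_{\Xcal/S}$ is relatively flat, $F_\nabla$ restricts to zero on every fiber $\Xcal_s$; moreover the explicit local description \eqref{eqn:extension}, \eqref{eqn:connection-1form} of the canonical extension shows that for a lift $\widetilde{\partial_{s_i}}$ of $\partial/\partial s_i$ the fiberwise form $\left(\Int_{\widetilde{\partial_{s_i}}}F_\nabla\right)\big|_{\Xcal_s}=\sum_j\theta_j(\partial_{s_i})\,\eta_j$ is a combination of the harmonic representatives $\eta_j$ appearing in \eqref{eqn:connection-1form}, hence is itself harmonic. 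This harmonicity is the crux of the proof.

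With these in hand the computation is short. Since $F_\nabla$ vanishes on fibers, contracting the integrand with $\widetilde{\partial_{s_i}}$ and restricting to $\Xcal_s$ leaves only
\[
\Phi(\partial_{s_i})=-\int_{\Xcal_s}\partial u\wedge\left(\Int_{\widetilde{\partial_{s_i}}}F_\nabla\right)\big|_{\Xcal_s}.
\]
Decomposing the harmonic form above into its holomorphic and antiholomorphic parts, the holomorphic $(1,0)$-part wedges with the $(1,0)$-form $\partial u$ to a $(2,0)$-form, which vanishes on the curve $\Xcal_s$; for the antiholomorphic part $\eta^{(0,1)}$ one has $\partial u\wedge\eta^{(0,1)}=d\!\left(u\,\eta^{(0,1)}\right)$, because $d\eta^{(0,1)}=0$ and $\dbar u\wedge\eta^{(0,1)}$ is of type $(0,2)$, hence zero. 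Integrating over the closed surface $\Xcal_s$ gives $0$ by Stokes, so $\Phi(\partial_{s_i})=0$ for all $i$ and therefore $\Phi=0$, which is the desired equality $\nabla^{int}_{\langle\Lcal,\Mcal\rangle}=\nabla^{tr}_{\langle\Lcal,\Mcal\rangle}$.

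The hard part is the harmonicity statement of the second input: it is exactly what guarantees that the contracted curvature is genuinely $d$-closed fiberwise, so that the Stokes step sees no residue contributions at $\Div m$; for a merely cohomologous (non-harmonic) representative the fiber pairing would pick up terms $\sum_p\ord_p(m)\,f(p)$ and the vanishing would fail. A minor technical point to dispatch is the legitimacy of Stokes' theorem in the presence of the logarithmic singularities of $u$: this is harmless since $u\in W^{1,1}(\Xcal_s)$ and $\partial u\in L^1(\Xcal_s)$, so that $d(u\,\eta^{(0,1)})=\partial u\wedge\eta^{(0,1)}$ holds as an identity of currents with no boundary term.
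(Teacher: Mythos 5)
Your proof is correct, and it takes a genuinely different route from the paper's. Both arguments begin with the same reduction: the trace and intersection connections differ on a frame $\langle\ell,m\rangle$ by the fiber integral $\frac{i}{2\pi}\pi_\ast\bigl(\frac{\nabla^M_{ch}m}{m}\wedge F_{\nabla_L}\bigr)$ (this is \eqref{eqn:trace-chern-vanishing} in the paper), so everything rests on its vanishing. The paper proves the vanishing globally on $\Xcal$: it writes $\nabla_L=\nabla^L_{ch}+\theta$, kills the Chern term using fiberwise flatness and $\partial$-closedness of $F_{\nabla^L_{ch}}$, converts the $d\theta$ term via type considerations and Poincar\'e--Lelong into $\pi_\ast(F_{\nabla^M_{ch}}\wedge\theta)+2\pi i\,\tr_{\Div m/S}(\theta)$, and cancels these two terms by differentiating the reciprocity law \eqref{eqn:reciprocityI} and invoking Lemma \ref{lem:chern} to identify $F_{\nabla^M_{ch}}$ with the Gauss--Manin data of $\Mcal$. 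You instead argue fiberwise: contract with a tangent vector, use $F_{\nabla_L}|_{\Xcal_s}=0$ to isolate the cross term, observe from \eqref{eqn:extension}--\eqref{eqn:connection-1form} that $\bigl(\Int_{\widetilde{\partial_{s_i}}}F_{\nabla_L}\bigr)\bigr|_{\Xcal_s}=\sum_j\theta_j(\partial_{s_i})\,\eta_j$ is harmonic --- this claim is correct, since contracting $F_{\nabla_L}=-d\int\nabla_{\GM}\nu_L$ and restricting to the fiber kills every term containing a form pulled back from $S$ --- and conclude with the type/Stokes computation $\int_{\Xcal_s}\partial u\wedge\eta=0$, where the logarithmic singularities of $u=\log\Vert m\Vert^2$ contribute boundary terms of size $O(\epsilon\log\epsilon)$ and hence nothing in the limit. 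Your route is shorter and more elementary (no Poincar\'e--Lelong on the total space, no reciprocity laws), and it never uses the fiberwise flatness of $\nabla^M_{ch}$, only that $\nabla^M_{ch}m/m=\partial\log\Vert m\Vert^2$ with $\log\Vert m\Vert^2$ globally defined; so it actually establishes the stronger statement that the trace connection agrees with the intersection connection formed with the Chern connection of an \emph{arbitrary} hermitian metric on $\Mcal$, consonant with the parenthetical ``(and any metric on $\Mcal$)'' in Theorem \ref{thm:main}. The paper's route, by contrast, stays inside the reciprocity formalism of Section \ref{section:proofs}, which is the machinery that carries over to the curvature computation of Proposition \ref{prop:curvature-trace}. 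Two points to tighten: the canonical extension presupposes a section and a rigidification, so you should (as the paper does) first reduce to that case, which is legitimate because equality of connections is local and all constructions are compatible with base change; and your assertion that the difference $\Phi$ is of type $(1,0)$ needs the one-line justification that both connections are compatible with the holomorphic structure of $\langle\Lcal,\Mcal\rangle$ --- or can simply be dropped, since your contraction argument applies verbatim to the antiholomorphic directions $\partial_{\bar s_i}$.
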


\begin{proof}
An equality of connections is local for the \'etale topology, and our constructions are compatible with base change. Therefore, we can assume there is a section $\sigma$ and that $\Lcal$ is rigidified along $\sigma$. Let $\nabla_L$ be the canonical extension of $\nabla_{\Xcal/S}^L$.  By the definition eq.\ \eqref{eqn:int-connection}, it suffices to show
that for all rational sections  $m$ of $\Mcal$,  
\begin{equation} \label{eqn:trace-chern-vanishing}
\pi_\ast\left( \frac{\nabla^M_{ch} (m)}{m}\wedge F_{\nabla_L}\right)=0
\end{equation}
Let $\Vert\cdot\Vert$ denote the metric on $\Mcal$,
 and write $\nabla_L=\nabla^L_{ch}+\theta$, where $\nabla^L_{ch}$ is the Chern connection, and $\theta$ is of type $(1,0)$.  
Then  using the fact that $F_{\nabla^L_{ch}}$ is $\partial$-closed,  we have
\begin{align*}
\pi_\ast\left( \frac{\nabla^M_{ch} (m)}{m}\wedge F_{\nabla_L}\right)&=\pi_\ast\left(\partial\log\Vert m\Vert^2\wedge F_{\nabla_L}\right)\notag \\
&=\partial \pi_\ast\left(\log\Vert m\Vert^2\cdot F_{\nabla^L_{ch}}\right)+\pi_\ast \left(\partial\log\Vert m\Vert^2\wedge d\theta\right)
\end{align*}
The first term vanishes, since $\nabla^L_{ch}$ is flat on the fibers.
For the second term,  as in the proof of Theorem \ref{thm:intersection} we find
\begin{align}
\pi_\ast \left(\partial\log\Vert m\Vert^2\wedge d\theta\right)&=\pi_\ast \left(\partial\log\Vert m\Vert^2\wedge \dbar\theta\right) \notag \\
&=-\dbar\pi_\ast \left(\partial\log\Vert m\Vert^2\wedge \theta\right)+\pi_\ast \left(\dbar\partial\log\Vert m\Vert^2\wedge\theta\right) \label{eqn:m} \\
&=\pi_\ast(F_{\nabla_{ch}^M}\wedge \theta)+2\pi i \tr_{\Div m/S}(\theta)\label{eqn:M-reciprocity}
\end{align}
where we have used that the first term on the right hand side of \eqref{eqn:m} vanishes because of type, and we apply the Poincar\'e-Lelong formula to the second term on the right hand side of \eqref{eqn:m} to derive \eqref{eqn:M-reciprocity}.  Locally on contractile open subsets of $S$, we can apply \eqref{eqn:reciprocityI} to obtain
\begin{equation}\label{eq:periods-theta} 
2\pi i\sum_{p\in \Div(m)}\ord_p(m)\int_{\sigma}^p \theta =
\sum_{i=1}^g\left(\int_{\alpha_i}\theta\int_{\beta_i}\frac{d\tilde m}{\tilde m} -\int_{\alpha_i}\frac{d\tilde m}{\tilde m}\int_{\beta_i}\theta\right)
\end{equation}
Let $\nabla_\GM \nu_M$ denote the Gauss-Manin invariant for $\nabla^M_{\Xcal/S}$. We now differentiate the equation above, and obtain:
\begin{equation}\label{eq:periods-theta-2}
	\begin{split}
	2\pi i\tr_{\Div m}(\theta)+2\pi i&\sum_{p\in \Div(m)}\ord_p(m)\int_{\sigma}^p \nabla_{\GM}\theta=\\
		&\sum_{i=1}^g\left(\int_{\alpha_i}\nabla_{\GM}\theta\int_{\beta_i}\frac{d\tilde m}{\tilde m} -\int_{\alpha_i}\frac{d\tilde m}{\tilde m}\int_{\beta_i}\nabla_{\GM}\theta\right)\\
	&+\sum_{i=1}^g\left(\int_{\alpha_i}\theta\int_{\beta_i}\nabla_\GM\nu_M -\int_{\alpha_i}\nabla_\GM\nu_M\int_{\beta_i}\theta\right).
	\end{split}
\end{equation}
A comment is in order to clarify the meaning of $\nabla_{\GM}\theta$ and its path integrations. By construction, the differential form $\theta$ is closed on fibers, and even holomorphic. Hence, it defines a relative cohomology class to which we can apply $\nabla_{\GM}$. This we write $\nabla_{\GM}\theta$. The meaning of integration along non-closed paths involves representing $\nabla_{\GM}\theta$ in terms of a family of harmonic forms, exactly as in Section \ref{sec:canonical}. After this clarification, we also note that equation \eqref{eq:periods-theta} holds for $\nabla_{\GM}\theta$ as well. We subtract this variant from \eqref{eq:periods-theta-2}, and deduce
\begin{align*}
2\pi i \tr_{\Div m/S}(\theta)&=\sum_{i=1}^g\left(\int_{\alpha_i}\theta\int_{\beta_i}\nabla_\GM\nu_M -\int_{\alpha_i}\nabla_\GM\nu_M\int_{\beta_i}\theta\right) \\
&=\pi_\ast\left(\theta\wedge \nabla_\GM\nu_M\right)\\
&=-\pi_\ast\left(\theta \wedge F_{\nabla^M_{ch}}\right)\ ,
\end{align*}
where to obtain the last line we use the fact that $\nabla^M_{ch}$ corresponds to the canonical extension of $\nabla^M_{\Xcal/S}$ (Lemma \ref{lem:chern}).
Hence, the right hand side of \eqref{eqn:M-reciprocity} vanishes, and so therefore does \eqref{eqn:trace-chern-vanishing}.  This completes the proof.
\end{proof}

\subsection{Curvatures}  \label{sec:curvature}
  In this section we compute the curvature of intersection and trace connections on $\langle\Lcal, \Mcal\rangle$. 
  Let  $\Lcal, \Mcal\to \Xcal$ be rigidified line bundles with flat relative connections $\nabla_{\Xcal/S}^L$ and $\nabla_{\Xcal/S}^M$, respectively (the rigidification is not essential here).  Denote the Gauss-Manin invariants by $\nabla_\GM\nu_L$ and $\nabla_\GM\nu_M$, and recall from Section \ref{sec:gm} that $\real \nu_L$, $\real\nu_M$ are well-defined.
  
 We will also need the following. Let
 \begin{equation}\label{eqn:ks}
 \KS(\Xcal/S)=\nabla_{GM}\Pi'=-\nabla_{\GM}\Pi'' \in \End\left(
 H_{dR}^1(\Xcal/S)\right)\otimes \Acal_S^{1,0}
 \end{equation}
 denote the derivative of the period map of the fibration $\Xcal\to S$, where $\Pi'$, $\Pi''$ are as in \eqref{eqn:nu'} and \eqref{eqn:nu''} (cf.\ \cite{Voisin:07}).
Finally,  define the operation
\begin{equation} \label{eqn:cup}
\bigl(H^1_{dR}(\Xcal/S)\otimes \Acal^i_S\bigr) \times \bigl(H^1_{dR}(\Xcal/S)\otimes \Acal^j_S\bigr)\stackrel{\cup}{\lra} H^2_{dR}(\Xcal/S)\otimes \Acal^{i+j}_S
\end{equation}
is given by the cup product on relative cohomology classes 
and the wedge product on forms, whereas $\pi_\ast$ denotes the fiber integration:
$
 H^2_{dR}(\Xcal/S)\to\Ccal^{\infty}(S)
$.  One easily verifies that
\begin{align}
\overline{\KS(\alpha)} & = -\KS(\overline\alpha) \label{eqn:ks-cc} \\
\pi_\ast\left( \alpha\cup \KS(\beta)\right) &=-\pi_\ast\left( \KS(\alpha)\cup \beta\right) \label{eqn:ks-skew} 
\end{align}
 
 With this understood, we have the following
 \begin{proposition} \label{prop:curvature-intersection}
  The curvature of the intersection connection $\nabla^{int}_{ \langle\Lcal, \Mcal\rangle}$ on $\langle\Lcal, \Mcal\rangle$ is given by \eqref{eqn:intersection-curvature}.
 \end{proposition}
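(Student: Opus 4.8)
The plan is to reduce everything to the curvature formula already proved in Proposition \ref{prop:curv-inter}. Taking $\nabla=\nabla_L$ and $\nabla'=\nabla_M$ to be the canonical extensions of $\nabla^L_{\Xcal/S}$ and $\nabla^M_{\Xcal/S}$ provided by Theorem \ref{thm:extension} (both relatively flat and satisfying (WR)), that proposition gives $F_{\nabla^{int}_{\langle\Lcal,\Mcal\rangle}}=\frac{i}{2\pi}\pi_\ast(F_{\nabla_L}\wedge F_{\nabla_M})$. Since curvature is a local object, I may work over a contractible $S^\circ$ as in Section \ref{sec:canonical}, where the rigidification is available; the task then becomes the identification of $\pi_\ast(F_{\nabla_L}\wedge F_{\nabla_M})$ with a constant multiple of $\pi_\ast(\nabla_\GM\nu_L\cup\nabla_\GM\nu_M)$.

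The key step is to extract the relevant part of $F_{\nabla_L}$ from the explicit local description \eqref{eqn:extension}. In a local nonvanishing meromorphic frame $\ell$ the connection form is $\nabla_L\ell/\ell=d\tilde\ell/\tilde\ell-\int_{\sigma(s)}^{z}\nabla_\GM\nu_L$, and $F_{\nabla_L}$ is its exterior derivative. As $\tilde\ell$ is fibrewise holomorphic, $d(d\tilde\ell/\tilde\ell)=0$, so $F_{\nabla_L}=-d\int_{\sigma(s)}^{z}\nabla_\GM\nu_L$. Writing $\nabla_\GM\nu_L=\sum_i[\eta_i^L]\otimes\theta_i^L$ with $\eta_i^L$ the fibrewise harmonic representatives of \eqref{eqn:connection-1form} and $\theta_i^L\in\Acal^1_{S^\circ}$, and differentiating the primitive $\sum_i\bigl(\int_{\sigma(s)}^{z}\eta_i^L\bigr)\theta_i^L$, the fibre differential of each primitive returns $\eta_i^L$. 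Hence the part of $F_{\nabla_L}$ with exactly one leg along the fibre and one along the base equals $-\sum_i\eta_i^L\wedge\theta_i^L$, namely the harmonic representative of $-\nabla_\GM\nu_L$; the purely fibrewise part vanishes by relative flatness (equivalently by \eqref{eqn:GM-expression}), and the remaining terms are purely horizontal. The same analysis applies to $F_{\nabla_M}$.

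Now I carry out the fibre integration. Because the fibre is two-real-dimensional, $\pi_\ast$ annihilates every term of $F_{\nabla_L}\wedge F_{\nabla_M}$ with fewer than two fibre legs, so only the product of the two mixed parts survives. Reordering the wedge to group the fibre factors (which introduces a Koszul sign when $\theta^L_i$ is moved past $\eta^M_j$) and using that fibre integration of a wedge of closed fibre representatives computes the cup-product pairing on $H^1_{dR}(\Xcal/S)$, I get $\pi_\ast(F_{\nabla_L}\wedge F_{\nabla_M})=-\pi_\ast(\nabla_\GM\nu_L\cup\nabla_\GM\nu_M)$ with the conventions of \eqref{eqn:cup}. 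Combining with $\frac{i}{2\pi}=-\frac{1}{2\pi i}$ then yields exactly \eqref{eqn:intersection-curvature}.

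I expect the main obstacle to be purely bookkeeping rather than conceptual: verifying that the mixed part of the curvature is precisely the harmonic representative of the Gauss-Manin invariant (where the explicit construction of Section \ref{sec:canonical} and the expression \eqref{eqn:GM-expression} are essential), and then keeping track of the Koszul signs in \eqref{eqn:cup} together with the factors of $i$ so that the final sign is the asserted one. The vanishing of the non-mixed contributions and the reduction to Proposition \ref{prop:curv-inter} are routine type-counting, and independence of the auxiliary extension is already guaranteed by the results preceding this proposition.
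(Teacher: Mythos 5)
Your proposal is correct and takes essentially the same route as the paper's (very terse) proof: both invoke Proposition \ref{prop:curv-inter} for the canonical extensions, use \eqref{eqn:extension} to identify $F_{\nabla_L}=-d\int_{\sigma(s)}^{z}\nabla_\GM\nu_L$ (and likewise for $\Mcal$), and observe that under $\pi_\ast$ only the mixed one-fibre-leg/one-base-leg terms survive, producing $\pi_\ast(\nabla_\GM\nu_L\cup\nabla_\GM\nu_M)$. Your extra bookkeeping with harmonic representatives, Koszul signs, and the constant $\tfrac{1}{2\pi i}$ simply fills in the details the paper leaves implicit, and it is consistent with \eqref{eqn:intersection-curvature}.
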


 \begin{proof}
 By \eqref{eqn:extension}, the curvature of the canonical extension is given by
 $$
 -d\int_{\tilde\sigma(s)}^{\tilde z}\nabla_\GM\nu
 $$
 From Proposition \ref{prop:curv-inter}, the only term that survives the fiber integration is
 $$
 \nabla_\GM\nu_L\cup \nabla_\GM\nu_M.
 $$
 \end{proof}

 The following is then an immediate consequence of the curvature formula.
 \begin{corollary}  \label{cor:hol-intersection}
 If the flat relative connections on $\Lcal$ and $\Mcal$ are of type $(1,0)$ (see Definition \ref{def:type}), then the intersection connection on $\langle\Lcal, \Mcal\rangle$ is holomorphic.
 \end{corollary}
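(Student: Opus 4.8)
The plan is to reduce holomorphicity of $\nabla^{int}_{\langle\Lcal,\Mcal\rangle}$ to a statement about the Hodge type of its curvature. Recall from Theorem \ref{theorem:int-connection} that the intersection connection is by construction compatible with the holomorphic structure on $\langle\Lcal,\Mcal\rangle$, so that its $(0,1)$-part equals $\dbar$. For such a connection, holomorphicity is equivalent to the vanishing of the $(1,1)$-part of the curvature. Indeed, in a local holomorphic frame $e$ one may write $\nabla^{int}e=\omega\otimes e$ with $\omega$ necessarily of type $(1,0)$ (compatibility forces the $(0,1)$-component to vanish); then $F=d\omega=\partial\omega+\dbar\omega$ has $(1,1)$-component $\dbar\omega$, and $\nabla^{int}$ is holomorphic precisely when $\omega$ is holomorphic, i.e.\ when $\dbar\omega=0$, equivalently when $F$ is of pure type $(2,0)$.

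First I would invoke the curvature formula of Proposition \ref{prop:curvature-intersection}, namely
$$
F_{\nabla^{int}_{\langle\Lcal,\Mcal\rangle}}=\frac{1}{2\pi i}\,\pi_\ast\bigl(\nabla_\GM\nu_L\cup\nabla_\GM\nu_M\bigr).
$$
By the hypothesis that both flat relative connections are of type $(1,0)$ (Definition \ref{def:type}), the Gauss--Manin invariants satisfy $\nabla_\GM\nu_L,\nabla_\GM\nu_M\in H^1_{dR}(\Xcal/S)\otimes\Acal^{1,0}_S$. Under the operation \eqref{eqn:cup} the differential-form components are combined by the wedge product; since each is of type $(1,0)$ on $S$, their wedge is of type $(2,0)$. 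The fiber integration $\pi_\ast$ contracts only the cohomological factor $H^2_{dR}(\Xcal/S)\to\Ccal^\infty(S)$ and leaves the form component on $S$ untouched. Hence $F_{\nabla^{int}_{\langle\Lcal,\Mcal\rangle}}$ is of pure type $(2,0)$.

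Combining the two steps, the $(1,1)$-part of the curvature vanishes, so by the criterion above the intersection connection is holomorphic, which completes the argument. The one point demanding care is the bookkeeping of Hodge types through \eqref{eqn:cup}: one must keep the cohomological and differential-form factors separate and observe that $\pi_\ast$ acts only on the former, so that the type of the resulting form on $S$ is governed entirely by the wedge of the two $(1,0)$ form-components. Beyond this there is no genuine obstacle; the statement is a direct corollary of the explicit curvature computation.
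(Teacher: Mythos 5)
Your proof is correct and follows exactly the route the paper intends: the corollary is stated there as an immediate consequence of the curvature formula of Proposition \ref{prop:curvature-intersection}, and your argument simply makes explicit the two ingredients (the type bookkeeping through the cup product and fiber integration, and the standard fact that a compatible connection with curvature of pure type $(2,0)$ is holomorphic). Nothing is missing; this is the paper's proof spelled out in detail.
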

 
Next, we turn to trace connections, where the calculation is a bit more involved.  Of course, as shown in Section \ref{sec:relation}, the trace connection is a special case of an intersection connection. However, the following gives a more general formula where $\nu_M$ is not necessarily associated to a unitary connection.
\begin{proposition} \label{prop:curvature-trace}
Assume a rigidification of $\Lcal$.  Then the trace connection $\nabla^{tr}_{ \langle\Lcal, \Mcal\rangle}$
 on $\langle\Lcal,\Mcal\rangle$ has curvature:
 \begin{align}
 \begin{split} \label{eqn:curvature-trace}
F_{\nabla^{tr}_{ \langle\Lcal, \Mcal\rangle}}
&=\frac{1}{2\pi i}
\pi_\ast\biggl\{
(\nabla_{\GM}\nu_L)' \cup (\nabla_{\GM}\nu_M)'' 
-(\nabla_{\GM}\nu_L)'' \cup \overline{(\nabla_{\GM}\nu_M)'' } \\
&\qquad\qquad
+2\real \nu_M\cup\left( \KS(\Xcal/S)\wedge \nabla_{\GM}\nu_L \right)
\biggr\}.
\end{split}
\end{align}
\end{proposition}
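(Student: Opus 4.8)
The plan is to compute the curvature as the exterior derivative of the trace connection one-form in a local frame $\langle\ell,m\rangle$, and then to reduce everything to fiber integrals by means of the reciprocity laws of Section~\ref{sec:canonical}. Since $\nabla^{tr}_{\langle\Lcal,\Mcal\rangle}\langle\ell,m\rangle/\langle\ell,m\rangle = \tr_{\Div m/S}(\nabla_L\ell/\ell)$, where $\nabla_L$ is the canonical extension of $\nabla^L_{\Xcal/S}$, and since the trace along a fixed finite \'etale divisor is a sum of pullbacks by sections and hence commutes with $d$ on $S$, I would first record that
$$F_{\nabla^{tr}_{\langle\Lcal,\Mcal\rangle}} = \tr_{\Div m/S}\bigl(F_{\nabla_L}\bigr) = -\tr_{\Div m/S}\left(d\int_{\tilde\sigma(s)}^{\tilde z}\nabla_{\GM}\nu_L\right),$$
using the expression for the curvature of the canonical extension from the proof of Proposition~\ref{prop:curvature-intersection}. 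The fact that the right-hand side must be independent of $m$ is precisely what the reciprocity argument below makes manifest.

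The heart of the argument is to expand $d\int_{\tilde\sigma(s)}^{\tilde z}\nabla_{\GM}\nu_L$ using the harmonic-representative description \eqref{eqn:connection-1form}. Writing $\nabla_{\GM}\nu_L = \sum_i[\eta_i]\otimes\theta_i$ with $\eta_i(z,s)$ the fiberwise harmonic representatives, the exterior derivative splits into three pieces: (a) the purely vertical differential, which returns $\nabla_{\GM}\nu_L$ itself; (b) the $s$-derivative of the harmonic forms $\eta_i(\cdot,s)$, which by the theory of variation of Hodge structure is governed precisely by the derivative of the period map $\KS(\Xcal/S)$ of \eqref{eqn:ks}; and (c) the contribution of the moving base point $\tilde\sigma(s)$. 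I would keep careful track of Hodge types throughout, using the splitting $\nabla_{\GM}\nu_L = (\nabla_{\GM}\nu_L)' + (\nabla_{\GM}\nu_L)''$ of \eqref{eqn:nu'}--\eqref{eqn:nu''}.

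To convert the resulting traces $\tr_{\Div m/S}(\cdot)$ into cup products I would invoke the reciprocity laws \eqref{eqn:reciprocityI} and \eqref{eqn:WR-families}, together with their anti-holomorphic variants noted after \eqref{eqn:reciprocityI}, exactly as in the proof that the trace connection is an intersection connection (Section~\ref{sec:relation}). Applied to the pair $(d\tilde m/\tilde m,\nabla_{\GM}\nu_L)$, these replace a trace by a sum of period products of the form $\int_{\alpha_i}\int_{\beta_i}-\int_{\beta_i}\int_{\alpha_i}$, which is a fiber integral $\pi_\ast(\,\cdot\cup\,\cdot)$ on $H^1_{dR}(\Xcal/S)$. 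The periods of $d\tilde m/\tilde m$ agree with those of $\nu_M$ up to the integral class in $R^1\pi_\ast(2\pi i\underline\ZBbb)$; since this ambiguity is purely imaginary, only $\real\nu_M$ enters unambiguously, which is the source of the factor $\real\nu_M$ in the third term. Pairing the holomorphic reciprocity law against $(\nabla_{\GM}\nu_L)'$ and its conjugate against $(\nabla_{\GM}\nu_L)''$, the $(1,0)$ and $(0,1)$ projections of $\nabla_{\GM}\nu_M$ together with the conjugation then yield the first two terms $(\nabla_{\GM}\nu_L)'\cup(\nabla_{\GM}\nu_M)''$ and $-(\nabla_{\GM}\nu_L)''\cup\overline{(\nabla_{\GM}\nu_M)''}$.

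Finally I would assemble the three contributions, using the skew-symmetry and conjugation identities \eqref{eqn:ks-cc}--\eqref{eqn:ks-skew} to put the Kodaira-Spencer piece into the symmetric form $2\real\nu_M\cup(\KS(\Xcal/S)\wedge\nabla_{\GM}\nu_L)$, dividing by the $2\pi i$ supplied by the reciprocity normalization. The main obstacle I anticipate is steps (b)--(c): correctly differentiating the family of harmonic representatives and the base section, identifying the outcome with $\KS(\Xcal/S)$, and tracking the integral periods so that precisely $\real\nu_M$ rather than the full $\nu_M$ appears. The secondary delicate point is maintaining Hodge-type consistency so that the three cup products land in the stated bidegrees and the conjugation in the second term emerges from the anti-holomorphic reciprocity law.
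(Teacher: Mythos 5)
Your proposal starts exactly where the paper does (curvature of the trace connection equals $-d\,\tr_{\Div m/S}\bigl(\int_{\tilde\sigma(s)}^{\tilde z}\nabla_{\GM}\nu_L\bigr)$, computed locally over $S^{\circ}$ with a flat symplectic basis and $\Div m$ given by sections), and you list every ingredient the paper actually uses: the reciprocity law in families \eqref{eqn:WR-families} and its anti-holomorphic variant, the flatness $\nabla_{\GM}^{2}\nu_L=0$, the identity $\nabla_{\GM}\Pi'=\KS(\Xcal/S)$, and the Riemann bilinear relations. But you invert the paper's order of operations — you differentiate the Gauss--Manin integral first and apply reciprocity afterwards — and this inversion is where the plan breaks. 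The laws \eqref{eqn:reciprocityI} and \eqref{eqn:WR-families} convert one specific kind of object into periods: traces over a divisor of \emph{path integrals of fiberwise-closed (harmonic) forms}. After you expand $d\int_{\tilde\sigma(s)}^{\tilde z}\nabla_{\GM}\nu_L$ into your pieces (a)--(c), the terms you must trace are no longer of that kind: piece (a) is the evaluation of the harmonic forms $\eta_i$ at the \emph{moving} points $p_j(s)$ of $\Div m$ (pulled back by the sections, so it involves the velocities $dp_j$), and piece (b) consists of integrals of the variations $\partial_s\eta_i$, which for a flat frame $[\eta_i]$ are fiberwise \emph{exact}, not harmonic — so, contrary to your claim, piece (b) is not directly ``governed by $\KS$''; the Kodaira--Spencer operator only appears after Hodge-decomposing, which is not visible at the level of representatives. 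Moreover, $\nabla_{\GM}\nu_M$ can only enter the answer through the $s$-dependence of $\Div m$ and of the multipliers of $\tilde m$, and the only tool in the paper that extracts this is differentiation of the reciprocity identity itself, via \eqref{eqn:derivative-GM}. So the step ``invoke the reciprocity laws exactly as in Section~\ref{sec:relation}'' does not apply to the terms your decomposition produces; making it apply amounts to proving a differentiated reciprocity law, i.e., to reverting to the paper's order.

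Concretely, the paper's mechanism — which is the idea missing from your sketch — is a reorganization performed \emph{before} differentiating: apply \eqref{eqn:WR-families} to $\tr_{\Div m/S^{\circ}}\int(\nabla_{\GM}\nu_L)'$ and its conjugate variant to $\tr_{\Div m/S^{\circ}}\int(\nabla_{\GM}\nu_L)''$, then regroup the resulting period sums into two blocks: coefficients $\real\log\chi_M(\gamma)$ multiplying periods of $(\nabla_{\GM}\nu_L)'-(\nabla_{\GM}\nu_L)''$, and coefficients $i\,\imag\log\chi_M(\gamma)$ multiplying periods of the \emph{full} $\nabla_{\GM}\nu_L$. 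Only then take $d$: the $d\log\chi_M$ factors become $\nabla_{\GM}\nu_M$-periods (yielding, after the bilinear relations, the first two cup-product terms of \eqref{eqn:curvature-trace}); the $\imag$-block dies because $\nabla_{\GM}^{2}\nu_L=0$; and the $\real$-block produces the third term because $\nabla_{\GM}\bigl[(\nabla_{\GM}\nu_L)'-(\nabla_{\GM}\nu_L)''\bigr]=2\,\KS(\Xcal/S)\wedge\nabla_{\GM}\nu_L$, with coefficient $\log|\chi_M|$, i.e.\ the periods of $\real\nu_M$. Your explanation that $\real\nu_M$ appears ``because the $2\pi i\underline{\ZBbb}$-ambiguity is imaginary'' is a correct consistency check (the well-defined part of the multipliers must be what survives), but it is not the mechanism; the factor $2$ and the pairing of $\real$ with the Hodge-difference versus $\imag$ with the full invariant come from the regrouping above, which your proposal never performs.
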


\begin{remark} 
The trace connection on $\langle\Lcal, \Mcal\rangle$ is independent of a choice of relative connection on $\Mcal$. Using \eqref{eqn:ks-cc} and \eqref{eqn:ks-skew}, one verifies that \eqref{eqn:curvature-trace} is indeed independent of the choice of $\nabla_{\Xcal/S}^M$. Moreover,  by specializing to the Chern connection on $\Mcal$,  \eqref{eqn:curvature-trace} reduces to \eqref{eqn:intersection-curvature}, as it must by the comment above.
\end{remark}

We also point out the following 
\begin{corollary} \label{cor:flat}
 If $(\nabla_{\GM}\nu)''$ vanishes identically then the trace
connection on $\langle \Lcal, \Lcal\rangle$ is flat.
\end{corollary}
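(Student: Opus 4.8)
The plan is to read the result directly off the curvature formula \eqref{eqn:curvature-trace} of Proposition~\ref{prop:curvature-trace}, specialized to the diagonal case $\Mcal=\Lcal$. Since the trace connection on $\langle\Lcal,\Lcal\rangle$ uses only the relative connection on the first factor, and the remark following Proposition~\ref{prop:curvature-trace} guarantees that the right-hand side of \eqref{eqn:curvature-trace} is independent of the auxiliary choice of $\nabla^M_{\Xcal/S}$, I am free to compute with $\nabla^M_{\Xcal/S}=\nabla^L_{\Xcal/S}$, so that $\nu_M=\nu_L=\nu$ and $\nabla_{\GM}\nu_M=\nabla_{\GM}\nu_L=\nabla_{\GM}\nu$. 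With this choice \eqref{eqn:curvature-trace} reads
\[
F_{\nabla^{tr}_{\langle\Lcal,\Lcal\rangle}}=\frac{1}{2\pi i}\pi_\ast\Bigl\{(\nabla_{\GM}\nu)'\cup(\nabla_{\GM}\nu)''-(\nabla_{\GM}\nu)''\cup\overline{(\nabla_{\GM}\nu)''}+2\real\nu\cup\bigl(\KS(\Xcal/S)\wedge\nabla_{\GM}\nu\bigr)\Bigr\}.
\]

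Under the hypothesis $(\nabla_{\GM}\nu)''\equiv 0$ the first two terms vanish by inspection, as each carries $(\nabla_{\GM}\nu)''$ as a factor. So the entire problem collapses to showing that the third term vanishes, and in fact that the Kodaira--Spencer contribution $\KS(\Xcal/S)\wedge\nabla_{\GM}\nu$ is identically zero. This is the crux. The key observation is that this expression is nothing but the Gauss--Manin derivative of $(\nabla_{\GM}\nu)'$: using $\KS(\Xcal/S)=\nabla_{\GM}\Pi'$ from \eqref{eqn:ks} together with the Leibniz rule for the covariant exterior derivative of the endomorphism $\Pi'$ applied to the $H^1_{dR}(\Xcal/S)$-valued one-form $\nabla_{\GM}\nu$, one obtains
\[
\nabla_{\GM}\bigl((\nabla_{\GM}\nu)'\bigr)=\nabla_{\GM}\bigl(\Pi'\,\nabla_{\GM}\nu\bigr)=(\nabla_{\GM}\Pi')\wedge\nabla_{\GM}\nu+\Pi'\,\nabla_{\GM}^2\nu=\KS(\Xcal/S)\wedge\nabla_{\GM}\nu,
\]
where $\nabla_{\GM}^2\nu=0$ because the Gauss--Manin connection is flat. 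Now the hypothesis enters a second time: $(\nabla_{\GM}\nu)''\equiv 0$ forces $(\nabla_{\GM}\nu)'=\nabla_{\GM}\nu$, so the left-hand side is $\nabla_{\GM}(\nabla_{\GM}\nu)=\nabla_{\GM}^2\nu=0$. Hence $\KS(\Xcal/S)\wedge\nabla_{\GM}\nu=0$, the third term drops out, and $F_{\nabla^{tr}_{\langle\Lcal,\Lcal\rangle}}=0$.

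I expect the only delicate point to be the bookkeeping in the displayed Leibniz computation. One must interpret $\nabla_{\GM}$ consistently as the flat Gauss--Manin connection on $H^1_{dR}(\Xcal/S)$ extended to bundle-valued forms, verify that $\nabla_{\GM}\Pi'$ is exactly the endomorphism-valued $(1,0)$-form $\KS(\Xcal/S)$ of \eqref{eqn:ks}, and confirm that no stray Koszul sign appears since $\Pi'$ is a degree-zero endomorphism acting on a form. Everything else is a purely formal consequence of flatness, requiring no Hodge-type analysis of the surviving cup product; it is worth stressing that the hypothesis $(\nabla_{\GM}\nu)''=0$ is a condition on the Hodge type of the cohomology values, distinct from the base-form condition of Definition~\ref{def:type}, so Corollary~\ref{cor:hol-intersection} does not apply and the direct argument above is the natural route.
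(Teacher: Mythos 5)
Your proposal is correct and is essentially the paper's own argument: both reduce to showing $\KS(\Xcal/S)\wedge\nabla_{\GM}\nu=0$ by differentiating the hypothesis with the Gauss--Manin connection and invoking its flatness $\nabla_{\GM}^2=0$ together with \eqref{eqn:ks}. The only cosmetic difference is that you differentiate $\Pi'\nabla_{\GM}\nu=\nabla_{\GM}\nu$ while the paper differentiates the equivalent identity $\Pi''\nabla_{\GM}\nu=0$; your extra remarks (specializing \eqref{eqn:curvature-trace} to $\nu_M=\nu_L$, justified by its independence of $\nabla^M_{\Xcal/S}$) merely make explicit what the paper leaves implicit.
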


\begin{proof}
Differentiate the equation
$$
0=(\nabla_{\GM}\nu)''=\Pi''\nabla_{\GM}\nu
$$
to find 
$$
0=\nabla_\GM\Pi''\wedge \nabla_{\GM}\nu+ \Pi''\nabla_{\GM}^2\nu \\
=-\KS(\Xcal/S)\wedge \nabla_{\GM}\nu+ \Pi''\nabla_{\GM}^2\nu\ .
$$
Since the $\nabla_\GM^2=0$, the result follows.
\end{proof}

\begin{proof}[Proof of Proposition \ref{prop:curvature-trace}]
First, since the calculation is local in $S$ for the analytic topology, we can work over a contractible open subsets $S^{\circ}$. Let $\tilde \nu_L$ be a lift of $\nu_L$ (the classifying morphism of $\Lcal,\nabla^L_{\Xcal/S})$),  and $\chi_s:\pi_1(\Xcal_s, \sigma(s))\to \CBbb^\times$ denote the holonomy representation of the flat connection in the fiber over $s\in S^{\circ}$: $\chi_s(\gamma)=\exp(\int_\gamma\tilde\nu_s)$. Similarly for $\Mcal$.
Choose a homology basis as in Section \ref{sec:canonical}. Let $m$ be a meromorphic section of $\Lcal$, whose divisor is finite and \'etale over (an open subset of) $S$. After \'etale base change, we may assume that $\Div m$ is given by sections. Using \eqref{eqn:WR-families}, we then have
\begin{align*}
2\pi i\sum_j \ord_{p_j}m\int_{\sigma(s)}^{p_j} (\nabla_{\GM}\nu_L)' &=
\sum_{i=1}^g\left\{
\int_{\alpha_i}(\nabla_{\GM}\nu_L)'
\int_{\beta_i}\frac{dm}{m}-\int_{\alpha_i}\frac{dm}{m}\int_{\beta_i}(\nabla_{\GM}\nu_L)'
\right\} \\
2\pi i\sum_j \ord_{p_j}m\int_{\sigma(s)}^{p_j} (\nabla_{\GM}\nu_L)'' &=
-\sum_{i=1}^g\left\{
\int_{\alpha_i}(\nabla_{\GM}\nu_L)''
\int_{\beta_i}\overline{\frac{dm}{m}}-\int_{\alpha_i}\overline{\frac{dm}{m}}\int_{\beta_i}(\nabla_{\GM}\nu_L)''
\right\}.
\end{align*}
Hence,
\begin{align*} 
2\pi i\sum_j \ord_{p_j}m&\int_{\sigma(s)}^{p_j} \nabla_{\GM}\nu_L =
\sum_{i=1}^g\biggl\{
 \real\log\chi_M(\beta_i)\int_{\alpha_i}\left[(\nabla_{\GM}\nu_L)'
 -
(\nabla_{\GM}\nu_L)''\right] \\
&\qquad\qquad
 -\real\log\chi_M(\alpha_i)\int_{\beta_i}\left[(\nabla_{\GM}\nu_L)'-
(\nabla_{\GM}\nu_L)''\right]\biggr\} \\
&\quad+ \sum_{i=1}^g
\left\{
 i\imag\log\chi_M(\beta_i)\int_{\alpha_i}\nabla_{\GM}\nu_L
 -i\imag\log\chi_M(\alpha_i)\int_{\beta_i}\nabla_{\GM}\nu_L
\right\}
\end{align*}
(the choice of log is immaterial). 
Using the flatness of the Gauss-Manin connection, 
$$d\int_\gamma\nabla_{\GM}\nu_L=\int_\gamma\nabla^2_{\GM}\nu_L=0
$$
we find
\begin{align}
-2\pi i\tr_{\Div m/S^{\circ}} &F_{\nabla^{tr}_{\langle\Lcal,\Mcal\rangle}}=
\sum_{i=1}^g\biggl\{
 d\real\log\chi_M(\beta_i)\wedge \int_{\alpha_i}\left[(\nabla_{\GM}\nu_L)'-
(\nabla_{\GM}\nu_L)''\right]\label{eqn:tracecurvature}  \\
&\qquad\qquad
 -d\real\log\chi_M(\alpha_i)\wedge \int_{\beta_i}\left[(\nabla_{\GM}\nu_L)'-
(\nabla_{\GM}\nu_L)''\right]\biggr\}\notag \\
&+ \sum_{i=1}^g
\left\{
 id\imag\log\chi_M(\beta_i)\wedge \int_{\alpha_i}\nabla_{\GM}\nu_L
 -id\imag\log\chi_M(\alpha_i)\wedge \int_{\beta_i}\nabla_{\GM}\nu_L\right\}      \notag    \\
& +\sum_{i=1}^g\biggl\{
 \log|\chi_M(\beta_i)|\int_{\alpha_i}\nabla_{\GM}\left[(\nabla_{\GM}\nu_L)'-
(\nabla_{\GM}\nu_L)''\right] \notag\\
&\qquad\qquad
 -\log|\chi_M(\alpha_i)|\int_{\beta_i}\nabla_{\GM}\left[(\nabla_{\GM}\nu_L)'-
(\nabla_{\GM}\nu_L)''\right] 
\biggr\}\notag.
\end{align}
Now
\begin{align*}
d\real\log\chi_M(\gamma)&=\frac{1}{2}\int_\gamma
\nabla_{\GM}\nu_M+\overline{\nabla_{\GM}\nu_M} \\
id\imag\log\chi_M(\gamma)&=\frac{1}{2}\int_\gamma
\nabla_{\GM}\nu_M-\overline{\nabla_{\GM}\nu_M}.
\end{align*}
Substituting this into \eqref{eqn:tracecurvature}, and using \eqref{eqn:ks},  we have
\begin{align*}
-2\pi i\cdot &\tr_{\Div m/S^{\circ}} F_{\nabla^{tr}_{\langle\Lcal,\Mcal\rangle}}=
\frac{1}{2}\sum_{i=1}^g\biggl\{
\int_{\beta_i}\left[ \nabla_{\GM}\nu_M+\overline{\nabla_{\GM}\nu_M}\right]
\wedge \int_{\alpha_i}\left[ (\nabla_{\GM}\nu_L)'-(\nabla_{\GM}\nu_L)''\right] \\
&\qquad-
 \int_{\alpha_i}\left[ \nabla_{\GM}\nu_M+\overline{\nabla_{\GM}\nu_M}\right]
\wedge \int_{\beta_i}\left[
(\nabla_{\GM}\nu_L)'-(\nabla_{\GM}\nu_L)''\right]\biggr\} \\
&\quad+
\frac{1}{2}\sum_{i=1}^g\biggl\{
\int_{\beta_i}\left[ \nabla_{\GM}\nu_M-\overline{\nabla_{\GM}\nu_M}\right]
\wedge\int_{\alpha_i}\nabla_{\GM}\nu_L \\
&\qquad- \int_{\alpha_i}\left[ \nabla_{\GM}\nu_M-\overline{\nabla_{\GM}\nu_M}\right]
\wedge\int_{\beta_i}\nabla_{\GM}\nu_L\biggr\} \\
&\quad+
2\sum_{i=1}^g\left\{
\log|\chi_M(\beta_i)|\int_{\alpha_i}\KS(\Xcal/S)\wedge\nabla_{\GM}\nu_L
-\log|\chi_M(\alpha_i)|\int_{\beta_i}\KS(\Xcal/S)\wedge\nabla_{\GM}\nu_L
\right\}\\
&=
\sum_{i=1}^g\left\{\int_{\beta_i}\nabla_{\GM}\nu_M\wedge\int_{\alpha_i}(\nabla_{\GM}\nu_L)'
-\int_{\beta_i}\overline{\nabla_{\GM}\nu_M}\wedge\int_{\alpha_i}(\nabla_{\GM}\nu_L)''\right\}\\
&\quad
-\sum_{i=1}^g\left\{\int_{\alpha_i}\nabla_{\GM}\nu_M\wedge\int_{\beta_i}(\nabla_{\GM}\nu_L)'
-\int_{\alpha_i}\overline{\nabla_{\GM}\nu_M}\wedge\int_{\beta_i}(\nabla_{\GM}\nu_L)''\right\}\\
&\quad+
2\sum_{i=1}^g\left\{
\log|\chi_M(\beta_i)|\int_{\alpha_i}\KS(\Xcal/S)\wedge\nabla_{\GM}\nu_L
-\log|\chi_M(\alpha_i)|\int_{\beta_i}\KS(\Xcal/S)\wedge\nabla_{\GM}\nu_L
\right\}.
\end{align*}
By the Riemann bilinear relations the right hand side is
\begin{align*}
&=
\sum_{i=1}^g\left\{\int_{\beta_i}(\nabla_{\GM}\nu_M)''\wedge\int_{\alpha_i}(\nabla_{\GM}\nu_L)'
-\int_{\beta_i}\overline{(\nabla_{\GM}\nu_M)''}\wedge\int_{\alpha_i}(\nabla_{\GM}\nu_L)''\right\}\\
&\quad
-\sum_{i=1}^g\left\{\int_{\alpha_i}(\nabla_{\GM}\nu_M)''
\wedge\int_{\beta_i}(\nabla_{\GM}\nu_L)'
-\int_{\alpha_i}\overline{(\nabla_{\GM}\nu_M)''}\wedge\int_{\beta_i}(\nabla_{\GM}\nu_L)''\right\}\\
&\quad+
2\sum_{i=1}^g\left\{
\log|\chi_M(\beta_i)|\int_{\alpha_i}\KS(\Xcal/S)\wedge\nabla_{\GM}\nu_L
-\log|\chi_M(\alpha_i)|\int_{\beta_i}\KS(\Xcal/S)\wedge\nabla_{\GM}\nu_L
\right\}.
\end{align*}
Collecting terms and applying the bilinear relations again, the formula now follows.
\end{proof}

\section{Examples and Applications}\label{section:examples}

\subsection{Reciprocity for trivial fibrations} \label{sec:trivial}
Throughout this section, we consider trivial families  $\Xcal=X\times S$, where $X$ is  a fixed compact Riemann surface $X$ of genus $g\geq 1$ with a prescribed point $\sigma\in X$.  Using the Hodge splitting we shall give explicit formulas illustrating the main construction of this paper in this simple case.
In particular, we shall give a direct proof of Weil reciprocity for the connection defined in Section \ref{sec:canonical}. 

The \emph{deRham moduli space} is defined by:
\begin{align*}
M_{dR}(X)&=\{\text{moduli of rank 1 holomorphic connections on $X$}\}\\
&\simeq H^1(X,\CBbb)/H^1(X,2\pi i\ZBbb)\ .
\end{align*}
As an algebraic variety, $M_{dR}(X)$ depends only on the underlying topological, and not the Riemann surface, structure of $X$.
We shall always assume a rigidification, or trivialization of our bundles at $\sigma$. If we take as base $S=M_{dR}(X)$, then there is a universal line bundle $\Lcal\to \Xcal$ equipped a universal relative connection. 
Choose a symplectic homology basis $\{\alpha_j, \beta_j\}_{j=1}^g$, and normalized abelian differentials $\omega_j$ with period matrix $\Omega$. Then $J(X)=\CBbb^g/\ZBbb^g+\ZBbb^g\Omega$.  
Given $[\nabla]\in M_{dR}(X)$, we have its associated character 
$$\chi_\nu: \pi_1(X,\sigma)\to \CBbb^\times: \gamma\mapsto\exp\left(\int_\gamma\nu\right)
$$
 (cf.\ \eqref{eqn:equivariant1}). We regard $\chi_\nu$ as an element of the \emph{Betti moduli space}
\begin{equation}\label{eqn:MB}
M_B(\pi_1(X,\sigma)):=\Hom(\pi_1(X,\sigma), \GL(1,\CBbb))\simeq (\CBbb^\times)^{2g}\ ,
\end{equation}
 with its structure as an algebraic variety.  The Riemann-Hilbert correspondence above gives a complex analytic (though not algebraic) isomorphism 
 $$
 M_{dR}(X)\simeq M_B(\pi_1(X,\sigma))\ .
 $$

 As before, we have chosen a lift of $\nu$ from $H^1(X,\CBbb)/H^1(X,2\pi i\ZBbb) $ to $H^1(X,\CBbb)$. In fact, we choose a harmonic representative of this class in $\Acal^1_X$, and continue to denote this by  $\nu$. Since we have chosen a basis $\{\omega_i\}$ for $H^{1,0}(X)$, we have local holomorphic coordinates $(t_i,s_i)$ for $M_{dR}(X)$. We shall write: $\nu=\sum_{i=1}^g t_i\omega_i+ s_i\overline{\omega_i}$, and so
 \begin{equation}\label{eqn:nabla}
 \nabla_{\GM}\nu=\sum_{i=1}^g  \omega_i\otimes dt_i+\bar\omega_i\otimes ds_i.
 \end{equation}
 Let $\widetilde X$ be the universal cover of $X$.  
According to the discussion in Section \ref{sec:canonical}, we view  sections $\ell$ of $ \Lcal$ as functions $\tilde\ell$ on $\widetilde X\times M_{dR}(X)$ that satisfy
\begin{equation}\label{eqn:equivariant}
\tilde\ell(\gamma z,\nu)=\exp\left(\int_\gamma \nu\right)\tilde \ell(z,\nu)
\end{equation}
(note that the bundle is invariant with respect to the integral lattice $H^1(X,2\pi i\ZBbb)$).  The universal connection 
$\nabla : \Omega^0(\mathcal X, \Lcal)\to \Omega^1(\mathcal X, \Lcal)$
is defined as follows (see \eqref{eqn:extension}): given $\tilde\ell$ satisfying \eqref{eqn:equivariant}, let
\begin{equation} \label{eqn:universal}
\nabla \ell (z,\nu)= d\tilde\ell(z,\nu) - \int_{\sigma}^z \nabla_\GM\nu \cdot \tilde\ell(z,\nu).
\end{equation}
  One can check directly that $ \nabla\ell (z,\nu)$ indeed satisfies the correct equivariance, and that the connection is independent of the choice of fundamental domain.  A change of homology basis has the same effect as pulling $\nabla_\GM\nu$ back by the corresponding action on $M_{dR}(X)$; and therefore $\nabla$ is independent of this choice.
 
 \begin{remark} \label{rem:betti}
This is the connection defined in \eqref{eqn:extension}. Notice that this is \emph{not} a holomorphic connection (see Remark \ref{rem:holomorphic}): $\left(\nabla_{\GM}\nu\right)''=\sum_{i=1}^g \bar\omega_i(z)\otimes ds_i$.
\end{remark}
The flat  connection corresponding to $\nu$  is $\nabla=d+\nu$, and  $\dbar_\nabla=\dbar+\nu''$ is the corresponding $\dbar$-operator for the holomorphic line bundle $\Lcal$ defined by $\nabla$. 
The map $\pi: M_{dR}(X)\to J(X)$ which takes a holomorphic connection to its underlying holomorphic line bundle realizes $M_{dR}(X)$ as an affine bundle over $J(X)$.  We wish to write this map explicitly. First, we identify the Jacobian variety $J(X)$ with the space of flat $\U(1)$-connections, or equivalently, as the space of $U(1)$-representations of $\pi_1(X,\sigma)$. 
 The Chern connection on $\Lcal_\nu$ is $d_A= d+ \nu''-\overline{\nu''}$, and this defines a \emph{unitary} character $\chi_u:\pi_1(X)\to \U(1)$. Let
\begin{align*}
2\pi i a_j&=\log\chi_u(\alpha_j)=\int_{\alpha_j} \nu''-\overline{\nu''} \\
2\pi i b_j&=\log\chi_u(\beta_j)=\int_{\beta_j} \nu''-\overline{\nu''}.
\end{align*}
Then the point $[u]\in J(X)$ corresponding to $[\nabla]\in M_{dR}(X)$ is given by
\begin{equation} \label{eqn:u}
u=b-a^t\Omega.
\end{equation}
In terms of these coordinates, one calculates: 
\begin{align}
2\pi i a_j &= s_j-\bar s_j\label{eqn:alpha} \\
2\pi i b_j & =\sum_{k=1}^g s_k\overline\Omega_{kj}-\bar s_k\Omega_{kj} \\
u_j & = -\frac{1}{\pi} s_k \imag \Omega_{kj} \label{eqn:uj} \\
s_j&= -\pi u_k(\imag \Omega)^{-1}_{kj} \label{eqn:s-u}
\end{align}

\begin{remark} \label{rem:unitary}
Notice that there is a smooth section $\jmath: J(X)\to M_{dR}(X)$ defined by
$$
[u]\mapsto [\nu] : \nu=\sum_{i=1}^g (-\bar s_i)\omega_i+s_i\bar\omega_i
$$
where $s_j$ is given by \eqref{eqn:s-u}.  The image $U_{dR}(X)\subset M_{dR}(X)$, which consists of the unitary connections, is a totally real submanifold.
\end{remark}

Next, we express meromorphic sections of $\Lcal_\nu\to X$ in terms of meromorphic functions
on $\widetilde X$ satisfying the equivariance \eqref{eqn:equivariant}.  Let $E(z,w)$ be the Schottky prime form associated with $\{\alpha_i,\beta_i\}$ (cf.\ \cite{Fay:73}).  For a meromorphic section $\ell$ of $\Lcal_\nu$ with divisor $\sum_{i=1}^N p_i-q_i$ we have
\begin{equation} \label{eqn:divisor}
\sum_{i=1}^N \int^{p_i}_{q_i}\vec \omega = u + m + n^t\Omega\quad ,\quad m,n\in \ZBbb^g
\end{equation}

\begin{lemma}
Define 
\begin{equation*}
\tilde \ell(z)=\frac{\prod_{i=1}^N E(z,p_i)}{\prod_{i=1}^N E(z,q_i)}\exp\left\{ 2\pi i(a-n)^t\int_{\sigma}^z\vec\omega+ \int_{\sigma}^z \nu'+\overline{\nu''}
\right\} 
\end{equation*}
Then $\tilde \ell$ is a meromorphic function on $\widetilde X$ with multipliers $\chi_\nu$ and divisor  projecting to $\Div(\ell)$.
\end{lemma}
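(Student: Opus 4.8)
The plan is to verify the two assertions separately: that $\tilde\ell$ has the stated divisor on $\widetilde X$, and that it transforms under the deck group $\pi_1(X,\sigma)$ with multiplier $\chi_\nu$. The divisor statement is immediate. The Schottky prime form $E(z,w)$ is holomorphic and vanishes to first order exactly on the lifted diagonal $z=w$, so the ratio $\prod_i E(z,p_i)/\prod_i E(z,q_i)$ has divisor $\sum_i(p_i-q_i)$, while the exponential prefactor is holomorphic and nowhere zero on $\widetilde X$: the integrands $\vec\omega$, $\nu'$ and $\overline{\nu''}$ are all holomorphic one-forms, so their integrals from $\sigma$ are single-valued on the simply connected $\widetilde X$. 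Hence $\Div\tilde\ell$ projects to $\Div\ell=\sum_i(p_i-q_i)$. It remains to compute the multipliers.

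For this I would first record the transformation laws of the two building blocks under $z\mapsto\gamma z$. The Abel--Jacobi integral $\int_\sigma^z\vec\omega$ increases by the basis vector $e_j$ under $\alpha_j$ and by the period vector $(\Omega_{j1},\dots,\Omega_{jg})$ under $\beta_j$. The prime form $E(\,\cdot\,,w)$ is invariant under $\alpha_j$ and transforms by $E(z,w)\mapsto E(z,w)\exp(-\pi i\Omega_{jj}-2\pi i\int_w^z\omega_j)$ under $\beta_j$, up to a sign and a half-order-differential factor $h(z)$. The crucial simplifying observation is that $\tilde\ell$ contains equal numbers ($N$) of prime forms in numerator and denominator, so that in the ratio both the signs $(\pm1)^N$ and the entire $z$-dependence of the factors $h(z)^{-N}$ cancel; only the exponential quasi-periods of the prime forms survive.

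For the $\alpha_j$-multiplier the prime form ratio is invariant, so only the prefactor contributes. Using $\int_{\alpha_j}\nu'=t_j$ and $\int_{\alpha_j}\overline{\nu''}=\bar s_j$ and discarding $\exp(2\pi i n_j)=1$, the prefactor picks up $\exp(2\pi i a_j+t_j+\bar s_j)$; substituting $2\pi i a_j=s_j-\bar s_j$ from \eqref{eqn:alpha} collapses this to $\exp(t_j+s_j)=\chi_\nu(\alpha_j)$, as required.

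The $\beta_j$-multiplier is where the lattice bookkeeping is delicate, and this is the main point. Here the prime form ratio contributes $\exp(-2\pi i\sum_i\int_{p_i}^{q_i}\omega_j)=\exp(2\pi i\sum_i\int_{q_i}^{p_i}\omega_j)$, and I would substitute Abel's theorem \eqref{eqn:divisor}, $\sum_i\int_{q_i}^{p_i}\vec\omega=u+m+n^t\Omega$. The integer vector $m$ disappears modulo $2\pi i\ZBbb$, and the resulting term $2\pi i\sum_k n_k\Omega_{jk}$ is exactly cancelled by the $-2\pi i\sum_k n_k\Omega_{jk}$ produced by the $-n$ in the prefactor — precisely the reason the shift $(a-n)$ was inserted. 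What survives is $\exp(2\pi i u_j+2\pi i\sum_k a_k\Omega_{jk}+\sum_k(t_k+\bar s_k)\Omega_{jk})$. Inserting $u_j=b_j-\sum_k a_k\Omega_{jk}$ from \eqref{eqn:u} cancels the $a$-terms, and then $2\pi i b_j=\sum_k s_k\overline{\Omega}_{jk}-\sum_k\bar s_k\Omega_{jk}$ reduces the expression to $\exp(\sum_k t_k\Omega_{jk}+\sum_k s_k\overline{\Omega}_{jk})=\chi_\nu(\beta_j)$. The only genuine obstacle is keeping the conventions for the symmetry of $\Omega$, the prime-form quasi-periods, and the lattice vectors $m,n$ mutually consistent; once these are fixed every auxiliary term cancels and both holonomies emerge exactly.
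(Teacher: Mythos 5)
Your proof is correct: the paper actually states this lemma \emph{without} proof, and your direct verification---quasi-periodicity of the prime form under the $\beta_j$-cycles, cancellation of the signs and of the half-order differentials $h(z)$ in the ratio of $N$ prime forms over $N$ prime forms, Abel's theorem \eqref{eqn:divisor} to absorb the lattice vectors $m,n$, and the coordinate identities \eqref{eqn:u} and \eqref{eqn:alpha}---is precisely the computation the authors leave to the reader. In particular, you correctly isolate the one essential bookkeeping point, namely that the shift $a-n$ in the exponent is there so that the term $2\pi i\,n^t\Omega$ produced by the prime forms via \eqref{eqn:divisor} cancels, and both the $\alpha_j$- and $\beta_j$-multiplier computations check out exactly.
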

A particular case of the above formula is a meromorphic function $f(z)$ with divisor $\sum_{i=1}^M x_i-y_i$ and
$$
\sum_{i=1}^M \int^{x_i}_{y_i}\vec \omega = \tilde m+ \tilde n^t\Omega \quad ,\quad \tilde m,\tilde n\in \ZBbb^g
$$
Then $f$ can be expressed
\begin{equation} \label{eqn:f}
f(z)=\frac{\prod_{i=1}^M E(z,x_i)}{\prod_{i=1}^M E(z,y_i)}\exp\left\{ -2\pi i \tilde n^t\int_{\sigma}^z\vec\omega\right\} 
\end{equation}
With this understood, we are ready to give a direct proof of (WR) in this setting:
\begin{proposition} \label{prop:reciprocity}
Let $\ell$ be a meromorphic section of the universal bundle $\Lcal\to \Xcal$ and $f$ a meromorphic function on $\Xcal$. Then
$$
\tr_{\Div f/M_{dR}}\left(\frac{ \nabla\ell }{\ell}\right) = \tr_{\Div \ell/M_{dR}}\left(\frac{df }{f}\right) 
$$
\end{proposition}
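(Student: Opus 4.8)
The plan is to prove the identity by substituting the explicit prime-form formulas into both traces and checking the equality directly, so that the two classical reciprocity laws used in the proof of Theorem~\ref{thm:extension} get replaced by the skew-symmetry $E(z,w)=-E(w,z)$ of the prime form together with the local constancy of the Abel--Jacobi image. First I would reduce to the case where $\Div\ell=\sum_i(p_i-q_i)$ and $\Div f=\sum_j(x_j-y_j)$ are given by sections of $\mathcal{X}\to M_{dR}$, which is legitimate since the asserted equality of $1$-forms is local on the base and invariant under \'etale base change; then both traces become signed sums of pullbacks along these sections. Using $\frac{\nabla\ell}{\ell}=\frac{d\tilde\ell}{\tilde\ell}-\int_\sigma^z\nabla_{\GM}\nu$ from \eqref{eqn:extension}, the preceding lemma for $\tilde\ell$, and \eqref{eqn:f} for $f$, I would split $d\log\tilde\ell$ and $d\log f$ into a \emph{prime-form part} (the $d\log E(z,\cdot)$ terms) and an \emph{exponential part} (the Abel--Jacobi exponents).

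For the prime-form part, the key observation is that, after substituting the moving divisor points $x_j=x_j(\nu)$, $p_i=p_i(\nu)$, etc., one has $d\log E(x_j,p_i)=d\log E(p_i,x_j)$ as total differentials on $M_{dR}$, since the two differ by the constant $\log(-1)$. Hence the double sum $\sum_{i,j}\varepsilon_{ij}\,d\log E(x_j,p_i)$ produced by the prime-form part of $\tr_{\Div f}(d\tilde\ell/\tilde\ell)$ coincides termwise with the one produced by $\tr_{\Div\ell}(df/f)$; this is exactly the logarithmic-derivative incarnation of the second reciprocity law \eqref{eqn:reciprocityII}, and these contributions cancel in the difference of the two sides.

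It then remains to match the exponential parts. Here I would first record the Gauss--Manin cancellation: with $\nu=\sum_i t_i\omega_i+s_i\bar\omega_i$ and $\nabla_{\GM}\nu=\sum_i\omega_i\,dt_i+\bar\omega_i\,ds_i$ from \eqref{eqn:nabla}, and using $2\pi i\,a_j=s_j-\bar s_j$ from \eqref{eqn:alpha}, a direct computation gives
\[
d\Big(\int_\sigma^z(\nu'+\overline{\nu''})+2\pi i(a-n)^t\int_\sigma^z\vec\omega\Big)-\int_\sigma^z\nabla_{\GM}\nu=\sum_i ds_i\int_\sigma^z(\omega_i-\bar\omega_i)+\Big(2\pi i(a-n)^t\vec\omega+\nu'+\overline{\nu''}\Big),
\]
the $dt_i$- and $d\bar s_i$-terms cancelling entirely. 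The fiber $(1,0)$-form $\Phi:=2\pi i(a-n)^t\vec\omega+\nu'+\overline{\nu''}=\sum_j c_j(\nu)\,\omega_j$ then traces to zero over $\Div f$, because $\tr_{\Div f}(\Phi)=\sum_j c_j\,d\big(\sum_j\int_\sigma^{x_j}-\int_\sigma^{y_j}\big)\omega_j$ and $\sum_j\int_{y_j}^{x_j}\vec\omega=\tilde m+\tilde n^t\Omega$ is locally constant, so its differential vanishes. The base term traces to $\sum_i ds_i\,(\Omega-\overline\Omega)$-periods of $\tilde n$, i.e. $2i\sum_i(\tilde n^t\imag\Omega)_i\,ds_i$, which equals $-2\pi i\,\tilde n^t du$ by the coordinate relation \eqref{eqn:uj} and the symmetry of $\Omega$. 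On the other side, the exponential part of $df/f$ is $-2\pi i\,\tilde n^t\vec\omega$, and $\tr_{\Div\ell}$ of it is $-2\pi i\,\tilde n^t\,d\big(\sum_i\int_{q_i}^{p_i}\vec\omega\big)=-2\pi i\,\tilde n^t\,du$ by \eqref{eqn:divisor}. The two agree, completing the proof.

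The main point to get right is the bookkeeping of the base-direction ($ds_i,d\bar s_i$) contributions coming from the motion of the divisors with $\nu$, and the recognition that the holomorphic fiber part $\Phi$ traces to zero: this vanishing is precisely the piece that the first reciprocity law \eqref{eqn:reciprocityI} supplies in the abstract argument, and here it is furnished instead by the local constancy of the lattice vectors $\tilde m,\tilde n$ attached to $\Div f$. The only genuinely delicate consistency check is that the quasi-periodicity of the prime form (encoded in the integers $n$, $\tilde n$ appearing in $\tilde\ell$ and $f$) is compatible with staying inside a fixed fundamental domain, so that the skew-symmetry identity and the abelian-sum evaluations may be applied without stray period ambiguities.
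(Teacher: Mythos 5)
Your proof is correct and is essentially the paper's own argument: both substitute the prime-form expressions \eqref{eqn:l-tilde} and \eqref{eqn:f} into the traces and verify the identity directly from the coordinate relations \eqref{eqn:alpha}, \eqref{eqn:uj}, \eqref{eqn:divisor}, together with the local constancy of the lattice vectors $m,n,\tilde m,\tilde n$. The only difference is organizational: the paper first establishes the multiplicative identity $\tilde\ell(\Div f)=f(\Div \ell)\exp\bigl(\sum_j t_jv_j+s_j\bar v_j\bigr)$ and then takes $d\log$, recognizing the exponential term as exactly the traced Gauss--Manin integral, whereas you differentiate first and match the prime-form pieces (via antisymmetry of $E$) and the exponential pieces (via the $du$ computation) separately.
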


\begin{proof}
Fix $\nu=\sum_{i=1}^g t_i \omega_i + s_i\bar\omega_i$. Then $\nu'+\overline{\nu''}=\sum_{i=1}^g(t_i+\bar s_i)\omega_i$. Using \eqref{eqn:alpha} we see that, up to a nonzero multiplicative constant,
\begin{equation} \label{eqn:l-tilde}
\tilde \ell(z)=\frac{\prod_{i=1}^N E(z,p_i)}{\prod_{i=1}^N E(z,q_i)}\exp\left\{ (t+s)^t \int_{\sigma}^z\vec\omega-2\pi in^t\int_{\sigma}^z\vec\omega\right\} 
\end{equation}
(note that $m,n$ and $\tilde m,\tilde n$ are locally constant). Hence
\begin{align*}
\tilde \ell(\Div f)&=\frac{\prod_{i=1}^N\prod_{j=1}^M E(x_j,p_i)E(y_j,q_i)}{\prod_{i=1}^N E(x_j,q_i)E(y_j,p_i)}\exp\left\{ (t+s)^t \sum_{i=1}^M\int^{x_i}_{y_i}\vec\omega-2\pi in^t\sum_{i=1}^M\int^{x_i}_{y_i}\vec\omega\right\}  \\
&
=\frac{\prod_{i=1}^N\prod_{j=1}^M E(x_j,p_i)E(y_j,q_i)}{\prod_{i=1}^N\prod_{j=1}^M E(x_j,q_i)E(y_j,p_i)}\exp\left\{ (t+s)^t(\tilde m+\tilde n^t\Omega)-2\pi in^t(\tilde n^t\Omega)\right\}
\end{align*}
since $n^t \tilde m\in \ZBbb$.
Similarly, using \eqref{eqn:uj},
$$
f(\Div \ell)=\frac{\prod_{i=1}^N\prod_{j=1}^M E(x_j,p_i)E(y_j,q_i)}{\prod_{i=1}^N\prod_{j=1}^M E(x_j,q_i)E(y_j,p_i)}
\exp\left\{ 2i\tilde  n^t(s^t\imag \Omega) -2\pi i \tilde n^t(n^t\Omega)\right\}.\\
$$
Now $ (t+s)^t(\tilde m+\tilde n^t\Omega)-2i \tilde n^t(s^t\imag \Omega) = \sum_{i=1}^g t_j v_j+ s_j \bar v_j$, where $v=\tilde m+\tilde n^t\Omega$, so that
$$
\tilde \ell(\Div f)=f(\Div \ell)\exp\left( \sum_{i=1}^g t_j v_j+ s_j \bar v_j\right).
$$
Differentiating with respect to $(z,\nu)$,
$$
d\tilde \ell(\Div f)=\frac{d(f(\Div \ell))}{f(\Div \ell)  }\ell(\Div f)
+
\left( \sum_{i=1}^g dt_j v_j+ ds_j v_j\right) \ell(\Div f).
$$
On the other hand,
$$
(\int_{z_0}^z \nabla_\GM\nu)(\Div f)=\sum_{i=1}^M \int_{y_i}^{x_i} dt_i\omega_i+ds_i\bar \omega_i= \sum_{i=1}^g dt_j v_j+ ds_j v_j.
$$
The result now follows from the definition of $ \nabla$.
\end{proof}
\subsection{Holomorphic extension of analytic torsion}
As mentioned in the Introduction, one motivation for this paper was to derive an interpretation of the holomorphic extension of analytic torsion in terms of Deligne pairings.  In this section, we review the construction of torsion and  give explicit formulas, generalizing those in \cite{Fay:81} and \cite{Hitchin:13}.  In the next section, we explain the relationship with our construction.

First, we review the definition of analytic torsion for the non-self-adjoint operators we consider. Fix an arbitrary  hermitian, holomorphic line bundle $\Mcal\to X$ with Chern connection $\nabla=\partial_\nabla+\dbar_\nabla$.
Given a flat connection on $\Lcal \to X$ with holonomy $\chi$, we regard smooth sections $\ell$ of $\Lcal\otimes \Mcal $ as $\chi$-equivariant sections $\tilde \ell$ of the pull-back of $\Mcal$ to $\widetilde X$ satisfying \eqref{eqn:equivariant}.  Pick a lift $\nu\in H^1(X,\CBbb)$ of the character,
 and set
$$
G_\nu(z)= \exp\left(\int_{\sigma}^z\nu\right).
$$
Then for any $\tilde \ell$, notice that $G_\nu(z)\tilde\ell(z)$ is a well-defined smooth section of $\Mcal\to X$. Define the operators 
\begin{align*}
D'' &: \Omega^{p,0}(X,\Mcal)\lra \Omega^{p,1}(X,\Mcal) : \alpha\mapsto G_\nu^{-1}\dbar_\nabla \left(G_\nu \alpha\right) \\
D' &: \Omega^{0,q}(X, \Mcal)\lra \Omega^{1,q}(X,\Mcal) : \beta\mapsto G_\nu^{-1}\partial_\nabla\left( G_\nu \beta\right).
\end{align*}
Fix a conformal metric on $X$, and let $\ast$ denote the Hodge operator.
We define the \emph{laplacian} associated to $\nu$ and $\Mcal$ by ${ \square}_{\chi_\nu\otimes \Mcal} (s)=-2i\ast D' D'' (s)$, for smooth sections $s$ of $\Mcal$. This is an elliptic operator that is independent of the choice of base point $\sigma$. In case $\nu$ is unitary, $G_\nu$ has absolute value $=1$, and via \eqref{eqn:equivariant} gives a unitary equivalence between ${ \square}_{\chi_\nu\otimes \Mcal}$  and the ordinary $\dbar$-laplacian  
 for  $\Lcal\otimes\Mcal$. In particular, the spectra of these two operators is the same in this case.
 For $\nu$ not unitary, ${ \square}_{\chi_\nu\otimes\Mcal}$ is not a  symmetric operator. Nevertheless, the  following holds.

\begin{lemma} \label{lem:finite}
For  $\nu$ in a compact set there are at most finitely many eigenvalues $\lambda$ of ${ \square}_{\chi_\nu\otimes\Mcal}$ with $\real \lambda\leq 0$.
\end{lemma}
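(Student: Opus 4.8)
The plan is to treat $\square_{\chi_\nu\otimes\Mcal}$ as a lower order perturbation of the ordinary, nonnegative self adjoint $\dbar$-Laplacian of $\Mcal$, with perturbation bounds that are uniform as $\nu$ ranges over the given compact set $K$. Writing $D''=\dbar_\nabla+\nu''$ and $D'=\partial_\nabla+\nu'$, where $\nu=\nu'+\nu''$ is the fixed harmonic representative, the factorization $\square_{\chi_\nu\otimes\Mcal}=-2i\ast D'D''$ expands as $\square_{\chi_\nu\otimes\Mcal}=\square_0+A_\nu$, where $\square_0=-2i\ast\partial_\nabla\dbar_\nabla=\dbar_\nabla^\ast\dbar_\nabla$ is the fixed $\Mcal$-Laplacian and $A_\nu$ is a first order operator whose coefficients are the smooth bundle maps determined by $\nu'$ and $\nu''$. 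Since $\nu'$ and $\nu''$ are of order zero, the principal symbol of $\square_{\chi_\nu\otimes\Mcal}$ equals that of $\square_0$, namely $\Vert\xi\Vert^2$; in particular the operator is elliptic with positive principal symbol, uniformly in $\nu$.

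Three standard consequences then drive the proof. First, ellipticity of positive order on the compact surface $X$ gives a compact resolvent, hence discrete spectrum with finite algebraic multiplicities and no finite accumulation point. Second, expanding $\langle\square_{\chi_\nu\otimes\Mcal}u,u\rangle=\Vert\dbar_\nabla u\Vert^2+(\text{cross terms})$ and absorbing each cross term by $\varepsilon\Vert\dbar_\nabla u\Vert^2+C_\varepsilon\Vert u\Vert^2$ (using that $\Vert\nu'\Vert_\infty$ and $\Vert\nu''\Vert_\infty$ are bounded on $K$), I obtain a G\aa rding inequality $\real\langle\square_{\chi_\nu\otimes\Mcal}u,u\rangle\ge c\Vert u\Vert_{H^1}^2-C\Vert u\Vert^2$ with $c,C$ uniform over $K$. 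Third, since the principal symbol is real the skew part $\square_{\chi_\nu\otimes\Mcal}-\square_{\chi_\nu\otimes\Mcal}^\ast$ is first order, so $|\imag\langle\square_{\chi_\nu\otimes\Mcal}u,u\rangle|\le\varepsilon\Vert u\Vert_{H^1}^2+C_\varepsilon\Vert u\Vert^2$.

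Putting these together confines the relevant spectrum. For a normalized eigenfunction $\square_{\chi_\nu\otimes\Mcal}u=\lambda u$ with $\Vert u\Vert=1$, the G\aa rding inequality gives $\real\lambda\ge c\Vert u\Vert_{H^1}^2-C$, so the hypothesis $\real\lambda\le 0$ forces $\Vert u\Vert_{H^1}^2\le C/c$; the imaginary part bound then yields $|\imag\lambda|\le R$ for an $R$ depending only on $K$. Thus every eigenvalue with $\real\lambda\le0$ lies in the fixed box $B=\{-C\le\real\lambda\le0,\ |\imag\lambda|\le R\}$, uniformly for $\nu\in K$. Combined with discreteness of the spectrum, only finitely many eigenvalues can lie in $B$, which proves the lemma.

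Finally, a word on the subtle point. The main obstacle is to upgrade ``finitely many for each $\nu$'' to a bound on the number that is uniform over $K$: the confinement box $B$ is uniform, but discreteness alone counts eigenvalues only for a fixed operator. I would handle this by a Riesz projection argument, taking $P_\nu=\frac{1}{2\pi i}\oint_\Gamma(\lambda-\square_{\chi_\nu\otimes\Mcal})^{-1}\,d\lambda$ onto the spectrum inside a contour $\Gamma$ enclosing $B$; this projection is finite rank, and $\nu\mapsto\square_{\chi_\nu\otimes\Mcal}$ depends analytically on $\nu$, so $\operatorname{rank}P_\nu$ is locally constant wherever $\Gamma$ avoids the spectrum. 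Covering $K$ by finitely many local pieces (allowing $\Gamma$ to vary locally, and using that the remaining eigenvalues satisfy $\real\lambda>0$) then bounds the count uniformly. This last step is the only place where care with the contour choice and with the non-self-adjointness of $\square_{\chi_\nu\otimes\Mcal}$ is genuinely needed; everything else is a routine consequence of ellipticity and the G\aa rding estimate.
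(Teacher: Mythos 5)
The paper offers no proof of this lemma: it is asserted as a standard fact, the only indication of the intended argument being the remark immediately afterwards that the principal symbol of $\square_{\chi_\nu\otimes\Mcal}$ coincides with that of the scalar Laplacian, together with the pointer to Abanov--Braverman for the analytic theory of determinants of such non-self-adjoint elliptic operators. Your proof is a correct, self-contained implementation of exactly that idea: writing $\square_{\chi_\nu\otimes\Mcal}=\square_0+A_\nu$, where $\square_0$ is the ordinary non-negative $\dbar$-Laplacian of $\Mcal$ (this is what the normalization $-2i\ast\partial_\nabla\dbar_\nabla$ is designed to produce, consistent with the paper's remark on the unitary case) and $A_\nu$ is first order with coefficients bounded uniformly for $\nu$ in the compact set, your G\aa rding inequality plus the first-order bound on the skew part confine every eigenvalue with $\real\lambda\le 0$ to a fixed compact box, and discreteness of the spectrum finishes the count. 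Two minor points are worth making explicit. First, passing from $\Vert\dbar_\nabla u\Vert^2$ to $c\Vert u\Vert_{H^1}^2-C\Vert u\Vert^2$ requires the elliptic estimate $\Vert u\Vert_{H^1}^2\le C(\Vert\dbar_\nabla u\Vert^2+\Vert u\Vert^2)$ for the first-order elliptic operator $\dbar_\nabla$; the same inequality also shows $\square_{\chi_\nu\otimes\Mcal}+M$ is invertible for $M\gg0$, which is what guarantees the resolvent set is non-empty before one may invoke ``compact resolvent, hence discrete spectrum.'' Second, your closing Riesz-projection argument for a bound on the number of such eigenvalues uniform over the compact set is sound ($\nu\mapsto\square_{\chi_\nu\otimes\Mcal}$ is an analytic family with fixed domain $H^2$, and norm-continuous finite-rank projections have locally constant rank), but it is more than the paper needs: for the stated application --- independence of $\det\square_{\chi_\nu\otimes\Mcal}$ of the Agmon angle and holomorphy in $\nu$ --- per-$\nu$ finiteness together with the uniform confinement box you already established suffices.
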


Since the symbol of ${ \square}_{\chi_\nu\otimes\Mcal}$ is the same as that of the scalar laplacian, the zeta regularization procedure applies to give a well-defined determinant $\det { \square}_{\chi_\nu\otimes\Mcal}$.
For a nice explanation of this,  we refer to \cite[Section 2.5]{AbanovBraverman:05}. We only point out here that by Lemma \ref{lem:finite},  $\det { \square}_{\chi_\nu\otimes\Mcal}$ is independent of a choice of \emph{Agmon} angle. 
Assume ${ \square}_{\chi_\nu\otimes\Mcal}$ has no zero eigenvalues.  If $\{\lambda_i\}_{i=1}^N$ are the eigenvalues of ${ \square}_{\chi_\nu\otimes\Mcal}$ with negative real part, $\{\mu_i\}$ the eigenvalues with positive real part, then by Lidskii's theorem
$$
\zeta_{{ \square}_{\chi_\nu\otimes\Mcal}}(s)=\lambda_1^{-s}+\cdots+\lambda_N^{-s}+\sum_{i=1}^\infty \mu_i^{-s}
$$
where any choice of logarithm is used to define the powers $\lambda_i^{-s}$ and the usual logarithm (real on the positive real axis) is used to define the rest.  Any other choice is of the form
$$
\tilde\zeta_{{ \square}_{\chi_\nu\otimes\Mcal}}(s)=\lambda_1^{-s}e^{-2\pi i k_1 s}+\cdots+\lambda_N^{-s}e^{-2\pi i k_N s}+\sum_{i=1}^\infty \mu_i^{-s}
$$
for integers $k_i$.  But then
$$
-\zeta_{{ \square}_{\chi_\nu\otimes\Mcal}}'(0)=-\tilde\zeta_{{ \square}_{\chi_\nu\otimes\Mcal}}'(0)+2\pi i\sum_{i=1}^N k_i
$$
and so $\det { \square}_{\chi_\nu\otimes\Mcal}=\exp(-\zeta_{{ \square}_{\chi_\nu\otimes\Mcal}}'(0))$ is independent of this choice.  We also note that a different choice of lift $\tilde \nu$ gives $G_{\tilde \nu}=G_{\nu}\cdot F$, where pointwise $\vert F\vert =1$.  Hence, $F$ gives a unitary automorphism of $L^2(X)$ such that ${ \square}_{\chi_{\tilde \nu}\otimes \Mcal}=F\circ{ \square}_{\chi_\nu\otimes\Mcal}\circ F^{-1}$; hence, 
the eigenvalues of ${ \square}_{\chi_{\tilde\nu}\otimes\Mcal}$ are the same as those of ${ \square}_{\chi_\nu\otimes\Mcal}$, and so the determinants agree.
Finally,  
since ${ \square}_{\chi_\nu\otimes\Mcal}$ depends holomorphically on $\nu$, so does  $\det { \square}_{\chi_\nu\otimes\Mcal}$ (see \cite{KontsevichVishik:93}), and since it agrees with the usual determinant when $\nu$ is unitary, $\det { \square}_{\chi_\nu\otimes\Mcal}$ is a holomorphic extension of the usual analytic torsion.  

As in Section \ref{sec:trivial}, 
let $X$ be a compact genus $g\geq 1$ Riemann surface with a conformal metric and a choice of symplectic homology basis $\{\alpha_j, \beta_j\}_{j=1}^g$.  This gives a period matrix $\Omega$, theta function $\vartheta(Z,\Omega)$,  and a Riemann divisor $\kappa$ of degree $g-1$, $2\kappa=\omega_X$.
Let $\chi_u: \pi_1(X)\to \U(1)$ be a unitary character whose holomorphic line bundle corresponds to the point $u\in J(X)$ as in \eqref{eqn:u}.  
The choice of conformal metric gives $\kappa$ a hermitian structure. 
Then the torsion of the $\dbar$-laplacian on $\chi_u\otimes \kappa$ is given by
\begin{equation} \label{eqn:twisted-torsion-real}
T(\chi_u\otimes \kappa)=\det{ \square}_{\chi_u\otimes\kappa}= C(X) \Vert \vartheta\Vert^2(u,\Omega) 
\end{equation}
where $C(X)$ is a constant depending on the Riemann surface $X$ and the conformal metric.
 Recall the definition of the norm:
$$
 \Vert \vartheta\Vert^2(u, \Omega) = \exp\left(-2\pi \imag
u^T(\imag\Omega)^{-1}\imag u\right)\vert \vartheta\vert^2(u, \Omega).
$$
In terms of periods, this is
\begin{equation} \label{eqn:norm-theta}
 \Vert \vartheta\Vert^2(u, \Omega) = \exp\left(-2\pi a^T( \imag\Omega )a\right)\vert \vartheta\vert^2(b-a^T\Omega, \Omega). 
\end{equation}
Now suppose $\chi: \pi_1(X,\sigma)\to \CBbb^\times$ is a complex character with periods $\chi(\alpha_j)=\exp(2\pi i a_j)$, $\chi(\beta_j)=\exp(2\pi i b_j)$, $a_j, b_j\in \CBbb/\ZBbb$.  Then we have the following definition:

\begin{equation} \label{eqn:twisted-holo-torsion}
T(\chi\otimes\kappa):=C(X) \exp\left(-2\pi a^T(\imag\Omega)a\right)\vartheta(b-a^T\Omega, \Omega)
\vartheta(b-a^T\overline\Omega, -\overline\Omega).
\end{equation}
By the transformation properties of the theta function, one verifies that  the expression in \eqref{eqn:twisted-holo-torsion} indeed depends on the values of $a_j, b_j$ modulo $\ZBbb$.
The subspace $U_B(\pi_1(X,\sigma))\subset M_B(\pi_1(X,\sigma))$ of unitary characters $  (S^1)^{2g}\subset (\CBbb^\times)^{2g}$ is totally real.
The following is clear:
\begin{proposition}[cf.\ \cite{Hitchin:14}] \label{prop:twisted-holo-torsion}
The function $\nu\mapsto T(\chi_\nu\otimes \kappa)$ is a holomorphic extension of the torsion on unitary characters. In particular, $T(\chi_\nu\otimes \kappa)=\det{ \square}_{\chi_\nu\otimes \kappa}$.
\end{proposition}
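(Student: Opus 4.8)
The plan is to identify the explicitly defined quantity \eqref{eqn:twisted-holo-torsion} with the zeta-regularized determinant $\det\square_{\chi_\nu\otimes\kappa}$ by analytic continuation from the unitary locus. Concretely, I would show that both are holomorphic functions on the connected variety $M_B(\pi_1(X,\sigma))\simeq(\CBbb^\times)^{2g}$ of \eqref{eqn:MB}, and that they coincide on the totally real submanifold $U_B=(S^1)^{2g}$ of unitary characters; since $U_B$ has maximal dimension, this forces equality everywhere. Holomorphicity of $\nu\mapsto\det\square_{\chi_\nu\otimes\kappa}$ on $M_{dR}(X)$ — hence, through the Riemann--Hilbert isomorphism $M_{dR}(X)\simeq M_B$, on $M_B$ — is already in hand from the preceding paragraphs via \cite{KontsevichVishik:93}, together with the fact that it restricts to the usual torsion for unitary $\nu$.

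The first task is to verify that the right-hand side of \eqref{eqn:twisted-holo-torsion} is itself holomorphic on $M_B$. In the coordinates $\chi(\alpha_j)=e^{2\pi i a_j}$, $\chi(\beta_j)=e^{2\pi i b_j}$ it is plainly entire in $(a,b)\in\CBbb^{2g}$, since $\vartheta$ is entire in its first argument, the matrices $\Omega$, $\overline\Omega$, $\imag\Omega$ are fixed constants, and $\exp(-2\pi a^T(\imag\Omega)a)$ is entire. Combined with the $\ZBbb^{2g}$-periodicity in $(a,b)$ recorded just after \eqref{eqn:twisted-holo-torsion}, this lets the expression descend through $a\mapsto e^{2\pi i a}$, $b\mapsto e^{2\pi i b}$ to a genuine holomorphic function on $(\CBbb^\times)^{2g}$.

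Next I would specialize to unitary characters, where $a,b$ are real. Using the conjugation identity $\overline{\vartheta(Z,\Omega)}=\vartheta(-\overline Z,-\overline\Omega)$ and the evenness of $\vartheta$, one finds $\vartheta(b-a^T\overline\Omega,-\overline\Omega)=\overline{\vartheta(b-a^T\Omega,\Omega)}$ for real $a,b$, so that the product of the two theta factors collapses to $|\vartheta(u,\Omega)|^2$ with $u=b-a^T\Omega$ as in \eqref{eqn:u}, while the exponential prefactor reproduces the norm \eqref{eqn:norm-theta}. Thus \eqref{eqn:twisted-holo-torsion} reduces on $U_B$ to $C(X)\|\vartheta\|^2(u,\Omega)$, which equals $\det\square_{\chi_u\otimes\kappa}$ by \eqref{eqn:twisted-torsion-real}. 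Finally, $U_B=(S^1)^{2g}$ is a real-analytic totally real submanifold of $M_B$ of maximal dimension (cf.\ Remark \ref{rem:unitary}), and two holomorphic functions on the connected manifold $M_B$ that agree on such a submanifold agree identically: in suitable local holomorphic coordinates the submanifold is $\RBbb^{2g}\subset\CBbb^{2g}$, and a holomorphic function vanishing on $\RBbb^{2g}$ vanishes. This yields $T(\chi_\nu\otimes\kappa)=\det\square_{\chi_\nu\otimes\kappa}$ throughout $M_B$, and in particular exhibits \eqref{eqn:twisted-holo-torsion} as a holomorphic extension of the torsion.

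The hard part will be the well-definedness underlying the holomorphicity step: I must check that under an integer shift $a_j\mapsto a_j+1$ the quasi-period factor acquired by each theta factor cancels exactly against the change in $\exp(-2\pi a^T(\imag\Omega)a)$, so that \eqref{eqn:twisted-holo-torsion} is truly $\ZBbb^{2g}$-periodic rather than merely quasi-periodic (the integer shifts $b_j\mapsto b_j+1$ being harmless by the plain periodicity of $\vartheta$ in its first argument). Everything else is either inherited from the earlier discussion (holomorphicity of $\det\square$, Fay's formula \eqref{eqn:twisted-torsion-real}) or is the standard identity principle across a maximal totally real submanifold.
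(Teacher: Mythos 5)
Your proposal is correct and is essentially the paper's own (largely implicit) argument: the paper likewise notes that \eqref{eqn:twisted-holo-torsion} descends modulo $\ZBbb^{2g}$ by the theta transformation laws, that it restricts on the unitary locus to $C(X)\Vert\vartheta\Vert^{2}(u,\Omega)=\det\square_{\chi_u\otimes\kappa}$ via \eqref{eqn:twisted-torsion-real}--\eqref{eqn:norm-theta}, and that holomorphicity of $\nu\mapsto\det\square_{\chi_\nu\otimes\kappa}$ (via \cite{KontsevichVishik:93}) plus agreement on the maximal totally real submanifold $U_B=(S^1)^{2g}$ forces equality everywhere. Your explicit verification of the conjugation identity $\vartheta(b-a^T\overline\Omega,-\overline\Omega)=\overline{\vartheta(b-a^T\Omega,\Omega)}$ for real $a,b$ and of the $\ZBbb^{2g}$-periodicity simply fills in the steps the paper declares ``clear.''
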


 Next we consider the holomorphic extension of the torsion  $T(\chi)$. For this we need to choose a basis of Prym differentials $\eta_i(z,\chi)$ on $X$ and  $\eta_i(\bar z, \chi^{-1})$ on $\overline X$, $i=1,\dots, g-1$ ($\chi$ nontrivial).  We choose these to vary holomorphically  in $\chi$, and for convenience  we require $\eta_i(\bar z, \chi^{-1})=\overline{\eta_i(z,\chi)}$ for $\chi$ unitary.  For $\chi$ unitary have a natural inner product
\begin{equation} 
\langle \eta_i(\chi), \eta_j(\chi)\rangle =\int_X \eta_i(z,\chi)\wedge \overline{\eta_j(z,\chi)}.
\end{equation}
For general characters $\chi$, define the pairing on Prym differentials on $X$ and $\overline X$ by
 \begin{equation} 
( \eta_i(\chi), \eta_j(\chi^{-1})) =\int_X \eta_i(z,\chi)\wedge \eta_j(\bar z,\chi^{-1}).
\end{equation}

 Choose generic points $p_1, \ldots, p_g$, and set
 $$
 u_0=\kappa-\sum_{i=1}^{g-1} p_i.
 $$
 Then for $\chi_u$ unitary, the torsion is given by
 \begin{align} 
 \begin{split} \label{eqn:torsion-real}
 T(\chi_u)
 &=4\pi^2C(X)\left| \det\omega_i(p_j)\right|^2\exp\left(4\pi \imag u_0 \cdot a
-2\pi a^T(\imag\Omega) a\right)
 \\
 &\qquad\times  \frac{\det \langle \eta_i(\chi), \eta_j(\chi)\rangle 
}{\left| \det\eta_i(p_j,\chi)\right|^2}
 \frac{\left|\vartheta (u+u_0, \Omega)\right|^2 }{\left|\sum_{i=1}^g\partial_{Z_i}\vartheta(u_0, \Omega)\omega_i(p_g) \right|^2} 
 \end{split}
 \end{align}
where it is understood that in the expression,  $\det\eta_i(p_j)$,  $1\leq j\leq g-1$.  As before this leads to the definition of holomorphic torsion.  For $\chi$ an arbitrary character, define
\begin{align} 
 \begin{split} \label{eqn:torsion-holo}
T(\chi)&=
4\pi^2C(X)\left| \det\omega_i(p_j)\right|^2 \exp\left(4\pi \imag u_0 \cdot
\alpha -2\pi \alpha^T(\imag\Omega) \alpha\right)
  \\
&\qquad \times
\frac{\det ( \eta_i(\chi), \eta_j(\chi^{-1}) )}{\det \eta_i(p_j,\chi)\det\eta_i(\bar p_j, \chi^{-1})}
\frac{\vartheta (\beta-\alpha^T\Omega+u_0, \Omega)\vartheta (\beta-\alpha^T\overline\Omega-\overline u_0, -\overline\Omega)}{\left|\sum_{i=1}^g\partial_{Z_i}\vartheta(u_0, \Omega)\omega_i(p_g) \right|^2}. \end{split}
\end{align}

\begin{proposition}[cf.\ \cite{Fay:81}] \label{prop:holo-torsion}
The function $\chi\mapsto T(\chi)$ is a holomorphic extension to $M_{dR}(X)$ of the torsion on unitary characters.
\end{proposition}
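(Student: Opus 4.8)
The plan is to verify directly the two properties that characterize a holomorphic extension, exactly as in the proof of Proposition \ref{prop:twisted-holo-torsion}: first, that the right-hand side of \eqref{eqn:torsion-holo} defines a single-valued holomorphic function on $M_{dR}(X)$; and second, that its restriction to the totally real locus $U_B(\pi_1(X,\sigma))$ of unitary characters recovers the genuine torsion \eqref{eqn:torsion-real}. Since the Riemann--Hilbert isomorphism $M_{dR}(X)\simeq M_B(\pi_1(X,\sigma))$ is complex analytic and $U_B\cong (S^1)^{2g}$ is totally real of maximal dimension inside $M_B\cong(\CBbb^\times)^{2g}$, these two facts together identify \eqref{eqn:torsion-holo} as the unique holomorphic extension of the torsion.

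For holomorphicity I would argue factor by factor in the coordinates $(\alpha_j,\beta_j)$, taken modulo $\ZBbb$, which are holomorphic on $M_{dR}(X)$. The prefactor $\exp(4\pi\imag u_0\cdot\alpha-2\pi\alpha^T(\imag\Omega)\alpha)$ is entire in $\alpha$, since $u_0$, $\imag u_0$ and $\imag\Omega$ are constants of the fixed surface. In the theta quotient, both $\vartheta(\,\cdot\,,\Omega)$ and $\vartheta(\,\cdot\,,-\overline\Omega)$ are entire in their first arguments, and those arguments $\beta-\alpha^T\Omega+u_0$ and $\beta-\alpha^T\overline\Omega-\overline u_0$ are affine-linear in $(\alpha,\beta)$ with constant coefficients $\Omega$, $\overline\Omega$, $u_0$, $\overline u_0$; the denominator is a nonzero constant. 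For the Prym factor, the differentials $\eta_i(z,\chi)$ and $\eta_i(\bar z,\chi^{-1})$ are chosen to depend holomorphically on $\chi$, and one checks that the displayed ratio $\det(\eta_i(\chi),\eta_j(\chi^{-1}))/(\det\eta_i(p_j,\chi)\,\det\eta_i(\bar p_j,\chi^{-1}))$ is invariant under a change of Prym basis: a change by matrices $M,N$ multiplies both numerator and denominator by the same factor $\det M\det N$. Hence it patches to a globally defined holomorphic function on the locus of nontrivial $\chi$ with the $p_i$ in general position, independent of the local frames. Finally, the same theta transformation laws already invoked for \eqref{eqn:twisted-holo-torsion} show the product is invariant under $\alpha_j\mapsto\alpha_j+1$ and $\beta_j\mapsto\beta_j+1$, so it descends to a function of $\chi$ on $M_{dR}(X)$.

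For the restriction to unitary characters I would set $\alpha=a$ and $\beta=b$ real, so that the underlying point of $J(X)$ is $u=b-a^T\Omega$ by \eqref{eqn:u}. The essential input is the conjugation law $\overline{\vartheta(Z,\Omega)}=\vartheta(-\overline Z,-\overline\Omega)$ for the Riemann theta function, together with its evenness $\vartheta(-Z,\Omega)=\vartheta(Z,\Omega)$. Applied with $a,b$ real, these identities identify the antiholomorphic factor $\vartheta(\beta-\alpha^T\overline\Omega-\overline u_0,-\overline\Omega)$ with $\overline{\vartheta(u+u_0,\Omega)}$, so that the theta numerator of \eqref{eqn:torsion-holo} collapses to $|\vartheta(u+u_0,\Omega)|^2$, matching \eqref{eqn:torsion-real}. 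Likewise the normalization $\eta_i(\bar z,\chi^{-1})=\overline{\eta_i(z,\chi)}$ valid on the unitary locus turns the Prym pairing into the inner product $\langle\eta_i(\chi),\eta_j(\chi)\rangle$ and the factor $\det\eta_i(\bar p_j,\chi^{-1})$ into $\overline{\det\eta_i(p_j,\chi)}$, while reality of $a$ leaves the exponential prefactor unchanged. Collecting these specializations shows \eqref{eqn:torsion-holo} reduces to \eqref{eqn:torsion-real} on $U_B$, that is, to the genuine analytic torsion $T(\chi_u)$; combined with holomorphicity and the totally real density of $U_B$, this completes the identification.

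The main obstacle is bookkeeping rather than conceptual. One must track the theta conjugation and quasi-periodicity relations with the correct signs and half-integer characteristics, the delicate point being the precise roles of $u_0$ and $\overline u_0$ and the appearance of $-\overline\Omega$ in the antiholomorphic factor, so that the second theta factor really is the complex conjugate of the first on $U_B$. One must also confirm that the Prym-differential ratio is genuinely globally single-valued and holomorphic, despite each $\eta_i(\chi)$ being only a locally framed section of the bundle over $M_{dR}(X)$ whose fiber is the space of $\chi$-twisted abelian differentials. The basis-independence computation indicated above resolves the latter, and the former is a direct verification from the series definition of $\vartheta$.
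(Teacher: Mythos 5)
Your proposal is correct and takes essentially the same approach as the paper: the paper states this proposition without any written proof, treating it --- exactly as with Proposition \ref{prop:twisted-holo-torsion} --- as clear because \eqref{eqn:torsion-holo} is assembled from factors manifestly holomorphic in $\chi$ (entire theta functions in affine-linear arguments, the holomorphically varying Prym differentials, the entire exponential prefactor) and is designed to collapse to \eqref{eqn:torsion-real} on the unitary locus via the theta conjugation law and the normalization $\eta_i(\bar z,\chi^{-1})=\overline{\eta_i(z,\chi)}$. Your factor-by-factor verification (well-definedness modulo $\ZBbb$, basis-independence of the Prym ratio, restriction to $U_B$) is precisely the bookkeeping the paper leaves implicit, so there is nothing methodologically different to compare.
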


\subsection{Holomorphic torsion and the Deligne isomorphism}
We next explain how the holomorphic extension of analytic torsion is related to the Deligne isomorphism \eqref{eq:deligne-iso} and the intersection connection. 
 To begin, from \eqref{eqn:nabla} and \eqref{eqn:universal}  we  have that
the curvature of the universal connection is
$$
F_{ \nabla} = -\sum_{i=1}^g  \omega_i\wedge dt_i+\bar\omega_i\wedge ds_i \in \Acal^2_\Xcal\ .
$$
Computing directly  from this, or alternatively using Proposition \ref{prop:curvature-intersection}, it follows that
the curvature of the intersection connection on $\langle \Lcal, \Lcal\rangle$ is given by 
\begin{equation} \label{eqn:curv-inter}
F_{\nabla^{int}_{ \langle \Lcal, \Lcal\rangle}}=-\frac{2}{\pi}\sum_{i,j=1}^g \imag\Omega_{ij} (dt_i\wedge ds_j).
\end{equation}
Note that the intersection connection is holomorphic, coming from the fact that $\nabla_\GM \nu$ is of type $(1,0)$, as in Corollary
\ref{cor:hol-intersection}.

 Let us suppose, to simplify the following discussion, that the genus $g\geq 2$. Choose a uniformization $X=\Gamma\backslash\HBbb$, where $\HBbb\subset\CBbb$ is the upper half plane and $\Gamma\subset\PSL(2,\RBbb)$ is a cocompact lattice $\simeq \pi_1(X,\sigma)$. Then let $\overline X=\Gamma\backslash\LBbb$, where $\LBbb\subset\CBbb$ is the lower half plane.
  If $\overline \Xcal=\overline X\times M_{dR}(\overline X)$, and $\overline\pi:\overline\Xcal\to M_{dR}(\overline X)$ the projection,  we define the universal bundle $\overline\Lcal\to \overline\Xcal$, where the fiber over $\overline X\times \{\nu\}$ is the line bundle associated to the character $\chi_\nu^{-1}$. 
  Then $\langle \overline\Lcal, \overline\Lcal\rangle$ is also a holomorphic line bundle on $M_{dR}(\overline X)$.  By the Riemann-Hilbert correspondence, there are complex analytic isomorphisms:
  $$
  M_{dR}(X)\isorightarrow M_B(\Gamma) \isoleftarrow M_{dR}(\overline X)\ ,
  $$
  where $M_B(\Gamma)$ is defined in \eqref{eqn:MB}.
  We therefore regard $\langle \Lcal, \Lcal\rangle$ and $\langle \overline\Lcal, \overline\Lcal\rangle$ as holomorphic bundles on $M_B(\Gamma)$.
On $\overline X$, the imaginary part of the period matrix $\imag \Omega$ is unchanged, but the coordinates $(t_i, s_j)\mapsto (-s_j, -t_i)$.  Hence, by \eqref{eqn:curv-inter},
$$
F_{\nabla^{int}_{ \langle\overline \Lcal,\overline \Lcal\rangle}}=\frac{2}{\pi}\sum_{i,j=1}^g \imag\Omega_{ij} (dt_i\wedge ds_j).
$$
In particular, the intersection connection on $\langle \Lcal, \Lcal\rangle \otimes \langle \overline\Lcal, \overline\Lcal\rangle$ is flat!
%
%
%

Next, we have
\begin{lemma} \label{lem:natural-iso}
 For any choice of theta characteristic $\kappa$, there is the following functorial isomorphism
\begin{equation} \label{eqn:natural-iso}
	\left[ \det R\pi_{\ast}(\Lcal\otimes \kappa)\otimes\det R\pi_{\ast}(\kappa)^{-1}
\right]^{\otimes 12}\isorightarrow
	\langle\Lcal,\Lcal\rangle^{\otimes 6}\ .
\end{equation}
\end{lemma}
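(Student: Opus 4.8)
The plan is to derive \eqref{eqn:natural-iso} directly from Deligne's isomorphism \eqref{eq:deligne-iso} by applying it twice --- once to $\Lcal\otimes\kappa$ and once to $\kappa$ --- and then cancelling the common factors, using throughout the defining relation $\kappa^{\otimes 2}\cong\omega_{\Xcal/S}$ of the theta characteristic (which holds after pullback to $\Xcal=X\times S$) together with the bi-additivity and symmetry of the Deligne pairing.

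First I would substitute $\Lcal\otimes\kappa$ for $\Lcal$ in \eqref{eq:deligne-iso}. Since $(\Lcal\otimes\kappa)\otimes\omega_{\Xcal/S}^{-1}\cong\Lcal\otimes\kappa^{-1}$, the last factor on the right becomes $\langle\Lcal\otimes\kappa,\Lcal\otimes\kappa^{-1}\rangle$, which by bi-additivity expands as
\[
\langle\Lcal,\Lcal\rangle\otimes\langle\Lcal,\kappa^{-1}\rangle\otimes\langle\kappa,\Lcal\rangle\otimes\langle\kappa,\kappa^{-1}\rangle.
\]
Applying the symmetry isomorphism $\langle\kappa,\Lcal\rangle\cong\langle\Lcal,\kappa\rangle$ and bi-additivity once more, the two middle terms combine to $\langle\Lcal,\kappa^{-1}\otimes\kappa\rangle\cong\langle\Lcal,\Ocal_{\Xcal}\rangle\cong\Ocal_S$, while $\langle\kappa,\kappa^{-1}\rangle\cong(\langle\kappa,\kappa\rangle)^{-1}$. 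This yields
\[
\det R\pi_{\ast}(\Lcal\otimes\kappa)^{\otimes 12}\isorightarrow\langle\omega_{\Xcal/S},\omega_{\Xcal/S}\rangle\otimes\langle\Lcal,\Lcal\rangle^{\otimes 6}\otimes(\langle\kappa,\kappa\rangle^{\otimes 6})^{-1}.
\]

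Next I would apply \eqref{eq:deligne-iso} with $\kappa$ in place of $\Lcal$. Here $\kappa\otimes\omega_{\Xcal/S}^{-1}\cong\kappa^{-1}$, so the right-hand side collapses to $\langle\omega_{\Xcal/S},\omega_{\Xcal/S}\rangle\otimes(\langle\kappa,\kappa\rangle^{\otimes 6})^{-1}$, giving
\[
\det R\pi_{\ast}(\kappa)^{\otimes 12}\isorightarrow\langle\omega_{\Xcal/S},\omega_{\Xcal/S}\rangle\otimes(\langle\kappa,\kappa\rangle^{\otimes 6})^{-1}.
\]
Tensoring the previous displayed isomorphism with the inverse of this one makes the common factors $\langle\omega_{\Xcal/S},\omega_{\Xcal/S}\rangle$ and $(\langle\kappa,\kappa\rangle^{\otimes 6})^{-1}$ cancel, leaving precisely the asserted isomorphism \eqref{eqn:natural-iso}.

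I expect the only genuinely delicate point to be \emph{functoriality/canonicity} rather than the formal computation. Each ingredient --- Deligne's isomorphism, bi-additivity, the symmetry $\langle\Lcal,\Mcal\rangle\cong\langle\Mcal,\Lcal\rangle$, and the trivialization $\langle\Lcal,\Ocal_{\Xcal}\rangle\cong\Ocal_S$ --- is known to be canonical and compatible with base change (cf.\ \cite{Elkik:89, Deligne:87}), so the composite is too; but one must check that the sign ambiguity in \eqref{eq:deligne-iso} cancels when the two instances are divided, and that the identification $\kappa^{\otimes 2}\cong\omega_{\Xcal/S}$ used in both applications is the same fixed one, so that no spurious dependence on auxiliary choices survives. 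Tracking these compatibilities carefully is what makes the resulting isomorphism functorial in the strong sense required later.
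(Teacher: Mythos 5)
Your proposal is correct and follows essentially the same route as the paper: apply Deligne's isomorphism \eqref{eq:deligne-iso} to $\Lcal\otimes\kappa$ and to $\kappa$, expand $\langle\Lcal\otimes\kappa,\Lcal\otimes\kappa^{-1}\rangle$ by bi-additivity and symmetry so that the mixed terms cancel, and tensor the first isomorphism with the inverse of the second. The only (cosmetic) difference is that the paper rewrites $\langle\omega_{\Xcal/S},\omega_{\Xcal/S}\rangle\simeq\langle\kappa,\kappa\rangle^{\otimes 4}$ before cancelling, whereas you cancel that factor directly; your closing remarks on sign ambiguity and canonicity correspond to the paper's Remark \ref{rem:iso}.
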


\begin{proof}
From compatibility of the Deligne pairing with tensor products,
\begin{align*}
\langle \Lcal\otimes\kappa, \Lcal\otimes\kappa\otimes \omega_{\Xcal/S}^{-1}\rangle &\simeq
\langle \Lcal\otimes\kappa, \Lcal\otimes\kappa^{-1}\rangle \\
&\simeq\langle \Lcal, \Lcal\otimes\kappa^{-1}\rangle \otimes \langle \kappa, \Lcal\otimes\kappa^{-1}\rangle \\
&\simeq\langle \Lcal, \Lcal\rangle\otimes \langle \Lcal, \kappa^{-1}\rangle
 \otimes \langle \kappa, \Lcal\rangle\otimes  \langle \kappa, \kappa^{-1}\rangle \\
 &\simeq\langle \Lcal, \Lcal\rangle\otimes\langle \kappa, \kappa\rangle^{-1}.
\end{align*}
Similarly, 
$$
\langle\omega_{\Xcal/S}, \omega_{\Xcal/S}\rangle\simeq \langle \kappa, \kappa\rangle^{\otimes 4}.
$$
By \eqref{eq:deligne-iso},
\begin{align*}
\det R\pi_{\ast}(\Lcal\otimes \kappa)^{\otimes 12}&\simeq \langle\omega_{\Xcal/S}, \omega_{\Xcal/S}\rangle\otimes
\langle \Lcal\otimes\kappa, \Lcal\otimes\kappa\otimes \omega_{\Xcal/S}^{-1}\rangle^{\otimes 6} \\
&\simeq \langle \kappa, \kappa\rangle^{4}\otimes \langle \Lcal, \Lcal\rangle^{\otimes 6}\otimes\langle \kappa, \kappa\rangle^{-6} \\
&\simeq  \langle \Lcal, \Lcal\rangle^{\otimes 6}\otimes\langle \kappa, \kappa\rangle^{-2}.
\end{align*}
On the other hand,
$$
\det R\pi_{\ast}( \kappa)^{\otimes 12}\simeq \langle \kappa, \kappa\rangle^{\otimes 4}\otimes \langle \kappa, \kappa^{-1}\rangle^{\otimes 6}\simeq \langle \kappa, \kappa\rangle^{-2}.
$$
The result follows.
\end{proof}
\begin{remark} \label{rem:iso}
There is a refinement of Deligne's isomorphism to virtual bundles of virtual rank 0, such as $\Lcal\otimes\kappa-\kappa$. In this case, the lemma can be refined to a more natural looking isomorphism, canonical up to sign
\begin{displaymath}
	\left[ \det R\pi_{\ast}(\Lcal\otimes \kappa)\otimes\det R\pi_{\ast}(\kappa)^{-1}
\right]^{\otimes 2}\isorightarrow
	\langle\Lcal,\Lcal\rangle. 
\end{displaymath}
Consequently, the isomorphism of the lemma is canonical and there is no sign ambiguity (since we take the 6th power of the latter).
\end{remark}
%
%
%
%
%

We may now give a geometric interpretation of the holomorphic extension of torsion.
To simplify the notation, let
$$
\lambda(X,\kappa)=\det R\pi_{\ast}(\Lcal\otimes \kappa)\otimes\det R\pi_{\ast}(\kappa)^{-1}.
$$
Considering both $X$ and $\overline X$,
by \eqref{eqn:natural-iso}  we have a canonical isomorphism
\begin{equation} \label{eqn:deligne-isomorphism}
\phi : \left[ \lambda(X,\kappa)\otimes \lambda(\overline X,\bar\kappa)\right]^{\otimes 12}\isorightarrow 
\left[\langle\Lcal,\Lcal\rangle\otimes\langle\overline\Lcal,\overline\Lcal\rangle\right]^{\otimes 6}.
\end{equation}
By the previous discussion, the right hand side admits a holomorphic (in fact, flat) connection. 
On the other hand, $\lambda(X,\kappa)\otimes \lambda(\overline X,\bar\kappa)$ has a canonical holomorphic connection given by the form 
$$-\partial\log T(\chi\otimes \kappa)$$
 in the 
canonical (up to a constant) frame, given by the relation with theta functions (see \ref{eqn:twisted-holo-torsion}).
With this understood, we have the following

\begin{theorem} \label{thm:flat}
The Deligne isomorphism $\phi$ in \eqref{eqn:deligne-isomorphism} is flat with respect to the connections defined above.
\end{theorem}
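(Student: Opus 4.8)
The statement asserts flatness of the isomorphism $\phi$ in \eqref{eqn:deligne-isomorphism}. Since $\phi$ is a holomorphic isomorphism of holomorphic line bundles on $M_B(\Gamma)$, and both sides carry holomorphic connections, flatness is equivalent to the vanishing of a single globally-defined holomorphic $1$-form: namely, the difference of the connection $1$-forms on the two sides, computed in compatible frames. The plan is therefore to identify explicit frames on each side, write down the connection $1$-form in each, and compare. The left side $\left[\lambda(X,\kappa)\otimes\lambda(\overline X,\bar\kappa)\right]^{\otimes 12}$ carries the connection $-\partial\log\left(T(\chi\otimes\kappa)\,T(\chi^{-1}\otimes\bar\kappa)\right)$ in the canonical theta-function frame, by Proposition~\ref{prop:twisted-holo-torsion} and the explicit formula \eqref{eqn:twisted-holo-torsion}. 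The right side $\left[\langle\Lcal,\Lcal\rangle\otimes\langle\overline\Lcal,\overline\Lcal\rangle\right]^{\otimes 6}$ carries the intersection connection, which we have just shown in the preceding discussion to be \emph{flat} (its curvature forms cancel, by \eqref{eqn:curv-inter}).

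First I would exploit flatness of the target: since the intersection connection on $\langle\Lcal,\Lcal\rangle\otimes\langle\overline\Lcal,\overline\Lcal\rangle$ is flat, it admits (locally, and since $M_{dR}$ is an affine bundle over an abelian variety, suitably globally up to periods) a flat frame, and in such a frame its connection $1$-form vanishes. The content of the theorem then reduces to showing that, transported through $\phi$, the torsion connection $-\partial\log\left(T(\chi\otimes\kappa)T(\chi^{-1}\otimes\bar\kappa)\right)$ also vanishes in that same frame — equivalently, that $\partial\log\left(T(\chi\otimes\kappa)T(\chi^{-1}\otimes\bar\kappa)\right)$ is, up to the explicit transition function relating the theta frame to the flat frame, itself a closed form with the right periods. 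Concretely I would compute $\partial\log T(\chi\otimes\kappa)$ directly from \eqref{eqn:twisted-holo-torsion}. Writing $T(\chi\otimes\kappa)=C(X)\exp(-2\pi a^T(\imag\Omega)a)\,\vartheta(b-a^T\Omega,\Omega)\,\vartheta(b-a^T\overline\Omega,-\overline\Omega)$ and using the coordinates $(t_i,s_i)$ with the dictionary \eqref{eqn:alpha}--\eqref{eqn:s-u}, the logarithmic derivative splits into a ``quadratic'' piece coming from $\exp(-2\pi a^T(\imag\Omega)a)$ and ``theta-logarithmic'' pieces. The key point is that the theta-logarithmic derivatives, when combined for both the $X$ and $\overline X$ factors, are holomorphic and contribute an exact form matching the Deligne-pairing frame's transition, while the quadratic piece reproduces precisely the curvature primitive implicit in \eqref{eqn:curv-inter}.

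The cleanest route is to compare curvatures rather than connection forms directly, and then fix the remaining ambiguity (a global holomorphic $1$-form) using the unitary locus. That is: (i) compute $F$ of the torsion connection by taking $\dbar\partial\log\left(T(\chi\otimes\kappa)T(\chi^{-1}\otimes\bar\kappa)\right)$ and show it vanishes, using the heat-kernel/variational formula for $\partial\dbar\log\det\square$ together with the fact that the $X$ and $\overline X$ contributions have opposite sign (mirroring the cancellation $F_{\nabla^{int}_{\langle\Lcal,\Lcal\rangle}}+F_{\nabla^{int}_{\langle\overline\Lcal,\overline\Lcal\rangle}}=0$ established above); (ii) conclude that $\phi^\ast$(torsion connection) $-$ (intersection connection) is a \emph{holomorphic} $1$-form $\beta$ on $M_B(\Gamma)$; (iii) restrict to the totally real unitary locus $U_B(\pi_1(X,\sigma))$, where both connections specialize to the Quillen connection (by Theorem~\ref{thm:intersection}(iv), the intersection connection is the Chern connection for Deligne's metric, which the Quillen metric refines via analytic torsion — this is exactly the classical Deligne/Quillen isometry, eq.~\eqref{eq:deligne-iso} made metric), so $\beta$ vanishes there; (iv) since a holomorphic $1$-form vanishing on a maximal totally real submanifold vanishes identically, conclude $\beta=0$.

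\textbf{Main obstacle.} The hard part will be step (iii), reconciling the normalizing constant $C(X)$ and the precise matching of frames on the unitary locus: one must verify that under $\phi$ the theta-function frame used to trivialize $\lambda(X,\kappa)$ corresponds, on unitary characters, exactly to the frame in which the Quillen metric equals $\|\vartheta\|^2$ up to the topological constant, so that the two connections literally coincide there and not merely up to an undetermined scalar. This requires tracking the explicit isomorphisms in Lemma~\ref{lem:natural-iso} and Remark~\ref{rem:iso} through the metric comparison \eqref{eqn:twisted-torsion-real}, i.e. matching the Arakelov/Quillen isometry of \eqref{eq:deligne-iso} against the Deligne metric appearing in Theorem~\ref{thm:intersection}(iv). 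Once the unitary-locus identification is pinned down, the holomorphicity-plus-totally-real argument closes the proof with no further computation.
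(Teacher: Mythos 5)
Your overall architecture coincides with the paper's: both connections are holomorphic, so the failure of flatness of $\phi$ is a holomorphic $1$-form on $M_B(\Gamma)$; one shows it vanishes on the totally real unitary locus and concludes it vanishes identically. Two remarks on the setup, though. Your step (i) is an unnecessary detour: no heat-kernel or variational formulas are needed, since $T(\chi\otimes\kappa)$ in \eqref{eqn:twisted-holo-torsion} is holomorphic in $\chi$ by construction, so $-\partial\log T(\chi\otimes\kappa)$ is automatically a flat holomorphic connection, and flatness of the intersection connection on $\langle\Lcal,\Lcal\rangle\otimes\langle\overline\Lcal,\overline\Lcal\rangle$ was already established from \eqref{eqn:curv-inter}. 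Holomorphicity of the difference form is thus immediate.

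The genuine gap is in your step (iii). For the right-hand side of \eqref{eqn:deligne-isomorphism}, the identification with the Chern connection of Deligne's metric along $U_{dR}(X)$ is indeed Lemma \ref{lem:chern} and Theorem \ref{thm:intersection} (iv). But for the left-hand side, the claim that the torsion connection ``specializes to the Quillen connection'' on the unitary locus is precisely the nontrivial computational content of the theorem, not a consequence of the Deligne--Quillen isometry plus frame bookkeeping. The connection was defined as $-\partial\log T(\chi\otimes\kappa)$ with $T$ the holomorphic extension \eqref{eqn:twisted-holo-torsion}; on the unitary locus $T(\chi_u\otimes\kappa)$ is a product of two theta \emph{values}, whereas the Quillen potential involves the product of the genuine torsions $T_X(\chi_u\otimes\kappa)\,T_{\overline X}(\chi_u^{-1}\otimes\overline\kappa)$, i.e.\ a product of squared \emph{norms} as in \eqref{eqn:twisted-torsion-real}; these are different functions. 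What must be proved is the identity of $(1,0)$-forms
$\partial_\chi\log T(\chi\otimes\kappa)=\partial_\chi\log T_X(\chi_u\otimes\kappa)+\partial_\chi\log T_{\overline X}(\chi_u^{-1}\otimes\overline\kappa)$
along the locus $t_i=-\bar s_i$, which the paper obtains by explicitly differentiating the three theta expressions in the coordinates $(t_i,s_i)$. Your diagnosis of the obstacle --- the constant $C(X)$ and matching frames ``not merely up to an undetermined scalar'' --- misses the actual difficulty: multiplicative constants and scalar ambiguities never affect $\partial\log$, hence never affect a connection form; what does not come for free is the derivative identity between a holomorphic function and a product of real-analytic norms. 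Until that computation (or an equivalent argument) is carried out, the proof is incomplete.
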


%
%
%
%
%
%

\begin{proof}
We first show, by explicit calculation, that $\phi$ is flat when restricted to  the unitary connections $U_{dR}(X)\subset M_{dR}(X)$. 
Recall from Lemma \ref{lem:chern} that the connection on $\langle\Lcal, \Lcal\rangle$ coincides with the Chern connection along $U_{dR}(X)$.  The connection we have defined on $\Ical_\kappa(X)$ differs from the Quillen connection by the additive term
\begin{equation} \label{eqn:difference}
\partial\log\left[ \frac{T_X(\chi_u\otimes \kappa) T_{\overline X}(\chi_u^{-1}\otimes \overline\kappa)  }{T(\chi\otimes \kappa)}\right].
\end{equation}
Since the Deligne isomorphism (for the Quillen metric) is an isometry, it suffices to show that the expression in \eqref{eqn:difference} vanishes when $\chi$ is unitary.
Let $\nabla=d +B$ be a flat connection, where $B=\sum_{i=1}^g t_i\omega_i+ s_i\bar\omega_i$.  Let $\chi_B\in M_{dR}(X)$ be the associated holonomy.  Notice that
\begin{align*}
2\pi ia_j&=\int_{A_j}b=t_j+s_j \\
2\pi ib_j&=\int_{B_j}b=\sum_{k=1}^g t_k\Omega_{kj}+s_k\overline\Omega_{kj}.
\end{align*}
From this expression and the interchange $\Omega\mapsto -\overline \Omega$, we see that the character $\chi^{-1}$ on $\overline X$ corresponds to the change of coordinates $(t_j, s_j)\mapsto (-s_j, -t_j)$. 


Next, consider the map $M_{dR}(X)\to J(X)$.  This takes $[\nabla]$ to the isomorphism class $[\nabla'']$ of the underlying holomorphic line bundle. In terms of flat connections, $[d+B]\mapsto [d+B''-\overline{B''}]$.  In terms of the coordinates introduced above  (see other note).
Rewriting \eqref{eqn:norm-theta} and \eqref{eqn:twisted-holo-torsion} in these coordinates we have
 \begin{align*}
 &T_X(\chi_u\otimes \kappa) =C(X)  \exp\left((1/2\pi)(s-\bar s)^T
(\imag\Omega)(s-\bar s)\right)\vert \vartheta\vert^2((1/\pi)(\imag\Omega) s, \Omega)  \\
& T_{\overline X}(\chi_u^{-1}\otimes \overline\kappa) =C(X)
\exp\left((1/2\pi)(t-\bar t)^T (\imag\Omega)(t-\bar t)\right)\vert
\vartheta\vert^2((1/\pi)(\imag\Omega) t, -\overline\Omega)  \\
T(\chi)&=C(X) \exp\left((1/2\pi)(t+ s)^T (\imag\Omega)(t+s)\right)
\vartheta((1/\pi)(\imag\Omega) s, \Omega)
\vartheta((1/\pi)(\imag\Omega) t, -\overline\Omega).
 \end{align*}
 We now calculate:
 \begin{align*}
 \partial_{\chi}\log T(\chi_u\otimes \kappa)
 &= \frac{1}{\pi}\sum_{i, j=1}^g(\imag \Omega)_{ij} (s_i-\bar s_i)ds_j \\
 &\qquad \qquad  +\frac{1}{\pi}\sum_{i, j=1}^g
\partial_{Z_i}\vartheta((1/\pi)(\imag\Omega) s, \Omega) (\imag \Omega)_{ij}ds_j \notag
 \end{align*}
  \begin{align*}
 \partial_{\chi}\log T_{\overline X}(\chi_u^{-1}\otimes \overline\kappa)
 &= \frac{1}{\pi}\sum_{i, j=1}^g(\imag \Omega)_{ij} (t_i-\bar t_i)dt_j \\
 &\qquad \qquad  +\frac{1}{\pi}\sum_{i, j=1}^g
\partial_{Z_i}\vartheta((1/\pi)(\imag\Omega) t, -\overline\Omega)(\imag \Omega)_{ij}dt_j \notag
 \end{align*}
\begin{align*}
 \partial_{\chi}\log T(\chi\otimes \kappa)
 &= \frac{1}{\pi}\sum_{i, j=1}^g(\imag \Omega)_{ij} (t_i+s_i)(dt_j+ds_j) \\
 &\qquad \qquad  +\frac{1}{\pi}\sum_{i, j=1}^g
\partial_{Z_i}\vartheta((1/\pi)(\imag\Omega) s, \Omega) (\imag \Omega)_{ij}ds_j \notag \\
 &\qquad \qquad  +\frac{1}{\pi}\sum_{i, j=1}^g
\partial_{Z_i}\vartheta((1/\pi)(\imag\Omega) t, -\overline\Omega)(\imag \Omega)_{ij}dt_j. \notag
  \end{align*}
  Hence, restricted to the unitary connections $U_{dR}(X)\subset M_{dR}(X)$ defined  by $t_i=-\bar s_i$ (see Remark \ref{rem:unitary}),
  $$ \partial_{\chi}\log T(\chi\otimes \kappa)=\partial_{\chi}\log T(\chi_u\otimes \kappa)+\partial_{\chi}\log T_{\overline X}(\chi_u^{-1}\otimes \overline\kappa)$$
 This proves the claim.  It follows that $\nabla\phi$ restricted to $U_{dR}(X)$ vanishes.  But since $U_{dR}(X)$ is totally real and $\nabla\phi$ is holomorphic, we conclude that $\nabla\phi\equiv 0$.
 \end{proof}
 
 \begin{remark}
 An analogous result to Theorem \ref{thm:flat} holds for the holomorphic torsion $T(\chi)$ and the determinant bundle $\det R\pi_\ast(\Lcal)$.  The idea of the proof is the same, where the calculation making use of \eqref{eqn:torsion-holo} is somewhat more lengthy. 
 \end{remark}
 
%
%
%
%
%
\subsection{The hyperholomorphic line bundle on twistor space}
In this section we show how the intersection connection leads quite naturally to the construction of a meromorphic connection on the hyperholomorphic line bundle over the twistor space of $M_{dR}(X)$.  This result is inspired by Hitchin's exposition in \cite{Hitchin:13, Hitchin:14}, to which we refer for more context and detail.

We begin with a quick review of the basic set-up.  Recall that $M_{dR}(X)$ has a hyperk\"ahler structure (for much more on this, see \cite{GoldmanXia:08}).  In terms of the coordinates introduced above, the symplectic structures are:
\begin{align*}
\Phi_1&=\frac{i}{2\pi}\sum_{i,j=1}^g \imag \Omega_{ij} ( dt_i\wedge d\bar t_j+ ds_i\wedge d\bar s_j) \\
\Phi_2+i\Phi_3&=\frac{1}{\pi}\sum_{i,j=1}^g \imag\Omega_{ij} ds_i\wedge dt_j.
\end{align*}
Let  $Z=M_{dR}(X)\times \PBbb^1$  denote the twistor space of $M_{dR}(X)$, and $\lambda: Z\to \PBbb^1$ the projection.  
Then $Z$ has the structure of a complex manifold with respect to which $\lambda$ is holomorphic, but the tautological complex structure is not a product.
The fiber $\lambda^{-1}(1)$ is biholomorphic to $M_{dR}(X)$, whereas the fiber $\lambda^{-1}(0)$ is biholomorphic to $T^{\ast} J(X)$,  the space of rank $1$ Higgs bundles on $X$.  Similarly, $\lambda^{-1}(\infty)\simeq T^{\ast} J(\overline X)$.
Each fiber has a holomorphic symplectic form given by 
\begin{equation} \label{eqn:hklr}
\Phi=\Phi_2+i\Phi_3+2i\lambda\Phi_1+\lambda^2(\Phi_2-i\Phi_3)
\end{equation}
(see \cite[Theorem 3.3]{HKLR:87}).

Next, recall the following (see \cite{Simpson:97}).

\begin{definition}[Deligne] \label{def:lambda}
Let $S$ be smooth algebraic, and set $\Xcal =X\times S$. Suppose we are given a function $\lambda: S\to \ABbb^1$. Then a \emph{$\lambda$-connection} on a line bundle $\Lcal\to \Xcal$ is a $\CBbb$-linear map $\nabla_\lambda: \Lcal\to \Lcal\otimes \omega_{\Xcal/S}$ of $\Ocal_\Xcal$-modules satisfying
$$
\nabla_\lambda(f\ell)= \lambda df\otimes \ell + f\cdot\nabla_\lambda\ell.
$$
for $f\in \Ocal_\Xcal$ and $\ell\in \Lcal$.
\end{definition}
By a result of Simpson, 
the functor which associates to $\lambda: S\to \ABbb^1$ the set of rank one $\lambda$-connections on $\Xcal$ is representable by a scheme $M_{Hod}(X)$ with a morphism $\lambda: M_{Hod}(X)\to \ABbb^1$.  By considering $M_{Hod}(\overline X)$ and a gluing procedure with respect to the anti-holomorphic involution $\lambda\mapsto -\bar\lambda^{-1}$, one constructs the \emph{Deligne moduli space of $\lambda$-connections} $\lambda: M_{Del}(X)\to \PBbb^1$.  Moreover, there is a biholomorphism
$M_{Del}(X)\simeq Z$.  This is achieved by finding holomorphic sections $\ABbb^1\to M_{Hod}(X)$ of $\lambda$, compatible with the anti-holomorphic involution.  For example, in the case of the flat connection $\nabla=d+\nu$, $\nu$ harmonic, the family of $\lambda$-connections is given by:
\begin{align}
\begin{split} \label{eqn:lambda}
\nabla_\lambda^{0,1}&= \dbar + \frac{1}{2}( (\lambda+1)\nu''+(\lambda-1)\overline{\nu'}) \\
\nabla_\lambda^{1,0}&=\partial + \frac{1}{2}((1+\lambda)\nu'+(1-\lambda)\overline{\nu''}).
\end{split}
\end{align}

Let $\Xcal=X\times M_{Del}(X)$, $\pi: \Xcal\to M_{Del}(X)$ the projection.  We furthermore assume a rigidification.
Then the universal bundle $\Lcal\to \Xcal$ admits a universal $\lambda$-connection. Let $\kappa$ be a theta characteristic as in the previous section, and use the same notation for the pull-back to $\Xcal\to X$.  We define the \emph{hyperholomorphic line bundle}
on $M_{Del}(X)$ by
\begin{equation} \label{eqn:hyper}
\Lcal_Z:= \det R\pi_\ast(\Lcal\otimes\kappa)\otimes \det R\pi_\ast(\kappa)^{-1}.
\end{equation}

Consider the divisor $D=D_0\cup D_\infty=\lambda^{-1}(0)\cup \lambda^{-1}(\infty)$.
We shall use the construction of this paper to obtain an explicit realization of the following property of the hyperholomorphic line bundle.
\begin{theorem}[{Hitchin, cf.\ \cite[Theorem 3]{Hitchin:14}}] \label{thm:hitchin}
The line bundle $\Lcal_Z$ admits a meromorphic connection with logarithmic singularities along the divisor $D$. The curvature of this connection restricted to the fibers of $Z-D\to \CBbb^\times$ is  $\lambda^{-1}\Phi$, where $\Phi$ is the HKLR form \eqref{eqn:hklr}. The residue of the connection at $\lambda=0$ (resp.\ $\lambda=\infty$) is the Liouville or tautological $1$-form on $T^{\ast} J(X)$ (resp.\ $T^{\ast} J(\overline X)$).
\end{theorem}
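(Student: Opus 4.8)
\textbf{Proof strategy for Theorem \ref{thm:hitchin}.}

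The plan is to produce the meromorphic connection on $\Lcal_Z$ as an intersection connection, using the functorial isomorphism \eqref{eqn:natural-iso} to identify $\Lcal_Z^{\otimes 12}$ with $\langle\Lcal,\Lcal\rangle^{\otimes 6}$, and then to transport the universal $\lambda$-connection through Deligne's construction. First I would set up the family globally over $M_{Del}(X)\simeq Z$: with $\Xcal=X\times M_{Del}(X)$ and the universal bundle $\Lcal$ carrying the universal $\lambda$-connection given explicitly in \eqref{eqn:lambda}. For $\lambda\neq 0,\infty$ this $\lambda$-connection, after dividing by $\lambda$, is an honest flat relative connection $\nabla^\lambda_{\Xcal/M}$, so Theorem \ref{thm:intersection} applies and yields an intersection connection $\nabla^{int}_{\langle\Lcal,\Lcal\rangle}$ over $Z-D$. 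Because the construction of the Gauss-Manin invariant and the canonical extension are compatible with base change (Lemma \ref{lemma:nabla-pullback}, eq.\ \eqref{eqn:GM-pullback}), this connection is built intrinsically over the whole of $Z-D$; via \eqref{eqn:natural-iso} it descends to a connection on $\Lcal_Z$ there. The first task is therefore to compute the Gauss-Manin invariant $\nabla_\GM\nu_\lambda$ of the universal $\lambda$-connection in the explicit coordinates of Section \ref{sec:trivial}, reading off from \eqref{eqn:lambda} that the harmonic representative of the relevant class scales like $\lambda^{-1}$ times the flat datum plus lower-order terms.

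Next I would compute the curvature using Proposition \ref{prop:curvature-intersection}, which gives $F_{\nabla^{int}}=\tfrac{1}{2\pi i}\pi_\ast(\nabla_\GM\nu_L\cup\nabla_\GM\nu_M)$ with $L=M$ here. Substituting the coordinate expressions \eqref{eqn:curv-inter} adapted to the $\lambda$-family, the two factors of $\nabla_\GM\nu_\lambda$ each contribute a $(1,0)$ piece proportional to $\omega_i$ and a conjugate piece; the fiber integral over $X$ picks out $\imag\Omega_{ij}$ through the Riemann bilinear relations, exactly as in the passage from $F_{\nabla}$ to \eqref{eqn:curv-inter}. The goal of this step is to match the resulting two-form, after the factor-of-$6$ and factor-of-$12$ bookkeeping from \eqref{eqn:natural-iso}, with $\lambda^{-1}\Phi$ where $\Phi$ is the HKLR form \eqref{eqn:hklr}; the three summands of $\Phi$ should emerge from the three homogeneous pieces (in $\lambda$, $\lambda^0$, $\lambda^2$, paired against the constant, holomorphic, and antiholomorphic parts of the $\lambda$-connection). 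This is a direct if somewhat intricate calculation, and I would organize it by expanding \eqref{eqn:lambda} into its $\nu'$, $\overline{\nu''}$, $\nu''$, $\overline{\nu'}$ components and tracking which bilinear pairings survive $\pi_\ast$.

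The remaining and hardest part is the behavior at the divisor $D=D_0\cup D_\infty$: showing the connection extends meromorphically with \emph{logarithmic} (simple-pole) singularities and identifying the residues with the Liouville forms on $T^\ast J(X)$ and $T^\ast J(\overline X)$. The obstacle is that as $\lambda\to 0$ the $\lambda$-connection degenerates from a flat connection to a Higgs field, so $\nabla^\lambda_{\Xcal/M}$ ceases to be flat and Theorem \ref{thm:intersection} no longer applies directly on $D$; one must instead analyze the $\lambda$-dependence of the trace/intersection formula \eqref{eqn:int-connection} near $\lambda=0$ and extract the leading pole. Concretely I would write the intersection-connection one-form $D\langle\ell,\ell\rangle/\langle\ell,\ell\rangle$ from \eqref{eqn:int-connection}, substitute the $\lambda$-scaled curvature $F_{\nabla^\lambda}$, and isolate the $\lambda^{-1}$ coefficient; under the identification $\lambda^{-1}(0)\simeq T^\ast J(X)$ the surviving term should be $\sum_i s_i\,dt_i$ (or its Dolbeault analogue), which is precisely the tautological one-form on the cotangent bundle. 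The symmetry $\lambda\mapsto-\bar\lambda^{-1}$ relating $D_0$ and $D_\infty$, together with the $\overline X$-version of the computation, then handles the residue at $\lambda=\infty$, giving the Liouville form on $T^\ast J(\overline X)$. I expect the pole-order analysis and the clean identification of the residue with the canonical one-form to be where most of the care is required.
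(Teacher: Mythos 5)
Your strategy coincides with the paper's proof: the paper likewise obtains the connection on $\Lcal_Z$ over $Z-D$ from the intersection connection on $\langle\Lcal,\Lcal\rangle^{\otimes 6}\simeq\Lcal_Z^{\otimes 12}$ (pulled back along the map $Z-D\to M_{dR}(X)$, $\nabla_\lambda\mapsto\lambda^{-1}\nabla_\lambda$, which by the base-change compatibility you invoke is the same as your direct construction over $Z-D$), computes the curvature by substituting $\tau_i=-\bar s_i+\lambda^{-1}t_i$, $\sigma_i=s_i+\lambda\bar t_i$ into \eqref{eqn:curv-inter}, and extracts the residue at $\lambda=0$ as the $\lambda^{-1}$ coefficient of the connection one-form (via the trace term and the prime-form expression \eqref{eqn:l-tilde}). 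The only slip is your guessed formula for the residue: it comes out as $-\frac{1}{\pi}\sum_{i,j}(\imag\Omega)_{ij}\,t_i\,ds_j=\sum_i t_i\,du_i$, the fiber (Higgs) coordinate paired against the differential of the Jacobian coordinate \eqref{eqn:uj}, rather than $\sum_i s_i\,dt_i$.
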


\begin{proof}
There is a holomorphic map $Z-D\to M_{dR}(X)$ obtained by sending a holomorphic bundle with $\lambda$-connection $\nabla_\lambda$ to the same holomorphic bundle with holomorphic connection $\lambda^{-1} \nabla_\lambda$.
By Remark \ref{rem:iso}, $\Lcal_Z^{\otimes 12}$ is naturally isomorphic to the pull-back of $\langle\Lcal, \Lcal\rangle^{\otimes 6}$, and therefore the pull-back of the intersection connection gives a holomorphic connection on  $\Lcal_Z$ over $Z-D$. 
The statement about the curvature follows from the fact that the HKLR form is the pull-back of the holomorphic symplectic form on $M_{dR}(X)$.  We shall verify this  directly using the coordinates above.  Let $(\tau_i, \sigma_i)$ be coordinates on $M_{dR}(X)$.
It will be convenient to  locally parametrize $Z-D_\infty$ by $(t_i,s_i,\lambda)$, where the $\lambda$-connection is given by,
$$
\nabla_\lambda=\begin{cases} \dbar + a'' +\lambda\psi'' \\
\lambda(\partial + a') + \psi'.
\end{cases}
$$
Here,  $a''=\sum_{i=1}^g s_i \bar\omega_i$, $a'=-\overline{a''}$, $\psi'=\sum_{i=1}^g t_i\omega_i$, $\psi''=\overline{\psi'}$. 
In these coordinates, the map $Z-D\to M_{dR}(X)$ is given by
$$
\tau_i=-\bar s_i+\lambda^{-1}t_i\quad ,\quad \sigma_i=s_i+\lambda\bar t_i.
$$
 Then
\begin{align*}
d\tau_i&= -d\bar s_i+\lambda^{-1}dt_i-\lambda^{-2}t_id\lambda \\
d\sigma_i&=ds_i+\lambda d\bar t_i +\bar t_i d\lambda
\end{align*}
from which
\begin{align*}
 d\tau_i\wedge d\sigma_j &=
 ds_i\wedge d\bar s_j+dt_i\wedge d\bar t_j +\lambda^{-1} d t_i\wedge ds_j -\lambda d\bar s_i\wedge d\bar t_j \\
 &\qquad + \bar t_j (-d\bar s_i + \lambda^{-1}dt_i)\wedge d\lambda 
+ \lambda^{-2}t_i(d s_j + \lambda d\bar t_j) \wedge d\lambda.
\end{align*}
Using \eqref{eqn:curv-inter}, it follows that restricted to the fibers,
$$
F_{\Lcal_Z}\biggr|_{\rm fiber}=2i\Phi_1 +\lambda^{-1}(\Phi_2+i\Phi_3) + \lambda(\Phi_2-i\Phi_3).
$$
For the residue at $\lambda=0$, note that from \eqref{eqn:l-tilde},
$$
\frac{d\tilde\ell}{\tilde \ell}=(d\tau)^t\int_{\sigma}^t\vec\omega+\tau^t d\int_{\sigma}^z\vec\omega + \text{ regular terms} 
$$
while
$$
-\int_{\sigma}^z \nabla_\GM\nu=-(d\tau)^t\int_{\sigma}^z\vec\omega-(d\sigma)^t\int_{\sigma}^z\overline{\vec\omega}.
$$
It follows that
\begin{align*}
\tr_{\Div m/S}\left( \frac{\nabla\ell}{\ell}\right)&= \tau^t d\left(\tr_{\Div m/S}\int_{\sigma}^z\vec\omega \right)     + \text{ regular terms} \\
&=-\frac{\lambda^{-1}}{\pi}\sum_{i,j=1}^g (\imag\Omega)_{ij}t_ids_j + \text{ regular terms}
\end{align*}
The residue of the connection at $\infty$ is calculated similarly.
This concludes the proof.
\end{proof}

\bibliographystyle{amsplain}
\bibliography{./papers}{}

\end{document}